\numberwithin{equation}{section}
\DeclareMathOperator{\vol}{Vol}
\DeclareMathOperator{\rad}{rad}
\DeclareMathOperator{\Hill}{H}
\DeclareMathOperator{\Moun}{M}
\DeclareMathOperator{\diam}{diam}
\newcommand{\N}{\ensuremath{\mathbb{N}}}
\newcommand{\R}{\ensuremath{\mathbb{R}}}
\newcommand{\Z}{\ensuremath{\mathbb{Z}}}
\renewcommand{\d}{\, \ensuremath{\mathrm{d}}}
\newcommand{\dv}{\ensuremath{d_v}}
\newcommand{\dw}{\ensuremath{d_w}}
\DeclareMathOperator{\CVol}{C_{V\!ol}}
\DeclareMathOperator{\cvol}{c_{vol}}
\DeclareMathOperator{\Cmix}{C_{mix}}
\DeclareMathOperator{\Cpsi}{C_{ψ}}
\newcommand{\displ}{B}
\newcommand{\Conf}{\mathrm{Conf}}
\renewcommand{\tilde}{\widetilde}
\renewcommand{\hat}{\widehat}
\newcommand{\compl}{^{\mathsf{c}}}
\renewcommand{\P}{\ensuremath{\mathbb{P}}}
\newcommand{\given}{\: \big| \:}											% conditioned on
\newcommand{\Given}{\: \Big| \:}										% bigger conditioned on
\newcommand{\as}{\text{a.s.}}
\newcommand{\ind}{\boldsymbol{1}}
\newcommand{\graph}{\ensuremath{\mathbb{G}^d\times \R}}
\newcommand{\Sier}{Sierpi{\'{n}}ski}
\newcommand{\Lip}{Lipschitz cutset}
\newcommand{\unit}{\ensuremath{l_F}}
\newcommand{\remain}{\ensuremath{m_F}}
\newcommand{\ext}{^{\mathrm{{\rm ext}}} }
\newcommand{\base}{^{\mathrm{base}} }
\newcommand{\inbase}{^{\mathrm{inf}} }
\newcommand{\Sup}{^{\mathrm{sup}} }
\newcommand{\Esup}{^{\mathrm{Esup}} }
\theoremstyle{plain}
\newtheorem{deff}{Definition}[section]
\theoremstyle{plain}
\newtheorem{thm}[deff]{Theorem}
\newtheorem{prop}[deff]{Proposition}
\newtheorem{lemma}[deff]{Lemma}
\newtheorem{corol}[deff]{Corollary}
\theoremstyle{remark}
\newtheorem{remark}[deff]{Remark}
\definecolor{darkgreen}{rgb}{0,0.6,0} %forlink
\definecolor{Green}{rgb}{0.1,0.7,0}
\definecolor{viola}{rgb}{0.7,0,1}
\newcommand{\GG}[1]{{
		\color{Green}{
			\ifmmode \text{\textbf{[G: #1]}}
			\else \textbf{[G: #1]	}	\fi
}}}
\newlength{\leftstackrelawd}
\newlength{\leftstackrelbwd}
\def\leftstackrel#1#2{\settowidth{\leftstackrelawd}%
	{${{}^{#1}}$}\settowidth{\leftstackrelbwd}{$#2$}%
	\addtolength{\leftstackrelawd}{-\leftstackrelbwd}%
	\leavevmode\ifthenelse{\lengthtest{\leftstackrelawd>0pt}}%
	{\kern-.5\leftstackrelawd}{}\mathrel{\mathop{#2}\limits^{#1}}}
\title{\Lip{} for fractal graphs\\ and applications to the spread of infections}
\author{		
		Alexander Drewitz %\orcid{0000-0002-5546-3614}}
		\thanks{Universität zu Köln,
			Department Mathematik/Informatik,
			Weyertal 86--90,
			50931 Köln, Germany.
			Emails: \url{adrewitz@uni-koeln.de},
			\url{gioelegj@gmail.com}}
		\and
		Gioele Gallo $ ^* $ %\orcid{0000-0002-0818-9624}
		\and 
		Peter Gracar %\orcid{0000-0001-8340-8340}
		\thanks{University of Leeds,
			School of Mathematics,
			Leeds LS2 9JT, U.K.
			Email: \url{P.Gracar@leeds.ac.uk}}
	}
\date{July 22, 2024}
\begin{document}
\maketitle
\begin{abstract}
	We consider the fractal \Sier{} gasket or carpet graph in dimension $d\geq 2,$ denoted by $G$. At time $0$, we place a Poisson point process of particles onto the graph and let them perform independent simple random walks, which in this setting exhibit sub-diffusive behaviour.
	We generalise the concept of particle process dependent Lipschitz percolation to the (coarse graining of the) space-time graph $G\times \R$, where the opened/closed state of space-time cells is measurable with respect to the particle process inside the cell. We then provide an application of this generalised framework and prove the following: if particles can spread an infection when they share a site of $G$, and if they recover independently at some rate $\gamma>0$, then if $\gamma$ is sufficiently small, the infection started with a single infected particle survives indefinitely with positive probability.
\end{abstract}

\tableofcontents

\section{Introduction}\label{SEC-Intro}
We consider the following setting: a collection of particles is placed on an infinite connected graph at time $0$ in such a way that it is in Poissonian equilibrium for simple random walk -- put simply, letting the particles perform simple random walks does not change their distribution on the graph. Then, over time, each particle performs an independent continuous time simple random walk on the graph. Assume that at time $0$ a single additional infected particle is placed somewhere on the graph and consider the infection dynamics to be as follows: whenever a particle shares a vertex with an infected particle, it instantaneously becomes infected itself. Infected particles can also recover and become healthy/susceptible again, which occurs independently for each infected particle at some exponentially distributed random time. Due to the infection mechanism outlined above, a particle can only truly recover when it is the sole particle at a vertex; otherwise it gets reinfected straight away by one of the other particles sharing its location.

This problem has been studied in various forms, and it can be traced back in the literature at least to Kesten and Sidoravicius. In \cite{KestenSid05}, the authors consider the graph to be the nearest neighbour square lattice $\Z^d$ and treat the case where infected particles never recover. They show that for large times and with high probability, the sites of $\Z^d$ that have already been visited by an infected particle contain a ball around the site where the infection started of radius proportional to the time elapsed since the start of the infection. They also prove that these sites are themselves completely contained in a bigger ball of radius that is also proportional to the same time, again with high probability. In \cite{KestenSid08} they refine this result and prove a shape theorem for the infection. In a parallel paper \cite{KestenSid06}, they study the case of infection with recovery on $\Z^d$ and prove the existence of a phase transition with respect to the recovery rate of the particles: for rates higher than a critical threshold, the infection will almost surely go extinct (i.e.\ no infected particle remains after some finite time), whereas for rates below this threshold, the infection will with positive probability survive indefinitely. It should be noted that in \cite{KestenSid06}, infections occur only when an infected particle jumps onto a site, meaning it is possible for healthy and infected particles to share a site. Our main result, stated in Theorem \ref{thm-application} holds for both infection mechanisms -- the one outlined at the beginning and the one from \cite{KestenSid06}.

More recently, Gracar and Stauffer \cite{Pete19,Pete19b} have developed a general framework with which they were able to prove that on the weighted graph $(\Z^d,\lambda)$, with edges equipped with uniformly elliptic conductances $\lambda_{x,y}$, the infection still spreads with positive speed. They also showed that in the case of infection with recovery, the infection not only survives indefinitely with positive probability, but they also derived a lower bound for the furthermost location the infection has reached divided by time---a question that was left unanswered previously. A further application of this framework can be found in \cite{BaldassoStauffer}, where it is shown that in the case of infection with recovery, conditioned on the infection surviving, the origin of $\Z^d$ (i.e.\ where the infection is started) is visited by an infected particle at arbitrarily large times. The key benefit of the framework used in these works is that it can be applied to different variations of the Poisson random walks and infection models, and that the multi-scale analysis which is done in order to set up the framework does not need to be redone from scratch when the type of event studied changes. Given a local, translation invariant and increasing event that has high enough probability, the framework provides the existence of a connected surface in space-time where the event holds. If the local event is chosen to be the successful propagation of the infection to its immediate surroundings in a predetermined amount of time, then the connected surface yields a lower bound on how far the infection can spread over time. This surface also acts as a cutset in space-time, separating the origin from infinity, so that any particle which visits the origin has to intersect the surface at some later time which is a key property that allows the above argument to be applicable. We also refer to \cite{MR2060478, MR2294981, MR3372847} for a non-exhaustive list of further related models.

In this work, we adapt the framework to a new class of graphs, i.e.\ to sub-diffusive fractal lattice graphs, also known as prefactals. In particular, we study the behaviour of a particle system on the \Sier{} gasket and on generalised \Sier{} carpets. Intuitively, these are the graphs of the famous triangle and square based fractals, where instead of repeating the construction recursively inwards, one instead expands outwards, by attaching copies of the current stage of the graph recursively.
A key difference between the standard Euclidean lattice ($\Z^d$ as well as for example the triangle or hexagonal lattice nearest neighbour graph) and the graphs we study is that random walks on the latter  exhibit subdiffusive behaviour. I.e., random walks move through the graph much more slowly than e.g.\ on the Euclidean lattice,  and it takes on average $r^{\dw}$ amount of time to leave a ball of radius $r$, where $\dw> 2$ is a constant that depends on the dimension of the graph, and on which parts of the graph are missing. Compared to Euclidean lattices, where this average is of order $r^2$ regardless of the dimension of the lattice, this shows that on such fractal graphs random walks exhibit a quantitatively different behaviour.
Crucially, this slower movement of the particles makes it unclear whether the dynamics of the infection process remain unchanged or whether the infection has a harder or easier time surviving over time. Our main result, stated in Theorem \ref{thm-application} provides an answer to this question: although the changed dynamics potentially affect the global propagation of the infection, the mechanism by which the infection survives on Euclidean lattices still remains in place and the infection has a positive probability of survival if the recovery rate is not too high.

In order to state the result more precisely, we quickly formalise some of the above concepts. Let $G$ be either the \Sier{} gasket graph or a generalised \Sier{} carpet graph defined precisely in Sections \ref{sec:Sierp} and \ref{SEC-Carpets}. (See also the corresponding Figures \ref{fig-Sierp1} and \ref{fig-SierpCarpet}.)
In our first result we adapt the so-called Lipschitz surface framework from \cite{Pete19} to the fractal graph case. Notably, although the framework retains its main characteristics, it requires substantial changes and new ideas across the board due to the significantly changed geometry of the graph, starting with the analogue of the Lipschitz surface for fractal graphs. While interesting from a purely mathematical point of view, it is also intriguing to understand if and how the introduction of a prefractal---which can be interpreted as containing obstacles on infinitely many scales---leads to a different quantitative and qualitative behaviour when compared to the Euclidean setting. On $\Z^d$, the framework gives rise to a discrete, Lipschitz connected surface in (a coarse-grained) space-time graph $\Z^{d+1}$. On the fractal graphs we study, we cannot hope for such a strong connectivity property. However, as we define in Subsection \ref{subsec-mainres} and prove in Section \ref{SEC-LipSurf}, the corresponding object still acts as a cutset on the coarse-graining of the space-time graph, meaning that any path escaping toward infinity must intersect this cutset (cf.\ Definition \ref{def-LipschitzSurf}). Furthermore, it is in some sense minimal and still retains the Lipschitz connectivity property along the time dimension (cf.\ Corollary \ref{corol-LipschitzInTime}). This object will be referred to as \Lip{}. We prove in  Theorem \ref{thm-main} that such a \Lip{} exists \as{}, and in Theorem \ref{thm-main2} that it surrounds the origin within a finite distance \as{}
%The \Lip{} retains the key property of the Lipschitz surface on $\Z^d$, namely that for any element of the coarse-graining of the \Lip{}, there exists at least one further element that is within bounded distance in the spatial sense and temporally one (coarse-grained) time step ahead.
The \Lip{} retains the flexibility of the Lipschitz surface and we expect that it can be taken advantage of to derive further interesting consequences. We refer to Section \ref{sec:further} for further details and present now one example. 

For this purpose, consider the infection process with recovery as outlined above, where at the beginning there is an independent Poisson distributed with intensity $μ_0$ number of particles at each vertex of the graph, and $γ$ is the rate at which infected particles recover. We say that the infection survives if for every time  there exists at least one infected particle somewhere on the graph. Our first main result is the following.

\begin{thm}\label{thm-application}
	For any $μ_0>0,$ there exists $γ_0>0$ such that for all $γ \in (0,γ_0),$ the infection with recovery on $G$ survives with positive probability.
\end{thm}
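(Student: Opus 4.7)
My plan is to invoke the \Lip{} framework of Theorems \ref{thm-main} and \ref{thm-main2} with a cell-level event tailored to guarantee local survival and propagation of the infection, and then to use the resulting cutset to build an infinite infection chain in space-time. This is the natural fractal analogue of the $\Z^d$-survival arguments of \cite{Pete19,Pete19b}.

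Concretely, I would fix coarse-graining scales $L$ (spatial) and $T \asymp L^{\dw}$ (temporal, with the exponent reflecting the sub-diffusive scaling of random walks on $G$). For each space-time cell $C$ I would declare $C$ open if, inside $C$: (i) the number of particles in its spatial support is concentrated around its Poissonian mean $\mu_0 \vol(B_L)$; (ii) there exists a chain of particles whose trajectories meet pairwise inside $C$ and which jointly connect the temporal bottom face of $C$ to its top face and to those of the prescribed space-time neighbouring cells, so that any infection entering $C$ is transmitted across and out of it (an adaptation of the ``infection path'' construction of \cite{Pete19}); and (iii) none of the $O(\mu_0 L^d)$ particles in this chain recovers during the time interval of length $T$. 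Items (i) and (ii) can be arranged to have probability tending to $1$ as $L\to\infty$ using the sub-Gaussian heat kernel estimates and meeting-probability computations available for random walks on $G$, uniformly in $\gamma$; item (iii) has probability at least $e^{-c\gamma \mu_0 L^d T}$, which for fixed $L$ can be made arbitrarily close to $1$ by taking $\gamma$ small enough. Tuning $L$ large and then $\gamma$ sufficiently small thus pushes $\P(E_C)$ above any prescribed threshold, so that Theorem \ref{thm-main} supplies an \as{} \Lip{} of open cells and Theorem \ref{thm-main2} places it at a finite distance surrounding the origin.

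To close the argument I would use Corollary \ref{corol-LipschitzInTime} to extract from the \Lip{} an infinite time-monotone chain of open cells $(C_n)_{n\geq 0}$ above the origin. Since the cutset encloses the origin in a bounded region at time $0$, with positive probability the initially infected particle meets $C_0$ within a bounded amount of time and transmits the infection there. The propagation guarantee built into each $E_{C_n}$ then inductively carries the infection from $C_n$ to $C_{n+1}$, proving survival for all times with positive probability. The main obstacle, and the step that requires most work, is the construction and probabilistic analysis of the local event $E_C$: on the fractal, the sub-diffusive heat kernel, the hierarchical cell geometry of the \Sier{} gasket/carpets, and the resulting anisotropy make the meeting-and-overlap arguments of \cite{Pete19} substantially more delicate to adapt; a secondary difficulty is ensuring that $E_C$ is measurable with respect to only a local neighbourhood of $C$ -- a requirement of the \Lip{} framework -- which calls for careful control of particle excursions into adjacent cells via the same heat kernel estimates.
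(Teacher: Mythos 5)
Your overall strategy -- define a local space-time event guaranteeing infection transmission, make it high-probability by first fixing $\ell$ large and then taking $\gamma$ small, invoke Theorems \ref{thm-main} and \ref{thm-main2} to get an almost-sure \Lip{} of such cells surrounding the origin, and then propagate the infection along the time-Lipschitz chain from Corollary \ref{corol-LipschitzInTime} -- is exactly the approach the paper takes. The splitting of $E_C$ into a ``typical number of particles,'' a ``connecting chain of trajectories,'' and a ``no recovery marks'' condition also matches the construction of \emph{acceptable} cells in Section \ref{SEC-applications}.

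There is, however, one genuine gap in your definition of $E_C$. Your item (ii) builds the infection chain starting from particles present at the temporal bottom face of $C$ and assumes that this will transmit ``any infection entering $C$''. This is only true if the infection enters $C$ from the temporal direction, i.e.\ if there is already an infected particle in the spatial tile of $C$ at the start of its time interval. But the first cell of the \Lip{} reached by the infection started at the origin need not be entered this way: the infection can enter $C$ from a spatially adjacent tile at some intermediate time $t$ strictly inside $C$'s time interval. Your chain, anchored at the bottom face, gives no guarantee in that case -- the particles whose meeting events you estimated were chosen at the cell's starting time, and an infected intruder arriving mid-interval may never meet any of them. The paper's Remark after equation \eqref{eq-accetable} makes this objection explicit, and it is precisely why the \emph{decent} cell condition \eqref{item-decent0}--\eqref{item-decent2} is introduced: it provides pre-selected random walk paths $(\pi^x_s)$ that the first infected intruder can be coupled to upon entering from any boundary site $x$ at any time $t$, relaying the infection to an \emph{acceptable} cell from which the temporal-entry argument then applies. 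Closing this gap requires the decent-cell construction (or something equivalent) plus the alternative construction of the process that lets an incoming infected particle follow $\pi^x_s$ without changing the law of the system; neither follows from the acceptable-cell estimates alone. The rest of your argument -- the inductive propagation along the time-Lipschitz chain via Corollary \ref{corol-LipschitzInTime}, the order in which $\ell$ and then $\gamma$ are tuned, and the appeal to Theorem \ref{thm-main2} to guarantee the cutset is within reach of the initial particle -- is correct and matches the paper.
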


%Theorem \ref{thm-application} is a direct consequence of Proposition \ref{prop-application}, which gives the above statement even in the case where the fractal graph is equipped with uniformly elliptic conductances, and it is proven as an application of the \Lip{} from Theorems \ref{thm-main} and \ref{thm-main2}, and the ``Lipschitz in the time dimension'' property derived in Corollary \ref{corol-LipschitzInTime}.

%	 Using a chain of such elements can then be used to guarantee the survival of the infection.

This paper is structured as follows. In Section \ref{SEC-Setting} we define the \Sier{} gasket graph and formalise the definitions and basic properties outlined above. We can then also state the two main technical Theorems \ref{thm-main} and \ref{thm-main2} which give the existence and key properties of the \Lip{}. In Section \ref{SEC-LipSurf} we construct the \Lip{} and provide a sufficient condition for its existence, as well as prove its key geometric properties. Section \ref{SEC-Mixing} covers a tool used in our multi-scale analysis, a decoupling theorem that allows us under the right conditions to resample particles independently. In Section \ref{SEC-MultiScaleSetup} we define the multi-scale tessellation of the space-time graph and its properties which lead to the proof of Theorem \ref{thm-main} in Section \ref{SEC-MultiScaleAnalysis}. We prove Theorem \ref{thm-main2} in Section \ref{SEC-ProofThm2} with an extension of the multi-scale argument developed before. Section \ref{SEC-Carpets} covers the adaptation of the results which are written with the \Sier{} gasket graph in mind to the case of generalised \Sier{} carpet graphs. The paper concludes with Section \ref{SEC-applications}, where Theorem \ref{thm-application} is proven by  applications of Theorems \ref{thm-main} and \ref{thm-main2}.

Throughout this work we will denote constant with $ c_0,c_1,\dots $ and $ C_1,C_2,\dots.$ Important constants that should be kept track of will be denoted differently: this includes $C_λ,\  \Cmix,\  C_ψ $ as well as the constants $ \Cr{Mixing1},\Cr{MixingDelta},\Cr{MixingRoot},\Cr{MixingProb} , Θ $ from  the decoupling Theorem \ref{thm-mixing}.

\section{Settings and definitions}\label{SEC-Setting}
We start by defining the \Sier{} graph and the coarse-graining which we will use throughout the paper. We then proceed to formally define the particle system we will be studying before stating the two main results of this paper.

\subsection{The \Sier{} gasket graph} \label{sec:Sierp}

The \Sier{} gasket is a fractal which was introduced in \cite{Sier}. Here we define the \Sier{} graph or \Sier{} prefractal based on the \Sier{} fractal with a recursive construction as presented in \cite{Delmotte02}.
Consider any of the graphs obtained from the $ d $-dimensional unit side-length regular simplex in $ \R^d $, $d\ge 2,$ by placing one vertex in the origin. Fix such a graph and denote it with
$\bigtriangleup^d $. More precisely, $\bigtriangleup^d :=(V,E)$ where $V$ are the $d+1$ vertices corresponding to the corners of the simplex and $E$ is the set of all undirected pairs of vertices which share an edge in the simplex.
For $d=2,$ this is the graph induced by the equilateral triangle with unit length sides, motivating the notation $\bigtriangleup^d $. In $d=3$, the graph is induced by the equilateral tetrahedron. We furthermore assume the graph to be weighted with conductances $λ:= (λ_{x,y})_{\{x,y\}\in E} ,$ which are positive symmetric and we assume the existence of a constant $ C_λ \in (0,\infty)$ such that the conductances are uniformly elliptic, i.e.
\begin{equation}\label{def-conductances}
	\frac{1}{C_λ}	\le		λ_{x,y}		\le		C_λ.
\end{equation}
Define now $ \bigtriangleup^d_0:=\bigtriangleup^d $ and iteratively the graph of scale $ n $, for $ n\ge 1, $ as
\begin{equation}\label{def-gasketRecursive}
	\bigtriangleup^d_n:= \bigcup_{x\in 2^{n-1} \bigtriangleup^d_0}  \big( x+\bigtriangleup^d_{n-1} \big),
\end{equation}
taking care of identifying overlapping vertices at the junctions; edges carry the same conductance as in $ \bigtriangleup^d_0 $, i.e.\ for any $n\ge 1$, $z\in 2^{n-1} \bigtriangleup^d_0$ and $x,y\in \bigtriangleup^d_0,$ the conductance on the edge $(z+x,z+y)$ is $λ_{x,y}$. The $ d $-dimensional \Sier{} graph $ \mathbb{G}^d $ is the graph obtained by taking the union of $\bigtriangleup^d_n$ over $n \in \N_0.$ We write $ x\sim y $ if there is an edge between $ x $ and $ y $, and let $ λ_x:=\sum_{y\sim x}λ_{x,y} $. We will denote by $ (\mathbb{G}^d,(λ_{x,y})_{x\sim y}) $ the weighted graph $\mathbb{G}^d$ with conductances $(λ_{x,y})_{x\sim y}$.

We introduce  the set
\begin{equation}\label{def-binaryLattice}
	\mathbb{B}^d:= \big\{ι\in \mathbb{G}^d \colon   ι+\bigtriangleup^d_0 \text{ is a subgraph of }\mathbb{G}^d \big\},
\end{equation}
which intuitively contains those vertices in $\mathbb{G}^d$ which are the ``lower left'' corner of some translation of the simplex $\bigtriangleup^d$ in $\mathbb{G}^d$.
Note that this set is stable under multiplication with powers of 2 in the sense that for all $ m\in\N_0 $ and $ ι\in \mathbb{B}^d $,
\begin{equation}\label{eq-PropOfB}
	%ι2^{a}\in \mathbb{B}^d,
	ι2^m + \bigtriangleup^d_m \text{ is a subgraph of } \mathbb{G}^d.
\end{equation}

We consider the natural graph distance $ d(\cdot,\cdot) $ on $ \mathbb{G}^d $ and define the distance between sets as the usual minimum of the distances between vertices contained therein.
For a finite set $A$ we define the volume $\vol(A):=|A|$ as the cardinality of the set $A$.
Define the ball of radius $ r\ge 0 $ with center $ x\in \mathbb{G}^d$ as $ B_r(x):=\{y\in \mathbb{G}^d\colon d(x,y)\le r\}$, and the volume of such balls $\vol_r(x):=\vol(B_r(x)).$ Note that the conductances do neither affect $d(\cdot,\cdot)$ nor the volume.

\begin{figure}[ht]
	\centering
	\begin{subfigure}[t]{0.4\textwidth}
		\includegraphics[height=\linewidth]{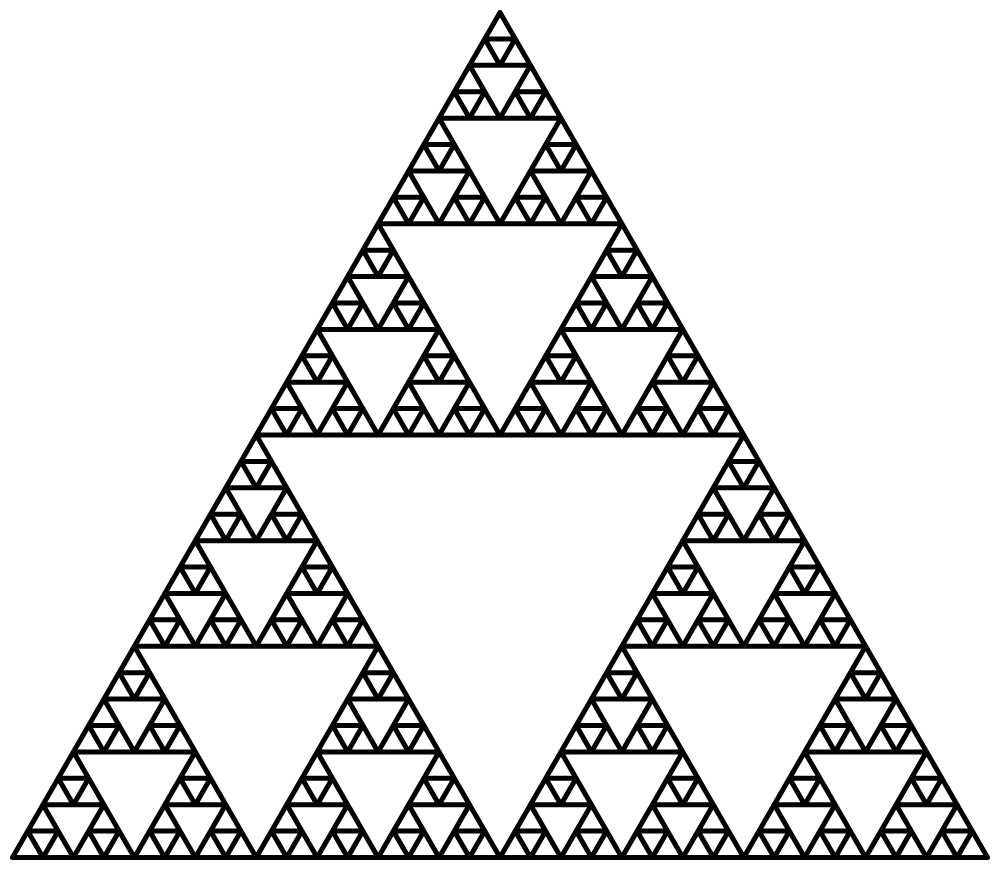}
		\subcaption{$d=2$}
	\end{subfigure}
	\hfill
	\begin{subfigure}[t]{0.4\textwidth}
		\includegraphics[height=\linewidth]{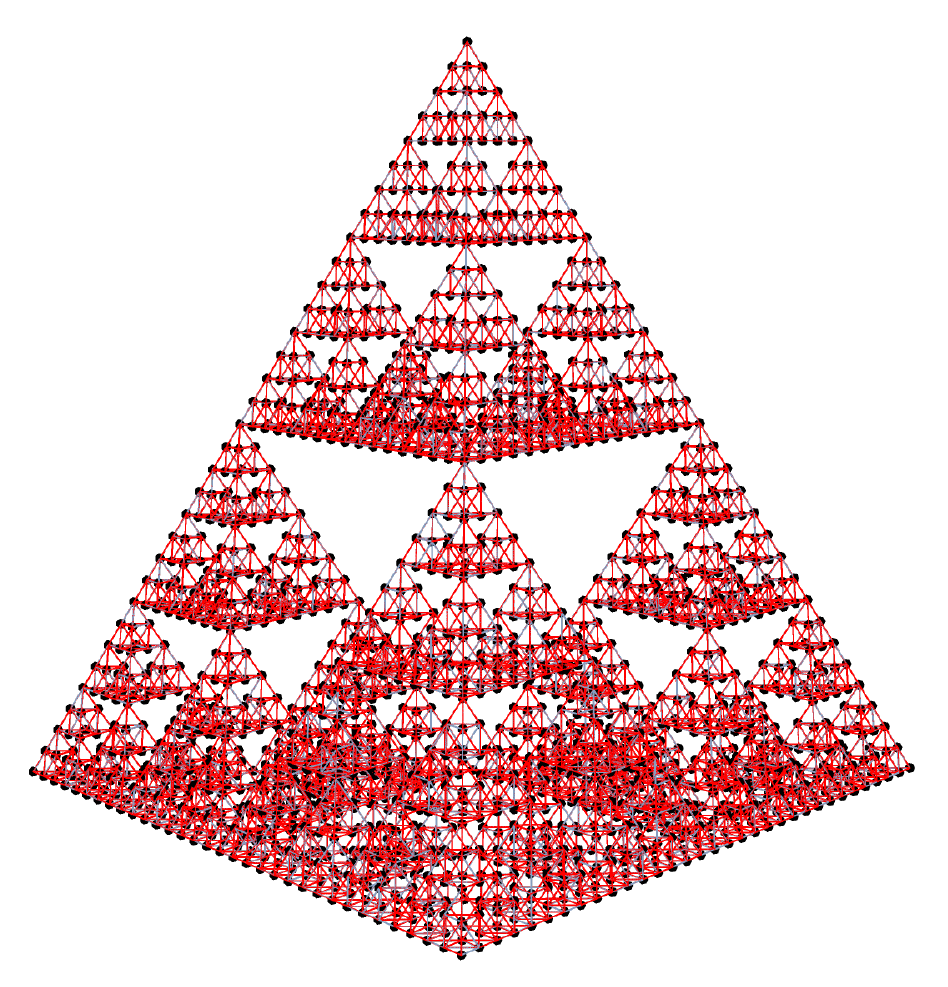}
		\subcaption{$d=3$}
	\end{subfigure}
	\caption{The first six stages of the \Sier{} Gasket.}\label{fig-Sierp1}
\end{figure}

It can be shown that for each $d\ge 2$, there exist constants $ \cvol, \CVol \in (0,\infty) $ (depending on the dimension) such that for all $ x\in \mathbb{G}^d$ and $ r \ge 1 $
\begin{equation}\label{def-dv}
	\cvol \, r^{\dv} \le \vol_r(x)\le \CVol\,r^{\dv},
	\tag{{Vol($ \dv) $}}
\end{equation}
and we call $\dv$ the \emph{volume dimension} of the graph. We refer to the discussion below \eqref{def-dw} for a brief list of the different names of $ \dv $ in the literature.
It is well-known that in dimension two we have $ \dv=\log_2(3) $. To show that \eqref{def-dv} holds in any dimension $ d $, it is not hard to generalise the proof in \cite{BarlowLEC98} in order to obtain that $ \dv=\log_2(d+1). $

We now present a regular coarse graining---referred to as \emph{tessellation}---of the space-time space $\graph$ which we need in order to state the theorems. This definition will be in line with the more complex tessellation presented in Subsection \ref{SUBSEC-Tessel}.

\subsection{First level tessellation} \label{subsec-firstlevel}

\begin{deff}\label{def-Tessel1}
	For a given value $ \ell\in\mathbb{N}_0, $ we tessellate the graph $ \mathbb{G}^d $ into \emph{tiles} $S_1(ι) := ι2^{\ell} +\bigtriangleup^d_{\ell}, $ for $ ι\in \mathbb{B}^d, $ so that each tile is indexed by $ ι $ and has side length equal to $ 2^{\ell}$.

	For a given value $ β>0, $ we tessellate $ \R $ (which will play the role of time) into intervals $T_1(τ)  := [τβ,(τ+1)β), $ indexed by $ τ\in \Z. $

	We then define a \emph{(space-time) cell indexed by $ (ι,τ) \in \mathbb{B}^d \times \Z $} to be the set 
 \begin{equation}\label{eq:cell}
     R_1(ι,τ):=  S_1(ι)\times T_1(τ).
 \end{equation}
\end{deff}
We will consider time starting at $0$ and so one would expect us to work with $\mathbb{R}_+$ instead of $\mathbb{R}$. Our multi-scale framework will however require us to consider ``earlier'' cells relative to a given space-time cell when dealing with larger scales. In particular, this includes cells that lie in the ``past'' relative to the start of the process at time $0$. Due to us considering only collections of particles that are in stationarity (we will formalise this soon) we can therefore work naturally with negative times as well and define the necessary notation already at this stage.
A careful reader might also wonder about the choice of using the index $1$ instead of $\ell$; as we will see later, the above defined tiles, intervals and cells will form the building blocks of our argument at scale 1, with tiles, intervals and cells of larger scales having a corresponding index.
\begin{remark}\label{rem:regions}
When referring to subsets of the spatial graph $\mathbb{G}^d $ in general, such as tiles, unions of tiles or balls on the graph, we will refer to them as \emph{regions} or \emph{subregions} when the distinction between the kind of subset does not play a role.
\end{remark}

Later on, we might refer to generic cells with shorter notation such as simply $ u$ or $v$ when we do not need to specify the indices of the cell. This will usually be in conjunction with some set of cells, where we will write $u\in A$ as a shorthand for $R_1(\iota,\tau)\in A$ (see for example \eqref{def-L1} and the text immediately thereafter).
\begin{deff}\label{def-adjacent}
	We say two  cells $ R_1(ι_1,τ_1)$ $ \ne$ $ R_1(ι_2,τ_2) $ are \emph{adjacent} if either
	$ι_1=ι_2$ and $|τ_1-τ_2|\le 1$ or else if $d(S_1(ι_1),S_1(ι_2))=0$ and $τ_1=τ_2.$
\end{deff}

\begin{remark}
	We could alternatively define $S_1(ι)$ to be ``half-open'' in the sense that only the ``corner'' corresponding to $ι$  is in $S_1(ι)$ while all other corners are not, making the tiles disjoint. This distinction makes no difference for the combinatorial arguments we will use; it could however be important for the lowest level events one could consider (cf.\ Definition \ref{def-E}) in the application of our framework.
\end{remark}

We will use this space-time tessellation in order to define a dependent percolation model where space-time cells will be \emph{good} or \emph{bad} depending on whether a given event dependent on the particle behaviour occurs roughly in the region defined by the corresponding $S_1(\iota)$ during the time interval $T_1(\tau)$. More precisely,
the events we will consider will not necessarily be localised entirely within $ S_1(ι) $. Instead, they will involve larger regions which in particular may intersect for different pairs $(\iota,\tau)$ and $(\iota',\tau')$. To this end we introduce the following extension.

\begin{deff}\label{def-Tessel1η}
	Let $ η\in \N$. For $ι \in \mathbb{B}^d$ we define the \emph{super-tile}
	\begin{equation*}
		S_1^{η}(ι):= \bigcup_{ι' \in \mathbb{B}^d \colon d(ι,ι')\le η} S_1(ι'),
	\end{equation*}
	and for $τ\in\Z$ the \emph{super-interval} $ T_1^η(τ):=[τβ_1,(τ+η)β_1),$ as well as the \emph{super-cell} $ R_1^η(ι,τ) $ as $S_1^{η}(ι)\times T_1^η(τ)$.
\end{deff}

\subsection{Random walks on the \Sier{} graph}%\label{subsect-RWonSier}

%In the previous subsection we defined the pre-fractal and equipped its edges with conductances $ λ_{x,y} $. We define now random walks on it as well as a \emph{particle system} $(\Pi_t)_{t \ge 0}$ representing the locations in $\mathbb{G}^d$ of a Poisson point process of particles. 

We will study Poisson random walks and for this purpose we start by analysing  properties of the simple random walk on \Sier{} gaskets.
We call a stochastic process $(X_t)_{t \ge 0}$ taking values in $\mathbb{G}^d$ and starting in $x_0 \in \mathbb{G}^d$ a (continuous time simple) random walk on $\mathbb{G}^d$ under the probability measure $P_{x_0}$, if $X_0=x_0$ holds $P_{x_0}$-\as{}, and while at $x \in \mathbb{G}^d$, it jumps to $y \sim x$ with rate $\lambda_{x,y}/\lambda_x.$ The corresponding expectation is then denoted by $E_{x_0}.$
We say that a function $f\colon \mathbb{G}^d\times \R\to \R$ is \emph{caloric} if satisfies the discrete heat equation
\begin{equation*}
	\frac{∂}{∂t} f(x,t) = \sum_{y\sim x} \frac{λ_{x,y}}{λ_x}(f(y,t)-f(x,t))
\end{equation*}
and it is easy to verify that the heat kernel $p_t(x,y):=\frac{1}{λ_y}P_x(X_t=y)$ seen as a function of $y$ and $t,$ with $x$ fixed, satisfies it.

It is well known that the heat kernels for a random walk on $\Z^d$ satisfy Gaussian estimates. Instead, the \Sier{} gasket falls into the class of nested fractals studied in \cite[Corollary 4.13]{HamblyKumagai}, which shows the validity of sharp upper and lower bounds for the heat kernel: denoting by $p_n(x,y):=\frac{1}{λ_y}P_x(X_n=y)$ the heat kernel for the discrete time random walk, it holds that
\begin{align}\label{eq-HKBoundDiscrete}
	p_n(x,y) \asymp n^{- \frac{\dv}{\dw}} \exp		\Big\{		-\Big(	 \frac{d(x,y)^{\dw}}{\C[s]n}		\Big)^{1/(\dw-1)}	\Big\}
	\\ %\end{align}
	\intertext{for $n > d(x,y),$ with $\dv$ as in \eqref{def-dv} and the \emph{walk dimension} $\dw$ will be motivated in \eqref{def-dw} below. Here, $\asymp$ indicates that the ratios of the two sides are bounded from above and below by positive constants independent of $x,y$ and $n$ in the regime $n > d(x,y).$ This result was first shown on $\mathbb{G}^2$ in \cite{Jones}. Using the fact that the continuous time random walk $X_t$ has jump rate 1, one can extend the proof of \cite[Theorem 2.5.6]{LawlerLimic} to obtain a continuous time version of \eqref{eq-HKBoundDiscrete}: for all $x,y\in\mathbb{G}^d$ and $t>0$ with $d(x,y)< t$ it holds that}
	%\begin{align}
	\label{eq-HKBound}		\tag{{HKB($ \dv,\dw) $}}
	p_t(x,y) \asymp t^{- \frac{\dv}{\dw}} \exp		\Big\{		-\Big(	 \frac{d(x,y)^{\dw}}{\C[s]t}		\Big)^{1/(\dw-1)}	\Big\}.
\end{align}

We say that the parabolic Harnack inequality holds for the graph $\mathbb{G}^d$ if there exists a constant $ \Cl{PHI} >0$ such that for all $ x\in \mathbb{G}^d, $ $ R\ge 1 $ and non-negative $ h\colon \mathbb{G}^d\times \R \to \R  $ %solving the heat equation $ \frac{∂h}{∂t}= Δh $ 
caloric in $ B_{2R}(x)\times (0,4R^{\dw}) $ satisfies
\begin{equation}\label{eq-PHI}		\tag{PH(\dw)}
	\sup_{B_R(z)\times [R^{\dw},2R^{\dw}]	}	h(x,t)
	\le 	\Cr{PHI}
	\inf_{B_R(z)\times [3R^{\dw},4R^{\dw}]	}	h(x,t)	.
\end{equation}

Next, we introduce the walk dimension, and  for this purpose, for  any subset $B$ of the graph $\mathbb{G}^d$ we write $H_B:=\inf\{t>0\colon X_t\in B\}.$
We say that the graph has \emph{walk dimension} $\dw$, if
%It is well known that in $\Z^d$, the mean exit time $E_x[H_{B_r(x)^{\mathsf{c}}}] $ from a ball of radius $r>0$ is of order $ r^2 $ for any $d>0.$
\begin{equation}\label{def-dw}\tag{E$(\dw)$}
	%\C[s] r^{\dw} \le 
	E_x[H_{B_r(x)^{\mathsf{c}}}] \asymp r^{\dw}
	%\le \C r^{\dw}.
\end{equation}
for all $x\in \mathbb{G}^d$ and $r \ge 1.$
In the literature, the volume dimension \eqref{def-dv} and walk dimension \eqref{def-dw} are often referred to by different symbols: for example \cite{BarlowLEC98} uses $d_f$ and $d_w$ respectively, \cite{Barlow04} uses $α$ and $β$, \cite{Jones} $\frac{d_s d_w}{2}$ and $d_w$, and \cite{Delmotte02} uses $d_f$ for the volume dimension.%For graphs for which the existence of $\dv$ and $\dw$ is known many useful tools exist that allow the study of how random walks behave on them. We therefore want to show that \eqref{def-dw} holds for the \Sier{} graph $\mathbb{G}^d$.

It is proven that the gasket in dimension $d=2$ has walk dimension $\dw= \log_2(5)$ (see for example \cite{BarlowLEC98} or \cite{GrigorYangWD}).

Next, we note that Theorem 3.1 of \cite{Grigor02} establishes the validity of the following implications:
\begin{equation} \label{eq:propsEquiv}
	\eqref{def-dv} + \eqref{eq-HKBound}
	\Longleftrightarrow
	\eqref{eq-PHI}
	\Longrightarrow \eqref{def-dw}.
\end{equation}
Hence, 
having established the validity of \eqref{def-dv} and \eqref{eq-HKBound} on the \Sier{} gasket $\mathbb{G}^d$,
 the validity of \eqref{def-dw} and \eqref{eq-PHI} is obtained from \eqref{eq:propsEquiv} in this setting also.

In this context we also note that volume and walk dimensions are related: indeed, for any graph which satisfies \eqref{def-dv} and \eqref{def-dw}, we have
\begin{equation}\label{eq-dvdw}
	2\le  \dw \le \dv +1;
\end{equation}
see e.g.\ \cite[Theorem 1]{Barlow04} for a proof. 
We will also need the following folklore estimate on the confinement probability, which is a direct consequence of the estimates on the exit probability $ Ψ_n(x,R) $ in \cite[Proposition 7.1]{Grigor01} on a graph with arbitrary random walk dimension.%, while on $ \Z^d $ the ratio between time and space is simply given by 2, as the random walk dimension.

\begin{lemma}\label{lemma-confining}
	Let $(X_t)$ be a random walk on $(G,λ)$ 
 such that \eqref{def-dw} holds. %, and conductances satisfying \eqref{def-conductances}
	 Then there exists $ \Cl[s]{conf1} \in (0,\infty)$ such that for all $ \mathbf{Δ}, r>0 $ with $ \mathbf{Δ}>\Cr{conf1} r ,$ the event
	\begin{equation*}
		\Conf(\displ_r, \mathbf{Δ}):= \big\{X_t \in B_r(X_0) \text{ for all } t\in [0,\mathbf{Δ}]\big\}
	\end{equation*}
	satisfies 
	\begin{equation}\label{eq-confining}\tag{{Conf($\dw) $}}
		\inf_{x_0 \in \mathbb{G}^d}P_{x_0}\big(\Conf(\displ_r,\mathbf{Δ})\big)	\ge 1-\Cr{conf1} e^{-\Cr{conf1}^{-1}\big( \frac{r^{\dw} }{\mathbf{Δ}} \big)^{\frac{1}{\dw-1}}			}.
	\end{equation}
    If $\Conf(\displ_r, \mathbf{Δ})$ holds we say that the random walk $X$ is \emph{confined to $B_r = B_r(X_0)$ during $[0,\Delta]$}.
\end{lemma}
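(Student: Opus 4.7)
The plan is to derive the lemma as a direct corollary of Grigoryan's exit-probability estimate \cite[Proposition 7.1]{Grigor01}, by taking complements. Observe first that
\[
\Conf(\displ_r,\mathbf{Δ})^{\mathsf c} = \{ H_{B_r(X_0)^{\mathsf c}} \le \mathbf{Δ} \},
\]
so the lemma reduces to a sub-Gaussian upper bound on $P_{x_0}(H_{B_r(x_0)^{\mathsf c}} \le \mathbf{Δ})$, uniformly in the starting point $x_0$. The hypothesis \eqref{def-dw} is precisely the input required by \cite[Proposition 7.1]{Grigor01}, which delivers a bound of exactly the desired shape $C \exp\!\bigl(-C^{-1}(r^{\dw}/\mathbf{Δ})^{1/(\dw-1)}\bigr)$.

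Spelling out what happens inside Grigoryan's argument: one starts from the mean-exit-time estimate $E_{x_0}[H_{B_\rho(x_0)^{\mathsf c}}] \asymp \rho^{\dw}$ given by \eqref{def-dw}, applies Markov's inequality to obtain the weak bound $P_{x_0}(H_{B_\rho(x_0)^{\mathsf c}} \le t) \lesssim t/\rho^{\dw}$ valid on a short time window, and then iterates this weak bound via the strong Markov property over a chain of $N$ concentric sub-balls of radius $r/N$ travelled during $N$ successive time pieces of length $\mathbf{Δ}/N$. Optimising over $N$ produces the exponent $(r^{\dw}/\mathbf{Δ})^{1/(\dw-1)}$; since one needs each sub-ball to have integer radius at least one and each time piece to be at least of order one, a regime restriction of the form $\mathbf{Δ} > \Cr{conf1} r$ emerges naturally. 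Uniformity in $x_0$ is inherited from the corresponding uniform statement in \eqref{def-dw}, which rests on the translation structure of $\mathbb{G}^d$ and the uniform ellipticity \eqref{def-conductances}.

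The remaining step is to transfer the result from the discrete-time setting in which \cite{Grigor01} is stated to the continuous-time walk considered here. As in the passage from \eqref{eq-HKBoundDiscrete} to \eqref{eq-HKBound} based on \cite[Theorem 2.5.6]{LawlerLimic}, this uses only that the number of jumps up to time $\mathbf{Δ}$ is $\mathrm{Poi}(\mathbf{Δ})$-distributed, whose Gaussian-type deviations from its mean are absorbed into the prefactor $\Cr{conf1}$. The main subtlety, rather than a genuine obstacle, is the chaining-and-optimisation step that produces the sharp exponent $1/(\dw-1)$ and simultaneously pins down the regime $\mathbf{Δ} > \Cr{conf1} r$; everything else is routine bookkeeping.
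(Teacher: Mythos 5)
Your proposal is correct and takes essentially the same approach as the paper: the paper likewise dispatches the lemma in one line by observing that $\Conf(\displ_r,\mathbf{Δ})^{\mathsf c}=\{H_{B_r(X_0)^{\mathsf c}}\le\mathbf{Δ}\}$ and invoking the exit-probability estimate of \cite[Proposition 7.1]{Grigor01} under the hypothesis \eqref{def-dw}. Your added discussion of the Markov-plus-chaining mechanism inside Grigoryan's proof, the origin of the exponent $1/(\dw-1)$ and the regime $\mathbf{Δ}>\Cr{conf1}r$, and the discrete-to-continuous transfer is all consistent with what is implicitly relied upon, but the paper does not spell any of it out.
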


\subsection{Poisson particle system}\label{subsec-PRW}

We are now going to define a particle configuration $∏$ as a  function in $(\N_0)^{\mathbb{G}^d}$, where $∏(x)$ is to be interpreted as the number of particles at $x\in\mathbb{G}^d.$ We denote by $φ_x$ the coordinate evaluation of $∏$ defined by $φ_x(∏)= ∏(x)$ and denote by $\mathcal{F}$ the σ-algebra generated by the coordinate maps.

We define a particle system as a family of particle configurations $(\Pi_t)_{t \in \R}\in (Ω,\mathcal{F}'),$ with $Ω:=\{f\colon (-\infty,+∞)\to (\N_0)^{\mathbb{G}^d}\}$ and $\mathcal{F}':=\mathcal{F}^{\otimes \R}$ the product σ-algebra of $\mathcal{F}$ over $\R$ as follows: we define $(∏_t)$  under a probability measure $\P$ a Poisson point process of random walkers with intensity given by $ μ(x):= μ_0 λ_x$ for $ x\in \mathbb{G}^d $ and some $ μ_0>0. $
It is easy to verify that the particle system is stationary (in fact, even reversible) in the sense that at any time $ t\in \R$, the particles remain distributed according to a Poisson point process with intensity $ μ. $ This system is often referred to as \emph{Poisson random walks.}

We say that an event $E\in\mathcal{F}'$ is increasing for the particle system $(∏_t)_{t\in\R}$ if the fact that $E$ holds for $(∏_t)_{t\in\R}$ implies that $E$ holds for all particle systems $(∏'_t)_{t\in\R} $ with $∏'_s\ge ∏_s$ for all $s\ge 0$, where $∏'_s\ge ∏_s$ indicates that $∏'_s(x)\ge ∏_s(x)$ for all $x\in\mathbb{G}^d.$

We now define what it means for an event to be measurable with respect to a particle system. Although one could define this for an arbitrary particle system, we will consider events that are measurable with respect to the more restrictive Poisson random walks particle system from above. In particular, this means that we will consider events that are measurable with respect not only to the locations of the particles at different times, but also their movements over time.

\begin{deff}%\label{def-restricted}
	Let $A\subseteq \mathbb{G}^d,$  $t_0\in\R$ and $t_1>0$. For the particle system $(∏_t)_{t \in \R}$ as above we denote by $ P_{x,t_0}:=(p_{x,t_0,i})_{i=1}^{∏_{t_0}(x)}$ the family of particles (including their movements over time) that are located at $x$ at time $t_0$ and with $P_{x,t_0,i}(t)$ the position of particle $p_{x,t_0,i}$ at time $t$,
	we say that an event $E$ is \emph{restricted} to $A$  and a time interval $ [t_0, t_0+t_1] $ if it is measurable with respect to $σ\big\{P_{x,t_0,i}(t), i\in \{1,\dots,∏(x)\}, x\in A,t\in [t_0,t_0+t_1]\big\}$. 
\end{deff}

\begin{deff} Let $r>0$, and $t_0, t_1$ as in the previous definition, we say that a particle is \emph{confined} inside $ B_r$   during $ [t_0, t_0+t_1] $ if during the time interval $[t_0,t_0+t_1]$ it stays inside the ball $ B_r(x) $, where $ x $ is the location of the particle at time $ t_0. $
\end{deff}
Recall that the probability of being confined has been estimated in \eqref{eq-confining}.
We define now the probability associated to an event $ E $.
\begin{deff}\label{def-E}
	For $μ(x)=μ_0 λ_x,$  an increasing event $E$ restricted to $A\subseteq \mathbb{G}^d$ and $[0,t] $, we define, recalling the notation $B_r$ from Lemma \ref{lemma-confining},
 \begin{align*} 
 &ν_E(μ,A,B_r,t) :=\\
 &\P \big(E\given 
    \text{the particles in $A$ at time $0$ with initial density $μ$ are confined inside $B_r$ during } [0,t]
\big).
 \end{align*}
 
\end{deff}

\subsection{Main results}\label{subsec-mainres}

We now provide the final definitions necessary to state the main theorems.
For each $ (ι,τ)\in \mathbb{B}^d\times \Z $ we will  denote by $ E(ι,τ) $ an arbitrary increasing event restricted to the super-cell $ R_1^η(ι,τ) $. We will call the cell $ R_1(ι,τ) $ \emph{bad} if the event $ E(ι,τ) $ does not hold, and \emph{good} otherwise.
%		We define now a new indexation of the space-time cells in $\graph$, called the base-height system. We pick a spatial dimension $ j $ which will play the role of ``height''. Any vertex $u\in \mathbb{G}_{d} \times \Z$ is written in triangular coordinate as $(u^1,\dots, u^d, u^0)$, where $u^0\in \Z$ and $(u^1,\dots, u^d)\in\mathbb{G}^d,$ We can define the ``base'' ${L}_0:=\{u\in \graph \given u^j=0\}$, i.e.\ all points in $\graph$ with 0 as $j$-th coordinate. Then, any point can be indexed as $ (b,h) $ where
%		\begin{equation}\label{def-baseheight}
%			b:=(u^1,\dots, u^{j-1},u^{j+1},\dots, u^d,u_0) \qquad\text{and}\qquad h:=u^{j}
%		\end{equation}  and in particular $ (b,0)\in {L}_0, $ but note that for the geometry of $ \mathbb{G}^d $, $ h\le  d_{\graph}((b,h),L _0)$. 
We next introduce a \emph{base} of the space-time graph \graph. Recalling the definition of the gasket via $ \bigtriangleup^d_n $ in \eqref{def-gasketRecursive}, we consider the $ (d-1) $-dimensional subgraph $ \bigtriangleup^{d-1}_n $ containing the origin defined in the same way, and taking the union of $ \bigtriangleup^{d-1}_n $ across $ n\in\N$ we obtain the $ (d-1 )$-dimensional \Sier{} gasket $ \mathbb{G}^{d-1}, $ which by construction is a subgraph of $ \mathbb{G}^{d}. $ Intuitively, this corresponds to the Euclidean space identification of the square lattice $\Z^2$ with the subgraph $\Z^2\times\{0\}$ of $\Z^3$ and the origin of $\Z^2$ with the origin in $\Z^3$. Just like in the square lattice case, the choice of which subgraph of $\mathbb{G}^d$ to identify with $\mathbb{G}^{d-1}$ is not unique and can be chosen arbitrarily among the admissible ones.

We now define the \emph{base} of the space-time tessellation as
\begin{equation}\label{def-L0}
	L_0 := \mathbb{G}^{d-1}\times \Z
\end{equation}
seen as a subgraph of $\mathbb{G}^d\times\Z$ as explained above, and the \emph{base of cells}
\begin{equation}\label{def-L1}
	L_1 := \bigcup_{(ι,τ)\in L_0 \cap (\mathbb{B}^d\times \Z)} \{R_1(ι,τ)\}.
\end{equation}
%We will use the notation $u\in L_1$ to mean the \emph{cell} $u$ is a subset of $L_1$. In particular, we will write $\bigcup_{u\in L_1}$ for the union across \emph{cells} in $L_1$. We will similarly use this notation when considering cells that are a part of other collections of cells similar to $L_1$, such as \emph{hills} in Definition \ref{def-HillMountain}. 
We will often consider the distance
\begin{equation*}
	d(R_1(ι,τ), L_0):=\min_{x\in R_1(ι,τ),\ y\in L_0} d(x,y)
\end{equation*}
between a cell $ R_1(ι,τ) \subseteq\mathbb{G}^d\times\Z$ and the base $ L_0 $, which we will refer to as the \emph{height} of the cell; it may help to visualise the base $ L_0 $ to lie ``horizontally'' as a subgraph of $\mathbb{G}^d\times\Z$. We can now finally define our last central object, the \Lip. We recall the definition of adjacent cells from Definition \ref{def-adjacent}, and from now on we call any sequence $ \{R_1(ι_j,τ_j)\}_{j\in\N} $ inside $\mathbb{G}^d\times\Z$ of adjacent cells a \emph{path}.

\begin{deff}\label{def-LipschitzSurf}
	A \emph{\Lip{}} $ F $ is a set of cells in
	$ \mathbb{G}^d\times \Z $
	% with indices $(ι,τ)\in\mathbb{B}^d\times \Z$ 
	such that the following property is fulfilled: any path starting in any cell $v\in L_1$, and such that $ d(R_1(ι_j,τ_j), L_0)\to∞ $ as $j \to \infty$, intersects $ F $.
\end{deff}

Definition \ref{def-LipschitzSurf} is stable under taking unions, and in particular the entire graph $\graph$ seen as a union of all cells satisfies the definition. To prevent such undesired examples, we introduce the following condition.
\begin{deff}%\label{def-minimal}
	We say that a \Lip{} $F$ is \emph{minimal} if, for each $F'\subset F$ we have that $F'$ is not a \Lip{}.
\end{deff}

\begin{remark}\label{rmk-LipNoLip}
	The minimal \Lip{} we will end up constructing is the analogue for fractal graphs of the ``Lipschitz surface'' in the lattice settings of $ \Z^d $, see \cite{GrimmettLip1,Drewitz,Pete19}. There, a Lipschitz surface is $ * $-connected, or equivalently, for any point $ (b,0) $ in the base of $ \Z^d $ one finds the corresponding height $ h=F(b) $ of the Lipschitz surface, which satisfies a Lipschitz condition of type $ |F(b_2)-F(b_1)|\le 1  $ whenever $ \|b_2-b_1 \|_1\le 1.$
	For the geometry of the fractal, we cannot hope for such strong connectivity properties. Seeing the fractal graph as a subset of the triangular lattice, we could start by defining the height $h$ as one of the dimensions of the lattice and the base $b$ as the remaing $d-1$ dimensions spanning $L_0$ together with the time dimension; in this case however, not every cell $(b,h)$ (written in this base-height notation) in the triangular lattice would belong to the fractal graph $\mathbb{G}^d$, since it may lie in one of the ``holes'' of the fractal graph. In particular we cannot require for any $b_1,b_2 \in L_0$  such that $\|b_2-b_1\|_1 \le 1,$ that the corresponding $h_1$ and $h_2$ satisfy $|h_2-h_1|\le 1$, as either could be ill defined.  However, the key property which remains true is that an appropriately\footnote{See Proposition \ref{prop-RadHill} and Corollary \ref{corol-LipschitzInTime}.} constructed minimal Lipschitz cutset $ F $ separates the origin $ (0,0)\in\graph $ from infinity in the sense of Definition \ref{def-LipschitzSurf} in the fashion of a cutset and it retains some mild Lipschitz continuity properties, so we opted to use the name \emph{\Lip{}} instead.
\end{remark}

We can now state our first technical result, and for this purpose, recall the constant $\Cr{conf1}$ from Lemma \ref{lemma-confining}.
\begin{thm}\label{thm-main}
	Let $ \mathbb{G}^d $ be the $ d- $dimensional \Sier{} gasket with conductances satisfying \eqref{def-conductances}. %Let $ \ell,η$ $\in\N ,$ $ ε\in(0,1),$ and let $β\in \N$ so that $2^{\dw\ell}/β$ is fixed, %with the ratio $ \frac{β}{\ell^{\dw}} $ be fixed,
	Let $\ell\in \N$ and let $β\in\N$ be large enough. Furthermore, let $η\in\N$, $ ε\in(0,1)$ and $ζ \in (0,\infty)$ such that 
	\begin{equation*}
		ζ \ge	\frac{1}{\ell}		\Big(%\sqrt[\dw]{
            \big(   \Cr{conf1}\log \big( \tfrac{8\Cr{conf1}}{3ε} \big) \big)^{\dw-1}	ηβ	
            \Big)^\frac{1}{\dw}%}
            ,
	\end{equation*}
	and tessellate $ \graph $ into space-time cells as described above. Let $ E:=E(ι,τ) $ be an increasing event restricted to the super cell $R_1^η(ι,τ)$ whose associated probability $ν_E\big((1-ε)μ, S_1^η(ι,τ), \displ_{ζ\ell}, ηβ\big)$ (cf.\ Definition \ref{def-E}) has a uniform lower bound across all $(ι,τ)\in \mathbb{B}^d\times \Z$ denoted by
	\begin{equation*}
		ν_E\big((1-ε)μ, S_1^η, \displ_{ζ\ell}, ηβ\big).
	\end{equation*}
	Then there exists $ α_0 \in (0,\infty)$ such that if
	\begin{equation*}%\label{eq->α0}
		ψ_1 (ε,μ_0,\ell,η) :=	\min \Big\{	\frac{ε^2μ_0 2^{\dv \ell}	} {C_λ}				,		-\log \Big(	1- ν_E\big( (1-ε)λ, S_1^η, \displ_{ζ\ell}, ηβ	 \big)
		\Big)	\Big\}  \ge α_0,
	\end{equation*}
	there exists almost surely a minimal \Lip{} $ F $ with the property that  $ E(ι,τ) $ occurs for all $(ι,τ)$ such that $R_1 (ι,τ)\in F $.
\end{thm}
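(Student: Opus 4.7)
\textbf{Proof plan for Theorem \ref{thm-main}.} The strategy is a multi-scale renormalization argument following the philosophy of \cite{Pete19,Pete19b}, adapted to the fractal geometry introduced in Section \ref{SEC-MultiScaleSetup}. The quantity $\psi_1$ is designed precisely so that a single application of a union bound controls the base-case probability of a ``bad'' scale-1 cell, and the definition of $\psi_1$ through a minimum anticipates the two distinct ways a cell can fail.

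First, I would translate the hypothesis $\psi_1 \ge \alpha_0$ into a quantitative scale-1 estimate. A cell $R_1(\iota,\tau)$ is bad if $E(\iota,\tau)$ fails. To invoke the given lower bound on $\nu_E$, I decompose the failure event according to two auxiliary ``good environment'' conditions: (i) the number of particles in $S_1^{\eta}(\iota)$ at time $\tau\beta$ is at least $(1-\varepsilon)\mu(S_1^\eta(\iota))$, which by standard Poisson Chernoff estimates fails with probability at most $\exp(-c\,\varepsilon^2\mu_0 2^{d_v\ell}/C_\lambda)$; and (ii) each such particle is confined to a ball of radius $\zeta\ell$ during $[\tau\beta,(\tau+\eta)\beta]$, which by Lemma \ref{lemma-confining} and the hypothesis on $\zeta$ fails per particle with probability at most $\tfrac{3\varepsilon}{8}$, and a further Chernoff argument controls the total mass of non-confined particles. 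On the complement of the failure of (i)--(ii), the event $E$ has conditional probability at least $\nu_E((1-\varepsilon)\mu,S_1^\eta,B_{\zeta\ell},\eta\beta)$ by monotonicity, so the overall bad probability is bounded by $\exp(-\psi_1)$ up to an absolute constant. Here the ``$\min$'' in the definition of $\psi_1$ comes in because the two failure routes are dominated by the two respective terms.

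Next I would run the multi-scale induction from Section \ref{SEC-MultiScaleSetup}. Working with the $k$-scale super-cells built by recursively grouping scale-$(k-1)$ cells along the gasket structure, I define a scale-$k$ cell to be bad whenever one can find a long enough ``cascade'' of bad scale-$(k-1)$ cells inside it, in the geometric sense required for a bad region to potentially prevent the existence of a cutset across that super-cell. Writing $p_k$ for the probability of being bad at scale $k$, I would show $p_k \le (C \cdot \#\text{cascades at scale }k) \cdot p_{k-1}^{a}$ for some combinatorial exponent $a>1$ encoding the minimum number of bad scale-$(k-1)$ cells needed. Because the scales grow geometrically, this recursion yields $p_k \le \exp(-\psi_1 b^k)$ with $b>1$, once $\psi_1 \ge \alpha_0$ is large enough to absorb the combinatorial factors. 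The main technical input here is the decoupling Theorem \ref{thm-mixing}, which lets me treat the contributions of distant scale-$(k-1)$ cells as approximately independent by resampling the particle processes at a small cost; this replaces the independence one would have in static Bernoulli percolation and is the substitute on fractals for the sprinkling arguments used on $\mathbb{Z}^d$.

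Finally, I would assemble the cutset from the resulting supply of good cells. From the Borel--Cantelli lemma applied to the super-exponential decay of $p_k$, almost surely only finitely many scales contain bad cascades hitting a given bounded region of $L_0$; by compactness and the geometry of the gasket this means every path of scale-1 cells that starts in $L_1$ and escapes to infinity (in the height sense of Definition \ref{def-LipschitzSurf}) is eventually trapped in a good scale-$k$ super-cell for some random $k$, and must cross a good scale-1 sub-cell of it. The union of all such crossed good cells gives a \Lip{} $F$; passing to a minimal subset (which exists by a standard Zorn-type reduction, since paths are countable) yields the desired minimal $F$. The main obstacle, compared with the Euclidean case, is precisely this last step: in the fractal setting one cannot parametrize $F$ as a Lipschitz graph over $L_0$, and the recursive geometry of the gasket must be exploited when defining which configurations of bad sub-cells actually obstruct the construction of a cutset in a super-cell. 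Getting the combinatorial exponent $a>1$ right at this stage, uniformly in $k$, is what drives the choice of $\alpha_0$.
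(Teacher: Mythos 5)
Your proposal correctly identifies the three ingredients of the multi-scale philosophy: a scale-1 estimate that $\psi_1\ge\alpha_0$ encodes, the decoupling Theorem \ref{thm-mixing} as the substitute for independence, and a renormalization that makes bad cells less likely at larger scales. However, there are genuine gaps in how these pieces are supposed to fit together, and the plan would not go through as written.

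First, the recursion $p_k \le (C\cdot\#\text{cascades})\cdot p_{k-1}^{a}$ is not available here in the form you propose. The particle system exhibits long-range, scale-crossing dependencies (knowing a cell is bad leaks information into neighbouring cells at all later times), so one cannot iterate a single ``bad probability'' through scales. What the paper actually does (Lemma \ref{lemma-Mixing}) is define for each scale $k$ a different pair of events $D^{\mathrm{base}}_k$ and $D^{\mathrm{ext}}_k$ measuring whether the particle density inherited from scale $k+1$ is healthy at the start of the $k$-level time interval and whether it remains so after confinement, and then bounds $\P(A_k(\iota,\tau)=0\mid\mathcal{F}_k(\iota,\tau))\le e^{-C_\psi\psi_k}$ directly for each $k$, conditioning on a carefully designed $\sigma$-algebra of distant or past information. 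The decay in $k$ comes from $\psi_k$ growing with cell size, not from bootstrapping $p_{k-1}$. Your sketch treats the decoupling theorem as a black box that ``lets me treat the contributions of distant scale-$(k-1)$ cells as approximately independent'', but it is precisely the architecture of $D^{\mathrm{ext}}_k$, $D^{\mathrm{base}}_k$, $A_k$ and the conditioning $\sigma$-algebras (a)--(b) before Lemma \ref{lemma-Mixing} that makes this usable; without it, the recursion is only a heuristic.

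Second, the assembly of the cutset is not completed by the step you describe. Definition \ref{def-LipschitzSurf} requires that \emph{every} height-escaping path of cells starting in $L_1$ hits $F$, and the paper achieves this by an explicit deterministic construction (Proposition \ref{prop-RadHill}): hills and mountains are built from d-paths of bad cells, and $F$ is the external boundary of the union of mountains together with the part of $L_1$ lying outside all mountains. This construction is what converts ``bad clusters have summable tail'' into ``$F$ exists and consists of good cells''. Your claim that any escaping path is ``eventually trapped in a good scale-$k$ super-cell'' and hence ``must cross a good scale-1 sub-cell of it'' is not justified; a good super-cell can easily contain bad scale-1 sub-cells at all heights (only the multi-scale-bad probability is controlled, and this is a different notion from scale-1 badness as remarked after \eqref{def-multiscalebad}). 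Moreover, the quantitative requirement is not just a.s.\ finiteness from Borel--Cantelli, but the weighted summability $\sum_r r^{\dv+1}\P(\mathrm{rad}_u(H_u)>r)<\infty$ in \eqref{eq:cutsetAssumption}, which requires the counting of ScD-paths including the diagonal steps (Lemma \ref{lemma-Combinatorial}) and the lower bound $\sum\psi_{k_j}\gtrsim t^{c_s}$ (Lemma \ref{lemma-φandt}), neither of which your plan sets up. Finally, minimality is not a ``standard Zorn-type reduction'': one must verify that the intersection of all sub-cutsets is itself a cutset, which in the paper is the content of Corollary \ref{corol-LipschitzInTime} and uses the finiteness of $\pi_F$ obtained from translation invariance and \eqref{eq:cutsetAssumption}.
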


\begin{remark}\label{rem-order}
    Before proceeding, we quickly outline how the conditions of Theorem \ref{thm-main} (and also the following Theorem \ref{thm-main2}) can be established. One usually fixes the ratio $2^\ell/\beta$  to be an arbitrary, but large constant. Once $2^\ell/\beta$ is fixed, we fix $\varepsilon\in(0,1)$ according to how much of the initial Poisson point process intensity we require in order to make the event $E$ still serve our goal. More precisely, we want that $E$ still lets us draw desired conclusions under the intensity $(1-\varepsilon)\mu$ which is slightly smaller than the actual intensity of the particles. This slack is needed to restrict our attention to the particles that are ``behaving well''. When dealing with events that become increasingly probable at larger scales, $\varepsilon$ can usually be chosen arbitrarily, but it is good to think of it as small. Next, the value $\eta$ determines the size of the super-tiles, which controls how much overlap we need and allow between the cells of the tessellation (this is usually done in order to enable information to propagate from cell to cell).
    The lower bound on $\zeta$ is to guarantee that, as particles move in $S_1^η$ for time $\beta$, with high probability they do not travel away from their starting position further than $\zeta\ell$, which gives us better control of dependencies between neighbouring cells of the tessellation.
    
    With $2^\ell/\beta$, $\varepsilon$ and $\zeta$ fixed, there exists a constant $\alpha_0>0$ for which the statement of the theorem holds. This constant is defined purely implicitly and ensures various expressions in the proof remain sufficiently small throughout the calculations.
    We now want to satisfy the condition that $\psi_1(\varepsilon,\mu_0,\ell,\eta)\geq\alpha_0$. This first means that $\frac{ε^2μ_0 2^{\dv \ell}	} {C_λ}\geq\alpha_0$. This can be satisfied by either making $\ell$ large enough (thus making the tessellation very coarse) or by assuming that the intensity of the particles is large enough by making $\mu_0$ large. Next, we need to satisfy $ν_E\big( (1-ε)λ, S_1^η, \displ_{ζ\ell}, ηβ	 \big)\geq 1-\exp\{-\alpha_0\}$. Usually, the event $E$ is a local event that becomes more likely as $\ell$ is made larger, so setting $\ell$ large satisfies this condition as well.
\end{remark}

\vspace{12 pt}

%	Theorem \ref{thm-main} gives the existence of the \Lip{}, so no path can ``escape vertically'', meaning that it cannot increase its distance from $ L_0 $ arbitrary. However, nothing prevents path from ``escaping horizontally'', i.e.\ increasing the distance from $ (0,0) $ and remaining confined between the \Lip{} $ F $ and the base $ L_0 . $ 	The next theorem shows that this scenario happens with small probability, in the sense that the \Lip{} $ F $ will eventually reach height 0 and intersect the base $ L_0. $
We can prove a further property of the \Lip{}, which gives us control on the distance of $F$ from any cell $R_1(\iota,\tau) \in L_1 $, without loss of generality  and in particular from $R_1(0,0)$, the cell containing the origin: for a fixed radius $r$ we investigate if the \Lip{} $F$ at distance $r$ surrounds the origin. More precisely, for a \Lip{} $F$ and $r>0$, we say that the event $ S(F,r) $ holds if any path $\{v_j\}_{j=1}^{n}$ of adjacent cells from $R_1(0,0)$ with $d\big(v_n,R_1(0,0)\big)> r$ intersects $ F $.
Note that this event is considerably more restrictive  than the one in Definition \ref{def-LipschitzSurf}: if $S(F,r)$ holds, it implies in particular that the \Lip{} does not only have finite distance from $L_0$, but essentially ``surrounds'' the cell $R_1(0,0)$ and prevents paths from obtaining arbitrary lengths while keeping their distance to $L_0$ small.
%$S(F,r):=\big\{\text{for any } \overrightarrow{v}\text{ intersects } F\cap B_r(0)\big\}$.

\begin{thm}\label{thm-main2}
	Under the conditions of Theorem \ref{thm-main}, let $F$ be the \Lip{} from Theorem \ref{thm-main} on which, in particular, the event $E$ holds. Then for each
  $c_s \in (0,\tfrac{\dv}{\dv+1}-\frac{1}{2})$  there exists $ \Cl[s]{thm2a}>0 $ such that for $ r_0 $ large enough we have
	\begin{equation*}
		\mathbb{P} \big( S(F,r_0)^{\mathsf{c}} \big)		\le \sum_{r\ge r_0}  r^{\dv+1} \exp\big\{		-\Cr{thm2a}  r^{c_s} \big\}.
	\end{equation*}
\end{thm}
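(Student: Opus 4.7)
The plan is to extend the multi-scale framework from the proof of Theorem \ref{thm-main} so as to control not only the existence of $F$, but also the diameter of the ``non-$F$ cluster'' containing $R_1(0,0)$. Let $\mathcal{C}$ be the set of space-time cells reachable from $R_1(0,0)$ by a path of adjacent cells none of which lie in $F$; then $S(F,r_0)^{\mathsf{c}}=\{\diam(\mathcal{C})> r_0\}$. My starting point would be the inclusion
\begin{equation*}
	S(F,r_0)^{\mathsf{c}} \subseteq \bigcup_{r \ge r_0} \big\{\exists\, v\in\mathcal{C} \text{ with } d\big(v,R_1(0,0)\big)\in[r,r+1) \big\}.
\end{equation*}
A union bound over $r$ and over the possible ``escape targets'' $v$ gives the $r^{\dv+1}$ prefactor appearing in the statement, since by \eqref{def-dv} the number of space-time cells within spatial distance $\sim r$ and time displacement $\le r$ of $R_1(0,0)$ is at most of order $r^{\dv+1}$.

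The core estimate to prove is then that, for each fixed $v$ at distance $r$ from $R_1(0,0)$, the probability $v\in\mathcal{C}$ decays like $\exp\{-\Cr{thm2a} r^{c_s}\}$. For this I would exploit that, by minimality of $F$ and the multi-scale construction underlying Theorem \ref{thm-main}, a cluster $\mathcal{C}$ of diameter $\ge r$ forces, at each scale $k$ of the hierarchy with scale-size $L_k\lesssim r$, a bad scale-$k$ cluster of diameter comparable to $r$. Choosing a critical scale $k^\star=k^\star(r)$, one bounds the probability of such a configuration by the product of a polynomial counting factor (counting positions and shapes of scale-$k^\star$ bad clusters in a ball of radius $r$ around the origin) and the multi-scale single-cell bad probability at scale $k^\star$; the latter decays super-polynomially in $L_{k^\star}$ thanks to the hierarchical amplification of the hypothesis $\psi_1\ge\alpha_0$ established in Theorem \ref{thm-main}. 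Optimising $k^\star$ as a function of $r$ yields the stretched exponential $\exp\{-\Cr{thm2a} r^{c_s}\}$; the specific constraint $c_s<\tfrac{\dv}{\dv+1}-\tfrac12$ arises from the trade-off between this counting factor, which grows as a power of $r$ with exponent tied to the space-time volume dimension $\dv+1$, and the per-cell decay coming from the scale-$k^\star$ Lipschitz-cutset construction, with the $-\tfrac12$ encoding slack losses inherited from the intensity correction $(1-\varepsilon)\mu$, the confinement estimate \eqref{eq-confining}, and the decoupling theorem. Summing the per-$r$ bounds over $r\ge r_0$ then produces the claimed tail.

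The main obstacle I anticipate is precisely the middle implication, namely showing that a large non-$F$ cluster at scale $1$ necessarily generates bad configurations at each intermediate scale of the hierarchy: this requires a careful revisiting of the construction and minimality of $F$ from Section \ref{SEC-LipSurf} and a tight propagation of hierarchical probability estimates so as not to degrade the exponent $c_s$. The dependencies between bad events in distinct but nearby cells at a common scale must additionally be controlled using the decoupling Theorem \ref{thm-mixing}, and it is precisely at this technical point where the restriction $c_s<\tfrac{\dv}{\dv+1}-\tfrac12$ crystallises.
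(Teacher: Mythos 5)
Your high-level strategy — union bound over target cells at distance $r$, combined with a stretched-exponential decay coming from the multi-scale hierarchy — is in the same spirit as the paper, and the $r^{\dv+1}$ prefactor does come from counting cells in a space-time ball. However, there is a genuine gap at the heart of your core estimate.

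You claim that ``a cluster $\mathcal{C}$ of diameter $\ge r$ forces, at each scale $k$ of the hierarchy with scale-size $L_k\lesssim r$, a bad scale-$k$ cluster of diameter comparable to $r$,'' and then propose to optimise over a single critical scale $k^\star(r)$. Neither of these steps is what actually works, and the first is not obviously true as stated. The cells in $L_1\setminus F$ are \emph{not} themselves bad cells; $F$ is built from external boundaries of mountains, so cells off $F$ and inside $L_1$ are merely cells lying in some mountain, which may well be good. The decisive ingredient the paper uses, and which your proposal lacks, is Lemma~\ref{lemma-HillsPath}: a path $\{u_j\}\subseteq L_1\setminus F$ produces a chain of hills containing the $u_j$, and stitching the defining $\mathrm{d}$-paths of those hills together yields a $\text{D}\rotatebox[origin=c]{180}{\text{D}}$-path (the symmetric variant where diagonal connections run both ways) of \emph{bad} cells connecting $R_1(0,0)$ to $u_n$. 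Without some such conversion you have no bad event to estimate; ``not in $F$'' is simply not a small-probability event.

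Once one has a $\text{D}\rotatebox[origin=c]{180}{\text{D}}$-path of bad cells, the paper does not optimise over a single critical scale. Instead it repeats the entire $\Omega_1\to\Omega^{\mathrm{sup}}_\kappa$ machinery from Lemma~\ref{lemma-Ω1Ωsup}, Corollary~\ref{corol-MixingPaths}, Lemmas~\ref{lemma-Combinatorial} and~\ref{lemma-φandt} for the symmetric path class, summing over all multi-scale paths with arbitrary scale profiles $(k_1,\dots,k_z)$; the cancellation $e^{-C_\psi\sum\psi_{k_j}}\cdot e^{\frac{C_\psi}{2}\sum\psi_{k_j}}$ and the lower bound $\sum\psi_{k_j}\gtrsim t^{c_s}$ of Lemma~\ref{lemma-φandtDD} are what produce the stretched exponential, and the exponent $\tfrac{\dv}{\dv+1}$ comes from comparing $\psi_k$ to the diameter $\operatorname{diam}_k$ of a scale-$k$ extended support. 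A single-scale optimisation would not correctly account for paths mixing scales, nor for the dependence structure across scales. Your attribution of the $-\tfrac12$ to slack from the intensity correction, confinement and decoupling is also not what drives it in the paper; that loss enters via the $\kappa=\mathcal O(\sqrt{\log t})$ cut-off and the accompanying sub-polynomial factors $\log^2(t)\,2^{\dv a\sqrt{\log t}}$ in Lemma~\ref{lemma-φandt}. To repair the proposal, you would need to (i) isolate and prove the hills-to-$\text{D}\rotatebox[origin=c]{180}{\text{D}}$-path reduction, and (ii) replace the critical-scale optimisation with the full sum over scale profiles, reproving the combinatorial and weight lemmas for the symmetric path class.
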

The theorem  in particular entails that the \Lip{} surrounds $R_1(0,0)$ at an almost surely finite distance. 
%Equivalently, any path of cells starting from $R_1(0,0)$ that contains a cell $u$ with $d(R_1(0,0),u)$ larger than some almost surely finite value intersects the \Lip{}, even if all cells of the path have their distance to $L_0$ small.

\paragraph{Strategy of the proofs.}
The existence of the \Lip{} of good cells constructed in Theorem \ref{thm-main} is essentially equivalent to  all paths starting in $L_1$ of bad cells having only finite lengths. However, simply estimating the number and the probability of bad paths does not work;
even in the simplest case where $\eta=0$ (i.e.\ the super-cells would be just the cells themselves and therefore non-intersecting), two events $E(ι,τ)$ and $E(ι',τ')$ can be heavily correlated whenever $τ\neq τ'$ and especially if the two tiles corresponding to $ι'$ and $ι$   are close to each other. As an example, knowing that there were no particles present in the tile $ι$ during the time interval $τ$ increases the probability that all spatially close tiles will have fewer than expected particles for some time to come.
On the other hand, as long as the occurrence of $E(ι,τ)$ depends principally on the particle system behaving ``typically'', it  becomes more probable that the event will occur if the cells are all made bigger. Just blowing everything up is not enough however, since this would not resolve the correlation and combinatorial issues, so we adopt a multi-scale argument. For each scale we estimate the probability of a cell of that scale to be ``multi-scale bad'', knowing that at a larger scale the particles were behaving typically up until shortly before; this property is defined precisely in \eqref{def-multiscalebad}. For a given time horizon we choose a maximal scale $κ$, the largest scale that we will consider, and show that the probability to be ``multi-scale good'' is exponentially close to 1 at this large scale $κ$ and consequentially, as long as there are only sub-exponentially many cells of scale $κ$ within the space-time region we consider, we have that at this largest scale, all cells are ``multi-scale good'' with arbitrarily large probability.
By partitioning space-time into cells of ever smaller scale until reaching scale one, this gives rise to a dependent space-time fractal percolation problem on which we want to count the number of paths of bad cells. Using the fractal percolation nature of the setup and the alluded property that large cells are much less likely to be bad than even all of their ``descendant'' cells being bad at once, we consider paths of bad cells across multiple scales. This makes the combinatorial arguments more involved, but provides much better bounds on the probabilities of individual paths existing.
We also consider only the most important (i.e. largest in their part of the path) cells of a path and use a decoupling result to decouple the remaining space-time cells of a path. Then, using a clever union bound for the probability of finding a path of cells of various scales yields the result.

\section{Constructing the \Lip{}}\label{SEC-LipSurf}

Recall the definitions of adjacent cells from Definition \ref{def-adjacent}, of $ L_0 $ and $L_1$ from \eqref{def-L0}, \eqref{def-L1}, and of bad cells at the very start of Subsection \ref{subsec-mainres}, where we considered a cell $R_1(ι,τ)$ \emph{bad} if a certain increasing event $E(ι,τ)$ does not occur. To construct the \Lip{} we will make use of the concept of d-paths of cells, hills and mountains which we now define.
\begin{deff}[d-path]\label{def-dpath}
	A \emph{d-path} in $ \graph$ is a (possibly finite) sequence $\{u_k\}_{k\in\N}$ of adjacent cells starting with a bad cell $u_0\in L_1$ such that for each $k \in \N$ one of the following holds:
	\begin{itemize}
		\item $u_{k+1}$ is bad and $d(L_0,u_{k+1})\ge d(L_0,u_{k})$  \quad(\emph{increasing move});
		\item  $d(L_0,u_{k+1})< d(L_0,u_{k})$ \quad(\emph{diagonal move}).
	\end{itemize}
\end{deff}

A d-path is defined in a way that it can increase or maintain the distance to the base $L_0$ only by moving to a bad cell in the next step, and otherwise can go ``down'' towards $L_0$ with the so-called \emph{diagonal move}, independently of the state of the cell it is moving to.

\begin{remark}
	We kept the name \emph{diagonal move} as in the lattice setting of \cite{Pete19} for consistency and in order to distinguish a connection in the path that can only go toward $L_0$ regardless of the state of the cell. Furthermore, in the carpet setting it will revert to a $\ast-$neighbours connection (cf.\ Definition \ref{def-dpathCarpet}), thus rendering the term \emph{diagonal} more meaningful.
\end{remark}
To describe the set of cells which can be reached via d-paths we introduce hills and mountains.

\begin{deff}[Hill and Mountain]\label{def-HillMountain}
	For any two cells $u,v\subseteq\graph$, we write $ u\to v $ if $u$ is a \emph{bad} cell and there is a d-path from $ u $ to $ v. $ For a cell $ u \in L_1$ define the hill $\Hill_u$ and mountain $\Moun_u$ around $ u \in L_1$ as
	\begin{align*}
		%\Hill_u:= \{v\subseteq\graph \given u\to v\}
		\Hill_u:= \bigcup_{v\colon %\subseteq \graph,
			u\to v} \{v\}
		\quad \text{ and } \quad
		\Moun_u:=\bigcup_{v\in L_1\colon u\in \Hill_v}\Hill_v,
	\end{align*}
	with the convention that if $u$ is good, then the  hill $H_u$ is defined to be the empty set.
\end{deff}
For a set of cells $S$, i.e.\ of the form $S=\bigcup_{i\in I} \{R_1(ι_i,τ_i)\}$  for some index set $I$, define for $ u\in S $ the sets
\begin{equation*}%\label{def-rad}
	\rad_u(S):= \sup\{d(u,v)\colon v\in S\}
\end{equation*}
and
\[ ∂_{{\rm ext}}S:=\bigcup_{\substack{
			u\in S^{\mathsf{c}}     \, : \,
			\exists v\in S           \\
			v \text{ adjacent to } u}}
	\{u\},\]
where $S\compl$ is the set of all cells not belonging to $S$.
We then obtain the following result.
\begin{prop}\label{prop-RadHill}
	If for all $ u\in L_1$ we have
	\begin{equation} \label{eq:cutsetAssumption}
		\sum_{r\ge 1} r^{\dv+1} \P(\rad_u(\Hill_u) > r) <∞,
	\end{equation}
	then the set
	$$F:=∂_{{\rm ext}} \Big( \bigcup_{u\in L_1} \Moun_u \Big)
		\quad \cup \quad
		L_1\setminus (\cup_{u\in L_1} \Moun_u)$$
	%$$F:=∂_{{\rm ext}} \Big( \bigcup_{u\in L_0}\Moun_u\Big) $$
	is $\P$-\as{} within a finite distance from $L_0$ (which is in fact equivalent to $F$ being non-empty), is a \Lip{} and all cells $u\subseteq F$ are good.
\end{prop}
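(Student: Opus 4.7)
The plan is to establish the three claimed properties of $F$ in turn: (i) every cell of $F$ is good, (ii) $F$ is non-empty (equivalently, within finite distance of $L_0$), and (iii) $F$ acts as a cutset in the sense of Definition \ref{def-LipschitzSurf}. A useful preliminary observation is that $M := \bigcup_{u \in L_1} \Moun_u$ coincides with $\bigcup_{v \in L_1} \Hill_v$: the inclusion ``$\supseteq$'' follows from $v \in \Hill_v$ whenever $v$ is bad (the length-one d-path), while ``$\subseteq$'' is immediate from the definition of $\Moun_u$.

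For (i), I reason purely combinatorially. If $w \in L_1 \setminus M$ were bad, the trivial d-path would give $w \in \Hill_w \subseteq M$, a contradiction; hence $w$ is good. If $w \in \partial_{\mathrm{ext}} M$, I pick an adjacent $v \in M$ and a bad $u' \in L_1$ with $v \in \Hill_{u'}$. Were $w$ bad, the d-path $u' \to \dots \to v$ extends by $v \to w$: if $d(L_0, w) \ge d(L_0, v)$ this is a valid increasing move (since $w$ is bad), and otherwise it is a valid diagonal move (no constraint on $w$). Either way $w \in \Hill_{u'} \subseteq M$, contradicting $w \notin M$.

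The heart of the argument is the quantitative control in (ii). For any cell $w \in \graph$, a necessary condition for $w \in \Hill_v$ is $\rad_v(\Hill_v) \ge d(v, w)$; grouping base cells $v$ by their distance to $w$ and using the volume bound $|B_r(w) \cap L_1| \le C r^{\dv+1}$ in $\graph$ gives
\begin{equation*}
\P(w \in M) \;\le\; \sum_{v \in L_1} \P\bigl(\rad_v(\Hill_v) \ge d(v, w)\bigr) \;\le\; C \sum_{r \ge d(w, L_0)} r^{\dv+1} \, \P\bigl(\rad_v(\Hill_v) \ge r\bigr),
\end{equation*}
which tends to $0$ as $d(w, L_0) \to \infty$ by \eqref{eq:cutsetAssumption}. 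A Borel--Cantelli argument over nested annuli then yields that a.s.\ $M$ is locally finite and has bounded vertical extent above each bounded horizontal region; in particular, $F$ is non-empty and within finite distance of $L_0$, with the reverse implication being trivial.

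For (iii), consider a path $\{v_j\}$ with $v_0 \in L_1$ and $d(L_0, v_j) \to \infty$. If $v_0 \notin M$ then $v_0 \in L_1 \setminus M \subseteq F$. Otherwise, since the path moves with bounded horizontal displacement per step and $M$ is vertically bounded above each bounded horizontal region by (ii), the path must exit $M$ at some first index $j^\star$, and $v_{j^\star}$ is then adjacent to $v_{j^\star - 1} \in M$ but outside $M$, so $v_{j^\star} \in \partial_{\mathrm{ext}} M \subseteq F$. The main obstacle I expect is the quantitative step in (ii): verifying the volume bound $|B_r(w) \cap L_1| \le C r^{\dv+1}$, promoting the pointwise decay $\P(w \in M) \to 0$ into a.s.\ local finiteness of $M$ via Borel--Cantelli, and ensuring uniformity of the hill-radius tail in $v \in L_1$ (which should follow from time stationarity of the Poisson particle system together with the local self-similarity of the gasket). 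Steps (i) and (iii) are then direct combinatorial consequences of the d-path/hill/mountain definitions.
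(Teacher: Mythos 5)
Your part (i) matches the paper's argument exactly — the same extension-by-an-increasing/diagonal-move contradiction. The difference lies in (ii) and (iii), and I believe your route has real gaps there.

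\textbf{The quantity you control is the wrong one.} The paper bounds $\P(\rad_u(\Moun_u) > r)$ directly, by splitting the hills contributing to $\Moun_u$ into those with $d(u,v)\le r/2$ (few of them, each forced to have radius $>r/2$) and those with $d(u,v)> r/2$ (each forced to have $\rad_v(\Hill_v)>d(u,v)$). This gives $\P(\rad_u(\Moun_u)>r)\to 0$, hence $\rad_u(\Moun_u)<\infty$ $\P$-a.s.\ for each fixed $u$, with no Borel--Cantelli needed. You instead estimate $\P(w\in M)$ for a single space-time cell $w$. These are related but not the same: knowing that a fixed cell far from $L_0$ is unlikely to be in $M$ does not directly control the size of any particular $\Moun_u$, which is what the construction of $F$ actually requires.

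\textbf{The Borel--Cantelli step is not justified by \eqref{eq:cutsetAssumption} as you have written it.} To get ``a.s.\ $M$ has bounded vertical extent above a bounded region $A$'' you must sum your bound $\P(w\in M)\leq C\sum_{r\ge d(w,L_0)} r^{\dv+1}\P(\rad_v(\Hill_v)\ge r)$ over all $w$ above $A$. Exchanging sums, this produces $\sum_r r^{\dv+1}\P(\rad_v(\Hill_v)\ge r)\cdot\#\{w: d(w,L_0)\le r\}$, which costs you at least one extra factor of $r$ and requires $\sum_r r^{\dv+2}\P(\rad_v(\Hill_v)\ge r)<\infty$ — strictly stronger than the hypothesis. (The slack you could exploit — that $L_1$ is lower-dimensional than $\graph$, so the annulus count is really $\sim r^{\dv'}$ with $\dv'<\dv$ — is not in your argument, and even then the fractal geometry of ``above $A$'' needs care.)

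\textbf{The cutset step (iii) does not follow from ``bounded vertical extent above each bounded horizontal region.''} The path is infinite and can move arbitrarily far horizontally over its lifetime, at which point it enters regions where your bound $R(K)$ on the height of $M$ above a $K$-ball may itself have grown without bound. Nothing you have proved rules out a path $\{v_j\}$ that stays inside $M$ forever by outrunning the ``bounded region'' while $d(L_0,v_j)\to\infty$ slowly. What the paper actually uses is the finiteness of $\rad_u(\Moun_u)$ for each $u$ together with the structural observation (stated in the proof, relying on the diagonal moves and translation invariance) that one mountain of infinite radius would force all mountains to have infinite radius; this collapses the a.s.\ statement across all $u$ to a single pointwise one. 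Without either that observation or a genuinely uniform height bound, the ``first exit from $M$'' in your (iii) is not guaranteed to exist.

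In short: (i) is correct and identical to the paper; (ii)--(iii) take a detour through $\P(w\in M)$ that needs a stronger hypothesis for the summability and still does not by itself yield the cutset property. The more robust move is the paper's: control $\rad_u(\Moun_u)$ directly, since that is the object the construction of $F$ (and the ``first cell outside $\cup_u\Moun_u$'' argument) actually depends on.
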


\begin{proof}

	%All three parts of the statement are trivial for 
	If $L_1\setminus (\cup_{u\in L_1} \Moun_u) \ne \emptyset,$ then it is trivially within finite distance from $L_0$
	and  the cells contained in it %$L_1\setminus (\cup_{u\in L_1} \Moun_u)$ 
	are good since they would otherwise be contained in some hills and therefore not in $L_1\setminus (\cup_{u\in L_1} \Moun_u)$.

	Next, we prove that cells in $ ∂_{{\rm ext}} ( \bigcup_{u\in L_1} \Moun_u ) $ are good. Suppose by contradiction that for some $u\in L_1$, a cell $ v\in ∂_{{\rm ext}}\Moun_u $ is bad. By definition of $∂_{{\rm ext}}\Moun_u $ there exists a cell $ v' \in \Moun_u$ adjacent to $ v, $ and $v'$ can be reached by some d-path since it lies in the mountain $\Moun_u$. If $d(L_0,v)\ge d(L_0,v')$, since $ v $ is bad, the d-path reaching $ v' $ can be extended to $ v $ with an \emph{increasing move}. Otherwise, if $d(L_0,v)<d(L_0,v')$, $ v $ can be reached by a \emph{diagonal move} from $v'$ (independently of the state of $v$), and in both cases therefore $ v\notin ∂_{{\rm ext}}\Moun_u. $

	To prove that $∂_{{\rm ext}} ( \bigcup_{u\in L_1} \Moun_u )$ is within a finite distance from $L_0$,  it is sufficient to show that for any cell $ u\in L_1 $ we have $ \rad_u(M_u)<∞,$ since, by construction of mountains with the diagonal moves, if the radius of a mountain was infinite, then it would be infinite for all mountains. We can therefore upper bound

	\begin{align*}
		\P(\rad_u(\Moun_u)>r) \le & \sum_{v\in L_1}\P\big(u\in \Hill_v,\ \rad_v (\Hill_v)> r-d(u,v)\big)                                               \\
		%			=                           & \sum_{v\in L_0}		P(v\in \Hill_0,\ \rad_0 (\Hill_0)> r-d(0,v))                                                                                                        \\
		=                        & \sum_{\substack{v\in L_1 \, :                                                                                     \\d(u,v)\le r/2}}   \P\big(u\in \Hill_v,\ \rad_v (\Hill_v)> r-d(u,v)\big) +
		\sum_{\substack{v\in L_1 \, :                                                                                                                \\ d(u,v)\ge r/2}}\P(u\in \Hill_v).
  \end{align*}
%Writing $\mathbf{B}_r(x)$ for the ball of radius $r$ and center $x$ inside $L_0\subseteq \mathbb{G}^{d-1}\times \Z$, 
We can upper bound the previous by
    \[
		                         \vol ({B}_{r/2}(u)\cap L_0) \P(\rad_v (\Hill_v)> r/2) +\sum_{s\ge r/2}\vol(∂({B}_s(u)\cap L_0))\P(\rad_v (\Hill_v)> s).
	\]
	Since by \eqref{def-dv} the volume of a ball in $\graph$ can be upper bounded by $\CVol r^{d+1},$ by the assumption in the proposition both summands tend to $ 0 $ as $ r $ increases.

	It remains to show that $F$ is a \Lip{}, i.e.\ it intersects any path $ \{u_j\}_{j\in\N} $ of cells starting from $L_1$ with $ d(u_j,L_0)\to ∞ $. Note that $L_1\setminus (\cup_{u\in L_1} \Moun_u)$ and a fortiori $F$ intersects any path that starts in a cell contained in $L_1\setminus (\cup_{u\in L_1} \Moun_u),$ so it remains to argue the case of paths that start in $L_1\cap (\cup_{u\in L_1} \Moun_u)$. The claim is a consequence of the definition of external boundary. Since $F$ is \as\ within finite distance from $L_0$, a path starting in a cell in $L_1$ and distance from $L_0$ going to infinity contains a cell $ u_j $ which is the first cell outside $\cup_{u\in L_1} \Moun_u. $ In particular, for some $ v\in L_1 $,  $ u_{j-1}\in \Moun_v$, $ u_j \notin \cup_{u\in L_1} \Moun_u$,  and $u_j \sim u_{j-1}$ so $u_j\in ∂_{{\rm ext}}  (\bigcup_{u\in L_1}\Moun_u),$ i.e.\ the path intersects the \Lip{} $ F. $
\end{proof}

Before turning to the multi-scale arguments, we prove a further property of the \Lip{}. We already highlighted in Remark \ref{rmk-LipNoLip} that on a fractal graph we cannot hope for a general Lipschitz condition. However, a Lipschitz connectivity property holds in the ``time dimension'' in the following sense.
\begin{corol}\label{corol-LipschitzInTime}
	Suppose that \eqref{eq:cutsetAssumption} is satisfied and let ${F} $ be as in Proposition \ref{prop-RadHill}. Consider
	\begin{align*}
		 & F^{\mathrm{o}}:= \bigcap_{\substack{ F' \subseteq F\, : \\ F' \text{ is a \Lip{}}    }}F'.
	\end{align*}
	Then $F^\mathrm{o}$ is a minimal \Lip{} and for all $ R_1(ι,τ)\in F^{\mathrm{o}}, $ there exist $ ι_{-1},ι_{+1} \in \mathbb{B}^d$ such that $ S_1(ι_{-1})  $ and $ S_1(ι_{+1}) $ are individually either adjacent or equal to $ S_1(ι) $, and such that
	\[  R_1(ι_{-1},τ-1), R_1(ι_{+1},τ+1) \in F^{\mathrm{o}}\]
\end{corol}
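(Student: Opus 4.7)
The plan is to establish the corollary in two parts: first that $F^{\mathrm{o}}$ is a minimal \Lip{}, then the claimed Lipschitz connectivity in the time direction.

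For the first part, I would begin by showing $F^{\mathrm{o}}$ is itself a \Lip{}. The key input is the almost sure finite extent of $F$ in the height direction (Proposition \ref{prop-RadHill}), which guarantees that for any path $\pi=(u_0,u_1,\dots)$ starting in $L_1$ with $d(u_j,L_0)\to\infty$, the intersection $V:=\pi\cap F$ is finite. Using this finiteness, I would establish the auxiliary fact that the intersection of any chain of \Lip{} cutsets contained in $F$ (ordered by reverse inclusion) is itself a \Lip{}: for if $(F_\alpha)_\alpha$ were such a chain and $\pi$ avoided $\bigcap_\alpha F_\alpha$, then for each $v\in V$ pick $\alpha_v$ with $v\notin F_{\alpha_v}$; since $V$ is finite and the $F_\alpha$ are totally ordered, the maximum $\alpha^*:=\max_{v\in V}\alpha_v$ is well defined, and $F_{\alpha^*}$ would then be disjoint from $V$, contradicting that $F_{\alpha^*}$ is a cutset. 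Zorn's lemma then yields a minimal \Lip{} $F^{\min}\subseteq F$, and the explicit structure of $F=\partial_{{\rm ext}}(\bigcup_u\Moun_u)\cup(L_1\setminus\bigcup_u\Moun_u)$ from Proposition \ref{prop-RadHill} should force uniqueness of $F^{\min}$, identifying $F^{\mathrm{o}}=F^{\min}$. Minimality of $F^{\mathrm{o}}$ itself is then automatic: any \Lip{} $F''\subsetneq F^{\mathrm{o}}$ would belong to the intersection defining $F^{\mathrm{o}}$, forcing $F^{\mathrm{o}}\subseteq F''$ and a contradiction.

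For the second part, fix $R_1(\iota,\tau)\in F^{\mathrm{o}}$. By minimality $F^{\mathrm{o}}\setminus\{R_1(\iota,\tau)\}$ is not a \Lip{}, so there is a path $\pi^*$ from $L_1$ to infinity avoiding $F^{\mathrm{o}}\setminus\{R_1(\iota,\tau)\}$; since $F^{\mathrm{o}}$ is a cutset, $\pi^*$ crosses $F^{\mathrm{o}}$ exactly at $R_1(\iota,\tau)$. I would then derive the required $R_1(\iota_{+1},\tau+1)\in F^{\mathrm{o}}$ by contradiction: assuming that no cell at time $\tau+1$ with tile equal or adjacent to $S_1(\iota)$ lies in $F^{\mathrm{o}}$, I would reroute $\pi^*$ in a small neighborhood of $R_1(\iota,\tau)$ through cells of the form $R_1(\iota',\tau+1)$ with $\iota'$ equal or adjacent to $\iota$. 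By the adjacency definition these cells connect to $R_1(\iota,\tau)$ (same tile, time shifted by $1$, or same time, spatially adjacent); none of them lies in $F^{\mathrm{o}}$ by assumption, so the rerouted path would avoid $F^{\mathrm{o}}$ altogether, contradicting the cutset property proved in Part 1. A symmetric argument at time $\tau-1$ produces $\iota_{-1}$.

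The principal obstacle will be Part 1, specifically the uniqueness of the minimal \Lip{} (equivalently, the fact that $F^{\mathrm{o}}$ itself is a cutset). While the chain lemma above is routine thanks to finiteness of $V$, uniqueness requires a genuine use of the combinatorial structure of $F$ from Proposition \ref{prop-RadHill} — namely, how the exterior boundary $\partial_{{\rm ext}}(\bigcup_u\Moun_u)$ of the mountains interlocks with the uncovered base cells $L_1\setminus\bigcup_u\Moun_u$. A secondary subtlety in Part 2 is checking that the local rerouting near $R_1(\iota,\tau)$ can actually be executed while remaining in the adjacency graph, given the restrictive notion of adjacency (same-tile time shift by $1$, or same-time tile adjacency); the rerouting must be constructed in a sequence of admissible steps rather than as a single jump.
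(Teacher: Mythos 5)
Your plan for Part 1 (Zorn's lemma plus a chain argument, then a uniqueness step for the minimal \Lip{}) is not wrong in principle, but as you acknowledge yourself, the uniqueness claim is left as a ``should force'' without an argument, and that is precisely where the content of this part of the corollary sits. The paper avoids Zorn's lemma entirely and proves directly that $F^{\mathrm{o}}$ is a cutset. Given any path $\pi=(u_j)$ from $L_1$ with $d(u_j,L_0)\to\infty$, use the a.s.\ finiteness of $\pi\cap F$ (your set $V$) to pick $u$, the \emph{last} cell of $\pi$ lying in $F$. Because $F$ is the exterior boundary of $\bigcup_{v}\Moun_v$ together with the uncovered base cells, one can reach a neighbour of $u$ from $L_1$ by a finite path avoiding $F$ entirely (for instance a d-path through the mountains). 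Prepending this approach to the tail of $\pi$ from its last visit of $u$ onward gives a new escaping path $\pi'$ with $\pi'\cap F=\{u\}$; since every \Lip{} $F'\subseteq F$ must intersect $\pi'$ and $\pi'\cap F'\subseteq\pi'\cap F=\{u\}$, we get $u\in F'$ for all such $F'$, hence $u\in F^{\mathrm{o}}$ and so $\pi\cap F^{\mathrm{o}}\neq\emptyset$. That is the missing idea: you do not need to classify minimal \Lip{}s, only to exhibit, for each escaping path, one cell of $F$ that every sub-\Lip{} is forced to contain.

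Part 2 of your sketch is essentially the paper's argument: minimality yields a path $\pi^*$ crossing $F^{\mathrm{o}}$ precisely at $R_1(\iota,\tau)$, which is then rerouted through a neighbourhood of $R_1(\iota,\tau)$ assumed disjoint from $F^{\mathrm{o}}$, producing the contradiction. The subtlety you flag about executing the reroute within the restrictive adjacency relation is genuine (in particular when $\pi^*$ enters $R_1(\iota,\tau)$ from $R_1(\iota,\tau-1)$); the paper's own write-up is equally terse on this point and works with a slightly larger neighbourhood of cells at both times $\tau$ and $\tau+1$ to make the reroute feasible, so this is a shared delicacy rather than an extra defect of your proposal.
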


An example of cells of $F$ which were removed in the transition from $F$ to $F^\mathrm{o}$ is depicted in Figure \ref{fig-LipMinimal}. The Lipschitz continuity in the time dimension is illustrated in Figure \ref{fig-sierpTime}.

\begin{figure}[!h]
	\centering
	\begin{subfigure}[t]{0.48\linewidth}
		\includegraphics[width=\linewidth]{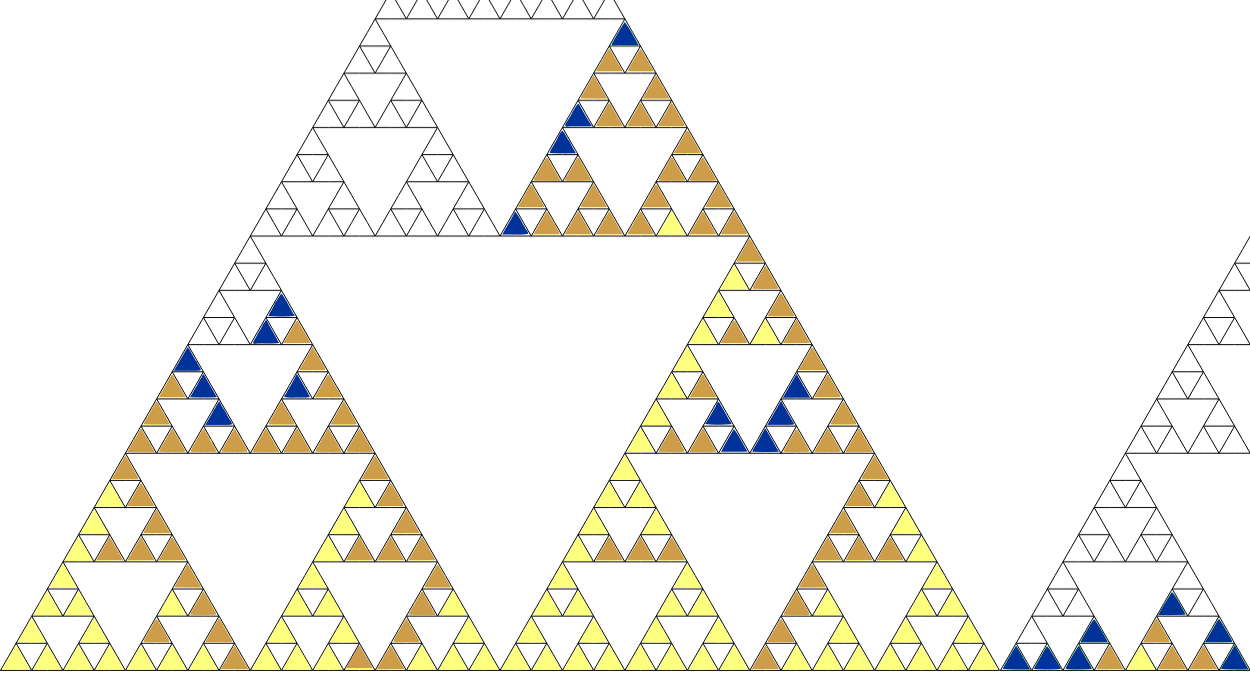}
		\subcaption{An illustration of possible mountains (in yellow) with bad cells highlighted with a darker tone. In dark blue the cells belonging to the \Lip{} $F$.}%\label{fig-Lip}
	\end{subfigure}
	\hfill
	\begin{subfigure}[t]{0.48\linewidth}
		\includegraphics[width=\linewidth]{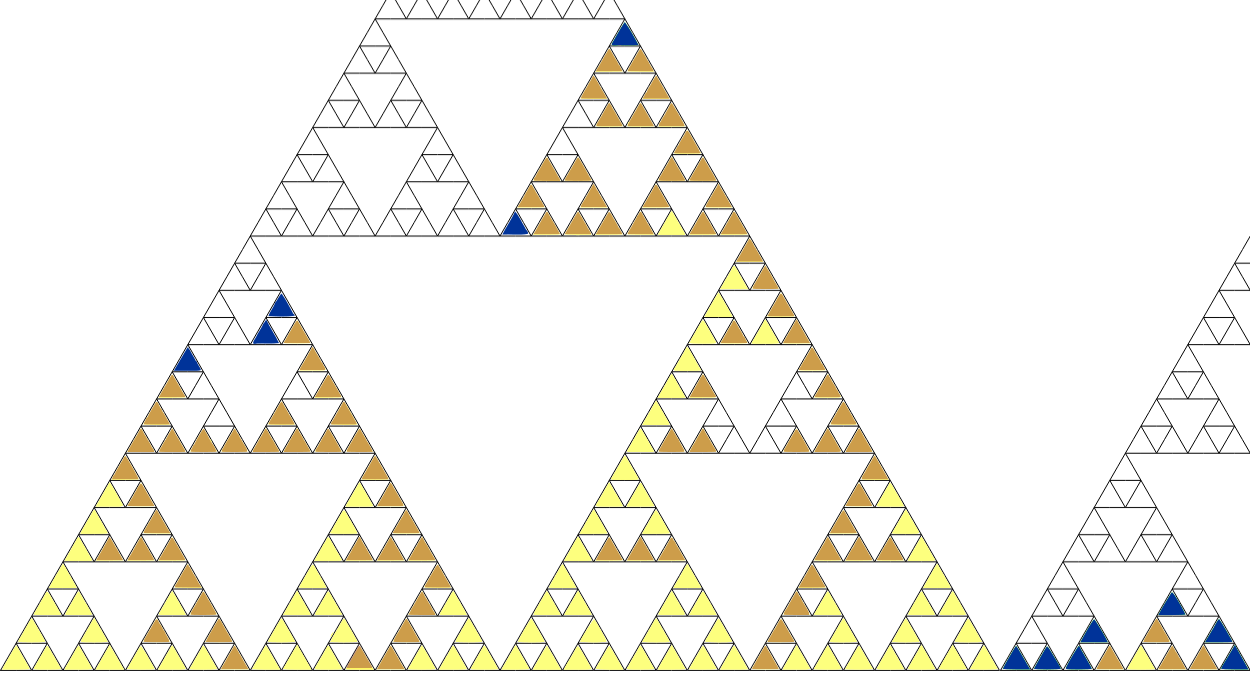}
		\subcaption{The resulting minimal \Lip{} $F^\mathrm{o}$ as obtained in Corollary \ref{corol-LipschitzInTime}. The removed cells are left blank as, even though they are good, we are ignoring this information.}%
	\end{subfigure}
	\caption{Constructing the minimal \Lip{}: a slab in $\mathbb{G}^2\times \{0\}$.}\label{fig-LipMinimal}
\end{figure}
\begin{figure}[!h]
	\centering
	\includegraphics[width=\linewidth]{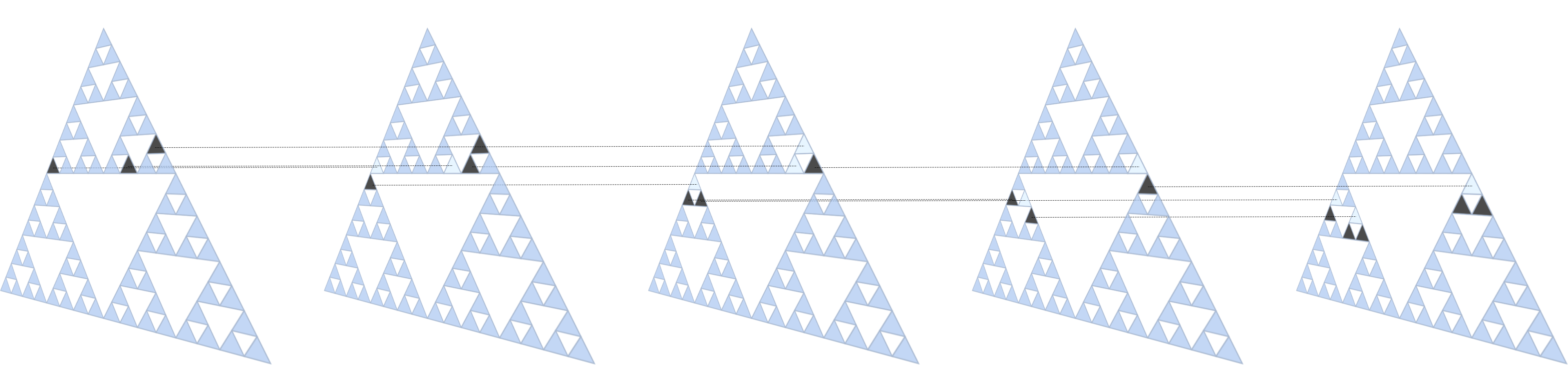}
	\caption{A possible evolution of the minimal \Lip{} $F^\mathrm{o}$ over 5 sequential time steps. Black tiles represent the cells of the minimal \Lip{} at the current time index $\tau$, the light blue tiles represent the cells of the minimal \Lip{} at the previous time index $\tau -1$. The two are connected with dashed lines to help visualise the relationship.}\label{fig-sierpTime}
\end{figure}

\begin{proof}
	$F^\mathrm{o}$ is a \Lip{} as a consequence of the definition of $F$ as we now argue. Let $π:=\{u_i\}_{i \in \N}$ be any path of cells with $u_1 \in L_1$ such that $d(u_i, L_0) \to \infty$ as $i \to \infty.$ We construct a path $π'$ with the help of $π$ as follows.
	%By  
	%and the fact that $\pi$ that it is possible to find a path $π'$ whose intersection with $F$ consists of a single cell which we denote with $u$: more precisely, 
	%Let $u$ be the last cell in the intersection of $π$ and $F$. Such a cell $u$ exists, since every cell of $F$ is either in $L_1$ or it is part of the external boundary of some mountain, which is by Proposition \ref{prop-RadHill} a.s.\ finite.  
    Let $\pi_F$ be the sequence of (not necessarily unique) cells of the path $\pi$ that lie in $F$, ordered according to their appearance in $\pi$. We claim that this sequence is a.s.\  finite. Assume the converse. We can associate each cell $u$ of $\pi_F$ to a mountain $M_v$ for which $u\in\partial_{\text{ext}}(M_v)$ (when this choice is not unique, we can take for example the mountain for which $d(u,v)$ is smallest). Due to our assumption on $\pi$, and by extension $\pi_F$, this implies the a.s.\ existence of a sequence of mountains $(M_{v_i})_{i\in\N}$ with $\operatorname{rad}_{v_i}(M_{v_i})\to\infty$ as $i\to\infty$. Together with translation invariance this however contradicts \eqref{eq:cutsetAssumption} and so $\pi_F$ is by necessity a.s.\ finite.
    Let $u$ now be the last cell of $\pi_F$.
    Define now $π'$ to be the part of $π$ from the last visit of $u$ (including $u$) onward. By the definition of $F$ as external boundary of a union of mountains, we can extend $π'$ before $u$ by some arbitrary (finite) path of cells from $L_1$ to $u$ which does not intersect $F$:
	% this is possible since $F$ is the external boundary of a union of mountains and therefore $u$ can be 
	for example we can use a d-path that ends in a cell neighbouring $u$.
	Since any \Lip{} $ F'\subset F$ needs to intersect any such path and in particular $π'$ and $F'\subseteq F$ we have $u\in F'$, and thus $u\in F^{\mathrm{o}}$. Since $π$ was an arbitrary path starting in $L_1$ with $d(\pi_i, L_0) \to \infty$ as $i \to \infty$, we obtain that $F^\mathrm{o}$ is a \Lip{}.

	The minimality is straightforward due to the definition of $F^\mathrm{o}$ and it remains to show the temporal Lipschitz connectivity claim.

	For this purpose, let $ R_1(ι,τ)\in F^{\mathrm{o}}$ be arbitrary,
	and we show the claim only for $ ι_{+1} $ and $ τ+1, $ the other case being identical. Suppose that such $ ι_{+1} $ does not exists. We show now that it would be possible to construct a sequence $ \{u_j\}_j $ of adjacent cells which includes some of the cells in
	\[\overline{R_1(ι,τ)}:= \big\{R_1(\bar{ι}, \bar{τ})	\colon \bar{τ}\in \{τ,τ+1\}, \bar{ι}=ι \text{ or such that } S_1(ι)\text{ adjacent to } S_1(\bar{ι})\big\}\setminus\{R_1(\iota,\tau)\}, \]
	starts from $ L_1 $, with $d(u_j,L_0)\to ∞ $ and does not intersect $ F^{\mathrm{o}} $. Note that by our supposition, none of the cells in $\overline{R_1(ι,τ)}$ are in $F^{\mathrm{o}}$.

	We construct the sequence of adjacent cells $ \{u_j\}_j $ so that it starts from $L_1$ and reaches $\overline{R_1(ι,τ)} $ without intersecting $ F^{\mathrm{o}};$ note that this is possible  due to  the assumption of minimality of $ F^{\mathrm{o}} $. Similarly, the sequence $ \{u_j\}_j $ can be extended from $\overline{R_1(ι,τ)}$ without intersecting $ F^{\mathrm{o}} $ and with $d(u_j,L_0)\to ∞ $. Since all of the cells in $\overline{R_1(ι,τ)}$ are adjacent, the resulting sequence $ \{u_j\}_j $ contradicts the  definition of \Lip{}, and proves the claim.
\end{proof}

The next three sections are devoted to the multi-scale argument which will establish the assumption \eqref{eq:cutsetAssumption}.

\section{Decoupling Theorem}\label{SEC-Mixing}

We begin by proving that when \eqref{eq-PHI} holds, random walks started from vertices close to each other have similar probability distributions at sufficiently large times. More precisely, we have the following \emph{fluctuation bound} for caloric functions. Recall the definition of the weighted graph $ (\mathbb{G}^d,(λ_{x,y})_{x\sim y}) $ from Subsection \ref{sec:Sierp}.

\begin{prop}\label{prop-fluctuations}
	Let $x_0\in \mathbb{G}^d$ be arbitrary and suppose that \eqref{eq-PHI} holds with constant $\Cr{PHI}>1$. Let $\Theta:=\log_2(\Cr{PHI}/(\Cr{PHI}-1))>1$ and define for $x,y\in \mathbb{G}^d$
	\[
		ρ(x_0,x,y):=d(x_0,x)\vee d(x_0,y).
	\]
    Write $Q(x_0,R):=B_{2R}(x_0)\times(0,4R^{\dw}).$
	
 Then there exists a constant $\Cl{BH}>0$ such that the following holds: Let $r_0\geq 2$ and suppose that $u$ is caloric in $Q(x_0,r_0)$. Then, for any $x_1,x_2\in B_{r_0/2}(x_0)$ and any $t_1,t_2$ for which $r_0^{\dw}-\rho(x_0,x_1,x_2)^{\dw}\leq t_1,t_2\leq r_0^{\dw},$ we have that
	\begin{equation*}
		|u(x_1,t_1)-u(x_2,t_2)|\leq \Cr{BH}\left(\rho(x_0,x_1,x_2)/r_0\right)^\Theta \sup_{(t,x)\in Q_+(x_0,r_0)}|u(x,t)|,
	\end{equation*}
	where $Q_+(x_0,r_0):=B_{r_0}(x_0)\times[3r_0^{\dw},4r_0^{\dw}]$.
\end{prop}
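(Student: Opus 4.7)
The strategy is the standard Moser-style derivation of Hölder continuity from the parabolic Harnack inequality: I would iterate \eqref{eq-PHI} on a nested, dyadically shrinking family of parabolic cylinders whose ``tops'' accumulate at the space-time point $(x_0,r_0^{\dw})$, exploiting the fact that a single application of \eqref{eq-PHI} contracts the oscillation of a caloric function by a factor $1-1/\Cr{PHI}$ when one restricts to a half-scale cylinder.

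The main technical ingredient is an \emph{oscillation decay lemma}: if $v$ is caloric on a parabolic cylinder $\tilde Q := B_{2R}(y)\times(s,s+4R^{\dw})\subseteq Q(x_0,r_0)$ and $M:=\sup_{\tilde Q}v$, $m:=\inf_{\tilde Q}v$, then on the ``future slab'' $\tilde Q^{+}:=B_R(y)\times[s+3R^{\dw},s+4R^{\dw}]$ one has $\operatorname{osc}(v,\tilde Q^{+})\le(1-1/\Cr{PHI})(M-m)$. I would prove this by applying \eqref{eq-PHI} separately to the two non-negative caloric functions $v-m$ and $M-v$ on $\tilde Q$, which yields
\[
\sup_{\tilde Q^{-}}v\le \Cr{PHI}\inf_{\tilde Q^{+}}v-(\Cr{PHI}-1)m,\qquad \inf_{\tilde Q^{-}}v\ge \Cr{PHI}\sup_{\tilde Q^{+}}v-(\Cr{PHI}-1)M,
\]
with $\tilde Q^{-}$ the analogous past slab; subtracting these two inequalities and rearranging gives the oscillation contraction on $\tilde Q^{+}$ (and symmetrically on $\tilde Q^{-}$).

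To produce the Hölder bound, I would set $\rho:=\rho(x_0,x_1,x_2)$, choose an integer $K$ with $2^{-(K+1)}r_0<\rho\le 2^{-K}r_0$, and define $R_k:=r_0/2^{k+a}$ with $a\in\N$ a fixed constant large enough that the initial cylinder $\mathcal C_0:=B_{2R_0}(x_0)\times(r_0^{\dw}-4R_0^{\dw},r_0^{\dw})$ still lies inside $Q(x_0,r_0)$, and so that each nested cylinder $\mathcal C_k:=B_{2R_k}(x_0)\times(r_0^{\dw}-4R_k^{\dw},r_0^{\dw})$ fits inside the future slab of $\mathcal C_{k-1}$ (using $\dw\ge 2$). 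The hypotheses $x_1,x_2\in B_{r_0/2}(x_0)$ and $t_1,t_2\in[r_0^{\dw}-\rho^{\dw},r_0^{\dw}]$ are precisely what is needed to guarantee $(x_i,t_i)\in\mathcal C_K$. Iterating the oscillation lemma $K$ times gives
\[
|u(x_1,t_1)-u(x_2,t_2)|\le\operatorname{osc}(u,\mathcal C_K)\le\Bigl(1-\tfrac{1}{\Cr{PHI}}\Bigr)^{K}\operatorname{osc}(u,\mathcal C_0)\le 2\Bigl(\tfrac{\Cr{PHI}-1}{\Cr{PHI}}\Bigr)^{K}\sup_{\mathcal C_0}|u|.
\]
Using $(1-1/\Cr{PHI})^{K}=2^{-K\Theta}$ with $\Theta=\log_2(\Cr{PHI}/(\Cr{PHI}-1))$ and $2^{-K}\asymp\rho/r_0$ produces the factor $(\rho/r_0)^{\Theta}$. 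To replace $\sup_{\mathcal C_0}|u|$ by $\sup_{Q_+(x_0,r_0)}|u|$ it suffices to apply \eqref{eq-PHI} once more to the non-negative caloric functions $\sup_{Q(x_0,r_0)}|u|\pm u$, noting that $\mathcal C_0$ lies in the past slab and $Q_+(x_0,r_0)$ in the future slab of $Q(x_0,r_0)$; this absorbs the remaining constants into $\Cr{BH}$.

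The main obstacle is the time-interval bookkeeping. One has to check that the nested cylinders $\mathcal C_k$ (a) lie inside the caloricity domain $Q(x_0,r_0)$, (b) satisfy $\mathcal C_{k+1}\subseteq\mathcal C_k^{+}$ so that iteration is legitimate, and (c) contain both target points at the terminal scale $k=K$. The hypothesis $t_1,t_2\in[r_0^{\dw}-\rho^{\dw},r_0^{\dw}]$ is engineered exactly for (c): the time window has width $\rho^{\dw}$, matching the parabolic scale of the spatial displacement $\rho$, and its right endpoint $r_0^{\dw}$ sits well inside $(0,4r_0^{\dw})$, leaving enough room above for the iteration to proceed without leaving the caloricity domain.
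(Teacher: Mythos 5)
Your strategy is essentially the paper's: you iterate the parabolic Harnack inequality on a nested family of dyadically shrinking parabolic cylinders, derive the oscillation contraction by applying \eqref{eq-PHI} to the two non-negative caloric functions $v-m$ and $M-v$, and sum the two resulting inequalities (discarding the past-slab oscillation, which is non-negative) to obtain $\operatorname{Osc}(u,\tilde Q^+)\le(1-1/\Cr{PHI})\operatorname{Osc}(u,\tilde Q)$ — this is exactly the paper's key step. The identification $(1-1/\Cr{PHI})^K=2^{-K\Theta}$ and the choice of the terminal scale $K$ via $2^{-K}\asymp\rho/r_0$ also match.

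The one place your proposal deviates — and where it glosses over a genuine bookkeeping difficulty — is the final ``propagation to $Q_+$'' step. You place your cylinders $\mathcal C_k$ so that their time-tops accumulate at $r_0^{\dw}$ (which is what the stated time window $t_i\in[r_0^{\dw}-\rho^{\dw},r_0^{\dw}]$ requires), and then claim that $\mathcal C_0$ lies in the past slab $Q_-(x_0,r_0)=B_{r_0}(x_0)\times[r_0^{\dw},2r_0^{\dw}]$ so that one more application of \eqref{eq-PHI} replaces $\sup_{\mathcal C_0}|u|$ by $\sup_{Q_+(x_0,r_0)}|u|$. But $\mathcal C_0$ sits at times \emph{strictly below} $r_0^{\dw}$, so $\mathcal C_0\not\subseteq Q_-(x_0,r_0)$, and the single extra application of \eqref{eq-PHI} you invoke does not quite go through as written; one would need a suitably shifted auxiliary cylinder whose past slab contains $\mathcal C_0$ and whose future slab still sits inside $Q(x_0,r_0)$, and to show this is compatible with landing in $Q_+(x_0,r_0)$. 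Interestingly, the paper's own proof has the mirror-image mismatch: there the cylinders are shifted by $4(r_0^{\dw}-r_k^{\dw})$ so their tops accumulate at $4r_0^{\dw}$, which makes $Q(1)\subseteq Q_+(x_0,r_0)$ automatic (so no extra PHI step is needed), but then the membership claim $(x_i,t_i)\in Q(m)$ is incompatible with the stated constraint $t_i\le r_0^{\dw}$ for any $m\geq 1$. So the two accounts trade the difficulty between the beginning and the end of the iteration; a clean version needs the time normalization in the statement and the proof to be reconciled (e.g.\ by anchoring the cylinders at $2r_0^{\dw}$, or restating the admissible $t_i$-window accordingly). Aside from this time-coordinate subtlety, your construction, the oscillation lemma, and the exponent arithmetic are all correct and coincide with the paper's.
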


The above inequality when applied to the heat kernel $u(x,t):=p_t(y,x)$ tells us that the fluctuations in $x$ and $t$ of the probability that a particle starting in $y$ is at $x$ after time $t$ can be precisely controlled and have an upper bound that is polynomial in $x$. We will use this bound below when comparing various heat kernel values with a representative one.

\begin{proof}
	In addition to $Q$ and $Q_+$, we define $Q_-(x_0,r_0):=B_{r_0}(x_0)\times[r_0^{\dw},2r_0^{\dw}]$. Next, define $r_k:=2^{-k}r_0$ and set
	\begin{align*}
		Q(k)   & :=4(r_0^{\dw}-r_k^{\dw})+Q(x_0,r_k),              \\
		Q_+(k) & :=4(r_0^{\dw}-r_k^{\dw})+Q_+(x_0,r_k),\text{ and} \\
		Q_-(k) & :=4(r_0^{\dw}-r_k^{\dw})+Q_-(x_0,r_k),
	\end{align*}
	where the summation is to be seen as a shift of the time interval of $Q$ (resp.\ $Q_+$ and $Q_-$).
	A quick calculation using that $\dw \geq 2$ then yields that
	$Q(k)\subset Q_+(k-1)$.
	Take now $k\geq 1$ small enough so that $r_k\geq 2$. We can without loss of generality consider the shifted set $Q(k)$ with the functions \(-u+\sup_{Q(k)}u\) and \(u-\inf_{Q(k)}u\). Indeed, note that under the change of time variable $\hat t:=t+4(r_0^{\dw}-r_k^{\dw})$, the function $\hat u(x,t):=u(x,\hat t)$ remains caloric. Since \eqref{eq-PHI} holds for any non-negative caloric function on $Q(x_0,r_k)$, it therefore holds for \(-\hat u+\sup_{Q(k)}u\) and \(\hat u-\inf_{Q(k)}u,\) and in particular also for \(-u+\sup_{Q(k)}u\) and \(u-\inf_{Q(k)}u\) on $Q(k)$.
	Applying \eqref{eq-PHI} to these two functions then yields the inequalities
	\[
		-\inf_{Q_-(k)}u + \sup_{Q(k)}u\leq \Cr{PHI}(-\sup_{Q_+(k)}u+\sup_{Q(k)}u)
	\]
	and
	\[
		\sup_{Q_-(k)}u-\inf_{Q(k)}u\leq \Cr{PHI}(\inf_{Q_+(k)}u-\inf_{Q(k)}u),
	\]
	respectively.
	Adding the two together and using that $\sup_{Q_-(k)}u-\inf_{Q_-(k)}u\geq 0$ then leads to
	\[
		\sup_{Q(k)}u-\inf_{Q(k)}u\leq \Cr{PHI}(\sup_{Q(k)}u-\inf_{Q(k)}u)-\Cr{PHI}(\sup_{Q_+(k)}u-\inf_{Q_+(k)}u).
	\]
	If we define now the \emph{oscillation} of $u$ inside $A$ as $\operatorname{Osc}(u,A):=\sup_A u-\inf_A u$ and set $\delta:=\Cr{PHI}^{-1} \in (0,\infty)$, we get
	\[
		\operatorname{Osc}(u,Q_+(k))\leq (1-\delta)\operatorname{Osc}(u,Q(k)).
	\]
	Take now the largest $m$ such that $r_m\geq \rho(x_0,x_1,x_2)$. Applying the above oscillation inequality on $Q(1)\supset Q(2)\supset\cdots\supset Q(m)$, we get since $(x_1,t_1), (x_2,t_2)\in Q(m)$ that
	\[
		|u(x_1,t_1)-u(x_2,t_2)|\leq \operatorname{Osc}(u,Q(m))\leq(1-\delta)^{m-1}\operatorname{Osc}(u,Q(1)).
	\]
	Using that $(1-\delta)^m=2^{-m\Theta}\leq \left(2\rho(x_0,x_1,x_2)/r_0\right)^\Theta$ we get the claim.
\end{proof}
Next, we state a result of Popov and Teixeira \cite{Popov2015}, which will let us couple the locations of our particle system after they have moved with an independent Poisson point process on $\mathbb{G}$.

\begin{prop}[Soft local times]\label{prop-softLocalTimes}
	Let $J$ be an at most countable index set and let \((Z_j)_{j\in J}\) be a collection of points distributed independently on $ \mathbb{G}^d $ according to a family of probabilities \(g_j:\mathbb{G}^d\rightarrow\mathbb{R}\), \(j\leq J\). Define for all \(y\in \mathbb{G}^d\) the soft local time function \(H_J(y)=\sum_{j=1}^{J}\xi_jg_j(y)\), where the \(\xi_j\) are i.i.d.\ exponential random variables of mean \(1\). Let \(\psi\) be a Poisson point process on $ \mathbb{G}^d $ with intensity measure \(\rho:\mathbb{G}^d\rightarrow\mathbb{R}\) and define the event
	\(
	E:=\left\{\psi\subseteq(Z_j)_{j\leq J}\right\},
	\)
    i.e.\ the particles belonging to $\psi$ are a subset of $(Z_j)_{j\leq J}$.
	Then there exists a coupling $\mathbb{Q}$ of \((Z_j)_{j\leq J}\) and \(\psi\), such that
	\[
		\mathbb{Q}\left(E\right)\geq\mathbb{Q}\left(H_J(y)\geq\rho(y),\;\forall y\in \mathbb{G}^d\right).
	\]
\end{prop}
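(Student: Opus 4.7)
The plan is to construct both $(Z_j)_{j\leq J}$ and $\psi$ jointly as functionals of a single auxiliary Poisson point process, so that the desired inclusion becomes a deterministic consequence of the event $\{H_J\geq\rho\}$. Let $\eta$ be a Poisson point process on $\mathbb{G}^d\times[0,\infty)$ with intensity $(\#)\otimes\mathrm{Leb}$, where $\#$ is counting measure on $\mathbb{G}^d$ and $\mathrm{Leb}$ is Lebesgue measure on $[0,\infty)$; I picture the second coordinate as a ``height''. By a standard thinning/projection argument, the first coordinates of $\eta\cap\{(y,t):0\le t\le \rho(y)\}$ form a Poisson point process on $\mathbb{G}^d$ with intensity $\rho$, and I take this to be $\psi$.

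Next I build $(Z_j,\xi_j)$ from the same $\eta$ by the soft-local-times recursion. Set $G_0\equiv 0$ and, inductively, given $G_{j-1}$, define
\begin{equation*}
\xi_j := \inf\bigl\{s>0 : \exists\,z\in\mathbb{G}^d \text{ with } (z,G_{j-1}(z)+s\,g_j(z))\in\eta\bigr\},
\end{equation*}
$Z_j$ as the a.s.\ unique vertex attaining this infimum, and $G_j(y):=G_{j-1}(y)+\xi_j\,g_j(y)$. I would then verify that $\xi_j$ is Exp($1$), $Z_j\sim g_j$, and that they are independent of each other and of the past. This is the ``engine'' of the method and follows from two facts: (i) conditionally on $G_{j-1}$, the restriction of $\eta$ to $\{t>G_{j-1}(y)\}$ is, by the strong Markov/memoryless property of Poisson processes along each vertical half-line, an independent Poisson point process with the same intensity; (ii) for such a PPP, if one looks at the first point hit as the graph of $s\mapsto G_{j-1}(y)+s\,g_j(y)$ is swept upward, elementary PPP computations give the exponential law for $\xi_j$ and the distribution $g_j$ for the vertex where the hit occurs (this uses $\sum_y g_j(y)=1$). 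In particular $G_J(y)=\sum_{j\le J}\xi_j g_j(y)=H_J(y)$, with the $H_J$ in the statement.

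The coupling claim is now a tautology. By construction, the $Z_j$'s are exactly the $\mathbb{G}^d$-projections of those points of $\eta$ that lie beneath the graph of $H_J$, namely in $\{(y,t):t\le H_J(y)\}$. On the event $\{H_J(y)\ge \rho(y)\;\forall y\}$ we have the set inclusion $\{(y,t):t\le\rho(y)\}\subseteq\{(y,t):t\le H_J(y)\}$, so every point of $\eta$ in the former region — and hence every point of $\psi$ — is picked up as some $Z_j$. This yields $\psi\subseteq(Z_j)_{j\le J}$ on $\{H_J\ge\rho\}$, as required.

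The main technical point, and the only step that is not purely bookkeeping, is the independence claim for $(Z_j,\xi_j)$: one has to justify that the inductive ``peeling'' really leaves an independent Poisson point process above the current ceiling $G_{j-1}$ at each stage, because $G_{j-1}$ itself depends on $\eta$. I would handle this by a discrete filtration argument, at step $j$ conditioning on $\sigma(Z_1,\xi_1,\ldots,Z_{j-1},\xi_{j-1})$, showing that the conditional law of $\eta$ restricted to $\{t>G_{j-1}(y)\}$ is again PPP with intensity $(\#)\otimes\mathrm{Leb}$, and only then extracting $(Z_j,\xi_j)$; this is exactly the place where the approach of Popov and Teixeira \cite{Popov2015} does the work and where I would invoke their construction in detail rather than reinvent it.
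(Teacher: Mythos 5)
Your proposal is a faithful reconstruction of the Popov--Teixeira soft local times construction, which is exactly what the paper invokes: its ``proof'' of this proposition is a citation to \cite[Section~4, Corollary~4.4]{Popov2015} and its graph-reformulation \cite[Corollary~A.3]{Hilario2015}, with no argument reproduced in the paper itself. Your account of the auxiliary PPP on $\mathbb{G}^d\times[0,\infty)$, the inductive peeling that produces $(Z_j,\xi_j)$ with the right laws, the identification of $G_J$ with $H_J$, and the observation that the $Z_j$ are precisely the projections of the $\eta$-points lying below the graph of $H_J$ (so that the inclusion of regions $\{t\le\rho\}\subseteq\{t\le H_J\}$ forces $\psi\subseteq(Z_j)_{j\le J}$) is the substance of the cited result. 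You also correctly flag the one step requiring genuine care — that conditionally on the past, $\eta$ above the ceiling $G_{j-1}$ remains a PPP of the same intensity even though $G_{j-1}$ depends on $\eta$ — and are right that this is the heart of the Popov--Teixeira argument rather than mere bookkeeping. So the proposal is correct, and it adds value relative to the paper by actually sketching the mechanism rather than only pointing at it.
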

\begin{proof}
	The coupling is introduced in \cite[Section 4]{Popov2015} and proven in \cite[Corollary 4.4]{Popov2015}. A reformulation of the construction for particles on a graph can be found in \cite[Appendix A]{Hilario2015}, and our claim corresponds to \cite[Corollary A.3]{Hilario2015}.
\end{proof}

Before stating the next result, it is useful to recall from Remark \ref{rem:regions} what we refer to as regions and sub-regions. Intuitively, one should think of a (sub-)region as either a large tile, or a ``ball-like'' union of tiles in $\mathbb{G}^d$.

\begin{prop}\label{prop-mixing-unbounded}
	Consider uniformly elliptic conductances $ λ_{x,y} $ satisfying \eqref{def-conductances} for some $ C_λ>0. $ For each $ \Cl[m]{Mixing1}>0 $ there exist constants $  \Cl[m]{MixingDelta}, \Cl[m]{MixingRoot}, \Cl[m]{MixingProb}  \in (0,\infty)$, and $Θ$ as in Proposition \ref{prop-fluctuations} such that the following holds.

	Fix large enough $\ell>0 $ and $ \bar{ε} >0$. Consider a region $ S_K $ of diameter $K\gg\ell$ tessellated into sub-regions $S_i^{\ell} $ of diameter at most $ \ell $ such that at time 0 there is a collection of particles (i.e., random walkers) where each sub-region $ S_i^\ell $ contains at least
	$ δ\sum_{y\in S_i^l} λ_y > \Cr{Mixing1} $ particles for some $ δ>0. $ Let $ \mathbf{Δ} , K' >0$ with 
	\begin{equation} \label{eq:Delta}
		\begin{split}\mathbf{Δ} & \ge \mathbf{Δ}_0:=\Cr{MixingDelta} \ell^{\dw} \bar{ε}^{\,-\tfrac{4}{Θ}} \\
		K-K'       & \ge \Cr{MixingRoot} (\mathbf{Δ})^{\frac{1}{\dw}},
        \end{split}
	\end{equation}
	and for $j\in J$, denote by $ Y_j $ the location of the $ j-$th  particle at time $ \mathbf{Δ} $, where $J$ is the index set of all particles that are inside the sub-region $S_{K'}\subset S_K$ of diameter $K'$ at time $\mathbf{\Delta}$, where $S_{K'}$ has Hausdorff distance at least $\frac{K-K'}{2}$ from the complement of $S_K$.

	Then, if $K$ is large enough for \eqref{eq:Delta} to be satisfied, there exists a coupling $ \mathbb{Q} $ of a Poisson point process $ Ξ $ with intensity measure $δ(1-\bar{ε})λ_y, $  $ y\in S_{K'} ,$ and $(Y_j)_{j\in J}$, such that
	\begin{equation*}%\label{eq-MixingNoDispl}
		\mathbb{Q}\big(Ξ\subseteq (Y_j)_{j\in J}  \big)
		\ge
		1-\sum_{y\in S_{K'}}e^{-\Cr{MixingProb} δλ_y \bar{ε}^2	\mathbf{Δ}^{\frac{\dv}{\dw}}	}.
	\end{equation*}
\end{prop}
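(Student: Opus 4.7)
The strategy is to apply the soft local times coupling of Proposition~\ref{prop-softLocalTimes} to \((Y_j)_{j}\), the locations at time \(\mathbf{\Delta}\) of all particles initially in \(S_K\). The individual densities are \(g_j(y) = \lambda_y\, p_{\mathbf{\Delta}}(x_j, y)\), with \(x_j\) the initial position of particle \(j\), and the target point process \(\Xi\) has intensity \(\rho(y) = \delta(1-\bar\varepsilon)\lambda_y\) supported on \(S_{K'}\). Proposition~\ref{prop-softLocalTimes} then reduces the claim to lower-bounding \(\mathbb{Q}(H_J(y) \geq \rho(y)\text{ for all } y \in S_{K'})\), where \(H_J(y) = \sum_j \xi_j g_j(y)\) with i.i.d.\ \(\mathrm{Exp}(1)\) weights \(\xi_j\). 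The proof then splits into a lower bound on \(\mathbb{E}[H_J(y)]\) and a concentration estimate.

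For the mean, the aim is to show \(\mathbb{E}[H_J(y)] = \sum_j g_j(y) \geq (1 - \bar\varepsilon/2)\delta\lambda_y\) for every \(y \in S_{K'}\). First, the hypothesis on the particle counts gives \(\sum_j g_j(y) \geq \delta \lambda_y \sum_i \sum_{x \in S_i^\ell} \lambda_x\, p_{\mathbf{\Delta}}(\widetilde{x}_i, y)\) for a reference point \(\widetilde{x}_i \in S_i^\ell\). Next, applying the fluctuation bound of Proposition~\ref{prop-fluctuations} to the caloric function \(u(x,t) = p_t(x,y)\) on space-time cylinders of spatial radius \(r_0 \asymp \mathbf{\Delta}^{1/\dw}\) allows one to replace \(p_{\mathbf{\Delta}}(\widetilde{x}_i, y)\) by \(p_{\mathbf{\Delta}}(x, y)\) inside each \(S_i^\ell\) at a relative cost \(O((\ell/\mathbf{\Delta}^{1/\dw})^\Theta)\); the assumption \(\mathbf{\Delta} \geq \Cr{MixingDelta}\,\ell^{\dw}\,\bar\varepsilon^{-4/\Theta}\) is precisely what makes this error absorbable into \(\bar\varepsilon/4\). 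Finally, reversibility yields \(\lambda_y\sum_{x \in S_K} \lambda_x\, p_{\mathbf{\Delta}}(x, y) = \lambda_y\, P_y(X_{\mathbf{\Delta}} \in S_K)\), and the confinement estimate \eqref{eq-confining} applied to a ball of radius \((K-K')/2\) around \(y \in S_{K'}\) shows this is at least \((1 - \bar\varepsilon/4)\lambda_y\) once \(K - K' \geq \Cr{MixingRoot}\,\mathbf{\Delta}^{1/\dw}\).

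For the concentration, \(H_J(y)\) is a weighted sum of independent \(\mathrm{Exp}(1)\) variables. The heat kernel bound \eqref{eq-HKBound} provides the uniform estimate \(g_j(y) \leq C \lambda_y\, \mathbf{\Delta}^{-\dv/\dw}\), and hence \(\sum_j g_j(y)^2 \leq C \lambda_y\, \mathbf{\Delta}^{-\dv/\dw}\cdot\mathbb{E}[H_J(y)]\). Combined with the mean estimate, a standard Bernstein inequality for weighted sums of exponentials then yields, for each \(y \in S_{K'}\),
\[
\mathbb{P}\big(H_J(y) < (1 - \bar\varepsilon)\delta\lambda_y\big) \leq \exp\big\{ -\Cr{MixingProb}\,\delta\lambda_y\,\bar\varepsilon^2\,\mathbf{\Delta}^{\dv/\dw}\big\}.
\]
A union bound over \(y \in S_{K'}\) combined with Proposition~\ref{prop-softLocalTimes} then closes the argument.

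The main obstacle will be balancing the three sources of error in the mean estimate (particle-count slack, heat kernel fluctuation inside sub-regions, and mass leaking out of \(S_K\)) so that each is controlled by \(\bar\varepsilon\) simultaneously, while preserving the exponent \(\mathbf{\Delta}^{\dv/\dw}\) in the Bernstein bound; this is precisely what forces the specific shape of the two conditions in \eqref{eq:Delta}. A secondary technical point is verifying that the caloric function hypothesis of Proposition~\ref{prop-fluctuations} can be applied with \(r_0\) comparable to \(\mathbf{\Delta}^{1/\dw}\) uniformly over reference points \(\widetilde x_i\in S_K\), which is where the lower bound on \(K\) implicit in \eqref{eq:Delta} enters.
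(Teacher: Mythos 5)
Your proposal is essentially correct and follows the same route as the paper: soft local times (Proposition~\ref{prop-softLocalTimes}), reduction to lower-bounding \(H_J(y)\), a mean estimate built from the fluctuation bound of Proposition~\ref{prop-fluctuations} (to homogenize the heat kernel across each \(S_i^\ell\)), reversibility, and confinement (\eqref{eq-confining}), followed by an exponential concentration bound whose variance proxy is supplied by the on-diagonal heat-kernel upper bound \(\max_j g_j(y)\le C\lambda_y\mathbf{\Delta}^{-\dv/\dw}\).

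The one presentational difference is that the paper does not invoke a ready-made Bernstein inequality; instead it carries out the exponential Chebyshev argument explicitly with a \emph{fixed} tilt \(\Gamma\asymp\bar\varepsilon\,\mathbf{\Delta}^{\dv/\dw}\) chosen so that \(\sup_x\Gamma f_{\mathbf{\Delta}}(x,y)<\bar\varepsilon/4\), uses \(\log(1+z)\ge z-z^2\), and then plugs in the mean bound directly — which is exactly what unpacking your ``standard Bernstein for weighted sums of exponentials'' would give, including the \(\delta\lambda_y\bar\varepsilon^2\mathbf{\Delta}^{\dv/\dw}\) rate. A second cosmetic difference: the paper passes to a deterministic thinned index set \(J'\) (exactly \(\lceil\delta\sum_{y\in S_i^\ell}\lambda_y\rceil\) particles per sub-region) and further to the subset \(J'(y)\) of particles started within distance \(R\asymp\mathbf{\Delta}^{1/\dw}\bar\varepsilon^{-(\dw-1)/\dw}\) of \(y\), whereas you work with all of \(J\). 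Since \(H_J(y)\ge H^{J'}(y)\), the paper's thinning is a technical convenience that does not change the argument's substance; your direct treatment of \(\mathbb{E}[H_J(y)]\) with the same three sources of error (particle-count slack, fluctuation inside each \(S_i^\ell\), and leakage past \(B_{(K-K')/2}(y)\)) is sound and leads to the same bound.
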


\begin{remark}\label{rem-Ksize}
    In the above proposition, we require both $\ell$ and $K$ to be large. More precisely, $K$ should be sufficiently larger than $\ell$ so that $K-K'$ (which is at least of order $\ell$) remains small in comparison to $K$.
\end{remark}

\begin{proof}
	Using Proposition \ref{prop-softLocalTimes}, we can deduce that there exists a coupling $\mathbb{Q}$ of an independent Poisson point process $\Psi$ on $\mathbb{G}$ with intensity measure $\zeta(y)=δ(1-\bar{ε})\lambda_y$ and the locations of the particles $Y_j$ after they have moved for time $\mathbf{\Delta}$, which are distributed according to the density functions $f_{\mathbf{\Delta}}(x_j,y):=p_{\mathbf{\Delta}}(x_j,y)\lambda_y$, where $x_j$ is position of $Y_j$ at time $0$, such that the particles belonging to $\Psi$ are a subset of $(Y_j)_{j\in J}$ with probability at least
	\[
		\mathbb{Q}\big(H_J(y)\geq \delta\lambda_y(1-\bar{ε}),\,\forall y\in S_{K'}\big),
	\]
	where $H_J(y)=\sum_{j\in J}\xi_j f_{\mathbf{\Delta}}(x_j,y)$, the $(\xi_j)_{j\in J}$ are i.i.d.\ exponential random variables with parameter 1, and $J$ is the index set of particles inside $S_{K'}$ at time $ Δ $.

	We first observe that for any $\Gamma>0,$
 the probability of the complement is upper bounded via 
	\begin{align*}
		\mathbb{Q}\big(\exists y\in S_{K'}:\, H_J(y)<\delta\lambda_y(1-\bar{ε})\big)
		 & \leq \sum_{y\in S_{K'}}\mathbb{Q}	\big(H_J(y)<\delta\lambda_y(1-\bar{ε})\big)                     \\
		 & \leq \sum_{y\in S_{K'}}e^{\Gamma\lambda_y\delta(1-\bar{ε})}\mathbb{E}_{\mathbb{Q}}[\exp\{-\Gamma H_J(y)\}],
	\end{align*}
	 due to the exponential Chebychev inequality.

	Let now
	\begin{equation}\label{def-MixingThmR}
		R:=\Cr{MixingRoot}\mathbf{\Delta}^{1/\dw}\bar{ε}^{\,-\tfrac{\dw-1}{\dw}},
	\end{equation}
 where $\Cr{MixingRoot}$ will be chosen large enough later on.
	Next, let $J'$ be any subset of $J$  such that
	%for each subregion $S_i^{\ell}$ contains 
	exactly $\lceil\sum_{y\in S_i^{\ell}}\delta\lambda_y\rceil$ many particles $Y_j$, $j \in J',$ were inside $S_i^{\ell}$ at time $0$ for every sub-region $S_i^{\ell}$ of $S_K$. 
	For $y\in \mathbb{G}$, define also $J'(y)\subseteq J'$ to be the set of all indices $j\in J'$ for which $d(x_j,y)\leq R$ and define $H^{J'}(y)$ as $H_J(y)$, but with the sum in the definition restricted to the indices $j\in J'(y)$, i.e.\ $H^{J'}(y)=\sum_{j\in J'(y)}\xi_j f_{\mathbf{\Delta}}(x_j,y)$. By definition, $H_J(y)\geq H^{J'}(y)$ and therefore
	\begin{align*}
		\mathbb{E}_{\mathbb{Q}}\big[\exp\{-\Gamma H_J(y)\} \big]\leq\mathbb{E}_{\mathbb{Q}} \big[\exp\{-\Gamma H^{J'}(y)\}\big].
	\end{align*}
	Since the $\xi_j$ in the definition of $H$ are independent exponential random variables of parameter 1, we can calculate further
	\begin{align*}
		\mathbb{E}_{\mathbb{Q}}\big[\exp\{-\Gamma H^{J'}(y)\}\big] & =\prod_{j\in J'(y)}\mathbb{E}_{\mathbb{Q}}\big[\exp\{-\Gamma\xi_j f_{\mathbf{\Delta}}(x_j,y)\} \big] \\
		                                          & =\prod_{j\in J'(y)}(1+\Gamma f_{\mathbf{\Delta}}(x_j,y))^{-1}.
	\end{align*}
	Furthermore, choosing $\Cr{MixingDelta}$ large enough, due to \eqref{eq-HKBound} we have for all $x$ with $d(x,y)\leq R$ that $p_{\mathbf{\Delta}}(x,y)\leq \Cl[s]{hkdiag}\mathbf{\Delta}^{-\dv/\dw}$ for some constant $\Cr{hkdiag}$. In particular, this holds for all $y\in S_{K'}$ and all $x\in \bigcup S_i^\ell$, where the union runs across all $S_i^\ell$ for which there exists $j\in J'(y)$ such that $x_j\in S_i^\ell$. Setting now $\Gamma:= \tfrac{1}{4\Cr{hkdiag}C_λ}\bar{ε} \mathbf{Δ}^{\dv/\dw}$ gives

	\begin{equation}\label{eq-MixingThm1}
		\sup_{x\in B_R(y)}\Gamma f_{\mathbf{\Delta}}(x,y)=\sup_{x\in B_R(y)}\Gamma\lambda_y p_{\mathbf{\Delta}}(x,y)\leq \Cr{hkdiag} C_λ \Gamma\mathbf{Δ}^{-\dv/\dw}<\bar{ε}/4.
	\end{equation}
	For this value of $\Gamma$ and using that for $|z|\leq \frac{1}{2},$ Taylor expansion yields that $\log(1+z)\geq z-z^2$, it further holds that
	\begin{align*}
		\prod_{j\in J'(y)}(1+\Gamma f_{\mathbf{Δ}}(x_j,y))^{-1} & \leq \prod_{j\in J'(y)}\exp\big\{-\Gamma f_{\mathbf{\Delta}}(x_j,y)(1-\Gamma f_{\mathbf{Δ}}(x_j,y))\big\}                          \\
		                                                   & \leq\exp\Big\{-
		\Big(1-\sup_{x\in B_R(y)}\Gamma f_{\mathbf{Δ}}(x,y)\Big)
		\sum_{j\in J'(y)}\Gamma f_{\mathbf{Δ}}(x_j,y)\Big\}                                                                                                                        \\
		                                                   & \leftstackrel{\eqref{eq-MixingThm1}}{\le} \exp\Big\{-\Gamma(1-\bar{ε}/4)\sum_{j\in J'(y)}f_{\mathbf{Δ}}(x_j,y)\Big\}.
	\end{align*}
	We claim now (and prove below) that
	\begin{equation}\label{eq-mixing-movetoy}
		\sum_{j\in J'(y)}f_{\mathbf{Δ}}(x_j,y)\geq \delta\lambda_y(1-\bar{ε}/2),
	\end{equation}
	which then gives us that
	\begin{align*}
		\mathbb{Q} \big(\exists y\in S_{K'}:\, H_J(y)<\delta\lambda_y(1-\bar{ε})\big) & \leq\exp \big \{\gamma\lambda_y\delta(1-\bar{ε})-\gamma(1-\bar{ε}/4)\delta\lambda_y(1-\bar{ε}/2)\big \} \\
		                                                                              & \leq\exp\{-\gamma\delta\lambda_y\bar{ε}/4\}.
	\end{align*}
	Plugging in the definition of $\gamma$ then yields the claim. We therefore proceed to prove \eqref{eq-mixing-movetoy}.

	Recall that a particle $Y_j$ has its initial location at time $0$ equal to $x_j$. For each $S_i^{\ell}$ and each particle $Y_j$ whose location at time 0 was $x_j\in S_i^{\ell}$, let $x_j'\in S_i^{\ell}$ be such that $f_{\mathbf{Δ}}(x_j',y)=\max_{w\in S_i^{\ell}}f_{\mathbf{Δ}}(w,y)$. Note that for two particles $Y_i,Y_j$ that were located inside $S_i^{\ell}$, $x_i'=x_j'$. This however does not hold for two particles if they did not lie in the same subregion at time 0. Next, we can bound
	\[
		\sum_{j\in J'(y)}f_{\mathbf{\Delta}}(x_j,y)\geq \sum_{j\in J'(y)}\left(f_{\mathbf{\Delta}}(x_j',y)-|f_{\mathbf{\Delta}}(x_j',y)-f_{\mathbf{\Delta}}(x_j,y)|\right).
	\]
	We will look at the first summand: for each $S_i^{\ell}$, it holds by our choice of $x_j'$ that 
	\begin{align*}
		\sum_{\substack{j\in J'(y)                         \\ x_j\in S_i^{\ell}}}f_{\mathbf{\Delta}}(x_j',y)&=\max_{w\in S_i^{\ell}}f_{\mathbf{Δ}}(w,y)\sum_{\substack{j\in J'(y)\\ x_j\in S_i^{\ell}}}1\\
		\intertext{which by definition of $J'$ can be lower bounded by }
		\max_{w\in S_i^{\ell}}f_{\mathbf{Δ}}(w,y)
		\Big\lceil \sum_{z\in S_i^{\ell}}\delta\lambda_z \Big\rceil
		 & \geq \sum_{z\in S_i^{\ell}}δλ_z f_{\mathbf{Δ}}(z,y).
	\end{align*}
	Set $R(y)$ to be the set of all sites $z$ of $S_K$ for which $d(z,y)\leq R$. Note that $R$ is always strictly positive since it (cf.\ \eqref{def-MixingThmR}) is proportional to $l$ and $\Cr{MixingRoot}$ is assumed to be large. Furthermore, note that if $z\in R(y)$ then for all particles $Y_j$ with their respective $x_j'=z$ and $j\in J'$ we have that $j\in J'(y)$. It also holds that $λ_z f_{\mathbf{Δ}}(z,y)=λ_y f_{\mathbf{Δ}}(y,z)$, which combined with the preceding calculation yields for each $S_i^{\ell}$ that
	\begin{align*}
		\sum_{j\in J'(y)}f_{\mathbf{\Delta}}(x_j',y) & \geq \sum_{z\in R(y)}\delta\lambda_z f_{\mathbf{\Delta}}(z,y)
		                                              =\delta\lambda_y\sum_{z\in R(y)} f_{\mathbf{\Delta}}(y,z)     
		                                              \geq \delta \lambda_y \P(\Conf(R,\mathbf{\Delta})).         
    \end{align*}                                            
		By Lemma \ref{lemma-confining} we have that there exists a constant $\Cr{conf1}$ so as to lower bound the previous expression by 
  \begin{align*}
		\delta\lambda_y \big(1-\Cr{conf1} e^{-\Cr{conf1}^{-1}\big( \frac{R^{\dw} }{\mathbf{\Delta}} \big)^{\frac{1}{\dw-1}}} \big)
		                                             & \geq \delta\lambda_y(1-\bar{ε}/4),
	\end{align*}
	where the last inequality holds by setting $R$ (cf.\ \eqref{def-MixingThmR}) through $\Cr{MixingRoot}$ large enough with respect to $\Cr{conf1}$.

	It remains to find an upper bound for the second summand $\sum_{j\in J'(y)}|f_{\mathbf{\Delta}}(x_j',y)-f_{\mathbf{\Delta}}(x_j,y)|$. Let $I$ be the set of all $i$ for which $S_i^{\ell}$ contains a location $x_j$ from the set $(x_j)_{j\in J'(y)}$. Then
	\begin{align}\begin{split}
		\sum_{j\in J'(y)}|f_{\mathbf{Δ}}(x_j',y)-f_{\mathbf{\Delta}}(x_j,y)|
		 & 
		=\sum_{i\in I}\sum_{\substack{j\in J'(y)          \\ x_j\in S_i^{\ell}}} |f_{\mathbf{\Delta}}(x_j',y)-f_{\mathbf{\Delta}}(x_j,y)|\\\label{eq-PHIform_pre}
		 & 
		=\lambda_y\sum_{i\in I}\sum_{\substack{j\in J'(y) \\ x_j\in S_i^{\ell}}}|p_{\mathbf{Δ}}(x_j',y)-p_{\mathbf{\Delta}}(x_j,y)|.
	\end{split}\end{align}
	Since for any $y \in \mathbb G^d,$ the heat kernel $\mathbb G^d \times \R \ni (x,t) \mapsto p_t(x,y)$  is caloric and the parabolic Harnack inequality is fulfilled on $\mathbb G^d,$ Proposition \ref{prop-fluctuations} with $r_0^{\dw}=\mathbf{\Delta}$ applies. This allows us to use the upper bound 
    \[
    |p_{\mathbf{Δ}}(x_j',y)-p_{\mathbf{\Delta}}(x_j,y)| \leq\frac{\Cr{BH} \ell^{Θ}}{\mathbf{\Delta}^{\Theta/\dw}}\sup_{(t,x)\in Q_+(x_j',\mathbf{\Delta}^{1/\dw})}|p_{t}(x,y)|.
    \]
    We can also take advantage of the upper heat kernel bound \eqref{eq-HKBound}, uniformly in $x_j'$, on the supremum term and upper bound it by $\Cl{HK}\mathbf{Δ}^{-\dv/\dw}$. Combined, \eqref{eq-PHIform_pre} is smaller than
	\begin{align}
		\begin{split}
			\lambda_y\sum_{i\in I}\sum_{\substack{j\in J'(y)\\ x_j\in S_i^{\ell}}}\frac{\Cr{BH} \ell^{Θ}}{\mathbf{\Delta}^{\Theta/\dw}}\Cr{HK}\mathbf{Δ}^{-\dv/\dw}
			&
			\leq \lambda_y\sum_{i\in I}\sum_{\substack{x\in S_i^{\ell}}}\frac{\Cr{BH} \delta\lambda_x l^{\Theta}}{\mathbf{\Delta}^{\Theta/\dw}}\Cr{HK}\mathbf{\Delta}^{-\dv/\dw}\\
			&\label{eq-PHIform}
			=\delta\lambda_y \Cr{BH}\Cr{HK}\sum_{i\in I}\sum_{\substack{x\in S_i^{\ell}}}\lambda_x l^{\Theta}\mathbf{\Delta}^{-(\dv+\Theta)/\dw}\\
			&
			\leftstackrel{\eqref{def-dv}}{\le} \delta\lambda_y \Cr{BH}\Cr{HK}\CVol C_{\lambda}R^{\dv}l^{\Theta}\mathbf{\Delta}^{-(\dv+\Theta)/\dw}\\
			&
			\leq \delta\lambda_y \bar{ε}/4,
		\end{split}
	\end{align}
	where the last inequality follows from \eqref{def-MixingThmR} as well as \eqref{eq:Delta}, and by setting $\Cr{MixingRoot}$ sufficiently large with respect to the constants $\Cr{BH},\Cr{HK},\CVol$, and $C_{\lambda}$. This proves \eqref{eq-mixing-movetoy} and concludes the proof.
\end{proof}

The statement of Proposition \ref{prop-mixing-unbounded} does not depend on particles located outside of the region $S_K$ at time 0. However, since the particles can move in an unrestricted way, repeated applications of the theorem across multiple regions of time and space (cf.\ Sections \ref{subsec-firstlevel} and \ref{SEC-LipSurf}) still exhibit long range correlations that we would like to avoid. To that end, we will prove a version of Proposition \ref{prop-mixing-unbounded} also for particle systems conditioned on having the particle movement confined (cf.\ Lemma \ref{lemma-confining}). The main difficulty is that by conditioning the particles in this way, their transition probabilities do not necessarily satisfy \eqref{eq-HKBound} and by extension \eqref{eq-PHI} any longer. It turns out, however, that these probabilities are still quantitatively the same under some mild modifications of the assumptions, which we prove in the following lemma.

\begin{lemma}
	\label{lemma-conditionedBound}
	Let $\lambda$ satisfy \eqref{def-conductances}. Then there exist constants $\Cl[s]{Cond1}$ and $\Cl[s]{Cond2}$  so that the following holds. Consider a region $S_\ell$ with $\ell>0$. Let $\mathbf{\Delta}>\Cr{Cond1} \ell^{\dw}$ and $\rho\geq \Cr{Cond2}(\mathbf{\Delta}\log_2^{\dw-1}(\mathbf{\Delta}))^{1/\dw}$. Denote by $Y$ a random walk on $\mathbb{G}^d$ conditioned on being confined to $\displ_{\rho/2}$ during the time interval $[0,\mathbf{\Delta}]$. Let $x,y\in S_\ell$ with $x$ being the starting point of the random walk, and define

	\[
		q(x,y):=P_x\big(Y_{\mathbf{\Delta}}=y\,|\, Y_t \text{ is confined to }\displ_{\rho/2}\text{ during }[0,\mathbf{\Delta}]\big).
	\]
	Then there exists a constant $C>2$ such that for $x,y,z\in S_l$ we have
	\[
		\left|\frac{q(x,y)}{\lambda_y}-\frac{q(z,y)}{\lambda_y}\right|\leq C \ell^{\Theta}\mathbf{\Delta}^{-(\dv+\Theta)/\dw}.
	\]
\end{lemma}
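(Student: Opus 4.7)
The strategy is to reduce the conditioned kernel $q$ to the unconditioned heat kernel $p_{\mathbf{\Delta}}$ up to a negligible error, and then invoke Proposition \ref{prop-fluctuations}. First I will write
\[
\frac{q(x,y)}{\lambda_y} \;=\; \frac{p^*_{\mathbf{\Delta}}(x,y)}{P_x(\mathrm{Conf})}, \qquad p^*_{\mathbf{\Delta}}(x,y):=\frac{1}{\lambda_y}P_x\!\left(X_{\mathbf{\Delta}}=y,\,\mathrm{Conf}\right),
\]
where $\mathrm{Conf}:=\Conf(\displ_{\rho/2},\mathbf{\Delta})$. Since $0\le p_{\mathbf{\Delta}}(x,y)-p^*_{\mathbf{\Delta}}(x,y)\le \lambda_y^{-1}P_x(\mathrm{Conf}^{\mathsf{c}})$ and by \eqref{def-conductances} $\lambda_y^{-1}\le C_\lambda$, Lemma \ref{lemma-confining} applied with radius $\rho/2$ and the hypothesis $\rho\ge \Cr{Cond2}(\mathbf{\Delta}\log_2^{\dw-1}(\mathbf{\Delta}))^{1/\dw}$ gives
\[
P_x(\mathrm{Conf}^{\mathsf{c}})\;\le\; \Cr{conf1}\exp\!\left\{-\Cr{conf1}^{-1}\big(\tfrac{(\rho/2)^{\dw}}{\mathbf{\Delta}}\big)^{\frac{1}{\dw-1}}\right\}\;\le\;\mathbf{\Delta}^{-k},
\]
uniformly in $x$, where $k>0$ can be made as large as we wish by taking $\Cr{Cond2}$ correspondingly large. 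In particular $P_x(\mathrm{Conf})\ge 1/2$ and $|P_x(\mathrm{Conf})^{-1}-P_z(\mathrm{Conf})^{-1}|\le 8\mathbf{\Delta}^{-k}$.

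Second, I will decompose
\[
\frac{q(x,y)}{\lambda_y}-\frac{q(z,y)}{\lambda_y} \;=\; \frac{p^*_{\mathbf{\Delta}}(x,y)-p^*_{\mathbf{\Delta}}(z,y)}{P_x(\mathrm{Conf})} \;+\; p^*_{\mathbf{\Delta}}(z,y)\!\left(\frac{1}{P_x(\mathrm{Conf})}-\frac{1}{P_z(\mathrm{Conf})}\right).
\]
Using $p^*_{\mathbf{\Delta}}(z,y)\le p_{\mathbf{\Delta}}(z,y)\le \Cr{HK}\mathbf{\Delta}^{-\dv/\dw}$ from \eqref{eq-HKBound}, the second summand is bounded by $8\Cr{HK}\mathbf{\Delta}^{-\dv/\dw-k}$; choosing $k\ge \Theta/\dw$ (by taking $\Cr{Cond2}$ large enough) absorbs this into the target bound since $\ell^{\Theta}\ge 1$. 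Similarly, $|p^*_{\mathbf{\Delta}}(x,y)-p^*_{\mathbf{\Delta}}(z,y)|$ differs from $|p_{\mathbf{\Delta}}(x,y)-p_{\mathbf{\Delta}}(z,y)|$ by at most $2C_\lambda\Cr{conf1}\mathbf{\Delta}^{-k}$, which is similarly absorbed.

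Third, I handle the main term $|p_{\mathbf{\Delta}}(x,y)-p_{\mathbf{\Delta}}(z,y)|$ via Proposition \ref{prop-fluctuations}. The key observation is that by reversibility of the weighted walk, the map $(w,t)\mapsto p_t(w,y)$ is caloric in $(w,t)$ for any fixed $y$; indeed the adjoint generator acting on the first variable coincides with the generator, so $\partial_t p_t(\cdot,y)=L p_t(\cdot,y)$. Thus, picking any $x_0\in S_\ell$ and setting $r_0:=\mathbf{\Delta}^{1/\dw}$, the hypothesis $\mathbf{\Delta}\ge \Cr{Cond1}\ell^{\dw}$ with $\Cr{Cond1}\ge 2^{\dw}$ (and $\ell$ large) ensures $x,z\in B_{r_0/2}(x_0)$ and $r_0\ge 2$. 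Choosing $t_1=t_2=r_0^{\dw}$, Proposition \ref{prop-fluctuations} together with \eqref{eq-HKBound} on the supremum yields
\[
|p_{\mathbf{\Delta}}(x,y)-p_{\mathbf{\Delta}}(z,y)|\;\le\; \Cr{BH}\Big(\tfrac{\ell}{\mathbf{\Delta}^{1/\dw}}\Big)^{\Theta}\sup_{(s,w)\in Q_+(x_0,r_0)}p_s(w,y)\;\le\; \Cr{BH}\Cr{HK}\,\ell^{\Theta}\mathbf{\Delta}^{-(\dv+\Theta)/\dw}.
\]
Dividing by $P_x(\mathrm{Conf})\ge 1/2$ and combining with the two polynomially small error contributions yields the claim with a suitable constant $C>2$.

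The only genuinely subtle point is coordinating the constants: $\Cr{Cond1}$ must be taken large enough that Proposition \ref{prop-fluctuations} applies with $r_0^{\dw}=\mathbf{\Delta}$ on a ball containing $S_\ell$, while $\Cr{Cond2}$ must be taken large enough that the confinement failure probability decays faster than the target scale $\mathbf{\Delta}^{-(\dv+\Theta)/\dw}$, so that all error terms arising from the conditioning can be swept into the main bound.
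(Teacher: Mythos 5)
Your proof is correct, and it takes a genuinely different—and somewhat cleaner—route than the paper's. The paper conditions on whether the walk exits $B_{\rho/2}$ during $[0,\mathbf{\Delta}]$ and introduces the auxiliary kernel $h(x,y):=P_x(Y_{\mathbf{\Delta}}=y\mid Y\text{ exits }B_{\rho/2}(x))$, writing $f_{\mathbf{\Delta}}=q\,p_E+h\,(1-p_E)$; after a case split on $q(x,y)\gtrless q(z,y)$ it then needs a separate argument (the strong Markov property together with a two-regime heat kernel estimate for $f_{\mathbf{\Delta}-\tau}(w,y)$) to bound $h/\lambda_y$ by an absolute constant. You instead work with the restricted kernel $p^*_{\mathbf{\Delta}}(x,y)=\lambda_y^{-1}P_x(X_{\mathbf{\Delta}}=y,\mathrm{Conf})$ and exploit the elementary sandwich $0\le p_{\mathbf{\Delta}}-p^*_{\mathbf{\Delta}}\le\lambda_y^{-1}P_x(\mathrm{Conf}^{\mathsf{c}})$, combined with the algebraic split
\[
\frac{q(x,y)}{\lambda_y}-\frac{q(z,y)}{\lambda_y}=\frac{p^*_{\mathbf{\Delta}}(x,y)-p^*_{\mathbf{\Delta}}(z,y)}{P_x(\mathrm{Conf})}+p^*_{\mathbf{\Delta}}(z,y)\Big(\frac{1}{P_x(\mathrm{Conf})}-\frac{1}{P_z(\mathrm{Conf})}\Big).
\]
This avoids the kernel $h$ entirely, and it also separates the $x$- versus $z$-dependence of the normalising probabilities as an explicit error term, a point that the paper's computation treats somewhat loosely by writing a single symbol $p_E(\rho)$ for both starting points. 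Both proofs then reduce the main contribution $|p_{\mathbf{\Delta}}(x,y)-p_{\mathbf{\Delta}}(z,y)|$ to Proposition \ref{prop-fluctuations} applied to the caloric function $(w,t)\mapsto p_t(w,y)$ (caloricity follows from reversibility as you argue, equivalently from $p_t(w,y)=p_t(y,w)$), and both invoke Lemma \ref{lemma-confining} with the hypothesis $\rho\geq \Cr{Cond2}(\mathbf{\Delta}\log_2^{\dw-1}\mathbf{\Delta})^{1/\dw}$ to make the confinement failure probability polynomially small of arbitrary order by enlarging $\Cr{Cond2}$. One bookkeeping point, shared with the paper: absorbing the error terms into the target $\ell^{\Theta}\mathbf{\Delta}^{-(\dv+\Theta)/\dw}$ tacitly uses $\ell\ge1$ so that $\ell^{\Theta}\ge1$; this is the intended regime for the tessellation scale.
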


\begin{remark}
	It is important to note that the above bound is of the same form as the bound we used in \eqref{eq-PHIform} for the unconditioned random walk. Consequently, we will use this lemma to prove a conditioned version of Proposition \ref{prop-mixing-unbounded} without having to directly use \eqref{eq-PHI}, which as mentioned above might not necessarily hold in this case.
\end{remark}

\begin{proof}[Proof of Lemma \ref{lemma-conditionedBound}]
    Define the probability 
    \[
        p_x(\rho):=P_x\big(Y_t\in B_{\rho/2}(x) \text{ for all } t\in [0,\mathbf{Δ}]\big),
    \]
	that a random walk started at $x$ is confined to $B_{\rho/2}$ during $[0,\mathbf{\Delta}]$. Using Lemma \ref{lemma-confining}, we have that
	\begin{align} \label{eq:LBp}
		1-p_E(\rho)%&\leq \P_x(Y\text{ exits }B_{ρ/2}(x)\text{ during }[0,\mathbf{\Delta}])\\
		%&=\P(\Conf(\displ_{\rho/2},\mathbf{\Delta})\compl)\\
		 & \leq \Cr{conf1} e^{-\Cr{conf1}^{-1}(\rho^{\dw}/\mathbf{\Delta})^{\frac{1}{\dw-1}}}.
	\end{align}
	Next, writing $h(x,y):=P_x(Y_{\mathbf{Δ}}=y\,|\,Y\text{ exits }B_{ρ/2}(x)\text{ during }[0,\mathbf{\Delta}])$ and $f_{\mathbf{\Delta}}(x,y)=P_x(Y_{\Delta}=y)$, we can write
	\[
		f_{\mathbf{\Delta}}(x,y)=q(x,y)p_E(\rho)+h(x,y)(1-p_E(\rho)).
	\]
	From this, we  immediately obtain the bound
	\[
		q(x,y)\leq f_{\mathbf{\Delta}}(x,y)\frac{1}{p_E(\rho)}.
	\]
	We can then further upper bound
	\begin{equation}\label{eq-conditionedBoundCalculation}\begin{split}
			\left|\frac{q(x,y)}{\lambda_y}-\frac{q(z,y)}{\lambda_y}\right|&=\mathds{1}_{\{q(x,y)>q(z,y)\}} \left(\frac{q(x,y)}{\lambda_y}-\frac{q(z,y)}{\lambda_y}\right)\\
			&\quad+\mathds{1}_{\{q(x,y)\leq q(z,y)\}} \left(\frac{q(z,y)}{\lambda_y}-\frac{q(x,y)}{\lambda_y}\right)\\
			&\leq\mathds{1}_{\{q(x,y)>q(z,y)\}} \left(\frac{f_{\mathbf{\Delta}}(x,y)}{\lambda_y p_E(\rho)}-\frac{f_{\mathbf{\Delta}}(z,y)}{\lambda_y p_{E}(\rho)}+\frac{h(z,y)(1-p_E(\rho))}{p_E(\rho)\lambda_y}\right)\\
			&\quad+\mathds{1}_{\{q(x,y)\leq q(z,y)\}} \left(\frac{f_{\mathbf{\Delta}}(z,y)}{\lambda_y p_E(\rho)}-\frac{f_{\mathbf{\Delta}}(x,y)}{\lambda_y p_{E}(\rho)}+\frac{h(x,y)(1-p_E(\rho))}{p_E(\rho)\lambda_y}\right)\\
			&\leq \frac{|p_{\mathbf{\Delta}}(y,x)-p_{\mathbf{\Delta}}(y,z)|}{p_E(\rho)}+\frac{\max\{h(x,y),h(z,y)\}(1-p_E(\rho))}{p_E(\rho)\lambda_y}.
		\end{split}\end{equation}

	Next, observe that we can write $h(x,y)$ as $\mathbb{E}_x[f_{\mathbf{\Delta}-\tau}(w,y)\,|\,\tau<\mathbf{\Delta}]$ with $\tau$ being the first time $Y$ exits $B_{ρ/2}(x)$ and $w:=Y_{\tau}$ the random vertex at the boundary of $B(x,\rho/2)$ where $Y$ is at time $\tau$. Since the weights $\lambda_{x,y}$ satisfy \eqref{def-conductances} we can bound $\frac{f_{\mathbf{\Delta}-\tau}(w,y)}{\lambda_y}$ from above by some positive constant $\C$. This is because either $\Delta-\tau$ is larger than $d(w,y)$, which allows us to use \eqref{eq-HKBound}, or $\Delta-\tau$ is smaller than $d(w,y)$, so that $f_{\mathbf{\Delta}}(w,y)$ is bounded above by the probability that a random walk jumps at least $d(w,y)$ steps in time $\mathbf{\Delta}-\tau$, which is small enough since $d(w,y)$ is large. Therefore we have that $\frac{\max\{h(x,y),h(z,y)\}(1-p_E(\rho))}{p_E(\rho)\lambda_y}$ is at most $\Cl{max}$. This together with the bound on $1-p_E(\rho)$ yields
	\begin{align*}
		\frac{\max\{h(x,y),h(z,y)\}(1-p_E(\rho))}{p_E(\rho)\lambda_y} & \leq \frac{\Cr{max}\cdot \Cr{conf1}}{p_E(\rho)}\exp\{-\Cr{conf1}^{-1}(\rho^{\dw}/\mathbf{\Delta})^{\frac{1}{\dw-1}}\}                   \\
		                                                              & \leq \frac{\Cr{max}\cdot \Cr{conf1}}{p_E(\rho)}\exp\big \{-\Cr{conf1}^{-1}(\Cr{Cond2}^{\frac{1}{\dw-1}}\log_2(\mathbf{\Delta})) \big\}.
	\end{align*}

	We now return to \eqref{eq-conditionedBoundCalculation}. By setting $\Cr{Cond2}$ (and by extension $\rho$) large enough and using the upper bound for $1-p_E(\rho)$ from \eqref{eq:LBp}, we can bound $p_E(\rho)$ from below by $1/2$. Applying Proposition \ref{prop-fluctuations} to the term $|p_{\mathbf{\Delta}}(y,x)-p_{\mathbf{\Delta}}(y,z)|$, using \eqref{eq-HKBound} to bound the resulting supremum term, and finally setting $\Cr{Cond2}$ even larger if necessary for $\exp\{-\Cr{conf1}^{-1}(\Cr{Cond2}^{\frac{1}{\dw-1}}\log_2(\mathbf{\Delta}))\}$ to be smaller than $\mathbf{\Delta}^{-\dv/\dw}$ concludes the proof.
\end{proof}

We now state the  version of Proposition \ref{prop-mixing-unbounded} for particles that are confined. Note that the statement remains essentially unchanged, other than having a stronger condition on $K-K'$ than before. This is also the statement of the result that we will rely on to conduct our multi-scale analysis (cf. Lemma \ref{lemma-Mixing}).

\begin{thm}\label{thm-mixing}
	Consider elliptic conductances $ λ_{x,y} $ satisfying \eqref{def-conductances} for some $ C_λ>0. $ %For each $ \Cl[m]{Mixing1}>0 $ there exist $  \Cl[m]{MixingDelta}, \Cl[m]{MixingRoot}, \Cl[m]{MixingProb}, Θ $ such that the following holds.
	For $Θ$ as in Proposition \ref{prop-fluctuations} and each $ \Cr{Mixing1}>0 $ there exist $  \Cr{MixingDelta}, \Cr{MixingRoot}, \Cr{MixingProb}$ such that the following holds.

	Fix large enough $\ell>0 $ and $ \bar{ε} >0$. Consider a region $ S_K $ of diameter $K\gg\ell$ tessellated into sub-regions $S_i^{\ell} $ of diameter at most $ \ell $ such that at time 0 there is a collection of particles, where each sub-region $ S_i^\ell $ contains at least
	$ δ\sum_{y\in S_{i}^\ell} λ_y > \Cr{Mixing1} $ particles for some $ δ>0. $ Let $ \mathbf{Δ} $ and $ K' >0$ be such that
	\begin{align*}
		\mathbf{Δ} & \ge \mathbf{Δ}_0:=\Cr{MixingDelta} \ell^{\dw} \bar{ε}^{\,-\tfrac{4}{Θ}}        \\
		K-K'       & \ge \Cr{MixingRoot}(\mathbf{Δ}(\log_2 \mathbf{Δ})^{\dw-1})^{\frac{1}{\dw}},
	\end{align*}
    and for $j\in J$, denote by $ Y_j $ the location of the $ j-$th  particle at time $ \mathbf{Δ} $ conditioned on being confined to a ball $ B_{ (K-K')}$  during $ [0,\mathbf{Δ}] $, where $J$ is the index set of all particles that are inside the sub-region $S_{K'}\subset S_K$ of diameter $K'$ at time $\mathbf{\Delta}$, where $S_{K'}$ has Hausdorff distance at least $\frac{K-K'}{2}$ from the complement of $S_K$.

	Then, if $K$ is large enough to satisfy \eqref{eq:Delta}, there exists a coupling $ \mathbb{Q} $ of a Poisson point process $ Ξ $ with intensity measure $δ(1-\bar{ε})λ_y $ for $ y\in S_{K'} $ and the family $ (Y_j)_{j\in J} $ such that
	\begin{equation*}%\label{eq-Mixing}
		\mathbb Q \big (
		Ξ\subseteq (Y_j)_j \big ) \ge 1-\sum_{y\in S_{K'}}e^{-\Cr{MixingProb} δλ_y \bar{ε}^2	\mathbf{Δ}^{\frac{\dv}{\dw}}	}.
	\end{equation*}
\end{thm}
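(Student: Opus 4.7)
The strategy is to mirror, step by step, the proof of Proposition \ref{prop-mixing-unbounded}, replacing the unconditioned transition density $f_{\mathbf{Δ}}(x,y)=p_{\mathbf{Δ}}(x,y)λ_y$ by the conditioned density $\tilde f_{\mathbf{Δ}}(x,y):=q(x,y)λ_y$, where $q(x,y)$ is defined as in Lemma \ref{lemma-conditionedBound} with the confinement ball chosen to be $B_{(K-K')/2}$ around each particle's initial position. The soft local times construction in Proposition \ref{prop-softLocalTimes} applies verbatim to the family of distributions $(\tilde f_{\mathbf{Δ}}(x_j,\cdot))_{j\in J}$ since these are honest probability densities with respect to $λ$, so that one obtains a coupling $\mathbb{Q}$ of the Poisson point process $Ξ$ with the family $(Y_j)_{j\in J}$ for which
\begin{equation*}
    \mathbb{Q}\bigl(Ξ\subseteq(Y_j)_{j\in J}\bigr)\ge \mathbb{Q}\Bigl(\tilde H_J(y)\ge δλ_y(1-\bar{ε}),\ \forall y\in S_{K'}\Bigr),
\end{equation*}
with $\tilde H_J(y):=\sum_{j\in J}ξ_j\tilde f_{\mathbf{Δ}}(x_j,y)$ and i.i.d.\ exponential $ξ_j$.

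The exponential Chebyshev step then needs two ingredients: a uniform diagonal upper bound on $\tilde f_{\mathbf{Δ}}$ to control the quadratic correction in $\log(1+z)\ge z-z^2$, and a fluctuation bound to control the analogue of \eqref{eq-PHIform}. For the first, one uses the trivial estimate $q(x,y)\le f_{\mathbf{Δ}}(x,y)/p_E(ρ)$; since Lemma \ref{lemma-confining} together with the enlarged lower bound on $K-K'$ forces $p_E(ρ)\ge 1/2$, the diagonal bound differs from that in the proof of Proposition \ref{prop-mixing-unbounded} only by a factor $2$, so choosing $\Gamma:=\tfrac{1}{8\Cr{hkdiag}C_λ}\bar{ε}\mathbf{Δ}^{\dv/\dw}$ preserves $\sup_{x\in B_R(y)}\Gamma\tilde f_{\mathbf{Δ}}(x,y)<\bar{ε}/4$. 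For the second ingredient, the key observation is that Lemma \ref{lemma-conditionedBound} provides exactly the same $\ell^{Θ}\mathbf{Δ}^{-(\dv+Θ)/\dw}$ fluctuation estimate as was obtained from Proposition \ref{prop-fluctuations} in \eqref{eq-PHIform}, allowing the chain of inequalities bounding $\sum_{j\in J'(y)}\tilde f_{\mathbf{Δ}}(x_j,y)$ from below by $δλ_y(1-\bar{ε}/2)$ to be carried through unchanged.

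The one genuine subtlety is that Lemma \ref{lemma-conditionedBound} requires the confinement radius $ρ$ to satisfy $ρ\ge\Cr{Cond2}(\mathbf{Δ}\log_2^{\dw-1}(\mathbf{Δ}))^{1/\dw}$, which is precisely the reason for the stronger hypothesis on $K-K'$ in the theorem. Since the particles are conditioned to stay within $B_{K-K'}$ around their initial positions, one sets $ρ:=K-K'$ and verifies that the ball $B_R(y)$ used in the restriction to $J'(y)$ still lies inside the confinement region, so that the $R$-conditioning of the soft local times argument remains compatible with the confinement event; by \eqref{def-MixingThmR} this amounts to comparing $\mathbf{Δ}^{1/\dw}\bar{ε}^{-(\dw-1)/\dw}$ with $(\mathbf{Δ}\log_2^{\dw-1}\mathbf{Δ})^{1/\dw}$, which is handled by enlarging $\Cr{MixingRoot}$.

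The main obstacle I anticipate is handling the lower bound $p_E(ρ)\ge 1/2$ while simultaneously ensuring that the confinement does not destroy the pointwise heat-kernel comparisons used in the computation analogous to \eqref{eq-mixing-movetoy}. The cleanest way to handle this is to condition each particle individually on being confined to a ball of radius $(K-K')/2$ around its own initial position (so that confinement inside $B_{K-K'}$ of any fixed point is automatic for particles starting within $S_{K'}$), then apply Lemma \ref{lemma-conditionedBound} particle by particle. Modulo this bookkeeping, every remaining estimate — the identity $λ_zq(z,y)=λ_yq(y,z)$ (by reversibility of the conditioned walk, which inherits reversibility from the weighted graph), the confinement-probability term, and the final union bound over $y\in S_{K'}$ — is a direct transcription of the corresponding step in the proof of Proposition \ref{prop-mixing-unbounded}, yielding the same exponential estimate with the same exponent $\Cr{MixingProb}δλ_y\bar{ε}^2\mathbf{Δ}^{\dv/\dw}$.
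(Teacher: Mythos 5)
Your proof proposal follows the same route as the paper: the paper's own proof of Theorem~\ref{thm-mixing} is essentially two sentences, pointing to Lemma~\ref{lemma-conditionedBound} for the fluctuation bound and to the upper bound $q(x,y)\leq f_{\mathbf{\Delta}}(x,y)/p_E(\rho)$ for the choice of $\Gamma$, and then declaring that the argument proceeds as in Proposition~\ref{prop-mixing-unbounded}. You correctly identify both of these substitutions and correctly diagnose why the stronger polylogarithmic hypothesis on $K-K'$ is needed (so that Lemma~\ref{lemma-conditionedBound} applies with $\rho$ large enough to force $p_E(\rho)\ge 1/2$ while still keeping $B_R(y)$ inside the confinement region). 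Two small points deserve correction.

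First, the density entering the soft local times coupling should be $\tilde f_{\mathbf{\Delta}}(x_j,y):=q(x_j,y)$, not $q(x_j,y)\lambda_y$: just as $f_{\mathbf{\Delta}}(x,y)=p_{\mathbf{\Delta}}(x,y)\lambda_y=P_x(X_{\mathbf{\Delta}}=y)$ is already a probability mass function in $y$, so is $q(x,y)$, and an extra $\lambda_y$ would destroy normalisation. This is cosmetic.

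Second, and more substantively, the identity $\lambda_z q(z,y)=\lambda_y q(y,z)$ that you invoke as ``reversibility of the conditioned walk'' is not true as stated. The conditioning event $\{Y\text{ confined to }B_{\rho/2}(z)\}$ is centred at the \emph{starting} point, so $q(z,y)$ and $q(y,z)$ involve confinement to two different balls and the conditioned chain is not reversible. What does hold, by time-reversal of the \emph{unconditioned} walk (the confinement event is invariant under path reversal, since it constrains the range of the path), is
\begin{equation*}
\lambda_z\, P_z\bigl(X_{\mathbf{\Delta}}=y,\ X\text{ confined to }B_{\rho/2}(z)\bigr)=\lambda_y\, P_y\bigl(X_{\mathbf{\Delta}}=z,\ X\text{ confined to }B_{\rho/2}(z)\bigr),
\end{equation*}
with the same ball $B_{\rho/2}(z)$ on both sides. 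Combining this with $p_E(z)\le 1$ and, for $z\in R(y)$ (so $d(z,y)\le R$), the containment $B_{\rho/2-R}(y)\subseteq B_{\rho/2}(z)$, one obtains
\begin{equation*}
\sum_{z\in R(y)}\lambda_z q(z,y)\ \ge\ \lambda_y\sum_{z\in R(y)}P_y\bigl(X_{\mathbf{\Delta}}=z,\ X\text{ confined to }B_{\rho/2-R}(y)\bigr)\ \ge\ \lambda_y\,\P\bigl(\Conf(B_R,\mathbf{\Delta})\bigr),
\end{equation*}
provided $R\le\rho/2-R$, which is guaranteed since $\rho$ grows by an extra polylogarithmic factor relative to $R$. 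This recovers the step you wanted; you should replace the claimed detailed-balance identity for $q$ with this time-reversal-plus-monotonicity argument.
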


\begin{remark}\label{rem-Ksize2}
    Similar to Proposition \ref{prop-mixing-unbounded}, we require both $\ell$ and $K$ to be large. Here, $K$ has to be sufficiently larger than $\ell$ so that $K-K'$ (which is grows polylogarithmically faster than $\ell$) remains small in comparison to $K$.
\end{remark}

\begin{proof}
	Using Lemma \ref{lemma-conditionedBound} and the upper bound on $g(x,y)$ from its proof when setting $\Gamma$, the proof proceeds the same as in Proposition \ref{prop-mixing-unbounded}. The independence from the graph outside of $S_{(2K-K')}$ follows from the fact that we consider only particles which are confined to $\displ_{(K-K')}$ (recall that confinement is defined with respect to the starting position of a particle) and ended in $S_K'$, so they never left $S_{(2K-K')}$ during $[0,\mathbf{\Delta}]$.
\end{proof}

\section{Multi-scale setup}\label{SEC-MultiScaleSetup}

In this section we define the multi-scale set-up for the construction. For some (large) $ κ\in \N $, we will define for each $1\le k \le κ$ cells at scale $ k $: in the fractal graph, spatial tiles will be denoted by $S_k(ι)$ and indexed by some $ι\in \mathbb{B}^d$; the time line $\R$ will be subdivided into intervals $T_k(τ)$ and indexed by $τ\in\Z.$ The space-time cells $R_k(ι,τ)$ will simply be the Cartesian product $S_k(ι) \times T_k(τ)$.
We will also need to introduce, for each scale $k$, extensions of the cells which do not need to be of the same scale. Those cells will be necessary to work with the dependencies between adjacent cells.
scale one will correspond to and agree with the first tessellation introduced in Definition \ref{def-Tessel1}. The value $κ$ instead is the largest scale that we will consider. The reader might want to think of $κ$ to be fixed for the moment. It will be determined later in the proof of Proposition \ref{prop-KnotinR}: if the largest area that we take into consideration is roughly $B_t(0)\times [-t,t]$, then we will set $κ=\mathcal{O}(\sqrt{\log(t)}).$

\subsection{Multi-scale tessellation}\label{SUBSEC-Tessel}
\paragraph{Space tessellation.}
We start by defining the space tessellation on the graph $\mathbb{G}^d$. After the full definition of all relevant tiles and intervals and a statement of useful properties, we refer to the end of this paragraph for a short motivation and intuition regarding the roles of the different tiles introduced here.

For  $ ε\in(0,1)$ and $ \ell, m, a  $ be positive (large) integers which we will fix later. Set $ \ell_0:= \ell - m $ and let
%		\boxed{
%			\begin{minipage}{0.5\textwidth}PETE
%				\begin{align}
%					\begin{split}
%						\ell_1:=\ell          \\
%						\ell_k:= m^{k-1} (k!)^a\ell,
%					\end{split}
%				\end{align}
%			\end{minipage}
%			\begin{minipage}{0.5\textwidth}Fractal
\begin{align}\label{def-ell}
	\begin{split}
		\ell_k&:= a(k-1)^2+m(k-1) + \ell.
	\end{split}
\end{align}
%		\end{minipage}}
Define the space tiles at scale $ k\in \N$ indexed by $ι\in \mathbb{B}^d $ (cf.\ \eqref{def-binaryLattice}) as the subgraphs of $\mathbb{G}^d$ with vertex sets
\begin{equation*}%\label{def-Sk}
	S_k(ι) := ι2^{\ell_k} + \bigtriangleup^d_{\ell_k},
\end{equation*}
and induced edges,
which are well-defined in view of \eqref{eq-PropOfB}. We say that two cells $ S_k(ι_1)\neq S_k(ι_2) $ are \emph{adjacent} if $ d(S_k(ι_1),S_k(ι_2))=0 $.
It is easy to verify that
\begin{align}
	 & \text{$ S_k(ι) $ has side length of $ 2^{\ell_k},$ and that } \label{eq-SideLengthSk}                                                                       \\
	 & \text{$ S_{k+1}(ι) $  is the union of exactly $ 2^{\dv(\ell_{k+1}-\ell_{k})}=(d+1)^{2ak-a+m }$ tiles of scale $ k. $} \label{eq-EnumSmallerCells}
\end{align}
Next, we introduce a hierarchy of the space tiles. We define for $ k,j\ge 0 $ the function $\pi_k^{(j)}$ by
\begin{equation} \label{eq:well-def}
	π_{k}^{(j)}	(ι)= ι'		\qquad		\Leftrightarrow		\qquad		S_k(ι)	\subseteq S_{k+j}(ι'),
\end{equation}
and we say that $ S_{k'}(ι') $ is an ancestor of $ S_k(ι) $ (or equivalently that $ S_k(ι) $ is a descendant of $ S_{k'}(ι') $) if $ π_{k}^{(k'-k)} (ι)	= ι'$. Note the map is well-defined by the uniqueness of the choice of
	$ι'$ on the right-hand side of \eqref{eq:well-def}, and  that any cell is also a descendant and an ancestor of itself.

Recall now the constant $\Theta$ from Proposition \ref{prop-fluctuations}.
We define for $ k\ge 0 $ and $b(k):=ak^{2+ \frac{8}{Θ\dw}}m2^m$ the \emph{base}, the \emph{area of influence}, and for $ k\ge 1 $ the \emph{extension}, the \emph{support} and the \emph{extended support} as
%		\boxed{\begin{minipage}{0.5\textwidth}
%				PETE
%				\begin{align*}
%					S\base_k(ι)         & :=\bigcup_{i\colon |i-ι|\le b(k)}   S_k(i)                \\
%					S^{\text{inf}}_k(ι) & :=\bigcup_{i\colon |i-ι|\le 2b(k)} S_k(i)                 \\
%					S\ext_k(ι)          & :=\bigcup_{i'\colon π_{k-1}^{(1)}(i')=ι} S\base_{k-1}(i')\\
%					S\Sup_k(ι)			&:=\bigcup_{i'\colon \|i'-π^{(1)}_k(ι) \|_∞ \le m}   S_{k+1}(i')\\
%					S\Esup_k(ι)			&:=\bigcup_{i'\colon \|i'-π^{(1)}_k(ι) \|_∞ \le 3m+1}   S_{k+1}(i')
%				\end{align*}
%			\end{minipage}
%			\begin{minipage}{0.5\textwidth}
%				FRACTAL
\begin{align}
	S\base_k(ι)   & :=\hspace{2pt}\bigcup_{ι'\colon d(S_k(ι'),S_k(ι))\le b(k)}   S_k(ι'),                                           \label{def-Sbase} \\
	S\inbase_k(ι) & :=\bigcup_{ι'\colon d(S_k(ι'),S_k(ι))\le 2b(k)} S_k(ι'),                                         \label{def-Sinbase}              \\
	S\ext_k(ι)    & :=\hspace{18 pt}\bigcup_{ι'\colon π_{k-1}^{(1)}(ι')=ι} S\base_{k-1}(ι'),               \label{def-Sext}                           \\
	S\Sup_k(ι)    & :=\bigcup_{ι'\colon d(S_{k+1}(ι'),S_{k+1}(π^{(1)}_k(ι) )) \le m}   S_{k+1}(ι'),         \nonumber %\label{def-Ssup}                          
    \\
	S\Esup_k(ι)   & :=\bigcup_{ι'\colon d(S_{k+1}(ι'),S_{k+1}(π^{(1)}_k(ι) ))\le 3m+1}   S_{k+1}(ι') .\label{def-SEsup}
\end{align}
%		\end{minipage}}
The choice of $b(k)$ will be made clear later  in \eqref{def-b(k)}. Recalling the value $η$ from Definition \ref{def-Tessel1η}, we also assume that $ b(1)\ge η $, which holds if we choose $ a $ large enough. See Figure \ref{fig-BaseExt} for an illustration of how the different tile extensions relate to each other.

\begin{figure}[!h]
	% \vspace{6pt}
	\centering
	\includegraphics[width=0.6\linewidth]{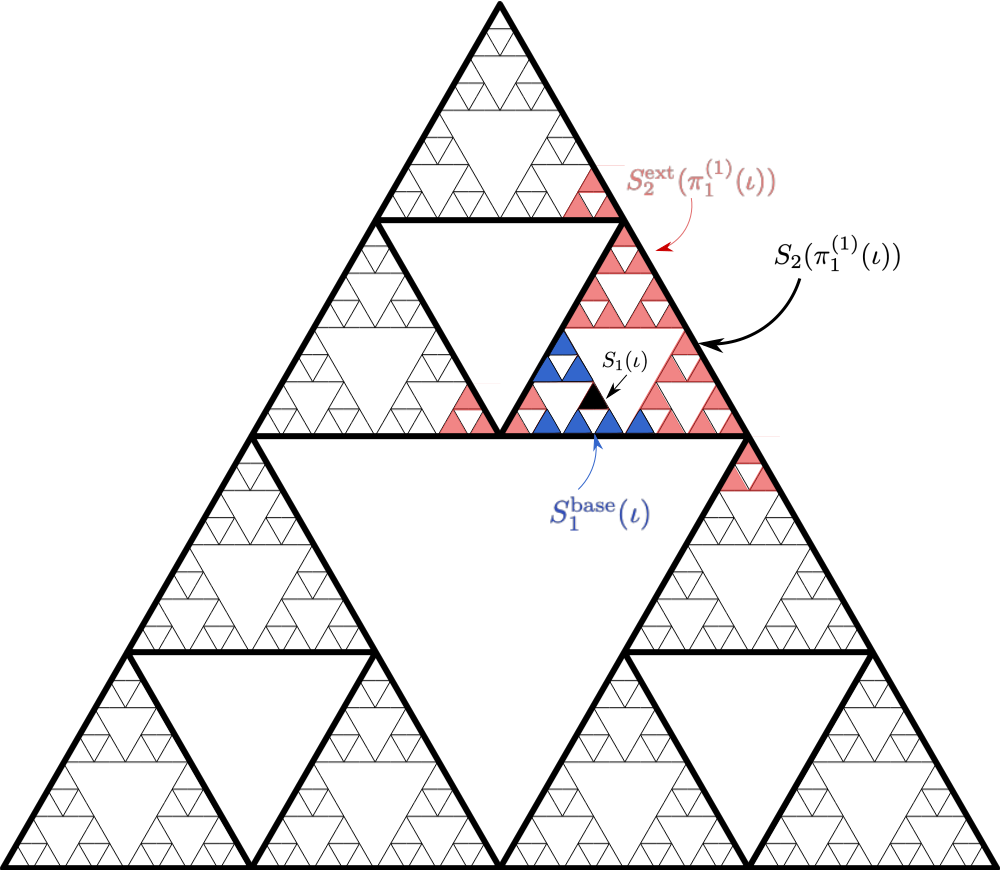}
	\caption{Illustration of $S_1\base(ι)$ and $S_2\ext(π^{(1)}_1(ι))$. The thin line triangles represent the many tiles $ S_1 $ of scale one, the thick black line triangles are tiles $ S_2 $ of scale 2. The black triangle represents the specific tile $S_1(\iota)$, while the dark blue region is $ S\base_1(ι) $ and the light red is $ S\ext_2(π^{(1)}_1(ι)) $.  $ S_1\inbase(ι) $ is not represented in order to keep the image legible.}\label{fig-BaseExt}
\end{figure}

We now state some properties of the above defined sets and the relations of the different tiles.
It is easy to check that for all $ (k,ι) \in \N_0\times \mathbb{B}^d$ it holds $ S_k(ι) 	\subseteq S\base_k(ι) \subseteq S\inbase_k(ι) $ and
\begin{equation}\label{eq-BaseInExt}
	S\base_k(ι) \subseteq S\ext_{k+1}(π^{(1)}_k(ι)).
\end{equation}
Since $ b(k) $ is increasing in $ k $, it also holds that
\begin{equation*}%\label{eq-ExtInBase}
	S\ext_{k}(ι)	\subseteq	S\base_k(ι).
\end{equation*}
Further simple properties of space tiles can easily be inferred: we will use later that
\begin{equation*}%\label{eq-EnumSInBase}
	S_k\base(ι)		\text{ contains at most } \CVol b(k)^{\dv} \text{ tiles of scale } k;\text{ and}
\end{equation*}
\begin{equation*}%\label{eq-EnumSInExt}
	S_k\ext(ι)		\text{ contains at most } \CVol(b(k-1)+2^{\ell_k})^{\dv} \text{ tiles of scale } k-1,
\end{equation*}
which both follow from \eqref{def-dv}.

We now look at the properties of the larger scales. Comparing the exponential growth of $ S_{k} $ in \eqref{eq-EnumSmallerCells} with the polynomial growth of $ b(k) $ in \eqref{def-Sinbase}, one sees that for $ a , m$ large enough, for all $ k $ and $ ι $, it holds that
\begin{equation}\label{eq-SInfInSup}
	S_k\inbase(ι)		\subseteq S_k\Sup(ι).
\end{equation}

\begin{remark}\label{rmk-SηSbase}
	The assumption $ b(k)\ge η $ implies that $ S_1\base(ι) $, and a fortiori $S_1\ext(ι) ,$ contains the \emph{super-tile} $ S_1^η(ι)$ defined in Definition \ref{def-Tessel1η}.
\end{remark}

We now quickly motivate the introduction of the different tiles. The tiles $S_k(ι)$ constitute the basic tiles at each scale. The introduction of the multi-scale argument suggests that we will introduce a notion of goodness for every scale $k$: this is related to $S_k\base(ι)$ and $S_{k+1}\ext(π_1^{(1)}(ι)),$ as well as to the events $D\base$ and $D\ext$ which we are going to define in \eqref{def-Dbase} and \eqref{def-Dext}.

Furthermore, $ S\inbase$, which is defined as $S\base$ but with a slightly larger border, will help us to keep tiles apart: if for two tiles the \emph{areas of influence} do not intersect, we will call these tiles \emph{well-separated} and we will be able to treat the tiles as essentially independent. Finally, we introduced $S\Sup$ and $S\Esup$ so that tiles whose (extended) supports intersect each other, even if otherwise well-separated, are still close enough to be part of a very general kind of path, the ScD-path (see Definition \ref{def-ScDpath}).

\paragraph{Temporal tessellation.}

We now turn to the temporal tessellation of $ \R. $ The tessellation itself is easier than the previous one introduced for space, and it corresponds to the one in \cite{Pete19}. Define for $k\ge 2$
%		\boxed{
%			\begin{minipage}{0.5\textwidth}
%				PETE-$ \Z^d $
%				\begin{align}
%					&	β_k:=\ell_k^2 C k^Θ ε^{-Θ}
%					\intertext{and note that}
%					&\frac{β_{k+1}}{β_k}=\frac{\ell^2_k (k+1)^{8/Θ}}{\ell^2_{k-1} k^{8/Θ}}
%					= m^2 k^{2a}\big( \frac{k+1}{k} \big)^{8/Θ}
%				\end{align}
%			\end{minipage}
%			\begin{minipage}{0.5\textwidth}
%				FRACTAL
\begin{align}
	β_k:=\Cmix ( \tfrac{k^2}{ε})^{\frac{4}{Θ}} 	\big( 2^{\ell_{k-1}} \big)^{\dw} %= \Cmix 5^{\ell_{k-1}}
	\label{def-β}
\end{align}
%		\end{minipage}}
where $  \Cmix  $ is a constant larger than $  8^{4/Θ}\Cr{MixingDelta} $, $ Θ $ and $ \Cr{MixingDelta} $ are constants from Theorem \ref{thm-mixing} and $\varepsilon$ is from the beginning of Subsection \ref{SUBSEC-Tessel}. Set as well $β:=β_1:=\Cmix \tfrac{2^{\dw(\ell-m)}}{ε^{4/Θ}}$, assuming $m$ large enough so that  $  \Cmix  \ge  8^{4/Θ}\Cr{MixingDelta} $ still holds.
On first reading, one should not be distracted by the constant $ \Cmix $ or the fine-tuning power $ k^{8/Θ} $ in $ β_k $ and instead focus on the leading term $ 2^{\ell_{k-1}} $ which is raised to the power $ \dw $. As discussed before, the term $ \dw  $ represent the power scaling between time and space from the perspective of the random walkers. That is a major difference from the lattice $ \Z^d $ where the ``walk dimension'' $ \dw $ equals 2 for every dimension $ d $ of the lattice. Note in particular that ratios between two consecutive time-scales satisfy
\begin{equation}	\label{eq-rateBeta}
	\frac{β_{k+1}}{β_k}=(\tfrac{k+1}{k})^{8/Θ}  \big( 2^{2ak-3a+m} \big)^{\dw}.
\end{equation}
Define the time intervals at scale $ k\in \N$ as the intervals
\begin{equation*}%\label{def-timeinterval}
	T_k(τ) =[τβ_k, (τ+1)β_k), \quad  τ\in \Z,
\end{equation*}
and we say that two intervals $ T_k(τ_1)\neq T_k(τ_2) $ with $τ_1, τ_2 \in \Z$ are \emph{adjacent} if $ |τ_1-τ_2|\le 1.$
We now introduce a hierarchy over time, which is more complex than the spatial one. While for space, a parent contains its descendants, since ``time flows forward'', parents with respect to time will still have larger intervals than their children, but will lie to the left (i.e.\ ``before''): see Figure \ref{fig-TimeAncestors}. Formally, let $γ_k^{(0)}(τ)=τ, $ and for $ j\ge 1 $ define
\begin{equation*}
	γ_{k}^{(j)}(τ):=τ' \quad \text{ if } \quad 		  γ_{k}^{(j-1)}(τ) β_{k+j-1}\in T_{k+j}(τ'+1),
\end{equation*}
see Figure \ref{fig-TimeAncestors} for visualisation. In analogy with the terminology introduced in the  spatial setting, we say that $ T_{k'}(τ') $ is an ancestor of $ T_k(τ) $ or equivalently that $ T_k(τ) $ is a descendant of $ T_{k'}(τ') $ if  $ γ_{k}^{(k'-k)} (τ)	=τ'$ and it still holds that any time interval is also a descendant and an ancestor of itself. Note that due to the ``time drift'' it does not contain its own descendants of any scale as subintervals.

\begin{figure}[ht]
	% \vspace{6pt}
	\centering
	\includegraphics[width=0.8\linewidth]{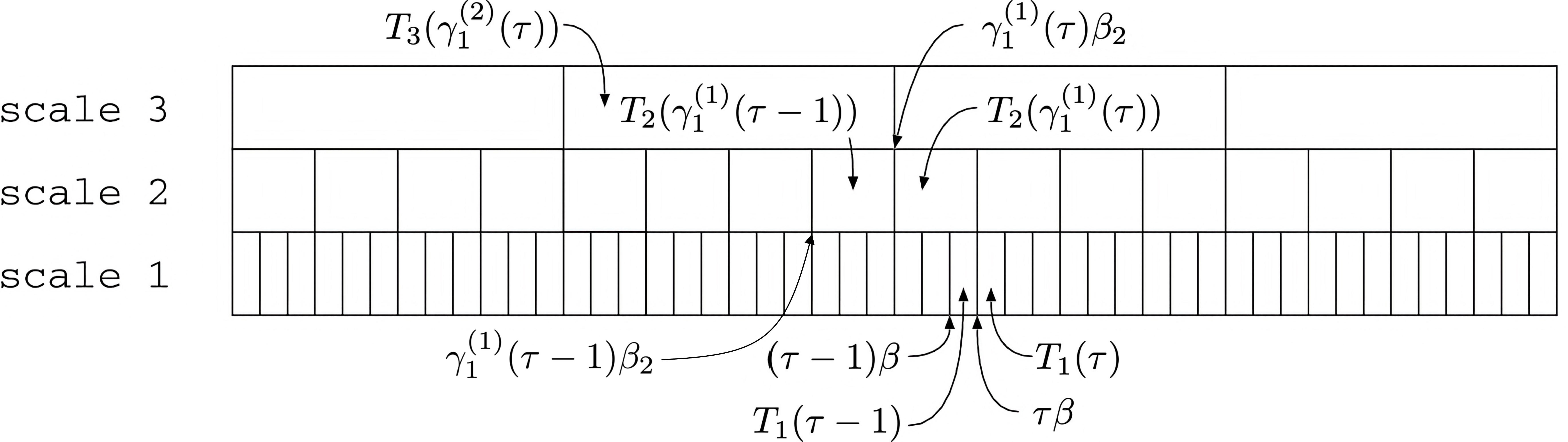}
	\caption{Temporal tessellation and its hierarchy structure. Image adapted from \cite{Pete19}.}\label{fig-TimeAncestors}
\end{figure}

As we did for space, we define for each scale $ k $ larger intervals that we will need:
%		\boxed{\begin{minipage}{0.5\textwidth}
%				\begin{align*}
%%					T\base_k(τ)         &     \\
%					T\inbase_k(τ) & :=[ γ_k^{(1)}(τ)  β_{k+1}		,		(τ+\max\{η,2\})β_{k} ]   \\
%%					T\ext_k(τ)          &NO  \\
%					T\Sup_k(τ)			&:= \bigcup_{i=0}^{8} T_{k+1} (γ^1(τ)-3+i)		\\
%					T\Esup_k(τ)			&:= \bigcup_{i=0}^{27}
%				\end{align*}
%			\end{minipage}
%			\begin{minipage}{0.5\textwidth}
%				FRACTAL
\begin{align}
	T\inbase_1(τ) & :=[ γ_1^{(1)}(τ)  β_{2}		,		(τ+η\wedge2)β_{1} ], \nonumber                       \\
	T\inbase_k(τ) & :=[ γ_1^{(1)}(τ)  β_{2}		,		(τ+2)β_{k} ], \label{def-Tinbase}           \\
	T\Sup_k(τ)    & := \bigcup_{i=0}^{8} T_{k+1} (γ^{(1)}_{k}(τ)-3+i),		\label{def-Tsup}    \\
	T\Esup_k(τ)   & := \bigcup_{i=0}^{26}	T_{k+1} (γ^{(1)}_{k}(τ)-12+i).		\label{def-TEsup}
\end{align}
%		\end{minipage}}

We now claim and prove that the time analogue of \eqref{eq-SInfInSup} still holds.
\begin{lemma}\label{lemma-TInfinSup}
	Let $ T_{k'}(τ') $ be a descendant of $ T_k(τ) ,$ and let $ T_{k'}(τ'') $ be adjacent to $ T_{k'}(τ') .$ Then for $a,m$ large enough
	\begin{equation*}
		T\inbase_{k'}(τ'')	\subseteq T_k\Sup(τ).
	\end{equation*}
\end{lemma}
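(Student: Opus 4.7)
The strategy is to unwind the $\gamma$-recursion in order to locate $\tau'\beta_{k'}$ quantitatively relative to $\tau\beta_k$, then absorb the adjacency offset and the leftward extension $\gamma_{k'}^{(1)}(\tau'')\beta_{k'+1}$ using the explosive growth \eqref{eq-rateBeta} of $(\beta_k)$.

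\textbf{Step 1 — Unfolding the ancestor recursion.} Write $\tau_j$ for the index of the scale-$(k'+j)$ ancestor of $T_{k'}(\tau')$, so $\tau_0=\tau'$ and $\tau_{k-k'}=\tau$. The definition of $\gamma^{(1)}_{k'+j-1}$ gives $\tau_{j-1}\beta_{k'+j-1}\in T_{k'+j}(\tau_j+1)$, hence
\[
\tau_{j-1}\beta_{k'+j-1}-\tau_j\beta_{k'+j}\in[\beta_{k'+j},2\beta_{k'+j}).
\]
Summing $j=1,\dots,k-k'$ telescopes to
\[
\tau'\beta_{k'}-\tau\beta_k\in\Big[\sum_{i=k'+1}^{k}\beta_i,\ 2\sum_{i=k'+1}^{k}\beta_i\Big).
\]
By \eqref{eq-rateBeta} the ratio $\beta_i/\beta_{i-1}$ is arbitrarily large once $a,m$ are large enough (with $m>a$ to cover the worst case $k=1$), so the sum is dominated by its last term and lies in $[\beta_k,(1+\delta)\beta_k]$ for any preassigned $\delta>0$.

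\textbf{Step 2 — Bracketing $T^{\inf}_{k'}(\tau'')$ and $T^{\Sup}_k(\tau)$.} Combining Step~1 with $|\tau''-\tau'|\le 1$ yields
\[
\tau''\beta_{k'}\in\big[\tau\beta_k+\beta_k-\beta_{k'},\ \tau\beta_k+2(1+\delta)\beta_k+\beta_{k'}\big).
\]
Applied to $\gamma_{k'}^{(1)}(\tau'')$, the defining inclusion $\tau''\beta_{k'}\in T_{k'+1}(\gamma_{k'}^{(1)}(\tau'')+1)$ gives $\gamma_{k'}^{(1)}(\tau'')\beta_{k'+1}\ge\tau''\beta_{k'}-2\beta_{k'+1}$, so (assuming the natural reading that the left endpoint of $T^{\inf}_{k'}(\tau'')$ is $\gamma_{k'}^{(1)}(\tau'')\beta_{k'+1}$ rather than the scale-$1$ expression literally printed)
\[
T^{\inf}_{k'}(\tau'')\subseteq[\tau''\beta_{k'}-2\beta_{k'+1},\ \tau''\beta_{k'}+2\beta_{k'}].
\]
The same identity applied to $\gamma_k^{(1)}(\tau)$ yields $(\gamma_k^{(1)}(\tau)-3)\beta_{k+1}\le\tau\beta_k-4\beta_{k+1}$ and $(\gamma_k^{(1)}(\tau)+6)\beta_{k+1}\ge\tau\beta_k+4\beta_{k+1}$, so
\[
T^{\Sup}_k(\tau)\supseteq[\tau\beta_k-4\beta_{k+1},\ \tau\beta_k+4\beta_{k+1}).
\]

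\textbf{Step 3 — Checking both endpoints.} Plugging the brackets of Step~2 into each other, the inclusion $T^{\inf}_{k'}(\tau'')\subseteq T^{\Sup}_k(\tau)$ reduces to the two scalar inequalities
\[
\beta_k+4\beta_{k+1}\ge\beta_{k'}+2\beta_{k'+1}
\qquad\text{and}\qquad
4\beta_{k+1}\ge 2(1+\delta)\beta_k+3\beta_{k'}.
\]
Since $k'\le k$ we have $\beta_{k'}\le\beta_k$ and $\beta_{k'+1}\le\beta_{k+1}$, so both reduce to the single requirement that $\beta_{k+1}/\beta_k$ be larger than an absolute constant, which follows from \eqref{eq-rateBeta} for $a,m$ large.

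\textbf{Main obstacle.} The argument carries almost no conceptual content; the only real task is the bookkeeping needed to see that accumulated drift (the $\sum_{i<k}\beta_i$ in Step~1), the adjacency shift $\beta_{k'}$, and the leftward extension $\beta_{k'+1}$ all remain uniformly negligible compared with the scale-$(k+1)$ gap $\beta_{k+1}$. The delicate spot is the boundary scale $k=1$, where the exponent $2ak-3a+m$ in \eqref{eq-rateBeta} is smallest ($=m-a$) and forces the constraint $m>a$; one must also verify that the polynomial factor $((k+1)/k)^{8/\Theta}$ does not spoil the geometric smallness of $\beta_{k'}/\beta_k$ when $k'$ is close to $k$, which is immediate since that factor is bounded uniformly in $k$.
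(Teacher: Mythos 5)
Your proof is correct and takes essentially the same route as the paper's: both unwind the $\gamma$-recursion to bound $\tau'\beta_{k'}-\tau\beta_k$ (the paper iterates $\tau'\beta_{k'}\le\gamma^{(1)}_{k'}(\tau')\beta_{k'+1}+2\beta_{k'+1}$ and bounds $\sum_{j\le k}\beta_j\le 2\beta_k$ "by induction", which is your $\delta$-estimate with $\delta=1$), then absorb all accumulated drift, the adjacency offset $\beta_{k'}$ and the leftward extension $\beta_{k'+1}$ into the $\beta_{k+1}$-scale slack of $T^{\Sup}_k(\tau)$. You are somewhat more careful than the paper on two points that are worth noting: you explicitly verify the left endpoint inclusion, which the paper dismisses as "easy to verify", and you explicitly flag (and correctly work around) the typo in \eqref{def-Tinbase}, where $\gamma_1^{(1)}(\tau)\beta_2$ should read $\gamma_k^{(1)}(\tau)\beta_{k+1}$ — the paper silently uses the corrected form in its own proof. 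No gap; the argument is sound.
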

\begin{proof}
	Recall that $ T\inbase_{k'}(τ'')		\subseteq [ γ_{k'}^{(1)}(τ'')  β_{k'+1}		,		(τ''+2\wedge η)β_{k'} ]  $, the definition of $T_{k}\Sup (τ) 		$%
	$	=((γ^{(1)}_{k}(τ)-3)β_{k+1},  (γ^{(1)}_{k}(τ)+5)β_{k+1})),$
	in \eqref{def-Tsup}, and $ |τ''-τ'|\le 1 $ by adjacency.

	It is easy to verify the inequality $ (γ^{(1)}_{k}(τ)-3)β_{k+1} \le γ^{(1)}_{k'}(τ'-1)β_{k'+1}$, so we concentrate on the right delimiters of the intervals. To prove the other inequality, note that for any interval $ T_{k'}(τ') $, we have $ τ'β_{k'} \le γ^{(1)}_{k'}(τ')β_{k'+1} + 2β_{k'+1}$ so iterating this $k-k'$ times we obtain
	\begin{equation*}
		τ'β_{k'} \le γ^{(k-k')}_{k'} (τ')β_{k} + 2\sum_{j=1}^{k-k'}β_{k'+j} .
	\end{equation*}
	We can bound using that $k'\ge 1$
	\begin{align*}
		\sum_{j=1}^{k-k'}β_{k'+j}
		\le \sum_{j=2}^{k}β_{j}
		= & \Cmix  \sum_{j=2}^{k}\Big(  \frac{j^2}{ε} \Big)^{4/Θ} 2^{\dw \ell_{j-1}}
		= \Cmix  ε^{-4/Θ} \sum_{j=2}^{k}  j^{8/Θ} 2^{\dw (a(j-2)^2+m(j-2)+\ell)}
		\intertext{which by induction is smaller than}
		  & \Cmix  ε^{-4/Θ} 2 k^{8/Θ} 2^{\dw (a(k-1)^2+m(k-1)+\ell)}
		= 2β_{k}.
	\end{align*}
	Hence, we have
	\begin{align*}
		(τ''+2\vee η)β_{k'} & \le  		(τ'+1+2\vee η)β_{k'}                                                 \\
		                    & \le  γ^{(k-k')}_{k'} (τ')β_{k} + 2\sum_{j=1}^{k}β_{k'+j} +(1+2\vee η)β_{k'} \\
		                    & \le τβ_k+4β_k+(1+2\vee η)β_{k'},
		\intertext{and since $ 4β_k+(1+2\vee η)β_{k'} \le (5+2\vee η)β_{k} \le β_{k+1}$ for $ a,m $ large enough, this is further smaller than}
		                    & τβ_{k} +β_{k+1} \le (γ^{(1)}_{k}(τ)+5)β_{k+1},
	\end{align*}
	proving the lemma.
\end{proof}
\paragraph{Space-time  tessellation.}

We can now define the space-time tessellation at different scales via the Cartesian products
\begin{align*}
	R_k(ι,τ)        & := S_k(ι)\times T_k(τ),               \\
	%	R\base_k(ι,τ)   & := S_k\base(ι)\times T_k\base(τ)\\
	R\inbase_k(ι,τ) & := S\inbase_k(ι)\times T\inbase_k(τ), \\
	%	R\ext_k(ι,τ)    & := S_k(ι)\times T_k(τ)\\
	R\Sup_k(ι,τ)    & :=S_k\Sup(ι)\times T\Sup_k(τ),        \\
	R\Esup_k(ι,τ)   & :=S_k\Esup(ι)\times T\Esup_k(τ).
\end{align*}
We say two cells $ R_k(ι_1,τ_1)$ and $ R_k(ι_2,τ_2) $ of same scale are \emph{adjacent} if either $ d(S_k(ι_1),S_k(ι_2))=0$ and $τ_1=τ_2$, or else if $ι_1=ι_2$ and $|τ_1-τ_2|\le 1.$ We extend the mappings $π$ and $γ$ to a hierarchy of space-time cells.
We say that $R_k(ι,τ)$ is an ancestor of $R_{k'}(ι',τ')$ if $S_k(ι)$ is an ancestor of $S_{k'}(ι')$ and $T_k(τ)$ is an ancestor of $T_{k'}(τ')$.

We observe, combining \eqref{eq-SInfInSup} and Lemma \ref{lemma-TInfinSup}, for any cell $ R_k(ι,τ) $ and any cell $ R_{k'}(ι'',τ'') $ which is adjacent to a descendant of $R_k(ι,τ)$ of scale $k'$, it holds that
\begin{equation}\label{eq-RInfinSup}
	R\inbase_{k'}(ι'',τ'')	\subseteq R_k\Sup(ι,τ).
\end{equation}
In particular, for any two cells $ R_k(ι,τ) $ and $ R_{k'}(ι',τ') $,
\begin{equation}\label{eq-InfIntersSupInters}
	R\inbase_{k}(ι,τ)\cap R\inbase_{k'}(ι',τ')	\neq \emptyset
	\quad \Rightarrow	\quad
	R_k\Sup(ι,τ)		\cap R_{k'}\Sup		(ι',τ')		\neq \emptyset,
\end{equation}
which means that if the areas of influence of two cells intersect then also the supports intersect.

Note that we defined the extended supports \eqref{def-SEsup} and \eqref{def-TEsup} in such a way that it holds for two cells $ R_{k_1}(ι_1,τ_1) $ and $ R_{k_2}(ι_2,τ_2) $ with $ k_1\le k_2 $ that
\begin{equation}\label{eq-SuppIntersEsup}
	R\Sup_{k_1}(ι_1,τ_1)\cap R\Sup_{k_2}(ι_2,τ_2)\neq \emptyset
	\quad \Rightarrow \quad
	R\Esup_{k_2}(ι_2,τ_2)\supseteq R\Sup_{k_1}(ι_1,τ_1),
\end{equation}
which means that if the supports of two cells intersect, then the bigger extended support contains the smaller support.

\subsection{Fractal percolation}\label{subsec-fractal}

%In the previous subsection we introduced the space-time tessellation. 
We now introduce several events to define new notions of goodness for each scale $ k. $ Having multi-scale levels of goodness is the link to the theory of fractal percolation. We will provide details about the analogy and an intuitive explanation of the following definitions at the end of the subsection.

For $ε>0$ as in the assumptions of Theorem \ref{thm-main}, we define the sequence
\begin{equation}\label{def-delta}
	\mathfrak{d}_1:=ε,				\qquad\qquad \mathfrak{d}_{k+1}:=\mathfrak{d}_{k}-\frac{ε}{2k^2}, \quad k \ge 1.
\end{equation}
Recalling the definition of $ S\base $ and $ S\ext $ in \eqref{def-Sbase} and \eqref{def-Sext}, as well as the particle system under consideration (see Section \ref{subsec-PRW}),  define the following indicator random variables:
\begin{align}
	\label{def-Dk}    & D_k(ι,τ)=1      &  & \begin{array}{l}
		                                         \text{if all tiles } S_{k-1}(ι')\subseteq S_k(ι) \text{ contain at least }\big( 1-\mathfrak{d}_{k}  \big)μ_0\sum_{y\in S_{k-1}(ι')}λ_y \\
		                                         \text{particles at time } τβ_k,\end{array}        \\[9pt]
	\label{def-Dext}  & D\ext_k(ι,τ)=1  &  & \begin{array}{l}
		                                         \text{if all tiles } S_{k-1}(ι')\subseteq S\ext_k(ι) \text{ contain at least }\big(1-  \mathfrak{d}_{k} \big) μ_0\sum_{y\in S_{k-1}(ι')}λ_y \\
		                                         \text{particles at time } τβ_k                                                                                                              \\
		                                         \text{that are confined during $ [τβ_k,(τ+2)β_k] $ inside } \displ_{b(k-1)2^{\ell_{k-1}}},\end{array}   \\[9pt]
	\label{def-Dbase} & D\base_k(ι,τ)=1 &  & \begin{array}{l}
		                                         \text{if all tiles } S_{k}(ι')\subseteq S\base_{k}(ι) \text{ contain at least } \big(1-\mathfrak{d}_{k+1}  \big)μ_0\sum_{y\in S_{k}(ι')}λ_y \\
		                                         \text{particles at time } γ_k^{(1)}(τ)β_{k+1}                                                                                               \\
		                                         \text{that are confined during $ [γ_{k}^{(1)}(τ)β_{k+1},τβ_k] $ inside } \displ_{b(k)2^{\ell_{k}}}.
	                                         \end{array}
\end{align}
Since $S_{k}\subseteq S\ext_k $, trivially $ D\ext_k(ι,τ)=1  $ implies $ D_k(ι,τ)=1  $.
Noting that $S\base_{k}(ι)\subseteq S\ext_{k+1}(π_{k}^{(1)}(ι)) $ as mentioned in \eqref{eq-BaseInExt}, and that $ [γ_{k}^{(1)}(τ)β_{k+1},τβ_k] \subset [γ_{k}^{(1)}(τ)β_{k+1},(γ_{k}^{(1)}(τ)+2)β_{k+1}] $ we have by definition
\begin{equation}\label{eq-Dext=>Dbase}
	D\ext_{k+1}\big(π_{k}^{(1)}(ι),γ_{k}^{(1)}(τ)\big) =1
	\qquad	\Rightarrow \qquad
	D\base_k(ι,τ)=1		\qquad \forall (k,ι,τ)\in \N\times \mathbb{B}^d\times \Z,
\end{equation}
and the goal of Lemma \ref{lemma-Mixing} below will be to show that with exponentially large probability, $\{D\base_{k}(ι,τ)=1\}$ implies $ \{D\ext_k(ι,τ) =1\}. $ To this end, we define
\begin{align}
	A_1(ι,τ) & :=\max\{ \ind_{E(ι,τ)}				,	1-D_1\base(ι,τ)\}	\label{def-A1}, \\
	A_k(ι,τ) & :=\max\{D_k\ext(ι,τ)	,	1-D_k\base(ι,τ)\}	\label{def-Ak},      \\
	A_κ(ι,τ) & :=D_κ\ext(ι,τ),
	\label{def-Aκ}
\end{align}
and
\begin{equation}\label{def-Aprod}
	A(ι,τ) := \prod_{k=1}^{κ} A_k (π_{1}^{(k-1)} (ι),    γ_{1}^{(k-1)}(τ)				).
\end{equation}
The first-time reader should think that $ A_k(ι,τ) =0$ intuitively indicates that ``in the chain of space-time cells that are ancestors of $R_1(ι,τ)$, the particles misbehaved at scale $k$'': more precisely, $ A_k(ι,τ)=0 $ if, even despite the favorable event $ D_k\base(ι,τ)=1 $, according to which the particle were in a good state inherited from higher scales, it resulted in $ D_k\ext(ι,τ )=0.$ As already mentioned above \eqref{def-A1}, we will prove that the previous situation happens with small probability in Lemma \ref{lemma-Mixing}.

We can now define the notions of goodness that we will consider. Recall that we defined at the very start of subsection \ref{subsec-mainres} that
\begin{equation}\label{def-bad}
	\text{a cell } R_1(ι,τ) \text{ is \emph{bad} if } \ind_{E(ι,τ)}=0.
\end{equation}
We consider now a stronger notion of bad cells for any scale $ 1\le k\le κ: $
\begin{equation}	\label{def-multiscalebad}
	\text{a cell } R_k(ι,τ) \text{ is \emph{multi-scale bad} if } A_k(ι,τ)=0.
\end{equation}Note that for scale one this definition is stricter then the definition of being \emph{bad}: as a simple consequence of \eqref{def-A1}, a \emph{multi-scale bad} cell is also \emph{bad}. Finally, we say for scale-one cells that
\begin{equation}	\label{def-badancestry}
	\text{a cell } R_1(ι,τ) \text{ has \emph{bad ancestry} if } A(ι,τ)=0,
\end{equation}  or equivalently that the cell has a \emph{multi-scale bad} ancestor.

In particular, a \emph{bad} cell of scale one has \emph{bad ancestry}, as we prove in the following lemma.
\begin{lemma}\label{lemma-Bad=>BadAnc}
	For a cell $ R_1(ι,τ) $ it holds  $ \ind_{E(ι,τ)}\ge A(ι,τ). $
	Equivalently, a scale-one cell which is \emph{bad}, in particular has \emph{bad ancestry}.
\end{lemma}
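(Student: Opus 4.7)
The plan is to prove the contrapositive-friendly reformulation: if $A(\iota,\tau)=1$ then $\mathbf{1}_{E(\iota,\tau)}=1$. Since $A(\iota,\tau)$ is a product of $\{0,1\}$-valued factors, the hypothesis forces $A_k(\iota_k,\tau_k)=1$ for every $k=1,\dots,\kappa$, where I abbreviate the ancestors by $\iota_k:=\pi_1^{(k-1)}(\iota)$ and $\tau_k:=\gamma_1^{(k-1)}(\tau)$. Note that by the tower property of the hierarchies, $\pi_k^{(1)}(\iota_k)=\iota_{k+1}$ and $\gamma_k^{(1)}(\tau_k)=\tau_{k+1}$.

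The heart of the proof is a downward induction on the scale, starting at $k=\kappa$ and ending at $k=1$, whose induction hypothesis is
\[
D_k^{\mathrm{ext}}(\iota_k,\tau_k)=1.
\]
For the base case $k=\kappa$, this is just the definition \eqref{def-Aκ} of $A_\kappa$, together with the assumption $A_\kappa(\iota_\kappa,\tau_\kappa)=1$. For the inductive step, suppose $D_{k+1}^{\mathrm{ext}}(\iota_{k+1},\tau_{k+1})=1$ for some $k\in\{1,\dots,\kappa-1\}$. Applying \eqref{eq-Dext=>Dbase} with the indices $(k,\iota_k,\tau_k)$ and using the identities $\pi_k^{(1)}(\iota_k)=\iota_{k+1}$ and $\gamma_k^{(1)}(\tau_k)=\tau_{k+1}$, we infer $D_k^{\mathrm{base}}(\iota_k,\tau_k)=1$. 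Now I invoke the hypothesis $A_k(\iota_k,\tau_k)=1$: if $k\geq 2$ the definition \eqref{def-Ak} reads $\max\{D_k^{\mathrm{ext}}(\iota_k,\tau_k),1-D_k^{\mathrm{base}}(\iota_k,\tau_k)\}=1$, and since the second term equals $0$, necessarily $D_k^{\mathrm{ext}}(\iota_k,\tau_k)=1$, closing the induction.

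Finally, I apply the same reasoning at scale $k=1$. From the $k=2$ step of the induction we have $D_2^{\mathrm{ext}}(\iota_2,\tau_2)=1$, hence $D_1^{\mathrm{base}}(\iota,\tau)=1$ by \eqref{eq-Dext=>Dbase}. Combining this with $A_1(\iota,\tau)=\max\{\mathbf{1}_{E(\iota,\tau)},1-D_1^{\mathrm{base}}(\iota,\tau)\}=1$ from \eqref{def-A1} forces $\mathbf{1}_{E(\iota,\tau)}=1$, which is the required conclusion. There is no real obstacle here — the statement is essentially a bookkeeping corollary of the implication \eqref{eq-Dext=>Dbase} and of the way the indicators $A_k$ have been designed to chain together — so the only subtle point is keeping track of the indices of the ancestor maps $\pi^{(1)}$ and $\gamma^{(1)}$, making sure that the scale-$k$ ancestor of the scale-$1$ cell is itself the scale-$k$ parent of the scale-$(k-1)$ ancestor.
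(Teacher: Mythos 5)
Your proof is correct and follows the same argument as the paper's: starting from $A_\kappa(\iota_\kappa,\tau_\kappa)=1$, you chain \eqref{eq-Dext=>Dbase} with the ``max'' structure of \eqref{def-Ak} down through the scales to reach $\ind_{E(\iota,\tau)}=1$ at scale one. The only difference is presentational, in that you phrase the repeated descent as an explicit downward induction, which is a clean way to write what the paper states more informally.
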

\begin{proof}
	Suppose  that $ A(ι,τ)=1 $. By \eqref{def-Aprod}, it therefore holds for all $ 1\le k\le κ $,  that $$ A_k \big(π_{1}^{(k-1)} (ι),    γ_{1}^{(k-1)}(τ)				\big)=1.$$

	In particular $ D\ext_{κ}\big(π_{1}^{(κ-1)} (ι),    γ_{1}^{(κ-1)}(τ)	\big) =1$, so applying the property in \eqref{eq-Dext=>Dbase} we obtain $ D\base_{κ-1}\big(π_{1}^{(κ-2)} (ι),    γ_{1}^{(κ-2)}(τ)	\big)   =1$. Since $ A_{κ-1}\big(π_{1}^{(κ-2)} (ι),    γ_{1}^{(κ-2)}(τ)	\big) =1  $ and it is defined as a maximum, the first argument needs to be a 1, and we obtain $ D\ext_{κ-1}\big(π_{1}^{(κ-2)} (ι),    γ_{1}^{(κ-2)}(τ)	\big)   =1.$

	Repeating this argument for all scales down to scale one, we need the first argument in the maximum of $ A_1(ι,τ) $ to be 1, i.e.\  it must hold that $ \ind_{E(ι,τ)}=1. $
\end{proof}

\paragraph{Intuition.}
We conclude this subsection by explaining the analogy of our setup to fractal percolation, whose framework has inspired this proof. For simplicity, we will explain the arguments on $ \R^d$ instead of the \Sier{} gasket.

Fix some value $ r\in\N. $ Consider the unit hyper-cube and subdivide it into $ r^d $ cubes of side length $ \frac{1}{r}. $ Then, for some value $ p\in [0,1], $ declare them open independently with probability $ p $ and closed otherwise. Then, subdivide again each of the \emph{open} cubes  into $ r^d $ cubes of side length $ \frac{1}{r^2}, $ and each of the second-level cubes is open with probability $ p $ and closed otherwise. Note that each level-1 cube that was closed is not further subdivided and so it is entirely closed. One can then repeat the above procedure with further subdivisions, see Figure \ref{fig-FracPerc}. This recursive construction introduces correlations into the system that would not be present in standard Bernoulli percolation: if we take two cubes of some arbitrary size, the probability that both of them are open simultaneously is strongly affected by how far back in the subdividing procedure their most recent open common ``ancestor'' was.

\begin{figure}[ht]
	\centering
	\includegraphics[width=0.8\linewidth]{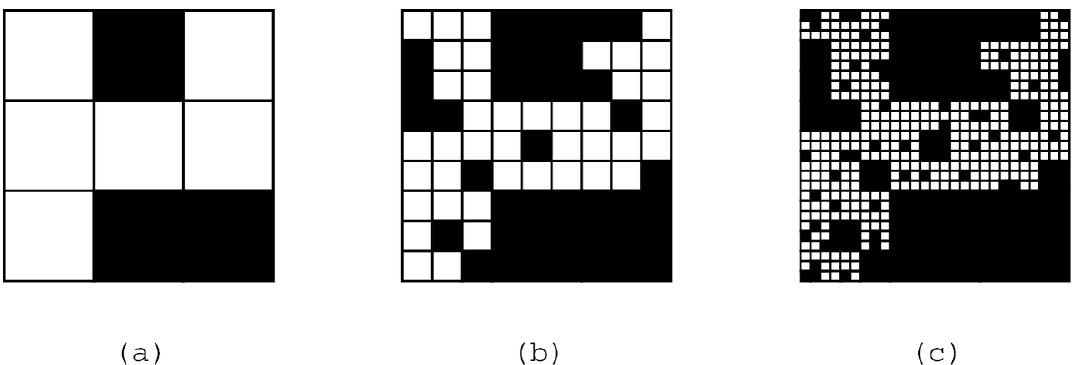}
	\caption{An example of fractal percolation in $ \R^2. $ Image from \cite{Pete19}.}\label{fig-FracPerc}
\end{figure}

The similarity with our case is straightforward. To obtain $ A(ι,τ)=1 $ we need a cell and all its ancestors to be multi-scale good, similarly to the fractal percolation where the cubes must be open at every level-$ k $ in order to be open at the last and smallest level. In view of Lemma \ref{lemma-Bad=>BadAnc}, a cell with $ A(ι,τ)=1 $ is then \emph{good}, in the sense below Definition \ref{def-E}.
It may seem now that directly performing a single-level percolation at scale $ k=1 $ might be easier, but unlike the fractal percolation described above, cells in our setting have further dependencies beyond the ones introduced by the subdivisions. In particular, note that knowing a cell of some scale $k$ is bad reveals information not only about its descendant cells, but also any other cells that are spatially and temporally close enough to be affected by the behaviour of the particles from the cell in question.
The other difference is that the percolation parameter $ p $ will not be kept constant: in our case the probability to be a multi-scale good cell $ \P(A_k(ι,τ)=1) $ is higher at larger scales, as we will prove in Lemma \ref{lemma-Mixing}. The proof there involves the events $ D\ext_k $ and $ D\base_k $ defined above in \eqref{def-Dext} and $ \eqref{def-Dbase} $, and in particular the strategy is as follows: assuming the favorable event $ D\ext_k(ι,τ) =1$, using the decoupling Theorem \ref{thm-mixing}, if we restrict to a slightly smaller cell (so from $S\ext_k(ι) $ to $ S\base_k(ι) $) and ``wait a bit'', we are able to resample the particles according an independent Poisson point process with only a slightly smaller intensity. This resampling allows us to essentially treat the configuration of the particles in the space-time cell in question as independent of the configuration elsewhere, thus roughly recovering the fractal percolation setup outlined above and taking care of both types of correlations mentioned at once.

\subsection{Paths of cells}\label{SUBSEC-Paths}

We next define the two notions of ``paths of cells'' that we will consider. As we will see momentarily, both notions are strongly related to d-paths from Definition \ref{def-dpath}.

Recall that, in line with Definition \ref{def-adjacent}, two cells $ R_k(ι_1,τ_1)\neq R_k(ι_2,τ_2) $ of same scale are called \emph{adjacent} if either $ d(S_k(ι_1),S_k(ι_2))=0$ and $τ_1=τ_2$, or $ι_1=ι_2$ and $|τ_1-τ_2|\le 1.$ We now extend this to cells of different scales.
Two cells $ R_{k_1}(ι_1,τ_1), $ and $R_{k_2} (ι_2,τ_2) $ with scales $ k_1>k_2 $ are called \emph{adjacent} if $ R_{k_1}(ι_1,τ_1), $ is adjacent to $ R_{k_1}\big(π_{k_2}^{k_1-k_2}(ι_2),γ_{k_2}^{k_1-k_2}(τ_2)  \big)$. Note that in particular, a cell is not adjacent to any of its ancestors.

Before proceeding, recall the definition of the base of the space-time tessellation $L_0$ given in \eqref{def-L0}. We say for two scale-one cells $ R_1(ι,τ) $ and $ R_1(ι',τ') $ that $ R_1(ι,τ) $ is  \emph{diagonally connected}  to $ R_1(ι',τ') $ if there exists a sequence of adjacent cells $ \{R_1(ι_1,τ_1),\dots, R_1(ι_n,τ_n)\} $  of scale one such that $R_1(ι,τ)=R_1(ι_1,τ_1)$, for all $ j\in \{1,\dots,n-1\} $ we have
$d(R_1(ι_{j+1},τ_{j+1}),L_0)< d(R_1(ι_j,τ_j),L_0),$ and $ R_1(ι_n,τ_n) $ is either equal or adjacent to $ R_1(ι',τ') $. Accordingly, the cells $R_1(ι_j,τ_j)$, $j\in \{1,\dots,n-1\}$ (and $R_1(ι_n,τ_n)$ if it differs from $ R_1(ι',τ') $) will be referred to as  \emph{diagonal steps}.

\begin{remark}
    Note that in the sequence of adjacent cells constituting the diagonal steps, the temporal coordinate is not changing in the sense that $τ = τ_j$ for all $ j\in \{1,\dots,n-1\}$. This agrees with the definition of d-paths (see Definitions \ref{def-dpath} and \ref{def-adjacent}) where diagonal moves also preclude temporal changes.
\end{remark}

For two cells $ R_{k_1}(ι_1,τ_1)$ and $ R_{k_2}(ι_2,τ_2) $ of not necessarily different scales we say that $ R_{k_1}(ι_1,τ_1)$ is \emph{diagonally connected} to $ R_{k_2}(ι_2,τ_2) $ if there exist two cells $ R_1(\tilde{ι}_1,\tilde{τ}_1)$ and $ R_1(\tilde{ι}_2,\tilde{τ}_2) $ of scale one, descendants of $ R_{k_1}(ι_1,τ_1) $ and $ R_{k_2}(ι_2,τ_2) $, respectively , so that  $ R_1(\tilde{ι}_1,\tilde{τ}_1)$ is diagonally connected to $ R_1(\tilde{ι}_2,\tilde{τ}_2) $.

\begin{deff}\label{def-Dpath}
	We define a \emph{D-path} as a sequence of
	cells of arbitrary scales, where each cell is either adjacent or diagonally connected to the next cell in the sequence.
\end{deff}

The reader will note the analogy to the definition of d-path in Definition \ref{def-dpath}. Fix a cell $ v=R_1(\iota_v,\tau_v)\in L_1 $ %, with $ w_1\in \mathbb{G}^d, $ $ w_t\in \Z $. 
and define for any (large) $ t>0$ the set
\begin{equation}\label{def-Ωt}
	Ω_1(v\rightarrow t)
\end{equation}
of all D-paths of cells of scale one for which the first cell of the path is $v $ and the last cell is the only cell not contained in $ B_{t}(S_1(ι_v))\times [-t+τ_v,τ_v+t] $, where $B_{t}(S_1(ι_v)):= \cup_{x\in S_1(ι_v)} B_t(x)$.
%the space-time region $\overline{\bigtriangleup_t^d} (w_1)\times [-2^t+w_t,w_t+2^t] $, (cf.\ \eqref{def-gasketRecursive}).

The next notion of paths involves instead cells of multiple scales.
%We say that two cells $ (k_1,ι_1,τ_1) $ and $ (k_2,ι_2,τ_2) $ are \emph{support adjacent} if $ R\Esup_{k_{j_1}}(ι_{j_1},τ_{j_1})\cap R\Esup_{k_{j_2}}(ι_{j_2},τ_{j_2})\neq \emptyset.$.
%
%We say that two cells $ (k_1,ι_1,τ_1) $ and $ (k_2,ι_2,τ_2) $ are \emph{support connected with diagonal} if there exist two scale one cells $ R_1(ι_1',τ_1')\subseteq  R\Esup_{k_{j_1}}(ι_{j_1},τ_{j_1})$ and  $R_1(ι_2',τ_2') \subseteq R\Esup_{k_{j_2}}(ι_{j_2},τ_{j_2}$ which are
\begin{deff}\label{def-ScDpath}
	We define as \emph{ScD-path}  (support connected with diagonal paths) a sequence of cells of possibly different scales $ \{R_{k_1}(ι_1, τ_1),\dots, R_{k_z}(ι_z, τ_z)\} $ for some $ z\in\N, $ with the following properties:
	\begin{itemize}
		\item each pair of cells is  \emph{well-separated}, meaning that their areas of influence do not intersect; i.e.\ for any pair $ R_{\tilde{k}}(\tilde{ι},\tilde{τ}) ,$ $ R_{\hat{k}}(\hat{ι},\hat{τ} )$ we have
		      %					\begin{equation*}
		      %						R_{k_1}(ι_1,τ_1)\nsubseteq R\Sup_{k_2}(ι_2,τ_2)			\text{    and    }	R_{k_2}(ι_2,τ_2)	\nsubseteq	R\Sup_{k_1}(ι_1,τ_1)
		      %					\end{equation*}
		      \begin{equation*}
			      R\inbase_{\tilde{k}} (\tilde{ι}, \tilde{τ})  	\cap	R\inbase_{\hat{k}} (\hat{ι}, \hat{τ})  =\emptyset,
		      \end{equation*}
		\item two consecutive cells $ R_{k_{j}} (ι_{j},τ_{j}) $ and $ R_{k_{j+1}} (ι_{j+1},τ_{j+1})  $ are either
		      \begin{equation*}
			      \text{\emph{support adjacent}}\colon \ R\Esup_{k_j}(ι_j,τ_j)			\cap 		R\Esup_{k_{j+1}}(ι_{j+1},τ_{j+1})		\neq \emptyset
		      \end{equation*}
		      or \begin{equation*}
			      \text{\emph{support connected with diagonals}}\colon
			      \begin{array}{l}
				      \text{there exist two scale-one cells, respectively}                  \\ \text{subsets of the extended supports of } R_{k_{j}}(ι_{j},τ_{j})\text{ and} \\
				      \text{$R_{k_{j+1}} (ι_{j+1},τ_{j+1}),  $ so that the first cell is} \\
				      \text{diagonally connected to the second.}
			      \end{array}
		      \end{equation*}
		      %						\text{support connected with diagonals} i.e.\ there exists two scale one cells respectively contained in the extended support of $ (k_{j},ι_{j},τ_{j}) $ and $ (k_{j+1},ι_{j+1},τ_{j+1})  $ which are \GG{defined already scale one diag conn?}
	\end{itemize}
\end{deff}

For $ v\in L_1$ and $ t>0$, we define
\begin{equation}\label{def-Ωtsup}
	Ω\Sup_{κ}(v\rightarrow t)
\end{equation}

as the set of all ScD-paths of cells of scale at most $ κ $ so that the extended support of the first cell of the path contains $v$ and the last cell is the only cell whose extended support is not contained in $ B_{t}(S_1(ι_v))\times [-t+τ_v,τ_v+t] $ with $ι_v,τ_v$ as before.

Define the bad cluster around $ v\in L_1$ as
\begin{equation}\label{def-K}
	K_v:=\big \{R_1(\tilde{ι},\tilde{τ})\colon \text{there exists a D-path of bad cells from $ v $ to $ R_1(\tilde{ι},\tilde{τ}) $}\big\}.
\end{equation}

We can relate D-paths and ScD-paths via the following technical lemma.

\begin{lemma}\label{lemma-Ω1Ωsup}
	For any $ t>0 $ and $ v\in L_1$, it holds that
	\begin{align*}
		 & \P \big (\exists P \in Ω_1(v\rightarrow t) \text{ of cells with bad ancestry} \big) \\
		 & \le
		\P \big(\exists P \in Ω\Sup_κ(v\rightarrow t) \text{ of multi-scale bad cells} \big).
	\end{align*}
\end{lemma}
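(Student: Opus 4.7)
I would prove the lemma by showing that, on the underlying probability space, the existence of a D-path in $Ω_1(v\to t)$ of cells with bad ancestry deterministically implies the existence of an ScD-path in $Ω\Sup_κ(v\to t)$ of multi-scale bad cells, so the inequality follows from monotonicity of $\mathbb{P}$. Concretely, given a D-path $P=(v_1,\ldots,v_n)\in Ω_1(v\to t)$ with every $v_j=R_1(ι_j,τ_j)$ of bad ancestry, the definition \eqref{def-badancestry} together with \eqref{def-Aprod} lets me pick, for each $j$, a scale $k_j\in\{1,\ldots,κ\}$ for which $A_{k_j}(π_1^{(k_j-1)}(ι_j),γ_1^{(k_j-1)}(τ_j))=0$, and I denote the resulting multi-scale bad ancestor by $\tilde v_j:=R_{k_j}(π_1^{(k_j-1)}(ι_j),γ_1^{(k_j-1)}(τ_j))$.

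From the sequence $(\tilde v_1,\ldots,\tilde v_n)$ I then extract an ScD-path $(u_1,u_2,\ldots)$ by a greedy scan along $P$: set $u_1:=\tilde v_1$, and given $u_1,\ldots,u_m$ with selected indices $j_1<\cdots<j_m$, take $j_{m+1}$ to be the smallest index $j>j_m$ such that $R\inbase_{\tilde v_j}$ is disjoint from each of $R\inbase_{u_1},\ldots,R\inbase_{u_m}$; if no such $j$ exists before $P$ leaves $B_t(S_1(ι_v))\times[-t+τ_v,t+τ_v]$, I promote the final ancestor to a larger scale (if necessary) to play the role of the ``escape'' cell of the ScD-path. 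By construction, the selected cells are multi-scale bad and pairwise well-separated, and since $\tilde v_1$ is an ancestor of $v=v_1$, we have $v\in R\Esup_{u_1}$, so the containment condition for $Ω\Sup_κ(v\to t)$ is satisfied at the starting end, while the promotion step at the far end guarantees that the last cell's extended support is the first in the sequence to leave the box.

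The remaining and most delicate verification is that consecutive cells $u_m,u_{m+1}$ are either support-adjacent or support-connected with diagonals. Here I would use the sub-D-path $v_{j_m},v_{j_m+1},\ldots,v_{j_{m+1}}$ connecting them: by the greedy rule, every $\tilde v_j$ for $j_m<j<j_{m+1}$ has $R\inbase$ intersecting some $R\inbase_{u_i}$ with $i\leq m$; by \eqref{eq-InfIntersSupInters} this forces the corresponding supports to intersect, and by \eqref{eq-SuppIntersEsup} the extended support of the \emph{larger-scale} cell among the two then swallows the smaller support. If the sub-D-path consists only of adjacency steps at scale one, a straightforward tracking of scales via $S\Sup$ and $S\Esup$ (together with the analogous temporal statements, notably Lemma \ref{lemma-TInfinSup}) shows that $R\Esup_{u_m}\cap R\Esup_{u_{m+1}}\neq\emptyset$, i.e., support-adjacency holds. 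If instead the sub-D-path contains at least one diagonal move, then that diagonal move takes place between scale-one descendants lying inside $R\Esup_{u_m}$ and $R\Esup_{u_{m+1}}$ (by the containment just discussed), and thus yields the required diagonal connection in the sense of the second bullet of Definition \ref{def-ScDpath}.

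\paragraph{Main obstacle.} The principal difficulty is the verification in the previous paragraph: if $u_m$ is of large scale $k_{j_m}$, many small-scale cells of the D-path may be ``absorbed'' before the next well-separated ancestor is found, and one has to show case-by-case, using the hierarchical relations \eqref{eq-BaseInExt}, \eqref{eq-SInfInSup}, \eqref{eq-RInfinSup} and \eqref{eq-SuppIntersEsup}, that every such run either keeps the two extended supports intersecting (giving support-adjacency) or produces a scale-one diagonal subpath connecting them (giving the diagonal support-connection). Handling the extreme case where the scales of $u_m$ and $u_{m+1}$ differ greatly, and confirming that the greedy promotion at the escape step indeed produces a valid element of $Ω\Sup_κ(v\to t)$, is the part that will require the most bookkeeping.
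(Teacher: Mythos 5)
The overall strategy you propose—replace each scale-one cell of $P$ by a multi-scale bad ancestor, thin out to a well-separated subsequence, and read off support-connectivity from the adjacency/diagonal structure of $P$—is the same as the paper's in spirit, but the thinning step you use is the wrong one, and the gap it leaves is genuine. The paper's construction proceeds in two stages. Step~1 first produces a D-path $P''$ of multi-scale bad cells of various scales by replacing scale-one cells with their multi-scale bad ancestors, deleting any cell that is a descendant of an earlier cell in the resulting sequence, and truncating at the first escape from the box. Step~2 then builds $\hat P$ from $P''$ by scanning the cells of $P''$ \emph{in decreasing order of scale}, at each selection absorbing all not-yet-processed cells whose area of influence meets that of the selected cell, and only afterwards re-ordering $\hat P$ along $P''$. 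The scale ordering guarantees that an absorbed cell always has scale no larger than the scale of the cell that absorbed it, which is exactly what is needed to apply \eqref{eq-SuppIntersEsup} in the useful direction $R\Esup_{\text{selected}} \supseteq R\Sup_{\text{absorbed}}$, and it is this one-sided containment that propagates along adjacent $P''$-cells to yield support-adjacency or a diagonal support connection between consecutive cells of $\hat P$.

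Your greedy scan runs in $P$-order rather than scale order, so this invariant fails: a candidate $\tilde v_j$ can be skipped because $R\inbase_{\tilde v_j}$ meets $R\inbase_{u_i}$ for a previously selected $u_i$ of \emph{strictly smaller} scale. In that case \eqref{eq-SuppIntersEsup} points the wrong way, giving $R\Esup_{\tilde v_j}\supseteq R\Sup_{u_i}$, and your phrase ``the extended support of the larger-scale cell among the two then swallows the smaller support'' is literally true but useless, since the larger cell is the one you discarded. Moreover, the cells skipped between $j_m$ and $j_{m+1}$ need not all be absorbed by $u_m$: each can intersect a \emph{different} earlier $u_i$, so there is no chain of extended-support containments linking $u_m$ to $u_{m+1}$, and the asserted ``straightforward tracking of scales'' showing $R\Esup_{u_m}\cap R\Esup_{u_{m+1}}\neq\emptyset$ is simply not available. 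The scale-sorted absorption is the missing idea, and I do not see how to salvage the single-pass scan without effectively introducing it. A secondary issue is that your ``promotion step at the far end'' is left unspecified; the paper avoids any such ad hoc fix by truncating $P''$ already in Step~1, before any thinning is done.
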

\begin{remark}
	Note that for a path $P \in Ω_1(v\rightarrow t)$ of cells with bad ancestry, the property of having a bad ancestor is required only for the cells of $P$ and not for the cells constituting the diagonal steps in the diagonal connections of $P$. This is in line with Definition \ref{def-dpath}, where diagonal moves of d-paths do not impose any requirements on the state of the cells. The same is of course true also for $P \in Ω\Sup_κ(v\rightarrow t)$, where being multi-scale bad is not required for the cells constituting diagonal connections.
\end{remark}
\begin{proof}[Proof of Lemma \ref{lemma-Ω1Ωsup}]
	We split the proof into two steps. Defining $ Ω_{κ}(v\rightarrow t) $ as the set of D-paths of cells of scale at most $ κ $, where the first cell is an ancestor of $ v $ and the last cell is the only cell whose support is not contained in $ B_{t}(S_1(ι_v))\times [-t+τ_v,τ_v+t] $, %$  \bigtriangleup_t^d \times [-2^t,2^t], $ 
	we prove in the two steps that
	\begin{align*}
		 & \P \big(\exists P \in Ω_1(v\rightarrow t) \text{ of cells with bad ancestry}\big) \\
		 & \le
		\P \big(\exists P \in Ω_κ(v\rightarrow t) \text{ of multi-scale bad cells}\big)      \\
		 & \le
		\P \big(\exists P \in Ω\Sup_κ(v\rightarrow t) \text{ of multi-scale bad cells}\big).
	\end{align*}

	\textit{Step 1.} Consider a D-path $ P=\big(R_1(ι_j,τ_j) \big)_{j=1}^{z} \in Ω_1(v\rightarrow t)$ of cells with bad ancestry. By definition, for each cell of $P$ it holds that $ A(ι_j,τ_j)=0 $, so there exists $ k_j $ such that $ A_{k_j}(π_{1}^{k_j'-1}(ι_j),   γ_{1}^{k_j'-1}(τ_j))=0, $ so that $ R_{\tilde{k}_j}(\tilde{ι}_j, \tilde{τ}_j):=R_{k_j}\big( π_{1}^{k_j'-1}(ι_j),   γ_{1}^{k_j'-1}(τ_j)\big) $ is a multi-scale bad cell.
	From the sequence $P':= \{R_{\tilde{k}_j}(\tilde{ι}_j, \tilde{τ}_j)\}_{j=1}^{z} $ construct a subsequence $ P'':=\{R_{k_j''}(ι''_j, τ''_j)\}_{j=1}^{z''}  $ taking in the same order of the cells from $ P' $ but removing all cells indexed by $ \hat{j} $ which are the descendant of some other cell in the path $P' $ with index $j_0 $, with $ j_0<\hat{j} $. Furthermore, if there is a cell $ R_{\tilde{k}_j}(\tilde{ι}_j, \tilde{τ}_j) $ before the last one whose support is not contained in $ B_{t}(S_1(ι_v))\times [-t+τ_v,τ_v+t] $, %$ \bigtriangleup^d_t\times [-2^t,2^t] $, 
	we remove from $ P'' $ all following cells.

	We claim that $ P''\in Ω_{κ}(v\rightarrow t),$  which will conclude step 1.
	This path starts with an ancestor of $ v $ and by construction the last cell's support is not contained in $ B_{t}(S_1(ι_v))\times [-t+τ_v,τ_v+t] $. %$ \bigtriangleup^d_t\times [-2^t,2^t] $.  
	Note that every cell in $ P $ has exactly 1 ancestor in $ P''. $ Consider now two cells $R_1(ι_j,τ_j)  $ and $ R_1(ι_{j+1}, τ_{j+1}) $ with different ancestors in $ P''. $ If $R_1(ι_j,τ_j)  $ is diagonally connected to $ R_1(ι_{j+1}, τ_{j+1}) $, then the ancestor of $R_1(ι_j,τ_j)  $ is either diagonally connected or adjacent to the ancestor of $ R_1(ι_{j+1}, τ_{j+1}) $; if $R_1(ι_j,τ_j)  $ and $R_1 (ι_{j+1}, τ_{j+1}) $ are adjacent, then their ancestors are adjacent, since two non-adjacent cells cannot have two adjacent descendants. Finally, every cell of $P''$ is multi-scale bad by how $P''$ was constructed.

	\begin{figure}[!ht]
		\centering
		\begin{subfigure}[b]{\linewidth}
			\includegraphics[width=\linewidth]{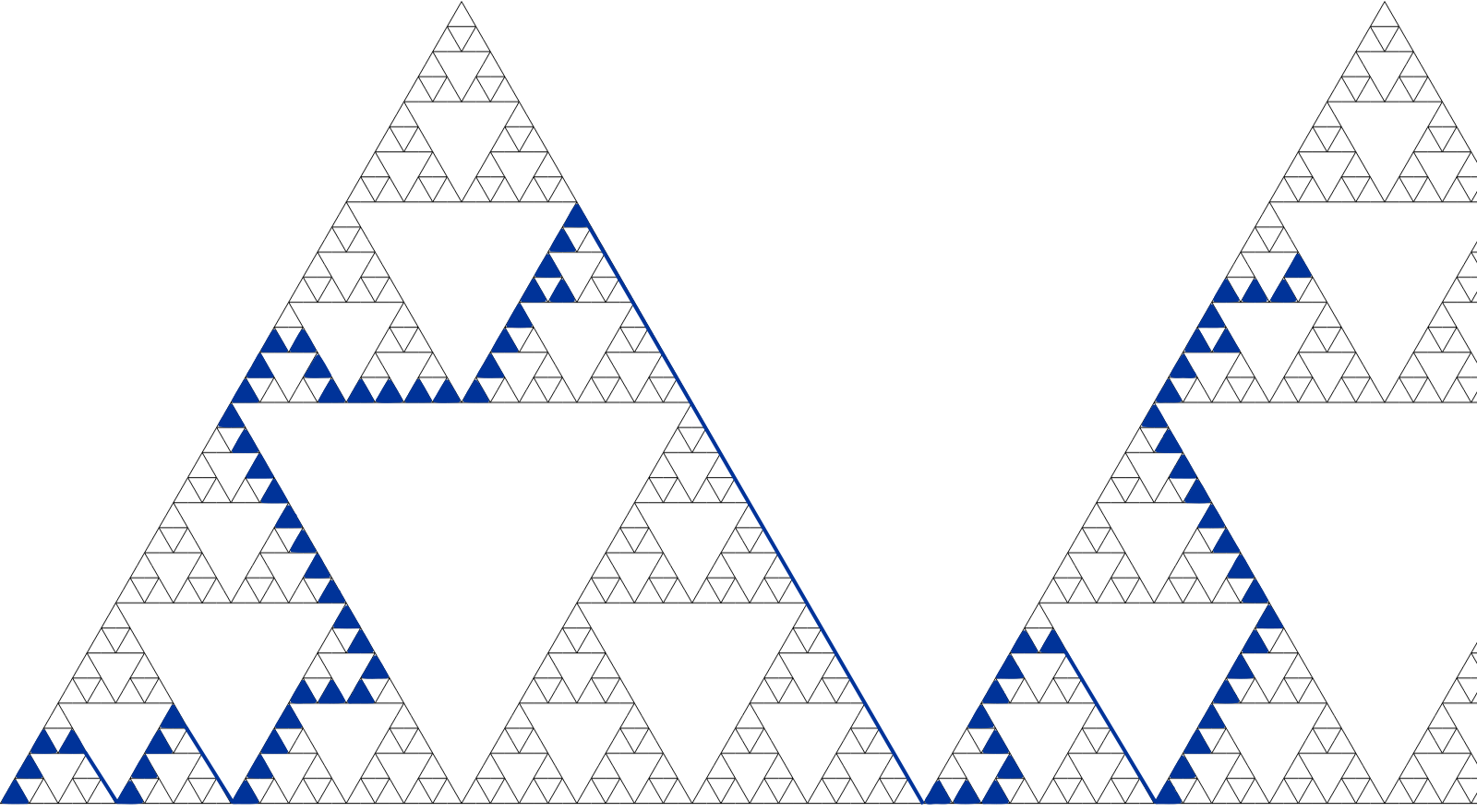}
			\subcaption{A possible D-path with adjacent and diagonally connected cells.}
		\end{subfigure}
		\begin{subfigure}[t]{0.48\linewidth}
			\includegraphics[width=\linewidth]{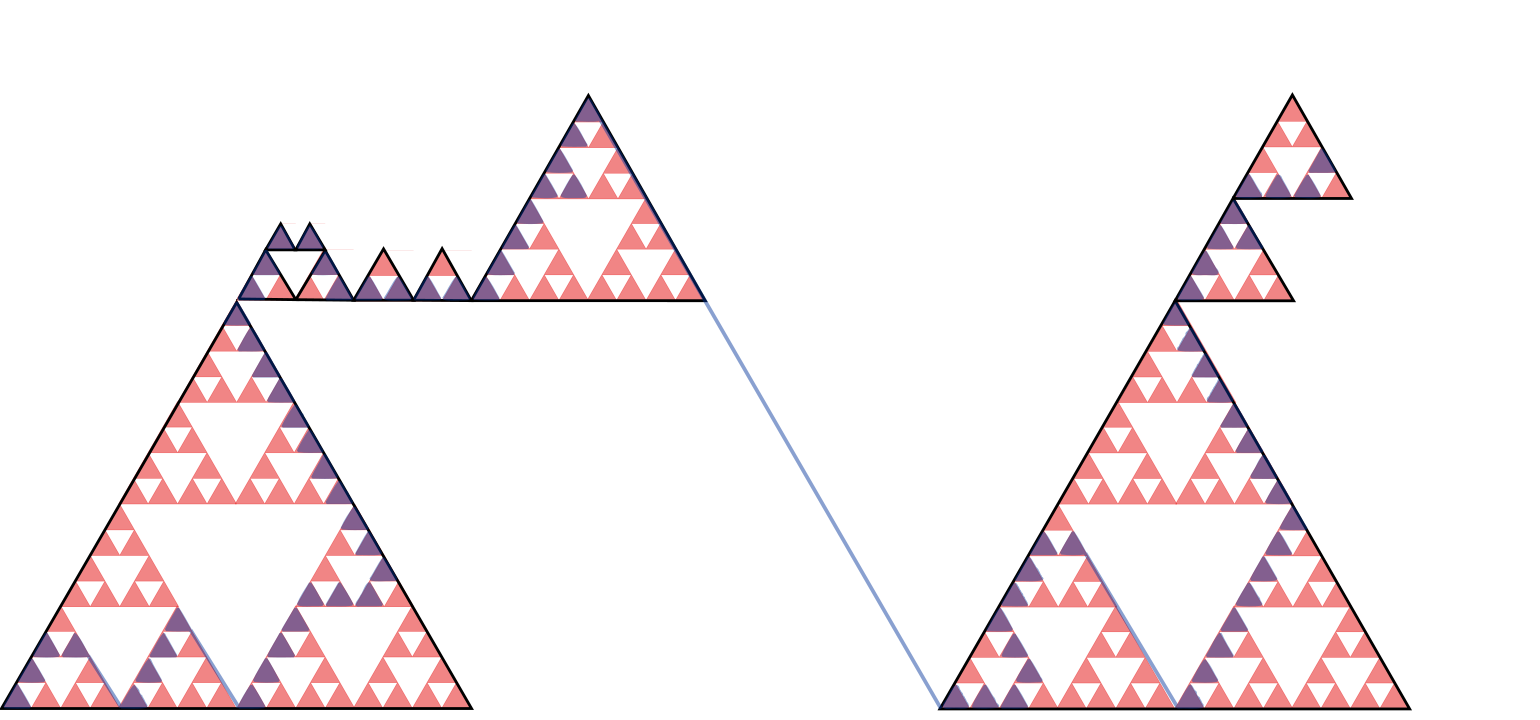}
			\subcaption{A D-path of multi-scale bad cells (in red with a thicker border) in comparison with the D-path (in blue) of the previous image. Note that many cells of scale one correspond to the same cell in this image.}
		\end{subfigure}
		\hfill
		\begin{subfigure}[t]{0.48\linewidth}
			\includegraphics[width=\linewidth]{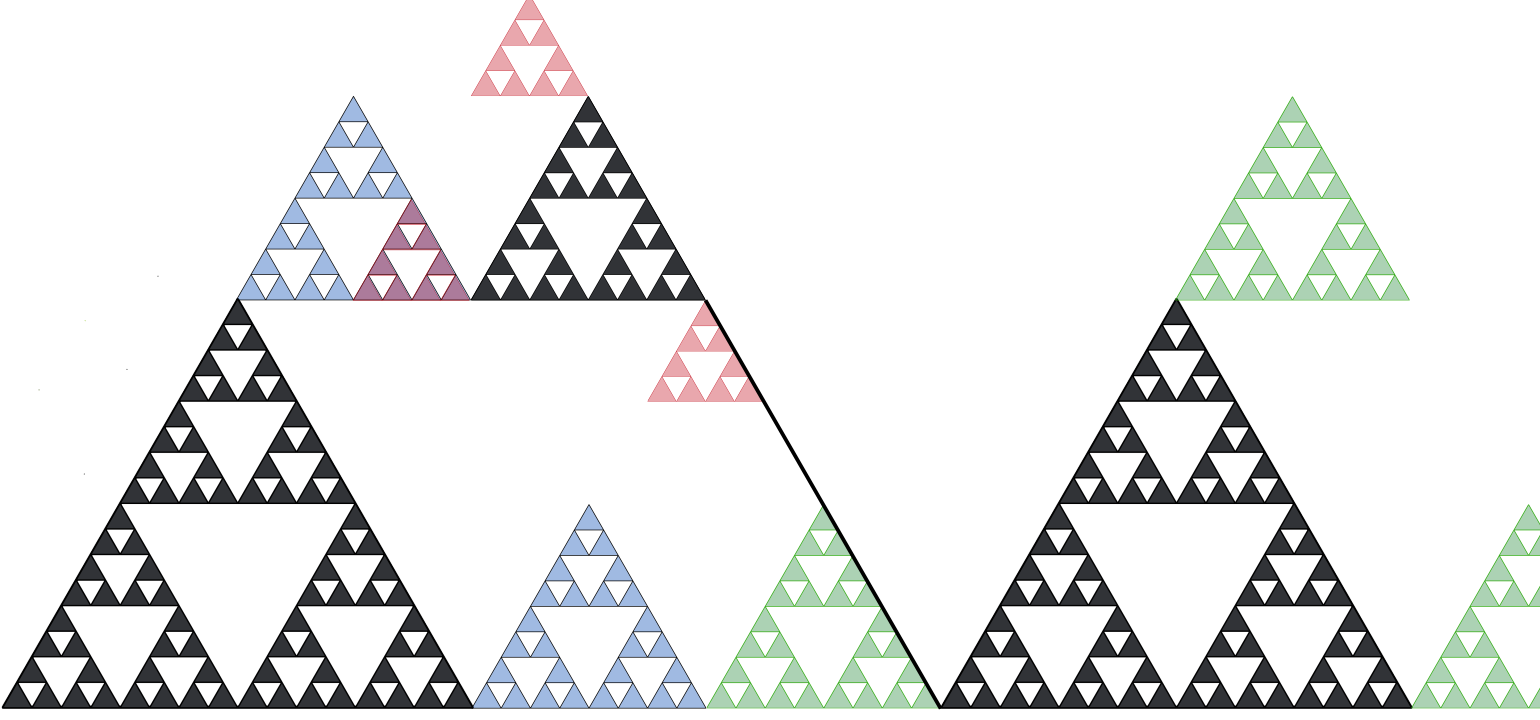}
			\subcaption{The corresponding ScD-path (in black), where some cells were discarded as they were not \emph{well-separated}. We highlight (respectively in blue, red and green) the extended supports and (in black) the diagonal of 2 cells which are \emph{support connected with diagonal}.}
		\end{subfigure}
		\caption{From D-paths to ScD-paths. Note that this example is on $\mathbb{G}$ without the time component in order to make the visualisation easier. In practice, the procedure is conducted on cells of $\mathbb{G}\times\mathbb{Z}$.}\label{fig-paths}
	\end{figure}
	% \begin{figure}[ht]\caption{From D-paths to ScD-paths}\label{fig-paths}
	% 	\begin{center}						
	% 		\begin{minipage}{0.9\linewidth}
	% 			\small{A possible D-path with adjacent and diagonally connected cells.}\\\vspace{3pt}\\
	%                     \includegraphics[width=\linewidth]{Images/SP1.png} 
	% 		\end{minipage}\\
	% 		\vspace{12pt}
	% 		\begin{minipage}{0.45\linewidth}
	% 			\small{A D-path of multi scale bad cells (in red) in comparison with the D-path (in blue) of the previous image. Note that many cell of scale one correspond to the same cell in this image.}\\ \vspace{15pt}\\
	% 			\includegraphics[width=\linewidth]{Images/SP2.png}
	% 		\end{minipage} \hspace{6pt}
	% 		\begin{minipage}{0.45\linewidth}
	% 			\small{The corresponding ScD-path (in black), where some cells were discarded as they were not \emph{well-separated}. We highlighted (in blue) the extended supports and (in black) the diagonal of 2 cells which are \emph{support connected with diagonal.}}\\ \vspace{6pt}\\
	% 			\includegraphics[width=\linewidth]{Images/SP3.png}
	% 		\end{minipage}
	% 	\end{center}
	% \end{figure}

	\textit{Step 2.} We now prove the second inequality, that is, starting from $ P'' $ we can obtain a path $ \hat{P} $ of multi-scale bad cells which are well-separated and in which every sequential pair of cells is either support adjacent or the first cell of the pair is support connected with diagonals to the second.

	First define a sequence $ L $ of cells from $ P'' $, but where the cells are ordered in the following way: we first order cells by scale, where cells of bigger scale come first, and within cells of the same scale we maintain the original order of $ P'' $. We construct $ \hat{P} $ and create a relation between $ P'' $ and $ \hat{P}$ in the following way. Following the order of $ L $, and in particular starting with scale $ κ $, we perform the following operations. Assuming the first cell of scale $ k $ in the list $ L $ is $ R_k(\hat{ι}, \hat{τ}) $ we
	\begin{itemize}
		\item add $ R_k(\hat{ι}, \hat{τ}) $ to $ \hat{P};$
		\item remove $  R_k(\hat{ι}, \hat{τ})  $ from $ L; $
		\item associate $  R_k(\hat{ι}, \hat{τ})  $ in $ P'' $ with itself in $ \hat{P};$
		\item remove from $ L $ all cells $  R_{\tilde{k}}(\tilde{ι}, \tilde{τ})  $ which are not well-separated from $ R_k (\hat{ι}, \hat{τ})  $ and associate them all with $  R_k(\hat{ι}, \hat{τ})  $ in $ \hat{P}. $
	\end{itemize}

	Repeating this procedure until $ L$ is empty, we obtain a sequence of cells $ \hat{P} $, and all cells in $ P'' $ are associated to some cell in $ \hat{P}. $ Before proceeding, we reorder $ \hat{P} $ according to the ordering in $P''$, thus making $ \hat{P} $ a path (which we will verify below). In particular, a cell $v$ in $ \hat{P} $ appears before a different cell $u$ of $ \hat{P} $ if according to the ordering of $P''$, there exists a cell of $P''$ associated to $v$ that appears before any cell of $P''$ associated to $u$. Since the multi-scale bad property follows trivially from $ P'', $ we are only left to show that
	\begin{equation}\label{eq-GoalΩΩ}
		\hat{P}\in Ω_{κ}\Sup(v\rightarrow t).
	\end{equation}

	First, let $ R_{\hat{k}_1}(\hat{ι}_1,\hat{τ}_1) \in \hat{P}$ be the cell which $ R_{k_1''}(ι_1'',τ_1'')\in P'' $ is associated to. In the non-trivial case, $ R_{\hat{k}_1}(\hat{ι}_1,\hat{τ}_1)   $ is not associated to itself, so $R_{\hat{k}_1}(\hat{ι}_1,\hat{τ}_1)  $ and $ R_{k_1''}(ι_1'',τ_1'')$ are not well-separated and therefore their areas of influence intersect. By \eqref{eq-InfIntersSupInters} their supports intersect as well. By \eqref{eq-SuppIntersEsup}, $ R\Esup_{\hat{k}_1}(\hat{ι}_1,\hat{τ}_1)  \supseteq R\Sup _{k_1''}(ι_1'',τ_1'')$, and since $R_{k_1''}(ι_1'',τ_1'')$ contains $ v $ by definition of $ P'',  $ we obtain
	that $ R\Esup_{\hat{k}_1}(\hat{ι}_1,\hat{τ}_1)$ contains $ v $ as desired.

	Secondly, we can argue in the same way to show that the extended support of the cell which $ R_{k''_{z''}}( ι''_{z''}, τ''_{z''}) $ is associated to is not contained in the space-time ball $ B_{t}(S_1(ι_v))\times [-t+τ_v,τ_v+t] $.
	%$ \bigtriangleup^d_t\times [-2^t,2^t]. $

	Finally, we need to show that sequential pairs of cells of $\hat{P}$ are either support adjacent or the first cell of the pair is support connected with diagonals to the second. Consider $ R_{\hat{k}_j}(\hat{ι}_j,\hat{τ}_j)\in \hat{P}$, and let $ R_{k_{j''},}(ι_{j''},τ_{j''}) $ be the first cell of $ P'' $ (in the original ordering of $ P'' $) which is associated to $R_{\hat{k}_j} (\hat{ι}_j,\hat{τ}_j) $. Next, take $ R_{k_{j''-1}}(ι_{j''-1},τ_{j''-1})\in P'' $ and let $ R_{\hat{k}_{j-1}}(\hat{ι}_{j-1},\hat{τ}_{j-1})\in \hat{P} $ be the cell which it is associated to. We claim that $ R_{\hat{k}_j,}(\hat{ι}_j,\hat{τ}_j)$ and $ R_{\hat{k}_{j-1}}(\hat{ι}_{j-1},\hat{τ}_{j-1}) $ are either support adjacent or $ R_{\hat{k}_{j-1}}$ is support connected with diagonals to $ R_{\hat{k}_j,}(\hat{ι}_j,\hat{τ}_j)$ based on whether  $ R_{k_{j''}}(ι_{j''},τ_{j''}) $ and $ R_{k_{j''-1}}(ι_{j''-1},τ_{j''-1})\in P'' $ are adjacent or whether $ R_{k_{j''-1}}(ι_{j''-1},τ_{j''-1})$ is connected with diagonals to $ R_{k_{j''}}(ι_{j''},τ_{j''}) .$

	If $ R_{k_{j''-1}}(ι_{j''-1},τ_{j''-1}) $ and $ R_{k_{j''}}(ι_{j''},τ_{j''}) $ are adjacent, we can suppose without loss of generality that $ k_{j''-1}\le k_{j''} $, and by definition there exists a cell $ R_{k_{j''}}(\tilde{ι}_{j''-1},\tilde{τ}_{j''-1}) $, which is an ancestor of $ R_{k_{j''-1}}(ι_{j''-1},τ_{j''-1}) $ and adjacent to $ R_{k_{j''}}(ι_{j''},τ_{j''}). $ Hence applying $ \eqref{eq-RInfinSup} $ twice we obtain that
	\begin{equation}\label{eq-proof-InfInSup}
		\begin{split}
			R\inbase_{k_{j''}}(ι_{j''},τ_{j''})&\subseteq R\Sup_{k_{j''}}(\tilde{ι}_{j''-1},\tilde{τ}_{j''-1})
			\\
			&\text{and}\\
			R\inbase_{k_{j''-1}}(ι_{j''-1},τ_{j''-1})&\subseteq
			R\Sup_{k_{j''}}(\tilde{ι}_{j''-1},\tilde{τ}_{j''-1}) .
		\end{split}
	\end{equation}
	Since $ R_{k_{j''}}(ι_{j''},τ_{j''})  $ is associated to $ R_{\hat{k}_j}(\hat{ι}_j,\hat{τ}_j) $, they are not well-separated and thus their areas of influence intersect. Therefore \eqref{eq-proof-InfInSup}  implies that $ R\Sup_{k_{j''}}(\tilde{ι}_{j''-1},\tilde{τ}_{j''-1})$ intersects $R\inbase_{\hat{k}_{j}}(\hat{ι}_{j},\hat{τ}_{j})$ and by \eqref{eq-RInfinSup} intersects $R\Sup_{\hat{k}_{j}}(\hat{ι}_{j},\hat{τ}_{j})$; since $ \hat{k}\ge k_{j''} $, applying \eqref{eq-SuppIntersEsup}, we have $ R\Esup_{\hat{k}_{j}}(\hat{ι}_{j},\hat{τ}_{j}) \supseteq $ $ R\Sup_{k_{j''}}(\tilde{ι}_{j''-1},\tilde{τ}_{j''-1}) $$\supseteq R\inbase_{k_{j''-1}}(ι_{j''-1},τ_{j''-1})$ where the last inclusion is due to \eqref{eq-proof-InfInSup}.
		Since the cells $ R_{k_{j''-1}}(ι_{j''-1},τ_{j''-1})$ and $ R_{\hat{k}_{j-1}}(\hat{ι}_{j-1},\hat{τ}_{j-1})$ are not well-separated, repeating the same argument below \eqref{eq-GoalΩΩ} we have
	$ R\Esup_{\hat{k}_{j-1}} (\hat{ι}_{j-1},\hat{τ}_{j-1})$
	$\supseteq$ $R\Sup_{k_{j''-1}}({ι}_{j''-1},{τ}_{j''-1})$ $ \supseteq $ $ R\inbase_{k_{j''-1}}(ι_{j''-1},τ_{j''-1}) $, where the last inclusion follows from \eqref{eq-RInfinSup}. This shows that the two extended supports intersect.

		If instead $ R_{k_{j''-1}}(ι_{j''-1},τ_{j''-1}) $ is connected with diagonals to $R_{k_{j''}}(ι_{j''},τ_{j''}) $, then by definition they contain respectively two cells $ R_1(\tilde{ι}_{j''-1},\tilde{τ}_{j''-1})$ and
	$ R_1(\tilde{ι}_{j''},\tilde{τ}_{j''}) $ such that $ R_1(\tilde{ι}_{j''-1},\tilde{τ}_{j''-1})$ is connected with diagonals to $ R_1(\tilde{ι}_{j''},\tilde{τ}_{j''}) $.
		Additionally, since $ R_{k_{j''}} (ι_{j''},τ_{j''}) $ is associated to $ R_{\hat{k}_j}(\hat{ι}_j,\hat{τ}_j)$, they are not well-separated and by the argument below \eqref{eq-GoalΩΩ} we have $ R_1(\tilde{ι}_{j''},\tilde{τ}_{j''}) \subseteq R\Sup_{{k}_{j''}}(ι_{j''},τ_{j''})  \subseteq R\Esup_{\hat{k}_j}(\hat{ι}_j,\hat{τ}_j).$ With the same argument, $ R_1 (\tilde{ι}_{j''-1},\tilde{τ}_{j''-1}) \subseteq  R\Esup_{\hat{k}_{j-1}}(\hat{ι}_{j-1},\hat{τ}_{j-1})$. This shows that $ R_{\hat{k}_{j-1}}(\hat{ι}_{j-1},\hat{τ}_{j-1}) $ is support connected with diagonals to  $ R_{\hat{k}_j}(\hat{ι}_j,\hat{τ}_j)$, which concludes the proof.
\end{proof}

\section{Multi-scale analysis}\label{SEC-MultiScaleAnalysis}

We will now use the multi-scale set-up introduced above in order to bound the probability of having paths of multi-scale bad cells. Recall that $ζ\in(0,\infty)$ as defined in Theorem \ref{thm-main} plays the role of 
imposing the confinement width of the particle movement at the 
scale-one tessellation.
We now define what will essentially be the ``weight'' of a cell as
\begin{equation}\label{def-ψ}
	\begin{aligned}
		ψ_1 (ε,μ_0,\ell) & :=	\min \Big\{	\frac{ε^2μ_0 2^{\dv\ell}	} {C_λ}				,		-\log \Big(	1- ν_E\big( (1-ε)λ, S_1^η, \displ_{ζ\ell}, ηβ	 \big)
		\Big)	\Big\},                                                                                                                             \\
		ψ_k (ε,μ_0,\ell) & :=\frac{ε^2μ_02^{\dv\ell_{k-1}}	} { k^4}, \qquad k\ge 2,
	\end{aligned}
\end{equation}
which we will use as a reference for both the probability of a cell of scale $k$ to be bad, and for the number of ScD-paths which contain a cell of scale $k.$

\subsection{Probability of a multi-scale bad ScD-path}\label{SUBSEC-ProbMultiBad}

We want to estimate the probability for a cell to be multi-scale bad. While close cells are heavily dependent on each other, we still want to obtain a bound conditioning on cells which are ``not too close'', in a spatial or temporal sense. Recall the definitions of $S\inbase_k $ and $T\inbase_k$ in \eqref{def-Sinbase} and \eqref{def-Tinbase}. We define $ \mathcal{F}_k (ι,τ) $ to be the $ σ $-algebra generated by all the $A_{k'}(ι',τ')  $ for which either:

\begin{enumerate}[label=(\alph{enumi}), ref=\alph{enumi}]
	\item \label{item-Fka}
	      $T_{k'}\inbase (τ')$
	      %                        $ [γ_{k'}^{(1)}(τ')β_{k'+1},	(τ'+2)β_{k'} ]$$ 			
	      $\cap$ $ 			 [γ_{k}^{(1)}(τ)β_{k+1},∞)	= \emptyset 	$,  or
	\item \label{item-Fkb}$ τ'β_{k'}\le τβ_k $ and $ S\inbase_k(ι) \cap S\inbase_{k'}(ι') =\emptyset. $
\end{enumerate}
Intuitively, this is information about the behaviour of particles in space-time cells that are either far enough in the past so that we can ignore them due to the starting assumptions guaranteed by $\{D_k^{\textrm{base}}(\iota,\tau)=1\},$ or which are happening roughly simultaneously with the time interval indexed by $\tau$ or later (i.e.\ during a time interval occurs after the time interval $\tau$, regardless of scale), but far enough away not to be able to influence the occurrence of the event $\{A_k(ι,τ)=0\}$ due to the confinement of the random walks under consideration. Recall that the intensity of the Poisson point process is $μ_x=μ_0λ_x.$

\begin{lemma}\label{lemma-Mixing}
	Let $\ell, β, ε,ζ,η$ be as in Theorem \ref{thm-main} with
	\begin{equation}\label{def-ζ}
		ζ \ge	\frac{1}{\ell}		\Big(	\big(   \Cr{conf1}\log \big( \tfrac{8\Cr{conf1}}{3ε} \big) \big)^{\dw-1}	ηβ	\Big)^{\frac{1}{\dw}}			.
	\end{equation}
	If $a$ and $m $  in \eqref{def-ell}  are large enough, then there exist $ C_ψ \in (0,\infty)$ and $ α_0 = α_0(ε,\beta/2^\ell,μ_0)>0 $  such that if $ ψ_1 >α_0,$ then for all $ k=1,\dots,κ $, all cells $ R_k(ι,τ) $ and any $F\in\mathcal{F}_k (ι,τ) $ with $\P(F)>0$ we have
	\begin{equation*}
		\P(A_k(ι,τ)=0\given F ) \le e^{ -C_ψ ψ_k }.
	\end{equation*}
	Furthermore, we have for scale $\kappa$ that
	\begin{equation*}
		\P(A_κ(ι,τ)=0 ) \le e^{ -C_ψ ψ_κ }.
	\end{equation*}
\end{lemma}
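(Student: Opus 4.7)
The plan is to decompose $\{A_k(\iota,\tau)=0\}$ using its definition and then drive everything through the decoupling Theorem~\ref{thm-mixing}. For $2 \le k < \kappa$ the event $\{A_k=0\}$ coincides with $\{D_k\base(\iota,\tau)=1,\, D_k\ext(\iota,\tau)=0\}$, so it suffices to bound $\mathbb{P}(D_k\ext(\iota,\tau)=0 \mid F,\, D_k\base(\iota,\tau)=1)$. Under this conditioning, at time $\gamma_k^{(1)}(\tau)\beta_{k+1}$ every sub-tile $S_k(\iota')\subseteq S_k\base(\iota)$ contains at least $(1-\mathfrak{d}_{k+1})\mu_0\sum_y\lambda_y$ particles, all confined to $B_{b(k)2^{\ell_k}}$ throughout $[\gamma_k^{(1)}(\tau)\beta_{k+1},\tau\beta_k]$.

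I would then apply Theorem~\ref{thm-mixing} to these particles with $\delta=(1-\mathfrak{d}_{k+1})\mu_0$, $\bar\varepsilon=\varepsilon/(2k^2(1-\mathfrak{d}_{k+1}))$ (so that $(1-\mathfrak{d}_{k+1})(1-\bar\varepsilon)\ge 1-\mathfrak{d}_k$), sub-region diameter $\sim 2^{\ell_k}$, outer diameter $K\sim b(k)2^{\ell_k}$, target region $S_{K'}\supseteq S_k\ext(\iota)$ of diameter $\sim 2^{\ell_k}$, and time increment $\mathbf{\Delta}=\tau\beta_k-\gamma_k^{(1)}(\tau)\beta_{k+1}\asymp \beta_{k+1}$. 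The definitions of $\beta_k$ (together with the standing assumption $\Cmix\ge 8^{4/\Theta}\Cr{MixingDelta}$) and $b(k)=ak^{2+8/(\Theta\dw)}m2^m$ are tailored so that both hypotheses $\mathbf{\Delta}\ge \Cr{MixingDelta}\ell^{\dw}\bar\varepsilon^{-4/\Theta}$ and $K-K'\ge \Cr{MixingRoot}(\mathbf{\Delta}(\log_2\mathbf{\Delta})^{\dw-1})^{1/\dw}$ hold once $a,m$ are taken large; the checking of these two hypotheses with the above parameter choices is the most delicate computation. The theorem then couples the particle positions at time $\tau\beta_k$ inside $S_k\ext(\iota)$ with an independent Poisson point process $\Xi$ of intensity at least $(1-\mathfrak{d}_k)\mu_0\lambda_y$, the coupling failing with probability at most $\sum_{y\in S_k\ext(\iota)} e^{-c\mu_0\lambda_y(\varepsilon/k^2)^2\mathbf{\Delta}^{\dv/\dw}}$, which is absorbed into a multiple of $e^{-C\psi_k}$ because $\mathbf{\Delta}^{\dv/\dw}\gtrsim 2^{\dv\ell_{k-1}}$.

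On the coupling event, two items remain. A standard Poisson lower-tail bound shows that the probability that some sub-tile $S_{k-1}(\iota')\subseteq S_k\ext(\iota)$ contains fewer than $(1-\mathfrak{d}_k)\mu_0\sum_y\lambda_y$ particles of $\Xi$ is at most $e^{-c\varepsilon^2\mu_0 2^{\dv\ell_{k-1}}/k^4}$ per sub-tile, and a union bound over the polynomially many sub-tiles yields the desired order $e^{-C\psi_k}$. Lemma~\ref{lemma-confining} then handles the new confinement requirement inside $B_{b(k-1)2^{\ell_{k-1}}}$ during $[\tau\beta_k,(\tau+2)\beta_k]$, thanks to the growth of $b(k-1)$ relative to $\beta_k^{1/\dw}\asymp 2^{\ell_{k-1}}$. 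Conditioning on $F\in\mathcal{F}_k(\iota,\tau)$ is harmless, since by its definition $F$ only reveals information about particles either temporally earlier than $\gamma_k^{(1)}(\tau)\beta_{k+1}$ or spatially disjoint from $S_k\inbase(\iota)$, and the Markov property of the Poisson random walks lets us restart from the favourable configuration guaranteed by $D_k\base=1$.

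For the base case $k=1$, after the same decoupling and confinement steps one further requires that $E(\iota,\tau)$ holds conditional on the resampled, confined particles; by definition of $\nu_E$ this contributes a factor $1-\nu_E((1-\varepsilon)\lambda,S_1^\eta,B_{\zeta\ell},\eta\beta)\le e^{-\psi_1}$, with $\zeta$ from \eqref{def-ζ} precisely chosen so that confinement on the super-tile scale holds with the required probability. For $k=\kappa$ there is no $F$ to condition on; stationarity of the Poisson random walks supplies the initial density directly, and the remaining argument yields the unconditional bound. The main obstacle throughout is the delicate calibration of the parameters $a,m,\beta_k,b(k),\mathfrak{d}_k$ and of the auxiliary $\bar\varepsilon$ so that every application of Theorem~\ref{thm-mixing}, Poisson concentration and Lemma~\ref{lemma-confining} contributes a factor absorbable into $e^{-C_\psi\psi_k}$ uniformly in $k\le\kappa$; this is the raison d'être of the specific formulas for $\beta_k$ and $b(k)$.
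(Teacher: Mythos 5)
Your proposal follows the paper's overall structure—decompose $\{A_k = 0\}$ into $\{D_k^{\text{base}}=1, D_k^{\text{ext}}=0\}$, decouple via Theorem~\ref{thm-mixing} with the initial data supplied by $D_k^{\text{base}}=1$, then verify the requirements of $D_k^{\text{ext}}=1$—and your choice of $K,K',l,\delta,\mathbf{\Delta}$ and your treatment of the conditioning on $F$, of $k=1$ and of $k=\kappa$ all match the paper. However, your choice of $\bar\varepsilon$ creates a genuine gap.

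You set $\bar\varepsilon = \varepsilon/\bigl(2k^2(1-\mathfrak{d}_{k+1})\bigr)$ precisely so that the coupled Poisson process $\Xi$ has intensity $(1-\mathfrak{d}_{k+1})(1-\bar\varepsilon)\mu_0\lambda_y = (1-\mathfrak{d}_k)\mu_0\lambda_y$. This leaves no slack. Your next step claims a lower-tail Poisson bound of order $e^{-c\varepsilon^2\mu_0 2^{\dv\ell_{k-1}}/k^4}$ on the event that a sub-tile contains fewer than $(1-\mathfrak{d}_k)\mu_0\sum_y\lambda_y$ particles of $\Xi$—but with intensity exactly $(1-\mathfrak{d}_k)\mu_0\lambda_y$, that is the event that a Poisson variable falls below its mean, which has probability $\approx 1/2$, not exponentially small. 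Worse, you still have to thin $\Xi$ to retain only the particles confined in $B_{b(k-1)2^{\ell_{k-1}}}$ during $[\tau\beta_k,(\tau+2)\beta_k]$, which further reduces the intensity strictly below $(1-\mathfrak{d}_k)\mu_0\lambda_y$, so the target count becomes unreachable even in expectation. The paper avoids both problems by taking $\bar\varepsilon = \varepsilon/(8k^2)$: after the coupling and the confinement thinning (which, by Lemma~\ref{lemma-confining} and the choice of $b(k-1)$, removes at most an $\varepsilon/(8k^2)$ fraction), the surviving Poisson intensity is at least $\bigl(1-\tfrac{\varepsilon}{4k^2}\bigr)(1-\mathfrak{d}_{k+1})\mu_0\lambda_y$, which exceeds $(1-\mathfrak{d}_k)\mu_0\lambda_y$ by a relative margin of order $\varepsilon/(4k^2)$, and it is this residual margin that feeds the Chernoff bound and yields the $e^{-C_\psi\psi_k}$ term. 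You need to redistribute the total budget $\mathfrak{d}_k - \mathfrak{d}_{k+1} = \varepsilon/(2k^2)$ across three sinks (decoupling loss, confinement loss, Chernoff slack) rather than spending it all on the first.
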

\begin{proof}
	We start by proving the result for $ 2\le k \le κ-1. $ Let $ F\in\mathcal{F}_k (ι,τ)$ with $\P(F)>0.$ Since
	\begin{align*}
		\P(A_k(ι,τ)=0\given F) & = \P(D\ext_k(ι,τ)=0, D\base_k(ι,τ)=1 \given F),
		\intertext{if $ \{ D\base_k(ι,τ)=1\}\cap F=\emptyset $, such probability is 0 and the lemma trivially holds, so we can assume $ \{ D\base_k(ι,τ)=1\}\cap F\neq \emptyset $ and obtain}
		\P(A_k(ι,τ)=0\given F) & \le \P(D\ext_k(ι,τ)=0 \given F,  D\base_k(ι,τ)=1).
	\end{align*}

	Recall that the event $ D\base_k(ι,τ)=1 $ (see \eqref{def-Dbase}) ensures that there are enough particles in $ S_k\base(ι) $ confined in $ \displ_{b(k)2^{\ell_{k}}} $ during $ [γ_{k}^{(1)}(τ)β_{k+1},τβ_k]  $. By definition, $ F  $ does not reveal further information about those particles because either

	\begin{itemize}
		\item by \eqref{item-Fka}, $ (τ'+2)β_{k'} \le γ_k^{(1)}(τ) β_{k+1}$ and so the time interval relevant to $ A_{k'}(ι',τ') $ does not intersect the time interval occurring in the definition of $   A_{k}(ι,τ)$, or
		\item by \eqref{item-Fkb}, $ S \inbase_{k'}(ι') \cap S\inbase_k(ι)		=\emptyset, $ so the particles in $ S_{k'}\base(ι') $ confined in $ B_{b(k)2^{\ell_k}} $ cannot leave $ S_{k'}\inbase(ι')  $ and thus cannot enter $ S_{k}\inbase(ι)  $ before $ τ'β_{k'}. $
	\end{itemize}

	Conditioned on the event $ D\base_k(ι,τ)=1 $ (defined in \eqref{def-Dbase}), we apply Theorem \ref{thm-mixing} to $ S_k\base(ι,τ) $, with the choices
	\begin{align*}
		K          & :=\text{side length of }S_k\base(ι)  = 2b(k) 2^{\ell_k}    		+2^{\ell_k},                                                  \\
		%K'&:=\text{side length of }S_k\ext(ι) \,= (2b(k-1)) 2^{\ell_{k-1}}+2^{\ell_k},\\
		K'         & \text{ such that }K-K'=b(k)2^{\ell_k},                                                                                     \\
		l          & := 2^{\ell_k},                                                                                                             \\
		δ          & := (1-\mathfrak{d}_{k+1}) μ_0,                                                                                             \\
		\mathbf{Δ} & := \text{length}\big([γ_{k}^{(1)}(τ)β_{k+1},τβ_k]\big)=τβ_k-γ_{k}^{(1)}(τ)β_{k+1}\in[\beta_{k+1},2\beta_{k+1}],\text{ and} \\
		\bar{ε}    & :=\frac{ε}{8k^2}.
	\end{align*}

	We check now that the conditions of Theorem \ref{thm-mixing} are satisfied, starting with checking that $ K-K' \ge \Cr{MixingRoot} \big(\mathbf{Δ}(\log_2 \mathbf{Δ})^{\dw-1}\big)^{\frac{1}{\dw}} $. Since $  K-K'
		%= 2b(k)2^{\ell_k}-2b(k-1)2^{\ell_{k-1}}  \ge
		=    b(k)2^{\ell_k}	 $ and $ \mathbf{Δ}\le 2 β_{k+1} $ we need to verify that
	\begin{align*}
		b(k)2^{\ell_k}			\ge \Cr{MixingRoot}	\big(\tfrac{β_{k+1}}{2} (\log_2 \tfrac{β_{k+1}}{2})^{\dw-1}  \big)^{\frac{1}{\dw}},
	\end{align*}
	which by definition of $ β_k $ in \eqref{def-β} is implied by $ b(k)2^{\ell_k} 		\ge		\Cl{ccc} 2^{\ell_k}\ell_k k^{\frac{8}{Θ\dw}}$ for some constant $ \Cr{ccc} $. Comparing it to the definition of $ \ell_k $ in \eqref{def-ell} it holds if we set
	\begin{equation}\label{def-b(k)}
		b(k):=ak^{2+ \frac{8}{Θ\dw}}m2^{m},
	\end{equation}
	and assume $ a $ and $ m $ are large enough.
	To check that $ \mathbf{Δ}\ge \Cr{MixingDelta} l^{\dw} \bar{ε}^{\,-4/Θ} $, we use that $ \mathbf{Δ}\ge β_{k+1}=  \Cmix\big(\frac{(k+1)^2}{ε}\big)^{4/Θ} \big(2^{\ell_{k}}\big)^{\dw} $ by definition of $ β_{k+1} $ in \eqref{def-β}, and the inequality holds as $ \Cmix \ge \Cr{MixingDelta} 8^{4/Θ}$.
	% We finally check the confinement condition: since $ K'\le b(k)2^{\ell_k}	+ 2^{\ell_k} $, it holds
	% \begin{equation*}
	% 	K-K'	\ge b(k) 2^{\ell_k},
	% \end{equation*}
	% so the event $ D_k\base(ι,τ) $ implies the confinement condition in the decoupling Theorem \ref{thm-mixing}.
	We finally note that
	\begin{align*}
		K' & =K-b(k)2^{\ell_k}               
		    =b(k)2^{\ell_k}+2^{\ell_k}      
		    \geq \big(2b(k-1)\big) 2^{\ell_{k-1}}+2^{\ell_k},
	\end{align*}
	which is the side length of $S_k\ext(ι)$.

	We can therefore apply  Theorem \ref{thm-mixing} in order to obtain a coupling between the particle system at time $τβ_k$ inside $S_k\ext(ι)$  and a Poisson point process $ Ξ $ with intensity $\big( 1- \mathfrak{d}_{k+1}  \big)μ_0(1-ε)λ_y $ where the inclusion of Theorem \ref{thm-mixing} holds with probability at least
	\begin{equation*}
		1- \sum_{y\in S_{k}\ext(ι)}				e^{-\Cr{MixingProb}( 1- \mathfrak{d}_{k}  )μ_0 λ_y 		\bar{ε}^2 \mathbf{Δ}^{\dv/\dw}}.
	\end{equation*}
	Using that $ \mathbf{Δ}\ge β_{k+1} >\Cmix 2^{\dw \ell_{k}} $ and the definition of $ β_k $ from  \eqref{def-β}, the quantity in the previous display is bigger than
	\begin{equation}\label{eq-MixingLemma-CouplingProbability}
		\begin{aligned}
			 & 1-\sum_{y\in S_{k}\ext(ι)}	e^{-\C( 1- \mathfrak{d}_{k}  )μ_0 λ_y \bar{ε}^2 2^{\dv \ell_{k}} }                                             \\
			 & \ge 1-\big(2b(k-1)2^{\ell_{k-1}}+2^{\ell_k} \big)^{\dv}			e^{-\C ( 1- \mathfrak{d}_{k}  )μ_0 C_λ^{-1} \frac{ε^2}{k^4}  2^{\dv \ell_{k}} } \\
			 & \ge 1-2^{\dv(1+\ell_{k})}	e^{-\C (1-ε)μ_0 C_λ^{-1}\frac{ε^2}{k^4} 2^{\dv \ell_{k}} }                                                      \\
			 & \ge 1- \frac{1}{2} e^{-\Cpsi ψ_k}.
		\end{aligned}
	\end{equation}
	The last step holds for $ k=2 $ since $ ψ_1(ε,μ_0,\ell) $ and therefore also $ ψ_2(ε,μ_0,\ell) $ is large enough by assumption; the inequality for $ k>2 $ follows from it by setting $ a,m $ large enough.

	To obtain $ D_k\ext(κ,ι)=1 $ we need to check the confinement requirement. To this end, define a Poisson point process $ Ξ' $ made of the particles of $ Ξ $ at their positions at time $(τ+2)β_k$ that are confined during the time $ [τβ_k,(τ+2)β_k] $ inside $ \displ_{b(k-1)2^{\ell_{k-1}}} $. Using the definition of confinement from Lemma \ref{lemma-confining}, this happens for each particle independently with probability $\P(\Conf(\displ_{b(k-1)2^{\ell_{k-1}}},	2β_{k}	)) $. By the thinning property of Poisson processes, $ Ξ' $ is therefore a Poisson point process with intensity measure
	\begin{align*}
		 & \P\big(\Conf(\displ_{b(k-1)2^{\ell_{k-1}}},	2β_{k}	)\big)	(1-\mathfrak{d}_{k+1 })μ_0(1-\bar{ε})λ_y
	\end{align*}
	which we can estimate using \eqref{eq-confining} as being bigger than
	\begin{align*}
		 & \Big(	1-\Cr{conf1}e^{-\Cr{conf1}^{-1}		\big(\frac{( b(k-1)2^{\ell_{k-1}} )^{\dw}	}{2β_{k}}	\big)^{\frac{1}{\dw-1}}		 } 	\Big)
		(1-\mathfrak{d}_{k+1 })	μ_0 (1-\tfrac{ε}{8k^2})λ_y                                                                                                                           \\
		 & \leftstackrel{\eqref{def-β}}{=}\Big(	1-\Cr{conf1}e^{- \Cr{conf1}^{-1}		\big(\frac{b(k-1)^{\dw}}	{2\Cmix } ( \frac{ε}{k^2}	)^{4/Θ}\big)^{\frac{1}{\dw-1}}
		}\Big)			(1-\mathfrak{d}_{k+1 })	μ_0 (1-\tfrac{ε}{8k^2}) λ_y                                                                                                                 \\
		 & \leftstackrel{\eqref{def-b(k)}}{=}\Big(	1-\Cr{conf1}e^{- \C	\big(\frac{\big(a(k-1)^{2+\frac{8}{Θ\dw}}m2^{m}\big)^{\dw }   ε^{4/Θ}}	{k^{8/Θ}\Cmix }\big)^{\frac{1}{\dw-1}}
		}\Big)			(1-\mathfrak{d}_{k+1 })	μ_0 (1-\tfrac{ε}{8k^2}) λ_y.
	\end{align*}
	Next, taking advantage of $\Cmix=\tfrac{β}{2^{\dw\ell}}ε^{4/Θ}2^{m \dw }$ which can be obtained by setting $β_1=β $ in \eqref{def-β}, the right-hand side of the previous display is bigger than
	\begin{align*}
		\Big(	1-\Cr{conf1}e^{- \C	\big(
		\frac{2^{\dw\ell}}{β}a^{\dw } (k-1)^{2\dw }m 	\big)^{\frac{1}{\dw-1}}
		}\Big)			(1-\mathfrak{d}_{k+1 })	μ_0 (1-\tfrac{ε}{8k^2}) λ_y.
	\end{align*}
	Setting $ m $ large enough with respect to $\varepsilon$, $\ell$ and $\beta$, this again is then larger than
	\begin{align*}
		 & (1-\tfrac{ε}{8k^2}) (1-\mathfrak{d}_{k+1 })	μ_0 (1-\tfrac{ε}{8k^2})  λ_y 
	 \ge (1-\tfrac{ε}{4k^2}) (1-\mathfrak{d}_{k+1 })	μ_0 λ_y.
	\end{align*}
	Conditioning on the success of the above coupling, we obtain using a union bound that the probability that all $ S_{k-1}(i') $ contained in $ S_k\ext(ι) $ have at least $(1- \mathfrak{d}_{k})μ_0\sum_{y\in S_{k-1}(i')}λ_y  $ particles which are confined during $ [τβ_k,(τ+2)β_k] $ inside $ \displ_{b(k-1)2^{\ell_{k-1}}}, $  is at least
	\begin{equation}\label{eq-MixChernoff1}
		1-\sum_{S_{k-1}(i')\subseteq S_k\ext(ι)}		\mathbb{Q}\big(Ξ'(S_{k-1}(i')) \le (1-\mathfrak{d}_{k})μ_0\textstyle\sum_{y\in S_{k-1}(i')}λ_y\big).
	\end{equation}
	Using the Chernoff bound \eqref{eq-Chernoff} with $ χ $ given by
	\begin{align*}
		 & 1-\frac{	( 1-\mathfrak{d}_{k})	μ_0	\sum_{y\in S_{k-1}(i')}λ_y}
		{	(1-\tfrac{ε}{4k^2}) (1-\mathfrak{d}_{k+1 }) μ_0	\sum_{y\in S_{k-1}(i')}λ_y	}  \\
		 & = \frac{	(1-\tfrac{ε}{4k^2}) (1-\mathfrak{d}_{k+1 })-( 1-\mathfrak{d}_{k})	}
		{	(1-\tfrac{ε}{4k^2}) (1-\mathfrak{d}_{k+1 }) 	}                                \\
		 & \ge (1-\tfrac{ε}{4k^2}) (1-\mathfrak{d}_{k+1 })		-		( 1-\mathfrak{d}_k)      
		  \ge( \mathfrak{d}_{k} - \mathfrak{d}_{k+1})- \frac{ε}{4k^2} =\frac{ε}{4k^2},
	\end{align*}
	we obtain the following lower bound for \eqref{eq-MixChernoff1}:
	\begin{equation}\label{eq-MixingLemma-DisplProbability}
		\begin{aligned}
			 & 1- \sum_{S_{k-1}(i')\subseteq S_k\ext(ι)}       \exp
			\Big\lbrace		-\frac 12	(\tfrac{ε}{4k^2})^2(1-\tfrac{ε}{4k^2}) (1-\mathfrak{d}_{k+1 })	μ_0 \textstyle\sum_{y\in S_{k-1}(i')}λ_y\Big\rbrace \\
			 & \leftstackrel{\eqref{eq-SideLengthSk}}{\ge}	   1- \sum_{S_{k-1}(i')\subseteq S_k\ext(ι)}       \exp
			\Big\lbrace		-\frac{ε^2}{32k^4}(1-\tfrac{ε}{4}) (1-\mathfrak{d}_{2})	μ_0 		{C_{λ}}^{-1} (2^{\ell_{k-1}})^{\dv}\Big\rbrace                 \\
			 & \ge 1-  \CVol\big(b(k-1)+2^{\ell_k - \ell_{k-1}}\big)^{\dv}     \exp
			\Big\lbrace		-\frac{ε^2}{32k^4}(1-\tfrac{ε}{4}) (1-\tfrac{ε}{2})	μ_0 		{C_{λ}}^{-1} 2^{\dv\ell_{k-1}}\Big\rbrace                          \\
			 & \ge 1-\frac{1}{2}e^{-C_ψ ψ_k},
		\end{aligned}
	\end{equation}
	where the last inequality follows from the same argument as after \eqref{eq-MixingLemma-CouplingProbability} since $ ψ_1 $ is assumed large enough.

	Combining \eqref{eq-MixingLemma-CouplingProbability} and \eqref{eq-MixingLemma-DisplProbability} proves the claim for $ 1<k<κ. $

	For $ k=κ $ the argument is easier, as there is no need to use the decoupling theorem and one can simply use \eqref{eq-MixingLemma-DisplProbability}, and prove both the conditional and unconditional statements.

	For $ k=1, $ we recall that the event $ A_1(ι,τ) $ was defined differently (cf.\ \eqref{def-A1}) We use again the decoupling Theorem to obtain a coupling with a Poisson point process $ Ξ $ which succeeds with probability  \eqref{eq-MixingLemma-CouplingProbability} with the choice $ k=1. $  To obtain $ \ind_{E(ι,τ)}=1, $ we recall that the event $ E(ι,τ) $ is measurable with respect to the σ-algebra of particles inside $ S_1^η(ι) $, which is contained in $ S_1\base(ι)$ by Remark \ref{rmk-SηSbase}, and the particles are confined in $ B_{ζ\ell_1} $ during $ [τβ_1, (τ+η)β_1] $. Using Lemma \ref{lemma-confining} we obtain
	\begin{equation*}
		\P\big(\Conf (B_{ζ\ell_1},	ηβ_1)\big)
		\ge 1-\Cr{conf1}\exp \Big\{		-\Cr{conf1}^{-1}\big(	 (ζ\ell_1)^{\dw}/(ηβ_1)		\big)	^{\frac{1}{\dw-1}}		\Big\}
		\stackrel{\eqref{def-ζ}}{\ge} 1-\tfrac{3ε}{8}.
	\end{equation*}
	Hence, the Poisson point process $ \Xi' $ of particles satisfying $ \Conf (B_{ζ\ell_1},	ηβ_1) $ has intensity at least
	\begin{equation*}
		\P\big(\Conf(\displ_{b(k-1)2^{\ell_{k-1}}},	2β_{k}	)\big)	(1-\mathfrak{d}_{2})μ_0(1-\bar{ε})λ_y
		\ge (1-\tfrac{3ε}{8}) (1-\tfrac{ε}{2})  	μ_0 (1-\tfrac{ε}{8}) λ_y \ge (1-ε)μ_0λ_y,
	\end{equation*}
	and since $ E(ι,τ) $ is increasing, we have
	\begin{equation*}
		\P(\ind_{E(i,τ)}=1\given F,  D\base_1(ι,τ)=1)
		\le 1-ν_E\big((1-ε)λ, S_1^η, \displ_{ζ\ell}, ηβ_1\big) \le e^{-α_0},
	\end{equation*}
	which concludes the proof.
\end{proof}

Now that we have a bound on the probability that a single cell $ R_k(ι,τ) $ is multi-scale bad, we can obtain an upper bound on the probability that all multi-scale cells in a given ScD-path are multi-scale bad. Recall the definition of the weights $ ψ_k $ in \eqref{def-ψ} and the value $ α_0 $ defined in Lemma \ref{lemma-Mixing}.

\begin{corol} \label{corol-MixingPaths} Let $ ζ $ as in \eqref{def-ζ}, $ ψ_1>α_0$ and consider an ScD-path $ \{ R_{k_1}(ι_1,τ_1), \dots, R_{k_z}(ι_z,τ_z)\} $. Then
	\begin{equation*}
		\P \Big(	\bigcap_{j=1}^{z} \{A_{k_j} (ι_j,τ_j)=0 \}	\Big)				\le 			e^{-C_ψ\sum_{j=1}^{z} ψ_{k_j}}.
	\end{equation*}
	where $ C_ψ $ is the constant from Lemma \ref{lemma-Mixing}.
\end{corol}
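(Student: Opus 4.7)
The plan is to apply Lemma \ref{lemma-Mixing} iteratively via the chain rule for conditional probabilities, peeling off one cell at a time. First, I would reindex the cells of the ScD-path according to a non-decreasing ordering of the values $\tau_j \beta_{k_j}$, breaking ties arbitrarily. The chain rule then gives
\begin{equation*}
\P\Bigl(\bigcap_{j=1}^z \{A_{k_j}(\iota_j,\tau_j) = 0\}\Bigr) = \prod_{j=1}^z \P\Bigl(A_{k_j}(\iota_j,\tau_j) = 0 \,\Big|\, A_{k_i}(\iota_i,\tau_i) = 0 \text{ for all } i<j\Bigr),
\end{equation*}
and I would aim to bound each factor by $e^{-C_\psi \psi_{k_j}}$ using Lemma \ref{lemma-Mixing}.

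The heart of the argument is to verify that for each $j$ the conditioning event is measurable with respect to $\mathcal{F}_{k_j}(\iota_j,\tau_j)$, i.e.\ that every previously processed index $i<j$ satisfies either condition (\ref{item-Fka}) or (\ref{item-Fkb}) in that $\sigma$-algebra's definition. This is where the well-separation clause of Definition \ref{def-ScDpath} does the work: by hypothesis $R\inbase_{k_i}(\iota_i,\tau_i) \cap R\inbase_{k_j}(\iota_j,\tau_j) = \emptyset$, and I would split into two cases. If $S\inbase_{k_i}(\iota_i) \cap S\inbase_{k_j}(\iota_j) = \emptyset$, then condition (\ref{item-Fkb}) holds at once, since the chosen ordering guarantees $\tau_i\beta_{k_i} \le \tau_j\beta_{k_j}$. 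Otherwise the spatial parts overlap, and well-separation forces the temporal intervals $T\inbase_{k_i}(\tau_i)$ and $T\inbase_{k_j}(\tau_j)$ to be disjoint; since each interval contains its own $\tau\beta_k$ and $\tau_i\beta_{k_i} \le \tau_j\beta_{k_j}$, the interval $T\inbase_{k_i}(\tau_i)$ must lie entirely to the left of $T\inbase_{k_j}(\tau_j)$, whose left endpoint is $\gamma_{k_j}^{(1)}(\tau_j)\beta_{k_j+1}$. This yields $(\tau_i+2)\beta_{k_i} < \gamma_{k_j}^{(1)}(\tau_j)\beta_{k_j+1}$, which is precisely condition (\ref{item-Fka}).

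With the required $\mathcal{F}_{k_j}(\iota_j,\tau_j)$-measurability secured, Lemma \ref{lemma-Mixing} applied to each factor gives $\P(A_{k_j}(\iota_j,\tau_j) = 0 \mid \cdot\,) \le e^{-C_\psi \psi_{k_j}}$, and multiplying over $j$ yields the claim. The main obstacle is the temporal case in the measurability argument: one must use the explicit form of $T\inbase_k(\tau)$ to translate the qualitative statement ``$T\inbase_{k_i}$ lies to the left of $T\inbase_{k_j}$'' into the precise inequality demanded by condition (\ref{item-Fka}). Everything else is a direct combination of the chain rule with the single-cell bound from Lemma \ref{lemma-Mixing}.
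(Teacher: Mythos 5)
Your proposal is correct and follows essentially the same route as the paper's own proof: order the cells by $\tau_j\beta_{k_j}$, apply the chain rule, and use well-separation of the ScD-path to check for each factor that the conditioning event is $\mathcal{F}_{k_j}(\iota_j,\tau_j)$-measurable via condition (\ref{item-Fka}) or (\ref{item-Fkb}). The only difference is cosmetic: you spell out the step from ``$T\inbase_{k_i}$ and $T\inbase_{k_j}$ disjoint'' to the precise inequality of (\ref{item-Fka}) using the fact that $\tau\beta_k\in T\inbase_k(\tau)$, which the paper states without elaboration.
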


\begin{proof}
	We first need to order the cells in a temporal order. To this end, consider any order $ \prec $ of the indices of the cells $ {1,\dots ,z} $ such that if $ j_1 \prec j_2 $ then $ τ_{j_1} β_{k_{j_1}}  \le   τ_{j_2} β_{k_{j_2}}  $. The corollary will be a simple consequence of Lemma \ref{lemma-Mixing} once we prove that for every $ 1\le \bar{j} \le z $, the cells $  R_{k_j}(ι_j,τ_j) $ with $ j\prec \bar{j} $ are $ \mathcal{F}_{k_{\bar{j}}}(ι_{\bar{j}}	,	τ_{\bar{j}}) $-measurable.

	We therefore consider two cells $ R_{k_{j_1}}(ι_{j_1},τ_{j_1}) $ and $R_{k_{j_1}} (ι_{j_2},τ_{j_2})  $ with $ j_1\prec j_2 $, so that $ τ_{j_1}β_{k_{j_1} }		\le τ_{j_2} β_{k_{j_2}} $. By definition of an ScD-path cells are well-separated, so $ R\inbase_{k_{j_1}} (ι_{j_1}, τ_{j_1})  	\cap	R\inbase_{k_{j_2}} (i_{j_2}, τ_{j_2})  =\emptyset,  $ meaning that:
	\begin{itemize}
		\item  either $ T_{k_{j_1}}\inbase (τ_{j_1})	\cap T_{k_{j_2}}\inbase (τ_{j_2})	=\emptyset  $ and thus \eqref{item-Fka} is satisfied;
		\item or $ S\inbase_{k_{j_1}} (ι_{j_1}) \cap S\inbase_{k_{j_2}} (ι_{j_2}) =\emptyset$ and thus \eqref{item-Fkb} is satisfied.
	\end{itemize}
	Here, \eqref{item-Fka} and \eqref{item-Fkb} are as they appear at the beginning of this subsection.
	Hence, using the standard chain conditioning and applying Lemma \ref{lemma-Mixing}  $ z $-many times we obtain that
	\begin{equation*}
		\P \Big(	\bigcap_{j=1}^{z}	\big\{A_{k_j} (ι_j, τ_j) =0\big\} \Big)
		\le 		\prod_{j=1}^{z}		\P \Big(A_{k_j} (ι_j, τ_j)=0
		\Given 	 	\bigcap_{\bar{j}\prec j}	\big\{A_{k_{\bar{j}}} (ι_{\bar{j}}, τ_{\bar{j}}) =0	\}		\Big)
		\le e^{-C_ψ\sum_{j=1}^{z} ψ_{k_j}},
	\end{equation*}
	which is the desired claim.
\end{proof}

\subsection{Number of ScD-paths}

In the previous section we established the probability for a given path of $ z $ cells of scales $ k_1,\dots,k_z $ to be made of multi-scale bad cells. We want now to count the number of such paths. Recall the definition of ScD-path in Definition \ref{def-ScDpath}, and of $ Ω_κ\Sup(v\rightarrow t) $ in \eqref{def-Ωtsup}. We will now give an upper bound for the number of paths in $ Ω_κ\Sup(v\rightarrow t) $, given a fixed number of cells and their scales. As we will see, $κ$ and $t$ are going to be linked with each other; for the moment, we work with given scales and so we omit stating either $\kappa$ or $t$ in our first bound below.

\begin{lemma}\label{lemma-Combinatorial}
	For a fixed length $ z \in\N$, fixed scales $ k_1,\dots, k_z $ and $ v\in L_1 $, the number of ScD-paths %in  $ Ω_κ\Sup(v\rightarrow t) $ 
	of cells with scales $k_1,\dots,k_z,$ where the extended support of the first cell contains $v ,$ is at most
	\begin{equation*}
		\exp \Big\{{\dfrac{C_ψ}{2} \sum_{j=1}^{z} ψ_{k_j} 	}	\Big\},
	\end{equation*}
	where $ C_ψ $ is the same constant as in Lemma \ref{lemma-Mixing}.
\end{lemma}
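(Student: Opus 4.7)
My plan is to bound the number of ScD-paths multiplicatively: first count the admissible choices for the initial cell, then for each consecutive pair count the admissible successors given the predecessor, and finally show the resulting product is absorbed into $\exp(\tfrac{C_\psi}{2}\sum_j \psi_{k_j})$ using the rapid growth of $\psi_k$ with respect to the cell sizes, provided the parameters $a$ and $m$ from \eqref{def-ell} and \eqref{def-b(k)} are chosen sufficiently large.

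For the initial cell $R_{k_1}(\iota_1,\tau_1)$, the only constraint is $v\in R\Esup_{k_1}(\iota_1,\tau_1)$. Since by \eqref{def-SEsup} and \eqref{def-TEsup} the extended support $R\Esup_{k_1}$ depends only on the scale-$(k_1{+}1)$ parent $\bigl(\pi^{(1)}_{k_1}(\iota_1),\gamma^{(1)}_{k_1}(\tau_1)\bigr)$, there are at most $O(m^{\dv})$ admissible parents, and given such a parent at most $2^{\dv(\ell_{k_1+1}-\ell_{k_1})}\cdot(\beta_{k_1+1}/\beta_{k_1})$ choices for $(\iota_1,\tau_1)$. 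For a support-adjacent successor $R_{k_{j+1}}$ of $R_{k_j}$, the intersection condition $R\Esup_{k_j}\cap R\Esup_{k_{j+1}}\neq\emptyset$ forces the centre of $R_{k_{j+1}}$ into a spatial region of diameter $O(2^{\ell_{\max(k_j,k_{j+1})+1}})$ and a temporal interval of length $O(\beta_{\max(k_j,k_{j+1})+1})$, so by \eqref{def-dv} and \eqref{def-β} the number of such successors is polynomial in $2^{\ell_{\max+1}}$ and $k_j\vee k_{j+1}$.

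The subtler case is the support-connected-with-diagonals extension. Here the plan is to use two key geometric features: first, a diagonal chain of adjacent scale-1 cells preserves the temporal coordinate and strictly decreases the distance to $L_0$, so the destination $\tilde c_2$ lies at the same $\tau$ as its source $\tilde c_1$ and within $\mathbb{G}^d$-graph-distance at most $h$, where $h$ is the height of $\tilde c_1$; second, by \eqref{def-dv} a ball of radius $h$ in $\mathbb{G}^d$ contains at most $\CVol h^{\dv}$ sites. Since $h$ is bounded by the accumulated Esup diameters along the preceding part of the path, it is polynomial in $z$ and in $2^{\ell_\kappa+1}$. Summing over $\tilde c_1\in R\Esup_{k_j}$ and multiplying by the residual $O(m^{\dv})\cdot 2^{\dv(\ell_{k_{j+1}+1}-\ell_{k_{j+1}})}$ factor that pins down $R_{k_{j+1}}$ once $\tilde c_2$ is fixed then again yields a polynomial count.

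Putting everything together, the total number of paths is at most a product $\prod_j Q(k_j,k_{j+1})$ of factors polynomial in $2^{\ell_{k_j+1}}$, $2^{\ell_{k_{j+1}+1}}$, ratios of $\beta_\cdot$, and the accumulated heights. The final step is to absorb each factor into $\exp(\tfrac{C_\psi}{2}\psi_{k_j})$: by \eqref{def-ell} and \eqref{def-ψ} one has $\psi_{k_j}\gtrsim 2^{\dv(a(k_j-2)^2+O(k_j))}/k_j^4$, which grows exponentially in $k_j^2$ and therefore dominates any fixed polynomial in the cell dimensions as soon as $a$ and $m$ are large enough. The principal obstacle is precisely the height-dependent bound coming from the diagonal case, since a priori $h$ could grow with the length of the path; the resolution is the very fast growth of $\psi_k$, which easily swallows polynomial-in-size (and polynomial-in-$z$) corrections, giving the claimed bound $\exp\{\tfrac{C_\psi}{2}\sum_{j=1}^{z}\psi_{k_j}\}$.
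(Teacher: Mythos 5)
Your Steps 1 and 2 (counting the initial cell via $\chi_{k_1}$ and the support-adjacent successors via $\Phi_{k,k'}$) essentially match the paper's Step 1 and Step 2, and there is no issue there. The gap is in your handling of the diagonal connections, specifically in the absorption step.

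You bound the number of targets of a single diagonal connection by $\CVol h^{\dv}$, with $h$ bounded by the \emph{total} accumulated height $H = \sum_i H_{k_i}$ of the path, and then take the product over all $z-1$ links. This gives a per-step factor of order $H^{\dv}$, i.e.\ a total bound roughly $(\CVol H^{\dv+1})^{z-1}$. You then claim this is swallowed by $\exp\{\frac{C_\psi}{2}\sum_j\psi_{k_j}\}$ because $\psi_k$ grows super-exponentially in $k$. But that growth is in $k$, not in $z$, and the problem case is a long path consisting entirely of scale-one cells. There $\sum_j\psi_{k_j} = z\psi_1$ grows only linearly in $z$ while $H = zH_1$, so the required inequality becomes $(z-1)\big[(\dv+1)\log(zH_1) + \log\CVol\big] \le \tfrac{C_\psi}{2} z\psi_1$, i.e.\ $\log z \lesssim \psi_1$. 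Since $\psi_1$ is a fixed constant (at least $\alpha_0$, but independent of $z$), this fails for $z$ large, so the absorption breaks down.

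The missing idea is that the $h_i$ are not free: diagonal chains only move \emph{toward} $L_0$, so the absolute height differences must \emph{sum} to at most $H$, not be individually bounded by $H$. The paper exploits this by summing over compositions $\sum_i h_i = l \le H$, applying an AM-GM/Lagrange-multiplier bound to $\prod A(h_i+1)$ and a binomial estimate to the number of compositions. The net effect is a per-step factor of order $(H/z)^{\dv+1}$ rather than $H^{\dv+1}$. That extra $1/z$ is exactly what makes the final absorption work: one needs $\log(\Cr{p} H/z) \le \tfrac{C_\psi}{8(d+1)}\cdot\tfrac{1}{z}\sum_j\psi_{k_j}$, and $H/z \le \text{const}\cdot\tfrac{1}{z}\sum_j\psi_{k_j}$ (using $H_k \lesssim \psi_k$), so $\log(\Cr{p} H/z) \le \Cr{p} H/z$ closes the argument uniformly in $z$. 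Without the $1/z$ improvement your plan cannot be repaired simply by choosing $a,m,\alpha_0$ larger.
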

\begin{proof}
	Recall that two consecutive cells $ R_{k_1}(ι_1,τ_1) $ and $ R_{k_2}(ι_2, τ_2 )$ in a ScD-path are either support adjacent or $ R_{k_1}(ι_1,τ_1) $ is support connected with diagonals to $ R_{k_2}(ι_2, τ_2 )$. We will prove the result in three steps: first, we will bound the number of ScD-paths where each cell is support adjacent to the next one, i.e.\ we do not allow diagonal connections. In the second step, we will show the result for the case in which the beginning and end of the (scale-one) diagonal steps are fixed relative to each other; in the third step we will obtain the bound where this last restriction is removed.

	\textit{Step 1.} We define the maximum number of scale $ k' $ cells which are support adjacent to a cell of scale $ k $
	\begin{equation}\label{def-Phi}
		Φ_{k,k'}:= \max_{(ι,τ)} \big|
		\{R_{k'}(ι',τ')\colon		R_k(ι,τ) \text{ is support adjacent to }R_{k'} (ι',τ')\}\big|
	\end{equation}
	and the number of cells of scale $ k $ whose extended support (defined in \eqref{def-SEsup} and \eqref{def-TEsup}) contains $v $ as
	\begin{equation}\label{def-Chi}
		χ_k:= \big| \{R_k(ι,τ)\colon R_k\Esup(i,τ) \supseteq v\}\big|.
	\end{equation}
	Hence, clearly the number of support adjacent only D-paths in $Ω_κ\Sup(v\rightarrow t) $ of cells with scales $ k_1,\dots, k_z $ is upper bounded by
	\begin{equation*}
		χ_{k_1} \prod_{j=2}^{z} Φ_{k_{j-1},k_j}.
	\end{equation*}

	We start by deriving a bound for $ χ_k $. Since the extended support of a cell of scale $ k $ contains at most $27\CVol(3m+1)^{\dv}  $ cells of scale $ k+1 $, there exist at most $27 \CVol(3m+1)^{\dv} $ different extended supports of a cell of scale $ k $ that contain the distinct cell of scale $k+1$ containing $ v $, and thus $v$ itself. By \eqref{eq-EnumSmallerCells} and \eqref{eq-rateBeta} each cell of scale $ k+1 $ contains $ \frac{β_{k+1}}{β_k} 2^{\dv (\ell_{k+1}-\ell_{k})}\le 	2^{8+\dw (2ak-3a+m)+\dv (2ak-a+m)}$ cells of scale $ k $, which is therefore also the number of scale $k$ cells that share the same extended support. We therefore have
	\begin{equation}\label{eq-BoundChi}
		χ_k \le 27\CVol(3m+1)^{\dv}		2^{8+\dw (2ak-3a+m)+\dv (2ak-a+m)}\le \exp \Big	\{\frac{C_ψ}{16} ψ_k	\Big\},
	\end{equation}
	where the last inequality holds trivially for $ m,a $ and $ α_0 $ large enough.

	We now bound $ Φ_{k,k'}. $ A cell of scale $ k' $ can only be support adjacent to a cell $R_k (ι_1,τ_1) $ if it is inside $B_r(p)\times A$, where $p\in S_k(ι_1) $, $r:= (3m+2)2^{\ell_{k+1}}+(3m+2)2^{\ell_{k'+1}} $ and $A$ an interval centered around $ T_k(τ_1) $ of width $ 28(β_{k+1}+β_{k'+1}). $ Consequently, $ Φ_{k,k'} $ can be bounded by the number of scale $ k' $ cells inside this Cartesian product. If $ k\ge k' $ then the terms $ 2^{\ell_{k'+1}} $ and $ β_{k'+1} $ are negligible (or of the same size) in comparison to $ 2^{\ell_{k+1}} $ and $ β_{k+1} $, and the spatial region contains at most $ \CVol(2(3m+2))^{\dv} $ cells of scale $ k+1 $, and by \eqref{eq-EnumSmallerCells}, each one of those contains exactly $ 2^{\dv(\ell_{k+1}-\ell_{k'})} $ cells of scale $k'$, so
	\begin{align*}
		\text{if } k\ge k', \text{ then } &  & \Phi_{k,k'}\le \Big(\CVol(2(3m+2))^{\dv}		2^{\dv (\ell_{k+1}-\ell_{k'})}\Big)\Big(56\frac{β_{k+1}}{β_{k'}}\Big).
	\end{align*}
	If instead $ k<k' $ we have similarly
	\begin{align*}
		\text{if } k<k', \text{ then } &  & \Phi_{k,k'}\le \Big(\CVol(2(3m+2))^{\dv}			2^{\dv (\ell_{k'+1}-\ell_{k'})}\Big)\Big(56\frac{β_{k'+1}}{β_{k'}}\Big).
	\end{align*}
	Combining the two and using \eqref{eq-rateBeta} we have that
	\begin{align*}
		\Phi_{k,k'} \le & \C 2^{\dv(6m+4)} 			2^{\dv \left(a(k \vee k')^2+m(k \vee k')\right)} 2^{\dw 2a(k \vee k')+\dw m }
	\end{align*}
	and for $ a,m,α_0 $ large it holds trivially that this is further smaller than
	\begin{align*}
		\exp\Big\{\frac{C_ψ}{16}ψ_{(k \vee k')} \Big\}.
	\end{align*}
	Hence we obtain with \eqref{eq-BoundChi}
	\begin{equation*}
		χ_{k_1} \prod_{j=2}^{z} Φ_{k_{j-1},k_j}
		\le \prod_{j=1}^z \Big(	e^{\frac{C_ψ}{16} ψ_{k_j}}	\Big)^2
		\le e^{\frac{C_ψ}{8} \sum_{j=1}^{z}ψ_{k_j}}.
	\end{equation*}

	\textit{Step 2.} In this step, we consider $ R_{k_1}(ι_1,τ_1) $ to be support connected with diagonals to $ R_{k_2}(ι_2,τ_2)$, which, as defined in Definition \ref{def-ScDpath}, means that there exist two cells $ R_1(\tilde{ι}_1,\tilde{τ}_1) $ and $ R_1(\tilde{ι}_2,\tilde{τ}_2)  $ contained in their respective extended supports such that $ R_1(\tilde{ι}_1,\tilde{τ}_1) $ is diagonally connected to $ R_1(\tilde{ι}_2,\tilde{τ}_2)  $. We denote by $(\tilde{ι}_1- \tilde{ι}_2,\tilde{τ}_1-\tilde{τ}_2) $ the relative position of the cell $ R_1(\tilde{ι}_1,\tilde{τ}_1) $ with respect to $ R_1(\tilde{ι}_2,\tilde{τ}_2)$ and write $(0,0)$ for the relative position of the cells $ R_{k_1}(ι_1,τ_1) $ and $ R_{k_2}(ι_2,τ_2)  $ when they are adjacent. In this step we consider the relative positions to be fixed, and we will show a bound for the number of different possible relative positions in the next step. In analogy with step 1, we define
	\begin{equation}\label{def-PhiStar}
		Φ^*_{k_1,k_2}:= \max_{(ι_1,τ_1)}
		\Bigg|	\Bigg\{
		R_{k_2}(ι_2,τ_2)\colon
		\begin{array}{l}
			R_{k_1}(ι_1,τ_1) \text{ is support adjacent or support connected} \\
			\text{with diagonals to }			R_{k_2}(ι_2,τ_2)
			\text{ with fixed relative }                                      \\
			\text{position of $ R_1 (\tilde{ι}_1,\tilde{τ}_1) $ with respect to $ R_1(\tilde{ι}_2,\tilde{τ}_2) $}
		\end{array}
		\Bigg\}\Bigg|.
	\end{equation}

	The case when the relative position is $(0,0)$ was treated in the previous step, so in that case we have
	\begin{equation*}
		Φ^*_{k_1,k_2} \le e^{\frac{C_ψ}{16}ψ_{k_1\vee k_2}	}.
	\end{equation*}

	In the case of diagonally connected cells, since the relative position is fixed, the possible combinations are determined by the product of all the possible positions of the cell $ R_1(\tilde{ι}_1,\tilde{τ}_1) $ inside the extended support of the cell $ R_{k_1}(ι_1,τ_1 )$ and the number of cells of scale one contained in the extended support of $   R_{k_2}(ι_2,τ_2 )$. Using the bound from the previous step we have
	\begin{equation*}
		Φ^*_{k_1,k_2} \le e^{\frac{C_ψ}{16}ψ_{k_1}	}	 e^{\frac{C_ψ}{16}ψ_{k_2}	}.
	\end{equation*}
	Combining the two equations yields
	\begin{equation*}
		Φ^*_{k_1,k_2} \le e^{\frac{C_ψ} {16}ψ_{k_1}	}	 e^{\frac{C_ψ}{16}ψ_{k_2}	}+e^{\frac{C_ψ}{16}ψ_{k_1\vee k_2}	}.
	\end{equation*}
	Hence the number of ScD-path where the $ z $ cells have fixed relative position is bounded by 
	\begin{equation}\label{eq-DpathFRelPos}
		χ_{k_1} \prod_{j=2}^{z} Φ^*_{k_{j-1},k_j} \le \exp\Big\{ \frac{C_ψ}{4} \sum_{j=1}^{z} ψ_{k_j} \Big\}
	\end{equation}
	\textit{Step 3.} In the final step, we bound the number of combinations of different relative positions in a ScD-path.
	%Recalling the definition of the base-height index in \eqref{def-baseheight}, we can bound the number of diagonal steps by the difference in height:
	For two given cells of scales $k_j$ and $k_{j+1}$ where the first is support connected with diagonals to the second, let $ R_1(ι_1,τ_1) $ and $ R_1(ι_2,τ_2) $ be the corresponding two scale-one cells for which $ R_1(ι_1,τ_1) $ is diagonally connected to $ R_1(ι_2,τ_2) $ with relative position $(ι_1-ι_2,τ_1-τ_2)$.
	Let $ h $ be the (absolute) difference between the distances of $ R_1(ι_1,τ_1) $ and $ R_1(ι_2,τ_2) $ from $ L_0 $, which we refer to as ``difference in height''; see the discussion below \eqref{def-L1}.
	Define $A(h)$ to be the number of cells that $ R_1(ι_1,τ_1) $ can be diagonally connected to, where the ``difference in height'' is $h$. More precisely, define
	\[
		A(h):=\max_{(ι_1,τ_1)}\bigg| \Big\{R_1(ι_2, τ_2)		\colon \begin{array}{l}
			R_1(ι_1,τ_1)	\text{ is diagonally connected to }R_1(ι_2,τ_2) \\
			\text{ with } \big|d\left(L_0, R_1(ι_1,τ_1)\right)  -  d\left(L_0,R_1(ι_2,τ_2)\right)\big|= h
		\end{array}			\Big\}\bigg|.
	\]
	As defined, $A(h)$ is also an upper bound on the number of different relative positions $(ι_1-ι_2,τ_1-τ_2)$ which result in a height difference of $h$.

	% We can bound the number of cells with fixed relative position by the number of cells with fixed difference in height:
	% \begin{align*}
	% 	&\max_{(ι_1,τ_1)}\bigg| \Big\{R_1(ι_2, τ_2)							\colon 
	% 	\begin{array}{l}	
	% 		R_1(ι_1,τ_1)	\text{ is diagonally connected to }R_1(ι_2,τ_2)	\\
	% 		\text{ with relative position }(ι_1-ι_2,τ_1-τ_2)
	% 	\end{array}			\Big\}\bigg|    \\
	% 	&\le  \max_{(ι_1,τ_1)}\bigg| \Big\{R_1(ι_2, τ_2)		\colon 
	% 	\begin{array}{l}
	% 		R_1(ι_1,τ_1)	\text{ is diagonally connected to }R_1(ι_2,τ_2) \\
	% 		\text{ with } |d(L_0, R_1(ι_1,τ_1))  -  d(L_0,R_1(ι_2,τ_2))|= x
	% 	\end{array}			\Big\}\bigg|  
	% 	\qquad =:A(x)
	% \end{align*}

	We next note that, by definition of the diagonal steps, we can bound $ A(h) $ by the number of cells of scale one at distance $h$ from a given cell of scale one. Recalling  \eqref{def-dv}, we can therefore use the very generous bound
	\begin{equation}\label{eq-A(x)estimate}
		A(h) <  \CVol h^{\dv+1},
	\end{equation}
	where the $+1$ term comes from having to also consider the time dimension.

	Recall from Subsection \ref{SUBSEC-Paths} that when a scale-one cell is diagonally connected to another scale-one cell, the height of the second cell can be at most that of the first cell. We can thus obtain easily an upper bound on the number of diagonal steps and equivalently on the total height difference. Define $ H_k $ as the side length of $ S\Esup_k $ divided by the side length of $ S_1$, that is
	\begin{equation}\label{def-Hk}
		H_k:=(3m+1)2^{ak^2+mk}.
	\end{equation}

	Then, using that a diagonal step by definition leads to a decrease of the distance to $L_0$, the maximum number of diagonal steps in an ScD-path of cells of scales $k_1,\dots,k_z$ is at most the combined distance from $L_0$ that the cells of scales $k_1,\dots,k_z$ can contribute to an ScD-path, i.e.
	\begin{equation*}
		H=\sum_{i=1}^{z} H_{k_i}.
	\end{equation*}

	Hence, the number of different configurations of the diagonal steps, and in particular different relative positions, is at most
	\begin{align*}
		 & \sum_{l=0}^{H}\sum_{\substack{h_2,\dots h_z \\ h_2+\dots+h_z=l}} A(h_2+1)A(h_3+1)\dots A(h_z+1),
	\end{align*}
	where $ h_i $ represent the (absolute) height difference between the $ i$-th and $ (i-1 $)-th cell; the $+1$ accounts for the fact that the final scale-one cell of a diagonal connection might be adjacent and not equal to the next cell of the path, as per definition of being diagonally connected. Using the method of Lagrange multipliers, this is smaller than
	\begin{align*}
		 & \sum_{l=0}^{H}\sum_{\substack{h_2,\dots h_z \\ h_2+\dots+h_z=l}} \bigg(	A\Big( \frac{l}{z-1}+1 \Big)	\bigg)^{z-1}.
	\end{align*}
	Using \eqref{eq-A(x)estimate} and that the total number of combinations of $z-1$ values $ h_i \ge 0$ which sum to $ l $ is $\binom{l+z-2}{ z-2}$, this is smaller still than
	\begin{align*}
		 & \sum_{l=0}^{H}			\binom{l+z-2} { z-2} \CVol	\big( \tfrac{l}{z-1}+1 \big)^{(z-1)(\dv+1)} \\
		 & \le \sum_{l=0}^{H}			\binom{l+z-1}{ z-1 }
		\CVol \big( \tfrac{l}{z-1}+1 \big)^{(z-1)(\dv+1)}
	\end{align*}
	and using repeatedly Pascal's rule we can further bound this by
	\begin{align*}
		 & \binom{z+H}{ z}
		\CVol\big( \tfrac{H}{z-1}+1 \big)^{(z-1)(\dv+1)} \\
		 & \le 	\frac{(z+H)^z}{z!}
		\Cl{Pasc}	\big( \tfrac{H}{z-1}+1 \big)	^{(z-1)(\dv+1)}.
	\end{align*}
	Since $ \frac{H}{z} $ is big by the assumption that $\psi_1$ is large enough,
	we finally get that this is smaller than
	\begin{align*}
		 & \frac{(z+H)^z}{(z/3)^z}
		\Cr{Pasc} \big( \tfrac{3H}{z} \big)	^{(z-1)(\dv+1)}            \\
		 & \le		(3+3H/z)^z	\Cr{Pasc}\big( \tfrac{3H}{z} \big)^{z(d+1)} \\
		 & \le		\big(	\Cl{p}\tfrac{H}{z}	\big)^{2z(d+1)}
	\end{align*}
	for some constant $\Cr{p}>0$ depending only on $ d $; we used in the first inequality that $\dv\le d$, which is a simple consequence of the fact that the graph can be embedded into the $d$-dimensional triangular lattice which has volume growth dimension $d$. To obtain that $ (\Cr{p}H/z)^{2z(d+1)} \le \exp(\frac{C_ψ}{8} \sum_{j=1}^{z}ψ_{k_j}) $ and thus to conclude \emph{Step 3} and the proof, we can equivalently show that
	\begin{equation}\label{eq-Combinatorial-h<ψ}
		(d+1)(\log(\Cr{p}H/z) \le \frac{1}{z}\frac{C_ψ}{8} \sum_{j=1}^{z}ψ_{k_j}.
	\end{equation}
	Comparing $ H_k $ from \eqref{def-Hk} and $ ψ_k $ from  \eqref{def-ψ} and setting $ m$ and $α_0 $ (and thus $ \ell $) large enough we can obtain $ H_k\le \frac{C_ψ}{8(d+1)\Cr{p}}ψ_k $ for all $ k $, and therefore \eqref{eq-Combinatorial-h<ψ} holds.
\end{proof}

In the previous two lemmas, we showed the relationship between ScD-paths and the sum of the weights $ ψ_k. $ We show now that if we consider an ScD-path in $  Ω\Sup_{κ}(v\rightarrow t)  $ (defined in \eqref{def-Ωtsup}) of cells of scales $ k_1,\dots,k_z $ for some $ t>0 $, then the sum of the weights $ ψ_k $ is at least of order $ t^{c_s} $.

\begin{lemma} \label{lemma-φandt}
	Suppose that the largest scale $κ$ we consider satisfies $ κ=\mathcal{O}\big(\sqrt{\log(t)}\big).$ Then, if $ψ_1$ is large enough, there exist $t_0 $ and $\Cl{t}>0$ %independent of $ t $ (but depending on everything else) 
	such that for any $ t> t_0 $, $ v\in L_1 $ and any path $ %\{(k_1,ι_1,τ_1),\dots, (k_z,ι_z,τ_z)\} 
		\big\{ R_{k_j}(ι_j,τ_j)\big\}_{j=1}^{z} \in Ω_{k}\Sup(v\rightarrow t)$
	\begin{equation*}
		\sum_{j=1}^{z} ψ_{k_j} \ge		 \Cr{t}t^{c_s},
	\end{equation*}
	where the positive constant $c_s$ is as defined in Theorem \ref{thm-main2}.
\end{lemma}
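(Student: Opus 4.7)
The plan is to relate the total weight $\sum_{j=1}^z\psi_{k_j}$ of an ScD-path in $\Omega\Sup_{κ}(v\to t)$ to the total space-time displacement the path must cover. Since the extended support of the first cell contains $v$ while that of the last cell is not contained in $B_t(S_1(\iota_v))\times[-t+\tau_v,\tau_v+t]$, either the total spatial displacement or the total temporal displacement along the path must exceed $c_0 t$ for some absolute $c_0>0$, and it suffices to handle each case separately.

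To upper bound both displacements, set $s_k:=2^{\ell_{k+1}}$ and $t_k:=\beta_{k+1}$, so that $\diam(S\Esup_k)\leq C s_k$ and $|T\Esup_k|\leq C t_k$. Consecutive cells in the ScD-path---whether support adjacent or support connected with diagonals---have overlapping temporal extensions, since in the second case the diagonal steps preserve the time coordinate (cf.\ the remark following Definition \ref{def-dpath}); hence the temporal displacement is bounded by $C\sum_j t_{k_j}$. For the spatial displacement, each diagonal step moves by at most $O(2^{\ell_1})$ spatially while strictly decreasing the distance to $L_0$, so any diagonal connection emanating from a scale-one cell in $S\Esup_{k_j}$ has at most $H_{k_j}\cdot 2^{\ell_1}$ steps and total length $O(H_{k_j}\cdot 2^{2\ell_1})=O(s_{k_j})$; combined with the direct cell contributions this yields $(\text{spatial displacement})\leq C\sum_j s_{k_j}$.

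A direct comparison of \eqref{def-ell}, \eqref{def-β}, and \eqref{def-ψ} then gives, for all $k\geq 2$,
\begin{equation*}
\psi_k \geq c\, s_k^{\dv}\cdot 2^{-4\dv a(k-1)}/k^4
\quad\text{and}\quad
\psi_k \geq c\, t_k^{\dv/\dw}\cdot 2^{-\dv a(2k-3)}/k^4.
\end{equation*}
Since each $k_j\leq\kappa=\mathcal{O}(\sqrt{\log t})$, the correction factors $2^{-C k_j}/k_j^4$ are bounded below by $\exp(-C'\sqrt{\log t})\geq t^{-o(1)}$ as $t\to\infty$. In the spatial case $\sum_j s_{k_j}\geq c_0 t$, using $\dv>1$ (since $\dv=\log_2(d+1)\geq\log_2 3$) and $s_{k_j}\geq s_1\gg 1$, we obtain $\sum_j s_{k_j}^{\dv}\geq(\min_j s_{k_j})^{\dv-1}\sum_j s_{k_j}\geq c\, t$, hence $\sum_j\psi_{k_j}\geq c\, t^{1-o(1)}$. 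In the temporal case $\sum_j t_{k_j}\geq c_0 t$, from \eqref{eq-dvdw} we have $\dv/\dw\geq\dv/(\dv+1)\in(0,1)$, and combining the monotonicity $t_{k_j}^{\dv/\dw}\geq t_{k_j}^{\dv/(\dv+1)}$ (which holds because $t_{k_j}\geq t_1>1$) with the concavity inequality $\sum x_j^{\alpha}\geq(\sum x_j)^{\alpha}$ valid for $\alpha\in(0,1)$, one concludes $\sum_j t_{k_j}^{\dv/\dw}\geq(\sum_j t_{k_j})^{\dv/(\dv+1)}\geq c\, t^{\dv/(\dv+1)}$, so $\sum_j\psi_{k_j}\geq c\, t^{\dv/(\dv+1)-o(1)}$. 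Since $c_s<\dv/(\dv+1)-\tfrac{1}{2}<\dv/(\dv+1)$, both bounds imply $\sum_j\psi_{k_j}\geq\Cr{t}\, t^{c_s}$ for all $t\geq t_0$ large enough.

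The principal subtlety is that diagonal connections contribute spatial displacement without incurring any $\psi$-weight, and in isolation could allow unbounded spatial reach per unit weight; the strict height-monotonicity of every diagonal step is precisely what caps the total diagonal contribution by $\sum_j H_{k_j}\cdot 2^{\ell_1}$, of the same order as the direct cell contribution $\sum_j s_{k_j}$.
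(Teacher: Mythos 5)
Your proposal is correct and tracks the paper's argument in substance: the paper works with a single space--time diameter $\diam_k$ of the extended support, observes $\sum_j \diam_{k_j}\ge t/2$, relates $\psi_k\gtrsim (\diam_k)^{\dv/(\dv+1)}$ using $\dw\le \dv+1$, controls the $k$-dependent prefactors via $\kappa=\mathcal{O}(\sqrt{\log t})$, and finishes with subadditivity of $x\mapsto x^{\dv/(\dv+1)}$ — precisely the chain of ideas you use, merely packaged without the spatial/temporal case split (and in your spatial case the $\dv>1$ trick yields a better, but unneeded, exponent). The only imprecision is in the diagonal-step accounting: individual diagonal connections are not bounded by $H_{k_j}\cdot 2^{\ell_1}$ steps (the starting height depends on the whole prior path), and the correct statement, as in the paper's telescoping argument around \eqref{def-Hk}, is that the \emph{total} number of diagonal steps is at most $\sum_j H_{k_j}$, which still gives your spatial bound $O(\sum_j s_{k_j})$.
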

\begin{proof}
	Let $ \diam_k $ denote the diameter of the extended support of a cell of scale $ k $.

	The key observation to prove the lemma is that
	\begin{equation}\label{eq-diam>t}
		\sum_{j=1}^{z} \diam_{k_j}
		%		(3m+1)2^{\ell_{k_j+1}}
		\ge \frac{t}{2},
	\end{equation}
	since by definition of $ Ω_κ\Sup(v\rightarrow t) $ in \eqref{def-Ωtsup} the path exits from $ B_{t}(S_1(ι_v))\times [-t+τ_v,τ_v+t] $ % $ \bigtriangleup^d_t \times [-2^t,2^t] $
	and with an argument similar to the one surrounding \eqref{def-Hk}, the distance that can be covered by diagonal steps is at most the sum of the side lengths of the cells. Therefore, we only need to compare $ \diam_{k} $ with $ ψ_k. $

	For the geometry of the fractal, the diameter of the tile is equal to the side length; hence, for $ 1\le k\le κ $, we note that
	\begin{align*}
		\diam_{k} & \le  (6m+3) 2^{\ell_{k+1}} + 27β_{k+1}                      \\
		          & \le (6m+3)2^{ak+a+m}2^{\ell_{k}}+(\Cmix 2^{\ell_{k}})^{\dw} \\
		          & \le \Cl{diago} 2^{2m} \ 2^{ak}2^{\dw\ell_{k}}               \\
		          & \le \Cr{diago} 2^{2m+ak}2^{(\dv+1)\ell_{k}},
	\end{align*}
	where in the last step we made use of \eqref{eq-dvdw}.
	For $ k\ge 2, $
	\begin{align*}
		ψ_k & =\frac{ε^2 μ_0 2^{\dv \ell_{k-1}}	} { k^4}                                                 
		 =\frac{ε^2 μ_0 2^{\dv \ell_{k}}	} { k^4	2^{\dv (ak-a+m)}}                                                \\
		    & =\frac{ε^2 μ_0 } { k^4 2^{\dv( ak-a+m)}}\frac{1}{\big( \Cr{diago} 2^{2m+ak}  \big)^{\frac{\dv}{\dv+1}}	}
		\big( 	\Cr{diago} 2^{2m+ak}	2^{(\dv+1)\ell_{k}}\big)^{\frac{\dv}{\dv+1}}                                       \\
		    & \ge\frac{ε^2 μ_0 } { \Cl{diago2}k^42^{\dv(ak+2m)}}
		\big(\diam_k\big)^{\frac{\dv}{\dv+1}}	.
	\end{align*}

	For $ k=1 $ we can fix a constant $ \Cl[s]{diagoScale1} >0$ depending on $ ε,μ_0,a,m, \ell $ and $ ν_E $, but crucially not on $t$, such that $ ψ_1\ge \Cr{diagoScale1} (\diam_1)^{\dv/(\dv+1)}.$

	Since we assumed that $ κ=\mathcal{O}(\sqrt{\log(t)}) $, we have that there exists $ \Cl[s]{bigO} $ such that $ k\le \Cr{bigO} \sqrt{\log(t)}$ for all $ k\le κ $ and thus summing over all cells of the path, \eqref{eq-diam>t} gives
	\begin{align*}
		\sum _{j=1}^z ψ_j \ge \C  \frac{ε^2 μ_0 } { 	\log^2(t)  2^{\dv a\sqrt{\log(t)}	}		}
		t^{   \frac{\dv}{\dv+1} }
	\end{align*}
	which for $ t $ large is larger than $ \Cr{t}t^{c_s}$.
\end{proof}

\subsection{Size of bad clusters}

Let $ t>0 $ be large, $ v\in L_1 $ and define 
\begin{equation*}
	\mathbf{S}_k^t(v):=\left\{S_k(ι')\colon ι'\in \mathbb{B}^d,\ S_k(ι') \cap B_{t}(S_1(ι_v)) \neq \emptyset\right\},
\end{equation*}
\begin{equation*}
	\mathbf{T}_k^t(v):=\left\{T_κ(τ')\colon τ'\in \mathbb{Z},\ \exists \bar{τ}\in \Z \colon γ_{1}^{(k-1)}(\bar{τ})=τ',\ T_1(\bar{τ})\cap [τ,τ+t]\neq \emptyset\right\}
\end{equation*}
and
\begin{equation*}
	\mathbf{R}_k^t(v):= \left\{ S\times T\colon S\in \mathbf{S}_k^t (v), T\in  \mathbf{T}_k^t (v)\right\},
\end{equation*}
where $ι_v, τ_v$ and $B_t(S_1(ι_v))$ are as defined previously below \eqref{def-Ωt}.
Recall also the definition of the bad cluster $ K_v $ from \eqref{def-K}.

\begin{prop}\label{prop-KnotinR}
	Let $ ζ $ as in \eqref{def-ζ}, $ α_0 $ as in Lemma \ref{lemma-Mixing} and $ t_0 $ as in Lemma \ref{lemma-φandt}. Then there exists a constant $ \Cr{K} $ independent of $ t $ such that for any $ v\in L_1 $ we have
	\begin{equation}\label{eq-KnotinR}
		\P\big(K_v \nsubseteq \mathbf{R}_1^t(v)\big) \le  e^{-\Cl{K} t^{c_s} },
	\end{equation}
	for all $ t>t_0. $
\end{prop}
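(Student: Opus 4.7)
The plan is to trace the event $\{K_v\nsubseteq \mathbf{R}_1^t(v)\}$ backwards through the reductions of the previous subsections and then apply a union bound balancing combinatorial growth against probabilistic decay across scales. If $K_v$ leaves $\mathbf{R}_1^t(v)$, then by \eqref{def-K} there is a D-path of bad scale-one cells starting from $v$ that exits the box $B_t(S_1(\iota_v))\times[-t+\tau_v,\tau_v+t]$; truncating at the first exit yields an element of $\Omega_1(v\rightarrow t)$. Since every bad scale-one cell has bad ancestry (Lemma \ref{lemma-Bad=>BadAnc}), this is a path in $\Omega_1(v\rightarrow t)$ of cells with bad ancestry, and Lemma \ref{lemma-Ω1Ωsup} then gives
\begin{equation*}
\P\bigl(K_v\nsubseteq \mathbf{R}_1^t(v)\bigr)\le \P\bigl(\exists P\in\Omega\Sup_\kappa(v\rightarrow t)\text{ of multi-scale bad cells}\bigr).
\end{equation*}

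Next I would fix $\kappa$ of order $\sqrt{\log t}$, as required by Lemma \ref{lemma-φandt}, and union bound over the length $z$ of the ScD-path together with the scale profile $(k_1,\ldots,k_z)\in\{1,\ldots,\kappa\}^z$. Lemma \ref{lemma-Combinatorial} bounds the number of ScD-paths with given scales by $\exp\{\tfrac{C_\psi}{2}\sum_j\psi_{k_j}\}$, while Corollary \ref{corol-MixingPaths} bounds the probability that all cells of a given path are simultaneously multi-scale bad by $\exp\{-C_\psi\sum_j\psi_{k_j}\}$. Combining these two estimates yields
\begin{equation*}
\P\bigl(K_v\nsubseteq \mathbf{R}_1^t(v)\bigr)\le \sum_{z\ge 1}\sum_{(k_1,\ldots,k_z)\in\{1,\ldots,\kappa\}^z}\exp\Bigl\{-\tfrac{C_\psi}{2}\sum_{j=1}^{z}\psi_{k_j}\Bigr\}.
\end{equation*}

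To extract the stretched-exponential decay I would split each exponent in half. One half is controlled by Lemma \ref{lemma-φandt}, which guarantees $\sum_j\psi_{k_j}\ge \Cr{t}\,t^{c_s}$ for any path in $\Omega\Sup_\kappa(v\rightarrow t)$, producing the factor $\exp\{-C_\psi\Cr{t}\,t^{c_s}/4\}$. The other half reduces the remaining double sum to the geometric series $\sum_{z\ge 1}\bigl(\sum_{k=1}^{\kappa}e^{-C_\psi\psi_k/4}\bigr)^z$, which is bounded uniformly in $\kappa$ (and hence in $t$) because $\psi_k$ grows rapidly in $k$ through the factor $2^{\dv\ell_{k-1}}$, so that $\sum_{k\ge 1}e^{-C_\psi\psi_k/4}<1$ once $\alpha_0$ (and therefore $\psi_1$) is taken large enough. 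Setting $\Cr{K}:=C_\psi\Cr{t}/4$ and absorbing the remaining finite prefactor by taking $t\ge t_0$ yields \eqref{eq-KnotinR}. The main obstacle is verifying that $\kappa=\mathcal{O}(\sqrt{\log t})$ is simultaneously large enough for Lemma \ref{lemma-φandt} to apply and small enough for the combinatorial sum over $(k_1,\ldots,k_z)\in\{1,\ldots,\kappa\}^z$ to remain convergent as $t\to\infty$; this compatibility hinges on the rapid growth of $\psi_k$ in $k$ together with the choice of the multi-scale parameters $a,m,\ell$ made in Section \ref{SEC-MultiScaleSetup}.
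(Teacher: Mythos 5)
Your proposal is correct, and it takes a genuinely different combinatorial route from the paper. The paper first isolates the top scale $\kappa$ by introducing the auxiliary event $H_\kappa$ (that all scale-$\kappa$ cells in $\mathbf{R}_\kappa^t(v)$ are multi-scale good), pays an error $\P(H_\kappa^{\mathsf c})\le e^{-\Cr{κgood} t}$ via a union bound with the \emph{unconditional} part of Lemma \ref{lemma-Mixing}, and is thereby left with paths in $\Omega^{\mathrm{sup}}_{\kappa-1}$ only; it then organises the count over scale profiles by the multiplicity vector $(h_1,\dots,h_{\kappa-1})$, handles the multinomial coefficient by $\kappa-1$ applications of \eqref{eq-chooses}, and restricts attention to the set $\mathcal{H}$ where some $h_l$ is large (which encodes the lower bound from Lemma \ref{lemma-φandt}). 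You instead keep scale $\kappa$ inside the union bound, relying on the fact that Lemma \ref{lemma-Mixing} asserts the \emph{conditional} estimate $\P(A_k=0\mid F)\le e^{-C_\psi\psi_k}$ for all $k\le\kappa$, so that Corollary \ref{corol-MixingPaths} already covers paths containing a scale-$\kappa$ cell; and you sum over ordered scale sequences directly, exploiting the exact factorisation $\sum_{z}\sum_{(k_1,\dots,k_z)}\prod_j e^{-C_\psi\psi_{k_j}/4}=\sum_z q^z$ with $q=\sum_{k\le\kappa}e^{-C_\psi\psi_k/4}<1$, which is arguably cleaner than the multinomial/Pascal bookkeeping and does not require \eqref{eq-chooses}. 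The extraction of the stretched-exponential factor is the same in both arguments (split the exponent and apply Lemma \ref{lemma-φandt} to one half, using that only realisable scale profiles contribute). One presentational caveat: your displayed inequality
\[
\P(K_v\nsubseteq\mathbf{R}_1^t(v))\le\sum_{z\ge1}\sum_{(k_1,\dots,k_z)\in\{1,\dots,\kappa\}^z}\exp\Big\{-\tfrac{C_\psi}{2}\sum_j\psi_{k_j}\Big\}
\]
over-counts, since the constraint from Lemma \ref{lemma-φandt} applies only to profiles that actually arise in $\Omega^{\mathrm{sup}}_\kappa(v\rightarrow t)$; to extract the $t^{c_s}$ factor one must keep the sum restricted to those profiles (where $\sum_j\psi_{k_j}\ge\Cr{t}t^{c_s}$), split the exponent there, and only then relax the remaining sum to run over all profiles. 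Your subsequent prose does exactly this, so the argument is sound, but the display as written cannot be used in isolation.
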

\begin{proof}
	Using Lemma \ref{lemma-Bad=>BadAnc} implies
	\begin{align*}
		\P\big (K_v \nsubseteq \mathbf{R}_1^t(v) \big)
		 & \le \P\big (\exists P\in  Ω_1(v\rightarrow t) \text{ of bad cells}\big)              \\
		 & \le \P\big(\exists P\in  Ω_1(v\rightarrow t) \text{ of cells with bad ancestry}\big),
	\end{align*}
	and by Lemma \ref{lemma-Ω1Ωsup} this is smaller than
	\begin{align*}
		\P \big(\exists P \in Ω_κ\Sup(v\rightarrow t) \text{ of multi-scale bad cells}\big),
	\end{align*}
	for any arbitrary choice of $κ$; we will fix it momentarily.

	Define now the event $H_\kappa$ to be the event that $A_\kappa(\iota,\tau)$ holds for all cells in $\mathbf{R}_\kappa^t(v)$, i.e.
	\[
		H_\kappa:=\bigcap_{R_κ(ι,τ)\in \mathbf{R}_κ^t(v)} \{A_κ(ι,τ)=1 \}
	\]
	Recalling how the event $ A_κ(ι,τ) $ is defined in \eqref{def-Aκ} for the largest scale $ κ $, using a union bound and Lemma \ref{lemma-Mixing} we obtain directly that
	\begin{align*}
		\P\big(H_κ(v)\big)
		\ge 1 - \big|\mathbf{R}_κ^t(v)\big|   e^{-C_ψ 	ψ_κ	}.
	\end{align*}
	We choose now $ κ $ to be the smallest integer such that $  ψ_κ \ge t.$ Using the definition of $ ψ_k $ in \eqref{def-ψ} one can see that $ κ=\mathcal{O}(\sqrt{\log(t)}) $; note that this choice satisfies the assumption of Lemma \ref{lemma-φandt}.
	Since the cardinality of $ \mathbf{R}_κ^t(v) $  satisfies
	\begin{equation*}
		\big|\mathbf{R}_κ^t(v)\big|			\le \C
		\Big( \frac{t}{2^{\ell_k}} \Big)^{\dv}
		\Big( \frac{t}{β_k} \Big),
	\end{equation*}
	we can use this to find some constant $\Cr{κgood} $ such that
	\begin{equation*}
		\P\big(H_κ(v)\big) \ge 1- e ^{\Cl[s]{κgood}	t}.
	\end{equation*}
	We now continue the previous chain of inequalities
	\begin{equation}
		\begin{aligned}
			 & \P( \exists P\in Ω_{κ}\Sup(v\rightarrow t) \text{ of multi-scale bad cells})                                                        \\
			 & \le   \P( \exists P\in Ω_κ\Sup(v\rightarrow t) \text{ of multi-scale bad cells}		\cap 	H_κ (v)	) + \P \big(H_κ(v)^{\mathsf{c}}\big)
			\\
			 & \le  \P( \exists P\in Ω_{κ-1}\Sup(v\rightarrow t) \text{ of multi-scale bad cells})   + e^{-\Cr{κgood} t }.
		\end{aligned}
	\end{equation}
	Since $c_s<\tfrac{\dv}{\dv+1}-\frac{1}{2}<1$, the term $e^{-\Cr{κgood} t }$ is of a smaller order than the claimed bound of $e^{-\Cr{K} t^{c_s} }$ and we can ignore it going forward.

	We now want to bound the remaining probability. If we fix the length of the path $ z\in \N $ and the scales $ k_1,\dots, k_z $ we can use Corollary \ref{corol-MixingPaths} and Lemma \ref{lemma-Combinatorial} to obtain
	\begin{align*}
		 & \P\big ( \exists P\in Ω_{κ-1}\Sup(v\rightarrow t) \text{ of $ z $ multi-scale bad cells of scales }k_1,\dots, k_z\big)                                           \\
		 & \le \ e^{-C_ψ \sum_{j=1}^{z} ψ_{k_j}}e^{\frac{C_ψ}{2} \sum_{j=1}^{z} ψ_{k_j}} = e^{-\frac{C_ψ}{2} \sum_{j=1}^{z} ψ_{k_j}} \le e^{- \frac{C_ψ}{2}\Cr{t}t^{c_s} },
	\end{align*}
	where the last step follows from Lemma \ref{lemma-φandt} since $κ$ and therefore also $κ-1=\mathcal{O}(\sqrt{\log(t)})$.

	It only remains to estimate the number of different possible lengths and weights of a path. We rewrite the weight of a path as the sum of the weights of cells of different scales, namely $ \sum_{j=1}^{z} ψ_{k_j}  = \sum_{k=1}^{κ-1} h_k ψ_k$, where $ h_k $ is the number of cells of scale $ k. $ Hence, for fixed $ h_1,\dots, h_{κ-1} $, the number of possible ways to order the cells is
	\begin{equation}\label{eq-multichoose}
		\frac{(h_1+\dots + h_{κ-1})!}{h_1! h_2!\dots h_{κ-1}!}
		= \binom{h_1+\dots +h_{κ-1}}{h_1}
		\binom{h_2+\dots +h_{κ-1}}{h_{2}}
		\dots \binom{h_{κ-1}}{ h_{κ-1}}.
	\end{equation}
	By the bounds provided by Lemma \ref{lemma-φandt}, there exists  $ k\in \{1,\dots, {κ-1}\} $ such that  $h_k\ge \frac{  \Cr{t}t^{c_s} } {(κ-1)ψ_{k}} $. Define now
	\begin{equation*}
		\mathcal{H}:=\Big\{(h_1,\dots,,h_{κ-1})\in (\N_0)^{κ-1}\colon \exists l\in \{1,\dots, {κ-1}\}\ h_l\ge \tfrac{  \Cr{t}t^{c_s} } {(κ-1)ψ_{l}} \Big\}.
	\end{equation*}
	We can then write
	%		\begin{equation}
	%			\begin{aligned}
	%			&\P( \exists P\in Ω_{κ-1}\Sup(v\rightarrow t) \text{ of multi-scale bad cells})\\
	%		\le	&\sum_{\substack{			h_1,\dots, h_κ\\ \exists k\colon h_k \ge   2^{\Cr{t}t} /(κψ_{k})			} }
	%							\P( \exists P\in Ω_{κ-1}\Sup(v\rightarrow t) \text{ of $ h_k $ multi-scale bad cells of scale $ k $, for all $ k=1,\dots κ $})\\
	%		\le& \sum_{\substack{			h_1,\dots, h_κ\\ \exists k\colon h_k \ge   2^{\Cr{t}t} /(κψ_{k})			} }
	%							e^{-\frac{C_ψ }{2}\sum_{k=1}^{κ} h_k ψ_{k}}			\frac{(h_1+\dots + h_κ)!}{h_1! h_2!\dots h_κ!}
	%		\intertext{applying \eqref{eq-chooses} $ κ-1 $ times}}
	%		\le&\sum_{\substack{			h_1,\dots, h_κ\\ \exists k\colon h_k \ge   2^{\Cr{t}t} /(κψ_{k})			} }
	%		e^{-\frac{C_ψ }{2}\sum_{k=1}^{κ} h_k (ψ_{k}-k)}	\\
	%			\le & e^{-ct}\\
	%			\end{aligned}
	%		\end{equation}
	\begin{align*}
		 & \P( \exists P\in Ω_{κ-1}\Sup(v\rightarrow t) \text{ of multi-scale bad cells}) \\
		 & \le	\sum_{	\mathcal{H}}
		\P\Big(     \exists P\in Ω_{κ-1}\Sup(v\rightarrow t)\colon
		\begin{array}{l}
			\text{such that for each $ k=1,\dots κ-1$, $P$ is made } \\ \text{of $ h_k $ multi-scale bad cells of scale $ k $}
		\end{array}
		\Big)                                                                             \\
		 & \le \sum_{	\mathcal{H}}
		e^{-\frac{C_ψ }{2}\sum_{k=1}^{κ-1} h_k ψ_{k}}			\frac{(h_1+\dots + h_{κ-1})!}{h_1! h_2!\dots h_{κ-1}!}.
	\end{align*}
	Applying \eqref{eq-chooses} $ κ-1 $ times to the right-hand side of \eqref{eq-multichoose} we can bound this further by
	\begin{align*}
		\sum_{	\mathcal{H}}
		e^{-\sum_{k=1}^{κ-1} h_k (\frac{C_ψ }{2}ψ_{k}-k)},
	\end{align*}
	and using that $ α_0,a,m  $ are large enough twice, this is finally smaller than
	\begin{align*}
		\sum_{	\mathcal{H} }
		e^{-		\frac{C_ψ }{3}  \sum_{k=1}^{κ-1} h_k ψ_{k}}		\le  e^{-\Cr{K} t^{c_s} },
	\end{align*}
	which concludes the proof.
\end{proof}

\subsection{Proof of Theorem \ref{thm-main}}\label{SUBSEC-ProofThm1}

\begin{proof}
	By Proposition \ref{prop-RadHill} we need to show for all $ v\in L_1 $ that
	\begin{equation*}
		\sum_{r\ge 1} r^{\dv+1} P(\rad_v(\Hill_v) > r) <∞.
	\end{equation*}

	Recalling the definition of $\mathbf{R}^t_k(v)$ above Proposition \ref{prop-KnotinR} and letting $v=R_1(ι_v,τ_v)$, we note that $ \mathbf{R}_1^t (v)$ contains only cells $R_1(ι',τ')$   with $ d(S_1(ι_v),S_1(ι'))\le \frac{t}{2^\ell } $ and $ |τ_v-τ'|\le \frac{t}{β}$. Hence, if $ r,T $ satisfy
	\begin{equation*}
		T \Big(	\frac{1}{2^\ell} +\frac{1}{\Cl{T} 2^{\dw \ell}}	\Big) \le r
	\end{equation*}
	for some constant $\Cr{T}$, it holds that
	\begin{equation*}
		\mathbf{R}_1^{T} (v)\subseteq \big\{R_1(ι',τ')\colon (ι',τ')\in \mathbb{B}^d \times \Z	,\ d\big( R_1(ι',τ'),v \big)		\le r	\big\}.
	\end{equation*}
	Define therefore $ T(r) :=$ $  (	\frac{1}{2^\ell} +\frac{1}{\Cr{T} 2^{\dw \ell}}	) ^{-1}  r $, and let $ t_0 $ be as in Lemma \ref{lemma-φandt} and $ r_0 $ such that $ T(r_0)>t_0. $
	Then
	\begin{align*}
		\sum_{r\ge r_0} r^{\dv+1} P(\rad_v(\Hill_v) > r)
		 & \le \sum_{r\ge r_0} r^{\dv+1} P\left(\Hill_v \nsubseteq \mathbf{R}_1^{T(r)}(v)  \right) \\
		 & \le  \sum_{r\ge r_0} r^{\dv+1} P\left(K_v \nsubseteq \mathbf{R}_1^{T(r)}(v)   \right)   \\
		 & \leftstackrel{\eqref{eq-KnotinR}}{\le}
		\sum_{r\ge r_0}		r^{\dv+1} 			\exp\big\{-\Cr{K} T(r)^{c_s}	\big\}.
	\end{align*}
	Since this series converges, the \Lip{} exists almost surely as stated in Proposition \ref{prop-RadHill}.
\end{proof}

\section{Proof of Theorem \ref{thm-main2}}\label{SEC-ProofThm2}
\newcommand{\DD}	{D\rotatebox[origin=c]{180}{D}-path}
\newcommand{\SDD}	{ScD\rotatebox[origin=c]{180}{D}-path}
\newcommand{\OmegaD}{Ω\rotatebox[origin=c]{180}{D}}

The main tool for the proof of Theorem \ref{thm-main2} are \DD s, which we define next. They are in essence a symmetric version of D-paths, in the sense that diagonal connections can go ``backwards''; equivalently, being connected by a \DD{} is a symmetric relationship unlike before.

\subsection{\texorpdfstring{\DD s}{DD}	}\label{SUBSEC-DD}

For the Reader's convenience we recall here  from Subsection \ref{SUBSEC-Paths} some definitions that we will build on next. We say for two scale-one cells $ R_1(ι,τ) $ and $ R_1(ι',τ') $ that $ R_1(ι,τ) $ is \emph{diagonally connected} to $ R_1(ι',τ') $ if there exists a sequence of scale-one cells $ \{R_1(ι_1,τ_1),\dots, R_1(ι_n,τ_n)\} $ such that $R_1(ι,τ)=R_1(ι_1,τ_1)$, for all $ j\in \{1,\dots,n-1\} $,
$d( R_1(ι_{j+1},τ_{j+1}),L_0)< d(R_1(ι_j,τ_j),L_0)$ and $ R_1(ι_n,τ_n) $ is either equal or adjacent to $R_1(ι',τ'). $
In addition, we define here two cells to be \emph{diagonally linked} if the first case occurs, i.e.\ if $ R_1(ι_n,τ_n) = R_1(ι',τ'). $

We say that two scale-one cells $ R_1(ι,τ) $ and $R_1 (ι',τ') $ are \emph{single diagonally connected} if $ R_1(ι,τ) $ is diagonally connected to $ R_1(ι',τ') $ or if $ R_1(ι',τ') $ is diagonally connected to $R_1 (ι,τ) $. We say that two scale-one cells $ R_1(ι,τ) $ and $ R_1(ι',τ') $ are double diagonally connected if there exists $ R_1(\tilde{ι},\tilde{τ}) $ such that $ R_1(ι,τ) $ is diagonally connected to $ R_1(\tilde{ι},\tilde{τ}) $,
$ R_1(ι',τ') $ is diagonally connected to $ R_1(\tilde{ι},\tilde{τ}) $,
and either $ R_1(ι,τ) $ or $ R_1(ι',τ') $ is diagonally linked to $ R_1(\tilde{ι},\tilde{τ}) $.
Note that being single diagonally connected or double diagonally connected is a symmetric relationship.

As done in Subsection \ref{SUBSEC-Paths}, we extend these new definitions to cells of arbitrary scale $ R_{k_1}(ι_1,τ_1) $ and $ R_{k_2}(ι_2,τ_2) $ by requiring that they respectively contain two scale-one cells which satisfy the corresponding definition of the connectedness above. In analogy to Definition \ref{def-Dpath} we introduce a new type of paths.

\begin{deff}\label{def-DD}
	We define a \DD \ as a sequence $ \{R_{k_j} (ι_j,τ_j)\}_{j=1}^{n} $ of cells where for each $ j \in \{2,\dots,n \}$, the cells $ R_{k_{j-1}}(ι_{j-1},τ_{j-1}) $ and $ R_{k_{j}}(ι_j,τ_j) $ are either adjacent, single diagonally connected or double-diagonally connected.
\end{deff}

Similarly to \eqref{def-Ωt}, for some $ t>0$ and $v\in L_1$, we define
\begin{equation}\label{def-ΩDt}
	\OmegaD_{1}(v\rightarrow t)
\end{equation}
to be the set of all \DD s of cells of scale one for which the first cell of the path is $ v $ or $ v $ is single diagonally connected to the first cell, and the last cell is the only cell not contained in the space-time ball $B_{t}(S_1(ι_v))\times [-t+τ_v,τ_v+t]$. %$ \bigtriangleup_t^d \times [-2^t,2^t] $ (cf.\ \eqref{def-gasketRecursive}).
We stress that, contrary to $ Ω_1(v\rightarrow t) $, $ v $ must not necessarily be part of the \DD{}; it can be that $v$ is only single diagonally connected to the path and not an actual cell of the \DD{}.

We define now Sc\DD{s}, the support connected version of \DD{s}. Recall the definition of well-separated cells and support adjacent cells from Definition \ref{def-ScDpath}. We say that two cells $R_{k_1}( ι_1,τ_1 )$ and $ R_{k_2}(ι_2,τ_2) $ are \emph{support connected with single diagonal} if there exist two scale-one cells respectively contained in the extended supports of $R_{k_1}(ι_1,τ_1 )$ and $ R_{k_2}(ι_2,τ_2) $ which are single diagonally connected. Similarly, we say that two cells $ R_{ k_1}(ι_1,τ_1 )$ and $ R_{k_2}(ι_2,τ_2) $ are \emph{support connected with double diagonal} if there exist two scale-one cells respectively contained in the extended supports of $R_{k_1}(ι_1,τ_1 )$ and $ R_{k_2}(ι_2,τ_2) $ which are double diagonally connected.

\begin{deff}\label{def-ScDD}
	We define as \SDD{} (support connected \DD) a sequence  of well-separated cells $ \{R_{k_j}(ι_j,τ_j)\}_{j=1}^{z} $ for some $z\in \N$ where for all $ j=2,\dots,z $ the cells $ R_{k_{j-1}}(ι_{j-1},τ_{j-1}) $ and $ R_{k_{j}} (ι_j,τ_j) $ are either support adjacent, support connected with single diagonal or support connected with double-diagonals.
\end{deff}
For  $ t>0$ and $v\in L_1$, we define
\begin{equation*}\label{def-ΩDtsup}
	\OmegaD\Sup_{κ}(v\rightarrow t)
\end{equation*}
the set of all Sc\DD s of cells of scale at most $ κ $ so that the extended support of the first cell of the path contains $ v $ or $ v $ is single diagonally connected to a scale-one cell that is contained in the extended support of the first cell, and the last cell is the only cell whose extended support is not contained in the space-time ball $B_{t}(S_1(ι_v))\times [-t+τ_v,τ_v+t]$.
Again, we highlight the difference with $ Ω\Sup_κ (v\rightarrow t)$, where instead $ v $ must be contained in the extended support, whereas here it can be only single diagonally connected to it.

Finally we define the analogue of the bad cluster $ K_v $ from \eqref{def-K}:
\begin{equation}\label{def-K*}
	K_{v}^*:=\{R_1(ι',τ')\colon \text{there exists a \DD\ of bad cells from $ v $ to $ R_1(ι',τ') $}\}
\end{equation}

Repeating the arguments of Lemma \ref{lemma-Ω1Ωsup}, we can easily obtain its analogue for \DD s.
\begin{lemma}\label{lemma-Ω1ΩsupDD}
	It holds that
	\begin{align*}
		 & \P \big(\exists P \in \OmegaD_1(v\rightarrow t) \text{ of cells with bad ancestry}\big) \\
		 & \le
		\P \big(\exists P \in \OmegaD\Sup_κ(v\rightarrow t) \text{ of multi-scale bad cells}\big).
	\end{align*}
\end{lemma}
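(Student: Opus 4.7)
The plan is to adapt the two-step proof of Lemma \ref{lemma-Ω1Ωsup} to the symmetric setting of \DD{s}. First, mirroring Step 1 of the earlier lemma, I will introduce an intermediate object $\OmegaD_κ(v\to t)$, namely the set of \DD{s} of cells of scale at most $κ$ where the first cell is either an ancestor of $v$ or has a descendant which is single diagonally connected to $v$, and the last cell is the only one whose support exits $B_{t}(S_1(ι_v))\times[-t+τ_v,τ_v+t]$. The claim will follow from the chain
\begin{align*}
	&\P \big(\exists P\in\OmegaD_1(v\to t) \text{ of cells with bad ancestry}\big) \\
	&\qquad \le \P \big(\exists P\in\OmegaD_κ(v\to t) \text{ of multi-scale bad cells}\big) \\
	&\qquad \le \P \big(\exists P\in\OmegaD\Sup_κ(v\to t) \text{ of multi-scale bad cells}\big).
\end{align*}

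For the first inequality, given $P=\{R_1(ι_j,τ_j)\}_{j=1}^z\in\OmegaD_1(v\to t)$ whose cells all have bad ancestry, I will associate to each cell a multi-scale bad ancestor guaranteed by \eqref{def-badancestry}, then prune any cell that is a descendant of an earlier one in the sequence and truncate at the first ancestor whose extended support exits the space-time ball. The verification that consecutive cells of the resulting path $P''$ remain connected by a \DD{}-connection follows exactly the argument of Step 1 of Lemma \ref{lemma-Ω1Ωsup}: adjacency lifts to adjacency of cells of (possibly) different scales; and the sequence of diagonal steps realising a single (respectively double) diagonal connection between two scale-one cells of $P$ still witnesses a single (respectively double) diagonal connection between suitable scale-one descendants of the chosen ancestors in $P''$. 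Preservation of the condition on the first cell (containing $v$ or being single diagonally connected to $v$) follows in the same way.

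For the second inequality, I apply the well-separation procedure of Step 2 of Lemma \ref{lemma-Ω1Ωsup}: order the cells of $P''$ by decreasing scale (ties broken by position in $P''$), greedily add the current cell to a new path $\hat{P}$, and associate to it all remaining cells of $P''$ that are not well-separated from it. After reordering $\hat{P}$ by the induced $P''$ ordering, I obtain a sequence of pairwise well-separated multi-scale bad cells. For each pair of consecutive cells $R_{\hat{k}_{j-1}}, R_{\hat{k}_{j}}\in\hat{P}$, I consider the corresponding pair $R_{k_{j''-1}}, R_{k_{j''}}\in P''$ as in the original proof, and show via the inclusions \eqref{eq-InfIntersSupInters}, \eqref{eq-RInfinSup}, and \eqref{eq-SuppIntersEsup} that support-adjacency, support-single-diagonal, and support-double-diagonal connections between $P''$ cells each lift to the corresponding type of connection between $\hat{P}$ cells. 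The support-adjacent subcase is identical to the one in Lemma \ref{lemma-Ω1Ωsup}, while the single- and double-diagonal subcases are both handled by noting that the scale-one cells realising the diagonal link lie in the relevant $R\inbase$-sets, and hence by \eqref{eq-RInfinSup}--\eqref{eq-SuppIntersEsup} in the $R\Esup$-sets of the merged $\hat{P}$ cells.

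The main obstacle is the bookkeeping for the double-diagonal case: one must check that the merging operation does not increase the number of diagonal legs between consecutive cells of $\hat{P}$ beyond two. This works because the two scale-one cells diagonally linked to a common cell, which realise the double-diagonal connection between two cells of $P''$, remain inside the extended supports of the corresponding cells of $\hat{P}$ by the same inclusion chain; consequently they realise a double-diagonal connection in $\hat{P}$ as well, without any extra diagonal leg being introduced by the merging. The symmetric nature of the \DD{} connections makes this slightly more delicate than in Lemma \ref{lemma-Ω1Ωsup}, but no new geometric input is needed beyond the inclusions already used there.
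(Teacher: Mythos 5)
Your proof is correct and takes the same route the paper intends: the paper's own proof of this lemma is a single sentence asserting that the two-step argument of Lemma~\ref{lemma-Ω1Ωsup} carries over to \DD{s}, and you have spelled out that adaptation with the appropriate intermediate object $\OmegaD_κ(v\to t)$, the lifting of single- and double-diagonal connections under pruning, and the well-separation merge via \eqref{eq-InfIntersSupInters}, \eqref{eq-RInfinSup}, and \eqref{eq-SuppIntersEsup}.
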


\subsection{Multi-scale analysis of \texorpdfstring{\DD  s}{DD}}
% In this Subsection we prove Theorem \ref{thm-main2} using \DD s.
We want to show that the \Lip{} intersects the \emph{base} $ L_0 $ within distance $ r $ from the origin with high probability. If the opposite was true, then we would be able to find a nearest-neighbour path in $ L_1 \setminus F $ which leaves a ball of radius $ r.  $ We will show that this implies the existence of a \DD\ from the origin that exits such a ball and we will use similar arguments to before to prove such \DD s are improbable.

We follow the structure of Section \ref{SEC-MultiScaleAnalysis} and write in detail only the parts where the proofs for \DD s differ from the ones for D-paths. Lemma \ref{lemma-Mixing} and Corollary \ref{corol-MixingPaths} still hold and and can be applied unchanged. We need to show the analogue of Lemma \ref{lemma-Combinatorial}.

\begin{lemma}\label{lemma-CombinatorialDD}
	For a fixed length $ z \in\N$, fixed scales $ k_1,\dots, k_z $ and $ v\in L_1 $, the number of \SDD s of cells of scale $k_1,\dots,k_z$ where the first cells either contains $v $ or is $v$ is single diagonally connected to a scale-one cell contained in the extended support of the first cell, is at most
	\begin{equation*}
		\exp \Big\{{\dfrac{C_ψ}{2} \sum_{j=1}^{z} ψ_{k_j} 	}	\Big\},
	\end{equation*}
	where $ C_ψ $ is the same constant as in Lemma \ref{lemma-Mixing}.
\end{lemma}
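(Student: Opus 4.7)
The plan is to follow the three-step structure of the proof of Lemma \ref{lemma-Combinatorial}, modifying each step to accommodate the three new features of \SDD{s}: the symmetry of single-diagonal connections, the possibility of double-diagonal connections, and the weakened starting condition in which $v$ need only be single-diagonally connected to a scale-one descendant of the first cell rather than contained in the first cell's extended support.

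For Step 1, the count of \SDD{s} with only support-adjacent consecutive cells will be bounded exactly as before by $\chi_{k_1}\prod_{j=2}^z\Phi_{k_{j-1},k_j}$. The weakened starting condition costs at most the number of scale-one cells to which $v$ may be single-diagonally connected inside the extended support of the first cell, and will be absorbed into $e^{(C_\psi/16)\psi_{k_1}}$ via the same kind of bound as $A(h)$ in \eqref{eq-A(x)estimate} combined with the height bound $H_{k_1}$ of \eqref{def-Hk}, for $a, m, \alpha_0$ large enough. For Step 2, I would treat the three types of consecutive connections separately: the support-adjacent and single-diagonal cases are handled exactly as before (the symmetry of the latter contributing only an extra factor of $2$), and the double-diagonal case is parameterised by choosing one scale-one cell in each of the two extended supports together with the relative positions of the common ``meeting'' cell with respect to both, giving a contribution bounded by the product of two single-diagonal counts. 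Combining the three cases and absorbing constants into the exponents yields $\Phi^{**}_{k_{j-1},k_j} \leq e^{(C_\psi/8)(\psi_{k_{j-1}} + \psi_{k_j})}$, so that the product across all pairs is controlled by $\exp\{(C_\psi/4)\sum_{j=1}^z\psi_{k_j}\}$.

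For Step 3, I would count the relative positions arising from the diagonal connections. Each single-diagonal contributes one height variable $h$ with a factor $A(h+1)$, while each double-diagonal contributes two height variables $h^{(a)}, h^{(b)}$ with a factor $A(h^{(a)}+1)\,A(h^{(b)}+1)$. The total height traversed is still bounded by $H = \sum_{i=1}^z H_{k_i}$, and the total number of height variables is at most $2(z-1)$. Rerunning the Lagrange-multiplier and Pascal's-rule argument of Step 3 of Lemma \ref{lemma-Combinatorial} with $z-1$ replaced by (at most) $2(z-1)$ will yield a bound of the form $(c_1 H/z)^{c_2 z(d+1)}$; an inequality analogous to \eqref{eq-Combinatorial-h<ψ} then closes the argument after possibly enlarging $m$ and $\alpha_0$.

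The main obstacle will be ensuring that double-diagonal connections do not inflate the final exponent beyond $(C_\psi/2)\sum\psi_{k_j}$. Since each such connection contributes at most two single-diagonals' worth of positions, doubling the number of height variables in the Lagrange-multiplier argument translates to at most a constant-factor adjustment to the large-parameter constants $a, m, \alpha_0, \ell$, and the desired bound will then follow by the same reasoning as in Lemma \ref{lemma-Combinatorial}.
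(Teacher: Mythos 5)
Your proposal is correct and follows essentially the same three-step adaptation strategy the paper uses: keeping Steps 1--2 of Lemma~\ref{lemma-Combinatorial} with minor modifications (deferring/absorbing the weakened starting condition, and noting that once relative positions are fixed the Step 2 counting is direction-independent so double-diagonals just double the number of position counts), and then rerunning Step 3 with roughly twice as many height variables ($2z-1$ in the paper vs.\ your $2(z-1)$) and a correspondingly enlarged $H$ (the paper inserts a factor $2$ into $H=2\sum H_{k_i}$, which you omit, but this is absorbed into the large-parameter slack). The small bookkeeping differences in how you distribute the exponent among $\chi_{k_1}$, $\Phi^{**}$, and the diagonal-position count are harmless and lead to the same final bound.
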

\begin{proof}
	We follow the proof of Lemma \ref{lemma-Combinatorial}.  For \textit{Step 1}, we need to make a small change. Compare the definitions of $ Ω_κ\Sup(v\rightarrow t) $ and $ \OmegaD_κ\Sup(v\rightarrow t) $: in the latter we also allow $ v $ to be single diagonally connected to a scale-one cell contained in the extended support of the first cell in the \DD. To account for this, note that we can fix the relative position of $ v$ and the scale-one cell in the extended support of the first cell in the \DD, and we are only left to control the number of the possible relative position which is done in \textit{Step 3.}

	\textit{Step 2} remains unchanged, and we can turn to \textit{Step 3.}

	Consider two consecutive cells in the \DD\ which are single diagonally connected. We can define similarly to before
	\begin{equation*}
		A(h):=\max_{(ι_1,τ_1)}\bigg| \Big\{  R_1(ι_2, τ_2)							\colon
		\begin{array}{l}
			R_1(ι_1,τ_1)	\text{ is single diagonally connected to }R_1(ι_2,τ_2) \\
			\text{ with } \big|d(L_0, R_1(ι_1,τ_1))  -  d(L_0,R_1(ι_2,τ_2))\big|= h
		\end{array}			\Big\}\bigg|  .
	\end{equation*}
	For two cells $ R_1(ι_1,τ_1) $ and $ R_1(ι_2,τ_2) $ in the \DD\ which are double diagonally connected, let $R_1 (\tilde{ι}, \tilde{τ}) $ be the cell of the double diagonal that $R_1 (ι_1,τ_1) $ or $ R_1(ι_2,τ_2) $ is diagonally linked to. Letting $ h_1 $ be the height difference between $ R_1(ι_1,τ_1) $ and $ R_1(\tilde{ι}, \tilde{τ}) $ and $ h_2 $ the height difference between $R_1 (ι_2,τ_2) $ and $R_1(\tilde{ι}, \tilde{τ}) $, we can upper bound the number of different relative positions between $ R_1(ι_1,τ_1) $ and $R_1 (ι_2,τ_2) $ for which the respective height differences to $ R_1(\tilde{ι},\tilde{τ}) $ are $ h_1 $ and $ h_2 $ by $ A(h_1+1)A(h_2+1). $

	Let $ H_k $ be as in \eqref{def-Hk}; similarly to what was done for D-paths, we can bound the total number of diagonal steps in a \DD{} with the maximal attainable distance from $ L_0, $ within the path, i.e.\ by
	\begin{equation*}
		H=2\sum_{i=1}^{z} H_{k_i},
	\end{equation*}
	where we added the factor 2 to account for the diagonal step to the \emph{previous} and the \emph{following} cell. For simplicity, when two cells are double diagonally connected we consider also the cell $ R_1(\tilde{ι},\tilde{τ}) $, to which both cells are diagonally connected as part of the path. So, letting $ h_i $, $ i=1,\dots,2z-1 $ be the height difference between two diagonally connected cells, the number of diagonal steps is at most
	\begin{equation*}
		\sum_{l=0}^{H}\sum_{\substack{h_1,\dots h_{2z-1}\\ h_1+\dots+h_{2z-1}=l}} A(h_1+1)A(h_2+1)\dots A(h_{2z-1}+1).
	\end{equation*}
	We can then repeat the remaining calculations as in Lemma \ref{lemma-Combinatorial}, substituting $ z $ with $ 2z $ and obtain the same result.
\end{proof}

We also have the analogue of Lemma \ref{lemma-φandt}:
\begin{lemma} \label{lemma-φandtDD}
	Suppose that the largest scale $κ$ satisfies $ κ=\mathcal{O}(\sqrt{\log(t)}).$ Then if $ψ_1$ is large enough, there exist $t_0 $ and $\Cr{t}>0$
	such that for any $ t> t_0 $ and any $ v\in L_1 $ and any \SDD\ $ \{R_{k_j}(ι_j,τ_j)\}_{j=1}^{z} \in \OmegaD_{k}\Sup(v\rightarrow t)$
	\begin{equation*}
		\sum_{j=1}^{z} ψ_{k_j} \ge			 \Cr{t}t^{c_s} .
	\end{equation*}
\end{lemma}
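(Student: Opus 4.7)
The plan is to follow the strategy of Lemma \ref{lemma-φandt} almost verbatim, reducing the claim to the key geometric estimate that the total spatial/temporal extent covered by the path is bounded by a constant multiple of $\sum_{j=1}^z \diam_{k_j}$, where $\diam_k$ denotes the diameter of the extended support $R_k\Esup$. The only new feature here is that adjacencies are now allowed to be replaced by single- or double-diagonal connections, and that $v$ need not lie in the extended support of the first cell.

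First I would establish the analogue of \eqref{eq-diam>t}. Each single diagonal connection between two consecutive cells $R_{k_{j-1}}$ and $R_{k_j}$ is realised by a sequence of scale-one cells contained in the extended supports, so its spatial-temporal extent is bounded by $\diam_{k_{j-1}}+\diam_{k_j}$. A double-diagonal connection goes through an intermediate scale-one cell $R_1(\tilde\iota,\tilde\tau)$, but by definition that cell lies below (with respect to $L_0$) both endpoints, so its distance to either endpoint is bounded by the corresponding extended support diameter; hence the extent is again at most $2(\diam_{k_{j-1}}+\diam_{k_j})$. The extra single diagonal connection possibly linking $v$ to the first cell contributes at most $\diam_{k_1}$. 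Summing over $j$ and using that the last cell must escape the space-time ball of radius $t$ around $v$, we obtain
\begin{equation*}
\sum_{j=1}^{z} \diam_{k_j} \ge \frac{t}{\Cl{escape}}
\end{equation*}
for some constant $\Cr{escape}$ depending only on $d$.

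With this bound in hand, the rest of the proof is identical to that of Lemma \ref{lemma-φandt}: the comparison $\psi_k \ge \C\, \diam_k^{\dv/(\dv+1)} / (k^4 2^{\dv(ak+2m)})$ for $k\ge 2$, and the analogous bound $\psi_1 \ge \Cr{diagoScale1}(\diam_1)^{\dv/(\dv+1)}$ for scale one, both hold independently of the type of path considered. Plugging in the assumption $\kappa = \mathcal{O}(\sqrt{\log t})$ and applying the concavity-type bound on each individual term exactly as before gives
\begin{equation*}
\sum_{j=1}^z \psi_{k_j} \ge \C\, \frac{\varepsilon^2 \mu_0}{\log^2(t)\, 2^{\dv a\sqrt{\log t}}}\, t^{\frac{\dv}{\dv+1}} \ge \Cr{t}\, t^{c_s}
\end{equation*}
for $t$ large enough, since $c_s < \tfrac{\dv}{\dv+1}-\tfrac{1}{2}$.

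The main obstacle I anticipate is the careful verification of the geometric extent bound for double-diagonal connections; in principle the intermediate cell could be far from $L_0$ were it not for the requirement that at least one of the endpoints is \emph{diagonally linked} (and not merely diagonally connected) to it, which forces the extent to be controlled by the extended support diameters. Once this is settled, the bookkeeping of constants is routine and the remainder of the argument transfers without modification.
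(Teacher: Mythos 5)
Your proposal correctly identifies the proof strategy: modify the key inequality \eqref{eq-diam>t} to account for the richer diagonal connections, and then leave the remainder of the argument unchanged from Lemma~\ref{lemma-φandt}. This is precisely what the paper does, replacing $t/2$ by $t/3$ because each cell can now contribute diagonal steps toward both the preceding and the following cell.

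However, the per-connection geometric argument you use to arrive at this replacement does not hold. You assert that the sequence of scale-one cells realising a single-diagonal connection is ``contained in the extended supports'' of the two cells $R_{k_{j-1}}$ and $R_{k_j}$, and hence that its extent is bounded by $\diam_{k_{j-1}}+\diam_{k_j}$. But only the first and last scale-one cells of a diagonal sequence need lie in those extended supports; the number of diagonal steps equals the height drop between the two endpoints, and that height drop depends on where $R_{k_{j-1}}$ and $R_{k_j}$ sit relative to $L_0$, not on the sizes of their extended supports. The same objection applies to your bound for double-diagonal connections: the intermediate scale-one cell does lie below both endpoints, but ``below'' can mean arbitrarily far below, so its distance to either endpoint is not controlled by the extended support diameters alone.

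The correct argument, as used in the paper around \eqref{def-Hk} in Lemma~\ref{lemma-Combinatorial} and adapted in Lemma~\ref{lemma-CombinatorialDD}, is global rather than per-connection. Since each diagonal step strictly decreases the height, the \emph{total} number of diagonal steps along the entire path is bounded by the cumulative height budget the path can accumulate, namely $2\sum_{i=1}^{z}H_{k_i}$ in the \SDD{} case (the factor of two accounts for diagonals toward both the previous and the following cell). As $H_{k_i}$, measured in spatial units, is of the same order as $\diam_{k_i}$, the total spatial extent is at most a constant multiple of $\sum_{j}\diam_{k_j}$, whence $\sum_{j}\diam_{k_j}\geq t/3$. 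Once this replacement is properly justified, the comparison of $\psi_k$ to $\diam_k^{\dv/(\dv+1)}$ and the rest of your proof go through unchanged.
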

\begin{proof}The proof is unchanged from the one of Lemma \ref{lemma-φandt} except that in \eqref{eq-diam>t} we have to substitute $ t/2 $ with $ t/3 $ since we now consider 2 diagonals for each cell instead of only one. The rest remains identical.
\end{proof}

Recall now the definition of $ K_{v}^* $ in \eqref{def-K*}. The analogue of Proposition \ref{prop-KnotinR} is then argued in the same way.
\begin{prop}\label{prop-K*notinR}
	Let $ ζ $ as in \eqref{def-ζ}, $ α_0 $ as in Lemma \ref{lemma-Mixing} and $ t_0 $ as in Lemma \ref{lemma-φandt}. Then there exists a constant $ \Cr{K} $ independent of $ t $ such that for any $v\in L_1$
	\begin{equation*}
		\P(K_{v}^* \nsubseteq \mathbf{R}_1^t(v)) \le e^{-\Cr{K} t^{c_s} },
	\end{equation*}
	for all $ t>t_0, $ with $c_s$ as in Theorem \ref{thm-main2}.
\end{prop}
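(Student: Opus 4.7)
The plan is to mirror the proof of Proposition \ref{prop-KnotinR} step by step, substituting each ingredient by its \DD-counterpart developed in Subsections \ref{SUBSEC-DD}--\ref{SUBSEC-DD}. First, by Lemma \ref{lemma-Bad=>BadAnc} every bad scale-one cell has bad ancestry, so by definition of $K_v^*$ in \eqref{def-K*},
\[
\P\big(K_v^*\nsubseteq \mathbf{R}_1^t(v)\big) \le \P\big(\exists P\in\OmegaD_1(v\to t)\text{ of cells with bad ancestry}\big),
\]
and Lemma \ref{lemma-Ω1ΩsupDD} upgrades this to a bound by $\P(\exists P\in\OmegaD\Sup_\kappa(v\to t)\text{ of multi-scale bad cells})$ for an arbitrary largest scale $\kappa$, which we fix as the smallest integer with $\psi_\kappa\ge t$, so that $\kappa=\mathcal{O}(\sqrt{\log t})$ (matching the assumption of Lemma \ref{lemma-φandtDD}).

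Next, I would introduce the good event $H_\kappa(v):=\bigcap_{R_\kappa(\iota,\tau)\in\mathbf{R}_\kappa^t(v)}\{A_\kappa(\iota,\tau)=1\}$. A union bound together with the unconditional statement of Lemma \ref{lemma-Mixing} applied at scale $\kappa$ gives
\[
\P(H_\kappa(v)^{\mathsf c})\le |\mathbf{R}_\kappa^t(v)|\, e^{-C_\psi\psi_\kappa}\le e^{-\Cl[s]{Ktwogood}t}
\]
by the choice of $\kappa$, and since $c_s<1$ this term is of strictly smaller order than the claimed bound $e^{-\Cr{K}t^{c_s}}$. On $H_\kappa(v)$, any candidate path in $\OmegaD\Sup_\kappa(v\to t)$ of multi-scale bad cells cannot contain a scale-$\kappa$ cell, so the remaining task is to bound $\P(\exists P\in\OmegaD\Sup_{\kappa-1}(v\to t)\text{ of multi-scale bad cells})$.

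Now, for a fixed length $z$ and fixed scales $k_1,\dots,k_z\in\{1,\dots,\kappa-1\}$, I would combine Corollary \ref{corol-MixingPaths} (whose statement only depends on the cells being well-separated, which is still guaranteed by Definition \ref{def-ScDD}) with the combinatorial bound from Lemma \ref{lemma-CombinatorialDD} to obtain
\[
\P\big(\exists P\in\OmegaD\Sup_{\kappa-1}(v\to t)\text{ of $z$ multi-scale bad cells of scales } k_1,\dots,k_z\big)\le e^{-\frac{C_\psi}{2}\sum_{j=1}^z\psi_{k_j}}\le e^{-\frac{C_\psi\Cr{t}}{2}t^{c_s}},
\]
where the last inequality uses Lemma \ref{lemma-φandtDD}. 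Finally, I would rewrite $\sum_{j=1}^z\psi_{k_j}=\sum_{k=1}^{\kappa-1}h_k\psi_k$ with $h_k$ the number of cells of scale $k$ in the path, enumerate orderings by multinomials exactly as in \eqref{eq-multichoose}, and restrict the sum to the set $\mathcal H$ of tuples for which at least one $h_k$ is forced to be large by Lemma \ref{lemma-φandtDD}. The resulting geometric series, controlled by choosing $\alpha_0,a,m$ large enough as in the proof of Proposition \ref{prop-KnotinR}, yields the desired $e^{-\Cr{K}t^{c_s}}$ bound.

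The only place where the \DD\ setting could cause trouble is the combinatorial step, since \DD-paths can also move via double-diagonal connections, not only single diagonals as in ScD-paths. However, this is already absorbed in Lemma \ref{lemma-CombinatorialDD} (by doubling the height budget $H$ and substituting $z$ with $2z$ in the Lagrange-multiplier computation), so that the weight estimate per path retains exactly the same shape $e^{\frac{C_\psi}{2}\sum\psi_{k_j}}$ as in the D-path case. Thus no new ingredient is required and the remainder of the argument is a transcription.
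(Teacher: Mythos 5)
Your proposal is correct and follows exactly the paper's intended argument: the paper itself states only that ``the analogue of Proposition \ref{prop-KnotinR} is then argued in the same way,'' and you have reconstructed that transcription faithfully, substituting $\OmegaD_1$, $\OmegaD\Sup_\kappa$, Lemma \ref{lemma-Ω1ΩsupDD}, Lemma \ref{lemma-CombinatorialDD} and Lemma \ref{lemma-φandtDD} for their D-path counterparts while keeping Lemma \ref{lemma-Mixing}, Corollary \ref{corol-MixingPaths}, the choice $\kappa=\mathcal{O}(\sqrt{\log t})$ and the multinomial enumeration unchanged. Your closing observation that the only potentially delicate point --- absorbing double-diagonal connections --- is already handled inside Lemma \ref{lemma-CombinatorialDD} correctly identifies why no new idea is needed.
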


Recall the concept of hills from Definition \ref{def-HillMountain}. In the following, we will say that two hills $ \Hill_{v_1} $ and $ \Hill_{v_2} $ are adjacent if there exist $ v'_j \in \Hill_{v_j} $, $ j=1,2 $ that are adjacent, and call them \emph{intersecting} if there exists $ \tilde{v} \in \Hill_{v_1}\cap \Hill_{v_2}.$

\begin{lemma}\label{lemma-HillsPath}
	Let $F$ be the \Lip{} from Theorem \ref{thm-main}. Let $ π =\{u_j\}_{j=0}^{n}$ with $ u_j \in L_1\setminus F $ be a sequence of sequentially pairwise adjacent cells.

	Then there exists a sequence of hills $\mathcal{H}:= \{\Hill_{v_j}\}_{j=0}^{k} $, $ k\le n $, such that every $ u_j $ is contained in some hill $ \Hill_{j'} $ and two consecutive hills of the sequence are either adjacent or intersecting.

	Furthermore there exists a \DD\ which starts in $ u_0 $ and ends in $ u_n. $
\end{lemma}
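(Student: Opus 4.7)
The plan is to prove the lemma in two stages, first producing the sequence of hills and then using it to construct the \DD{}.

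To obtain the hill sequence, I would first argue that each $u_j$ is contained in some hill. By Proposition \ref{prop-RadHill}, the cutset $F$ consists of the external boundary $\partial_{\mathrm{ext}}(\bigcup_{u\in L_1}M_u)$ together with those cells of $L_1$ that do not belong to any mountain, so every $u_j\in L_1\setminus F$ must lie in $\bigcup_{u\in L_1}M_u$. Since by Definition \ref{def-HillMountain} each mountain is itself a union of hills $H_v$ containing the reference cell, this forces $u_j\in H_{\tilde v_j}$ for some $\tilde v_j\in L_1$, and that $\tilde v_j$ is necessarily bad, for otherwise its hill would be empty by convention. Selecting one such hill per $u_j$ and then removing duplicated consecutive entries produces a sequence $\{H_{v_j}\}_{j=0}^{k}$ with $k\le n$. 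For any two distinct consecutive hills in the reduced sequence, some original adjacent pair $u_{j'}\sim u_{j'+1}$ satisfies $u_{j'}\in H_{v_j}$ and $u_{j'+1}\in H_{v_{j+1}}$, so the two hills are either intersecting (if the same cell happens to lie in both) or adjacent (as witnessed by $u_{j'}\sim u_{j'+1}$), which is the required conclusion.

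For the \DD{} claim, I would exploit that adjacency is one of the three relations admitted in Definition \ref{def-DD}. Indeed, Definition \ref{def-HillMountain} combined with Definition \ref{def-dpath} gives, for each $j$, a d-path and hence a sequence of pairwise adjacent cells from the bad centre $v_j$ to $u_j$. Between two consecutive hills $H_{v_j},H_{v_{j+1}}$ we similarly obtain a short adjacent bridge: either a cell $w\in H_{v_j}\cap H_{v_{j+1}}$ with d-paths from $v_j$ and $v_{j+1}$ to $w$, or cells $w\in H_{v_j}$, $w'\in H_{v_{j+1}}$ with $w\sim w'$ and d-paths from $v_j$ to $w$ and from $v_{j+1}$ to $w'$. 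Concatenating the reversed d-path $u_0\to v_0$, the bridging sub-paths through the centres $v_0,v_1,\dots,v_k$, and the d-path $v_k\to u_n$ yields a single sequence of pairwise adjacent cells running from $u_0$ to $u_n$, which is a \DD{} by adjacency alone.

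The step that I expect to require the most care is the merging that produces the reduced hill sequence: one must verify that after discarding duplicated consecutive hills, the resulting neighbours remain adjacent or intersecting, so that $k\le n$ can be attained without losing the required connectivity between consecutive members. A deeper concern is that the intended downstream application in Theorem \ref{thm-main2} seems to require a \DD{} whose cells are bad, since it is meant to feed into a bound of the flavour of $K_v^{*}$ from \eqref{def-K*} and Proposition \ref{prop-K*notinR}. If that is the case, the adjacency-based concatenation above is not enough; one should instead take as the \DD{} the bad centres $v_0,\dots,v_k$, bracketed by $u_0,u_n$, and verify that consecutive $v_j,v_{j+1}$ are single or double diagonally connected. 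The natural route is to decompose each d-path into its maximal ascending segments (which stay in bad cells by virtue of the increasing-move condition) and its descending diagonal segments, and then use the common cell $w$ produced by the previous paragraph as the meeting point of two descents to witness a double diagonal connection; the adjacent case would require inserting one extra diagonal step to accommodate the bridging adjacency $w\sim w'$, which is the subtlest point.
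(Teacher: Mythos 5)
Your proof is correct and follows essentially the same route as the paper: pick a hill containing each $u_j$ (this is exactly the observation that $u_j\notin F$ forces $u_j$ into a mountain, hence into some hill), note that adjacent $u_{j-1},u_j$ give adjacent or intersecting hills, reduce, and then build the \DD{} by concatenating d-paths through the hill centres $v_0,\dots,v_k$ and the meeting cells $w,w'$. One small observation: you remove only \emph{consecutive} duplicate hills, whereas the paper phrases the reduction as ``remove all but the first appearance''; the latter phrasing can in principle destroy the adjacency/intersection property between newly consecutive members of the reduced sequence, while your version preserves it, so your variant is the cleaner one (and the lemma only needs $k\le n$, which both versions give).

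On your third paragraph: the worry is legitimate as a question about how Lemma~\ref{lemma-HillsPath} interfaces with Theorem~\ref{thm-main2} and $K_v^*$, but it is not a gap in your proof of the lemma as stated. The lemma asks only for \emph{some} \DD{} from $u_0$ to $u_n$, with no badness requirement, and the paper's own proof does exactly what your second paragraph does --- it concatenates d-paths (forward and reversed) into one sequence of pairwise adjacent cells and observes this is trivially a \DD{}. So your self-doubt is directed at the statement, not at your argument, and what you actually wrote already establishes the stated claim. I would also flag that the repair you sketch (take the bad centres $v_0,\dots,v_k$ as the \DD{}) does not work directly: a d-path inside a single hill can be long, so consecutive centres $v_j,v_{j+1}$ need not be adjacent or single/double diagonally connected. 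If one wanted a \DD{} of bad cells, the correct extraction would be to keep \emph{all} bad cells of each concatenated d-path (these form a D-path, since between two consecutive bad cells a d-path only makes diagonal moves) and to use the meeting cells $w,w'$ as the common endpoint witnessing a double diagonal connection at the junction between two d-paths; this is not needed to prove the lemma, but it is the right mechanism for the version your concern is gesturing at.
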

\begin{proof}
	We start with the first claim. For each $ u_j \in π$, we have by assumption that $ u_j \notin F $, so there exists a hill $ \Hill_{v_j}\ni u_j. $ Furthermore, for all $ j=1,\dots, n $, $ u_{j-1} $ and $ u_j $ are adjacent and so the respective hills $ \Hill_{v_{j-1}} $ and $ \Hill_{v_{j}} $ are either adjacent or they intersect.
	The sequence of hills $ \{\Hill_{v_j}\}_{j=0}^{n} $ may contain repetitions of the same hills, so by removing all but the first appearance of those which appears multiple times, we end up with a sequence of $ k\le n $ different elements.

	We prove now the existence of the \DD. Consider the sequence of hills $ \{\Hill_{v_j}\}_{j=0}^{k} $ from the previous step, and denote with $ \overleftarrow{v}_{j} \in \Hill_{v_j} $ for $ j=1,\dots, n $ the cell (chosen in some arbitrary manner, for example lexicographically) that is either contained in or adjacent to a cell contained in $ \Hill_{v_{j-1}}. $ By definition of a hill, there exist a d-path from $ v_0$ to $ u_0 $ and a d-path from $ v_0$ to either $ \overleftarrow{v}_{1} $ or to a cell adjacent to it. Similarly, there exist a d-path from $ v_j $ to $ \overleftarrow{v}_j $ and a d-path from $ v_j $ to $ \overleftarrow{v}_{j+1} $ (or a cell adjacent to it). Repeating this, we obtain a sequence of cells
	\begin{equation*}
		u_0 ,	v_0,		\overleftarrow{v}_1,
		v_1,		\overleftarrow{v}_2,
		\dots,
		v_k,   u_n
	\end{equation*}
	where for each pair of consecutive cells there exists a d-path from the first to the second or from the second to the first.

	Note that, just like D-paths, d-paths are also \DD{s}. Secondly, if a certain sequence is a \DD, then the reverse sequence is also a \DD{}, as a simple consequence of the fact that being adjacent, single diagonally connected or double diagonally connected is a symmetric relation. Thirdly, if there exist a \DD{} from a cell $ u_1 $ to $ u_2 $ and one from $ u_2 $ to $ u_3 $ we can concatenate them and obtain a \DD{} from $ u_1 $ to $ u_3. $

	We can thus construct a \DD\ for the sequence $ u_0 ,	v_0,		\overleftarrow{v}_1,v_1,\overleftarrow{v}_2,\dots,v_k,   u_n	 $, concluding the lemma.
\end{proof}

We can now prove Theorem \ref{thm-main2}.

\begin{proof}[Proof of Theorem \ref{thm-main2}]
	By Theorem \ref{thm-main}, a \Lip{} $ F $ exists \as{}, so we need to show that it surrounds the origin at some distance $ r. $ Suppose the converse.

	This means that there exists a sequence of cells $ \{u_j\}_{j=0}^{n} $ with $ u_j:=R_1(ι_j,τ_j)\in L_1\setminus F $ and such that $ u_0=R_1(0,0) $ and $ d(u_n, u_0) > r$. Applying Lemma \ref{lemma-HillsPath} we obtain the existence of a \DD\ from $R_1 (0,0) $ to $ u_n. $

	By Proposition \ref{prop-K*notinR}, for $ t>t_0 $, the probability that such a path exists is smaller than
	\begin{equation*}
		\P(K_{(0,0)}^* \nsubseteq \mathbf{R}_1^t(0,0)) \le e^{-\Cr{K} t^{c_s} }.
	\end{equation*}
	Setting again $ t= (	\frac{1}{2^\ell} +\frac{1}{\Cr{T} 2^{\dw \ell}}	) ^{-1}  r $ as in the proof in Subsection \ref{SUBSEC-ProofThm1} concludes the proof for
	$r_0 := (	\frac{1}{2^\ell} +\frac{1}{\Cr{T}2^{\dw \ell}}	) t_0.$
\end{proof}

\section{Generalised \Sier{} carpets}\label{SEC-Carpets}

In this section we show how to adapt the previous arguments for the \Sier{} gasket to a further class of fractal graphs, the \Sier{} carpets. We start by introducing the graph and then stating the results. As we will see, other than changes to constants and parameters, the work done for the gasket can be applied mostly without further changes necessary, so we will only highlight selected statements to show how they work in the carpet case.

\begin{figure}[!ht]
	\centering
	\begin{subfigure}[t]{0.48\linewidth}
		\includegraphics[height=\linewidth]{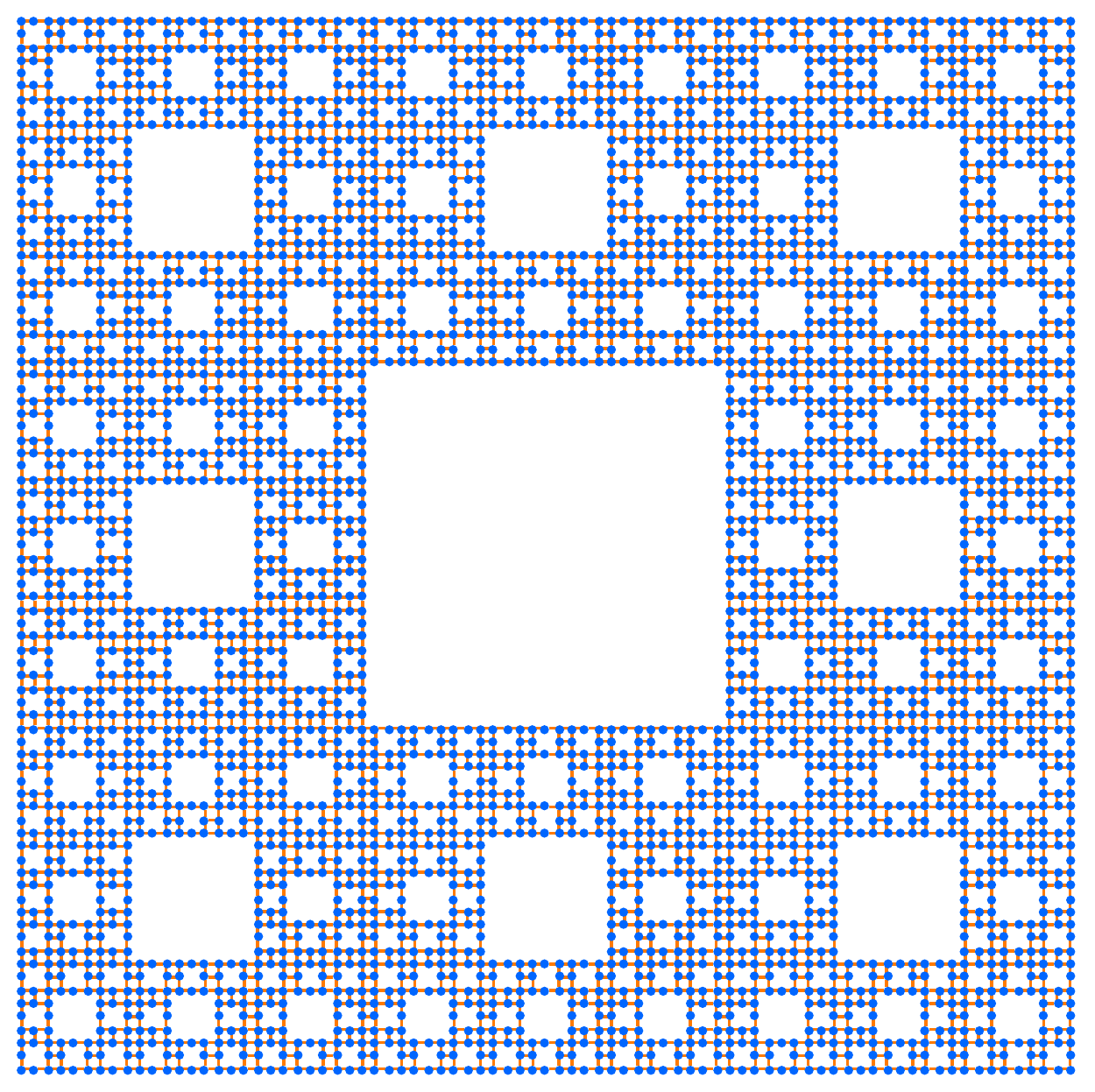}
		\subcaption{$d=2$}
	\end{subfigure}
	\hfill
	\begin{subfigure}[t]{0.48\linewidth}
		\includegraphics[height=\linewidth]{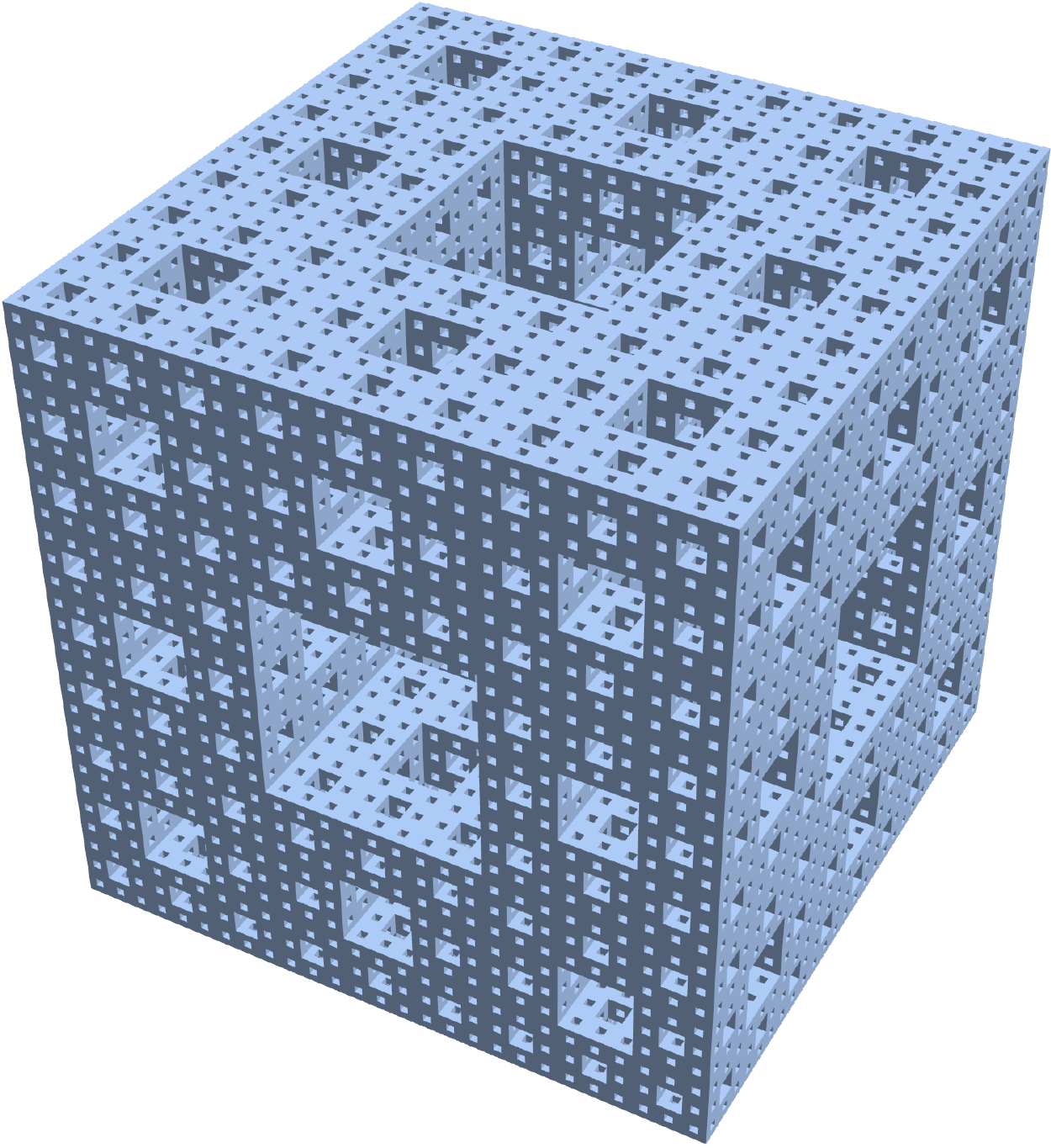}
		\subcaption{$d=3$}
	\end{subfigure}
	\caption{Examples of generalised \Sier{} Carpet. }\label{fig-SierpCarpet}
\end{figure}

\subsection{Setup and statement}

We consider the class of fractal graphs of \cite{BarlowBassCarpets}. We state the definition for completeness and refer to \cite{BarlowBassCarpets} for more details.

Let $ d\ge 2 $, $ \unit\ge 3,  $ and $ 1\le \remain \le (\unit)^d. $
Next, let $ F_0 :=[0,1]^d $ and for $n\in\Z$ $ \mathcal{S}_n $ be the collection of closed cubes of side $ (\unit)^n $ and corner vertices in the lattice $ (\unit )^n \Z^d$. For $ A\subseteq\R^d $ let $ \mathcal{S}_n(A):= \{S\in \mathcal{S}_n\colon S \subseteq A\} $. For $ S\in\mathcal{S}_n $, let $ Ψ_S $ be the orientation preserving affine map which maps $ F_0 $ onto $ S. $

Let $ F_1 $ be the union of $ \remain $ distinct cubes of $ \mathcal{S}_{-1} (F_0) $ satisfying the following conditions:

\begin{enumerate}[(H1)]
	\item \textit{Symmetry}: $ F_1 $ is preserved by all the isometries of $ F_0. $
	\item \textit{Connectedness}: the interior $ \text{Int}(F_1) $ is connected, and contains a path connecting the hyperplanes $ \{x_1=0\} $ and $ \{x_1=1\} $.
	\item \textit{Non-diagonality}: For any cube $ B $ in $ F_0 $ which is the union of $ 2^d $ distinct elements of $ \mathcal{S}_{-1} $, if $ \text{Int}(F_1 \cap B) $ is non-empty, it is connected.
	\item \textit{Borders included:} $ F_1 $ contains the segment $ \{x\colon 0\le x_1\le 1, x_2=\dots =x_d=0\} $.
\end{enumerate}

Given $ F_n,$ $ F_{n+1} $ is obtained by removing the same pattern from each of the squares in $ \mathcal{S}_{-n}(F_n) $, so that $ F_{n+1} $ is the union of $ (\remain) ^n$ squares in $ \mathcal{S}_{-n}(F_0) $; formally
\begin{equation*}
	F_{n+1} := \bigcup_{S\in \mathcal{S}_{-n}(F_n)}  Ψ_S (F_1)
\end{equation*}
and $ F := \bigcap_{n=0}^{∞} F_n $ is called a \emph{generalised \Sier{} carpet.} The Hausdorff dimension of $ F $ is $ \dv:= \frac{\log (\remain)}{\log(\unit)} $ (see \cite{BarlowBassCarpets} and references therein). We now define the pre-fractal graph.

For any cube $ S_{-n} $, call the \emph{lower-left corner} the vertex $ x $ with $ x_i \le y_i $ for each $ i=1,\dots, d $ and $ y\in S_{-n} $. Let $ \Box_n $ be the collection of lower-left corners of the cubes in $ (\unit)^n  F_n $, and
\begin{equation*}
	V:= \bigcup_{n=0}^{∞} \Box_n,
\end{equation*}
see Figure \ref{fig-CarpetBoxes}.

We define the \emph{generalised \Sier{} carpet graph} $ \mathbb{SC}^d:=\mathbb{SC}^d(\unit,\remain) $ as the graph with vertex set $ V $ and edges $ E:=\{\{x,y\}	\in V\times V\colon \|x-y\|_1=1\}. $

\begin{figure}[ht]
	\centering
	\includegraphics[width=0.3\linewidth]{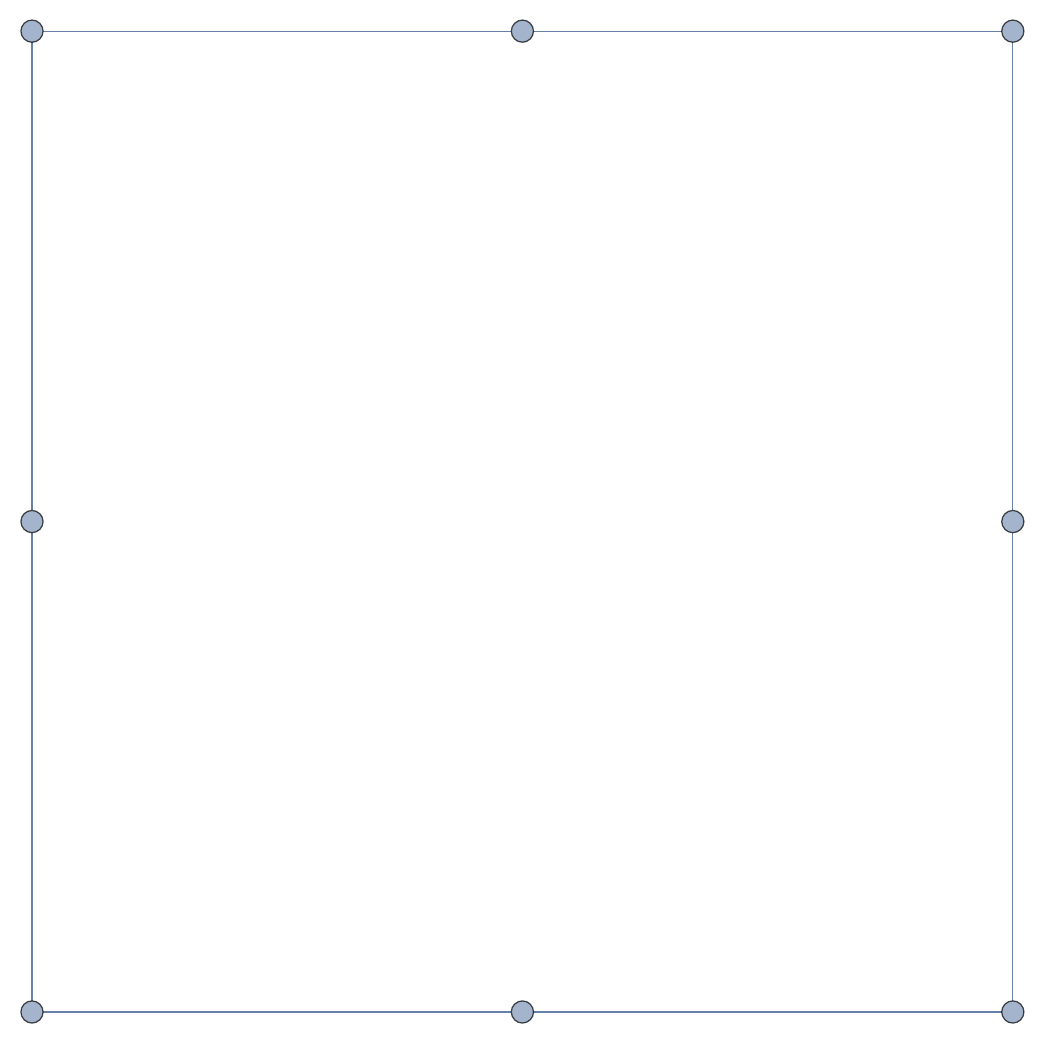} \hfill
	\includegraphics[width=0.3\linewidth]{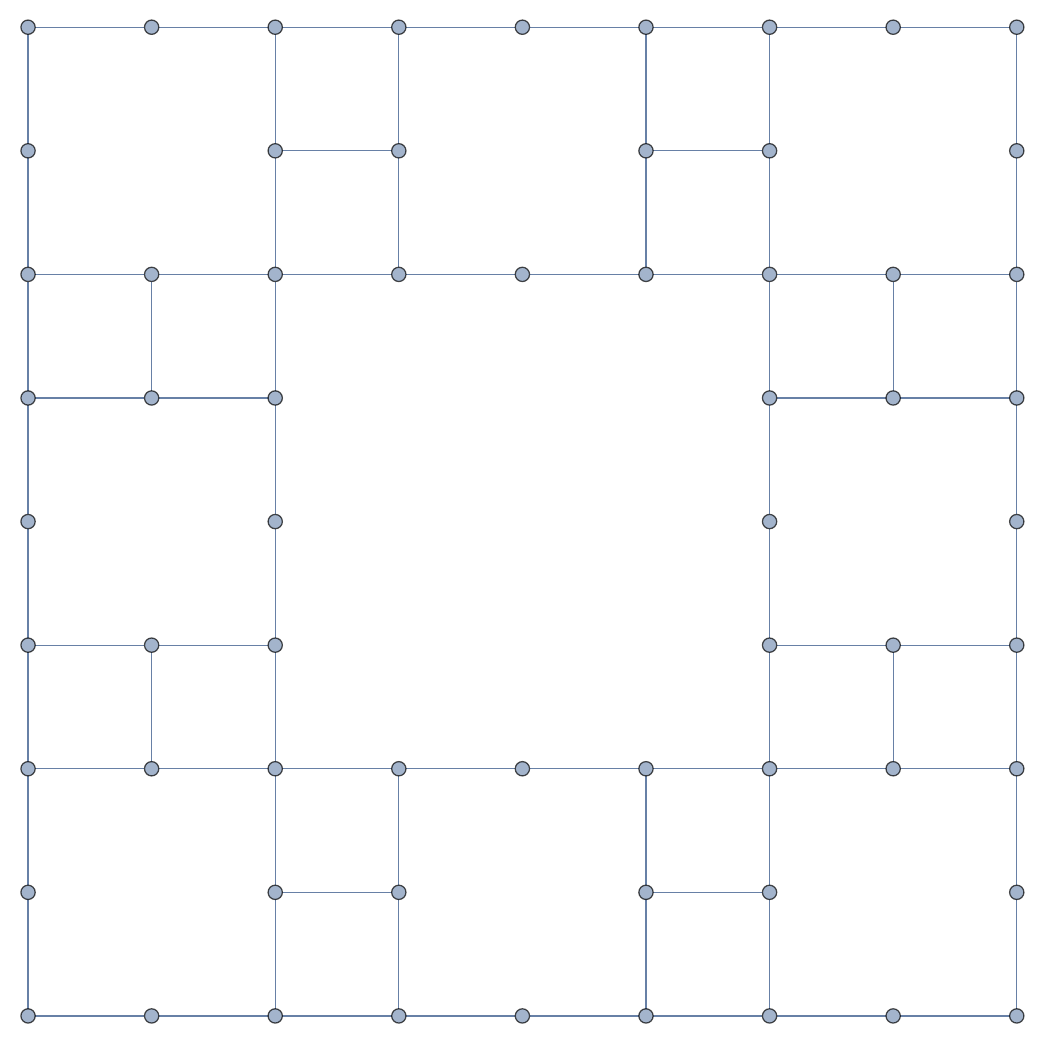} \hfill
	\includegraphics[width=0.3\linewidth]{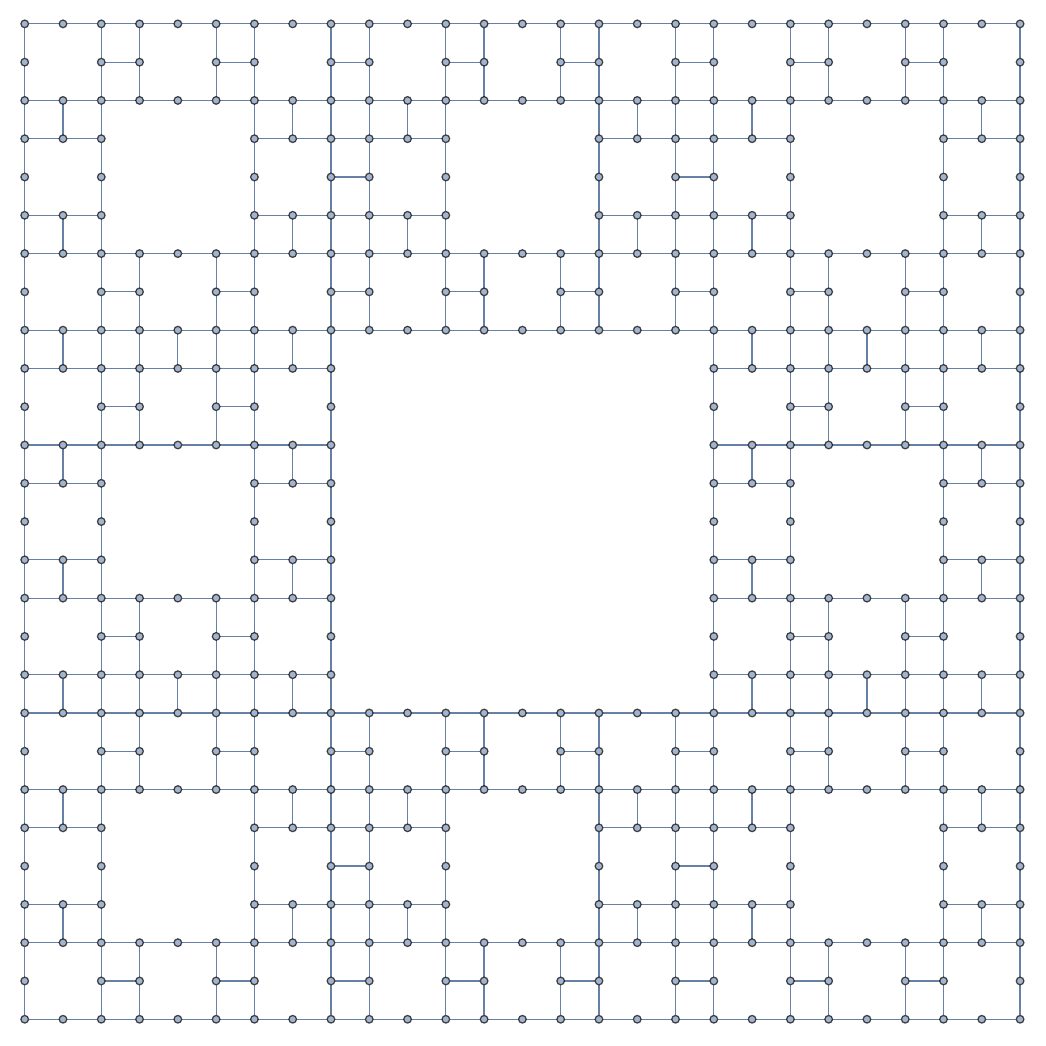}
	\caption{$\Box_1$, $\Box_2$ and  $\Box_3$ with $d=2, \unit=3$, $\remain= 8$ and corresponding edges. Note that the first 2 pictures are scaled up by a factor of $3^2$ and $3^1$ respectively.}\label{fig-CarpetBoxes}
\end{figure}

Similarly to \Sier{} gaskets, one can easily prove the volume estimate
\begin{equation}\label{def-dvSC}
	\cvol \, r^{\dv} \le \vol_r(x)\le \CVol\,r^{\dv},
\end{equation}
with $ \dv:= \frac{\log(\remain)}{\log(\unit)}$. Theorem 1.5 in \cite{BarlowBassCarpets} shows that upper and lower bounds for the heat kernels \eqref{eq-HKBound} hold for some value $ \dw$ (which to the best of our knowledge is not known explicitly). Similarly to gaskets, applying \cite[Theorem 3.1]{Grigor02} gives that the mean exit time satisfies
\begin{equation}\label{def-dwSC}
	%\C[s] r^{\dw} \le 
	E_x[H_{B_r(x)^\mathsf{c}}] \asymp r^{\dw},
\end{equation}
and that the parabolic Harnack inequality \eqref{eq-PHI} with parameter $ \dw $ holds. Furthermore Lemma \ref{lemma-confining} also holds due to the above.

Now that the graph has been defined, we can define the tessellation of the carpets, in order to formulate the analogues of Theorems \ref{thm-main} and \ref{thm-main2}. %Introducing 
%\begin{equation}\label{def-binaryLattice}
%\mathbb{B}^d:= 
%\big\{ι\in \mathbb{SC}^d \colon   ι\text{ is a subgraph of    }\mathbb{SC}^d \big\},
%\end{equation}
We define the tiles $ S_k(ι) $ as
\begin{equation*}%\label{def-Sk}
	S_k(ι) := ι(\unit)^{\ell_k} + \Box_{\ell_k},
\end{equation*}
$ι\in \mathbb{SC}^d$ which is the union of $ \unit^{\dv (\ell_{k}-\ell_{k-1})} $-many $ (k-1) $ tiles.

Define just like before $ β_k $ to be
\begin{equation*}
	β_k:=\Cmix ( \tfrac{k^2}{ε})^{\frac{4}{Θ}} 	\big( \unit^{\ell_{k-1}} \big)^{\dw}
\end{equation*}
with the walk dimension $ \dw $ from \eqref{def-dwSC} and we define the time interval $T_k(τ)$, $τ\in\Z$, as before. Similarly, we define space-time cells as the cross product of spatial tiles with the time intervals.

Like in the gasket case, we define $L_0$ and $L_1$ as in \eqref{def-L0} and \eqref{def-L1} to be the ``hyperplane'' subgraph and its corresponding collection of cells.
Note that in order to define $L_0$, one needs to consider a subgraph $\mathbb{SC}^{d-1}(\unit,\remain)$ with the same $\unit$ but an appropriately changed $\remain$. As an example, in the case of the 3 dimensional \Sier{} carpet from Figure \ref{fig-SierpCarpet}, $\remain$ must be changed from $20$ in $d=3$ to $\remain=8$ in $d=2$.

We define two scale-one cells $ R_1(ι_1,τ_1)$ and $ R_1(ι_2,τ_2) $ to be \emph{adjacent} if $ d(ι_1,ι_2)  + |τ_1-τ_2|= 1.$ With this adaptation, we can define the \Lip{} $ F $ as in Definition \ref{def-LipschitzSurf}, and state the main theorem.

\begin{remark}\label{rmk-AdjacentCarpet}
	The change in how adjacency is defined is due to the ``disjoint'' nature of how the pre-fractal is constructed (recall that with the gasket, the corners of the triangles were shared). With this new definition of adjacency, we recover the same behaviour in the sense that two cells are adjacent if either they are spatially the same and only one time interval away from each other, or if they share the time interval and are spatially nearest neighbours, i.e.\ have $L^1$ distance equal to 1.
\end{remark}

\begin{thm}\label{thm-main3}
	Let $d\ge 2$, $\unit\ge3,$ $1\le \remain \le (\unit)^d$ and $ \mathbb{SC}^d (\unit, \remain)$ be a $ d- $dimensional generalised \Sier{} carpet. Let $\ell\in \N$ and let $β\in\N$ be large enough. Furthermore, let $η\in\N$, $ ε\in(0,1)$ and $ζ \in (0,\infty)$ such that 
	\begin{equation*}
		ζ \ge	\frac{1}{\ell}		\Big(	\big(   \tfrac{1}{\Cr{conf1}^{-1}}\log \big( \tfrac{8\Cr{conf1}}{3ε} \big) \big)^{\dw-1}	ηβ	 \Big)^{\frac{1}{\dw}}		.
	\end{equation*}
	Tessellate $ \graph $ into space-time cells as described above, and let $ E(ι,τ) $ be an increasing event restricted to the super cell $R_1^η(ι,τ)$ whose associated probability $ν_E\big((1-ε)μ, S_1^η(ι,τ), \displ_{ζ\ell}, ηβ\big)$ has a uniform lower bound across all $(ι,τ)\in \mathbb{SC}^d\times \Z$ denoted with
	\begin{equation*}
		ν_E\big((1-ε)μ, S_1^η, \displ_{ζ\ell}, ηβ\big).
	\end{equation*}

	Then there exists $ α_0 >0$ such that if
	\begin{equation*}
		ψ_1 (ε,μ_0,\ell) :=	\min \Big\{	\frac{ε^2μ_0 2^{\dv \ell}	} {C_λ}				,		-\log \Big(	1- ν_E\big( (1-ε)λ, S_1^η, \displ_{ζ\ell}, ηβ	 \big)
		\Big)	\Big\}  \ge α_0
	\end{equation*}
	there exists almost surely a \Lip{} $ F $ where the event $ E(ι,τ) $ holds for all $ (ι,τ)\in F $.

	Furthermore there exists $ \Cr{thm2a}>0 $ such that for $ r_0 $ large enough
	\begin{equation*}
		\P \big( S(F,r_0)^{\mathsf{c}} \big)		\le \sum_{r\ge r_0}  r^{\dv+1} e^{		-\Cr{thm2a}  r^{c_s} 		},
	\end{equation*}
	for $c_s\in \big(0, \frac{\dv}{\dv+1}-\frac{1}{2}\big)$ and $S(F,r_0)$ was defined above Theorem \ref{thm-main2}.
\end{thm}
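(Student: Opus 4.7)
The plan is to transfer the entire multi-scale machinery developed in Sections \ref{SEC-LipSurf}--\ref{SEC-ProofThm2} for the gasket $\mathbb{G}^d$ to the carpet graph $\mathbb{SC}^d(\unit,\remain)$, verifying at each step that the carpet-specific geometry does not break any estimate. First I would check the \emph{analytic inputs}: the volume growth \eqref{def-dvSC} with exponent $\dv=\log(\remain)/\log(\unit)$, the sub-Gaussian heat kernel bounds \eqref{eq-HKBound}, the parabolic Harnack inequality \eqref{eq-PHI}, the walk-dimension estimate \eqref{def-dwSC} and the confinement Lemma \ref{lemma-confining} all hold on $\mathbb{SC}^d$ by \cite{BarlowBassCarpets} combined with \cite[Theorem 3.1]{Grigor02}. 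In particular, Proposition \ref{prop-fluctuations}, Proposition \ref{prop-mixing-unbounded}, Lemma \ref{lemma-conditionedBound} and the decoupling Theorem \ref{thm-mixing} transfer \emph{verbatim} since their proofs only use \eqref{eq-PHI}, \eqref{eq-HKBound} and \eqref{def-dw}.

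Next I would redo the tessellation of Subsection \ref{SUBSEC-Tessel} with the base $\unit$ in place of $2$ everywhere: define $S_k(\iota)=\iota \unit^{\ell_k}+\Box_{\ell_k}$ and the bases, areas of influence, extensions, supports and extended supports $S\base_k, S\inbase_k, S\ext_k, S\Sup_k, S\Esup_k$ analogously to \eqref{def-Sbase}--\eqref{def-SEsup}, keeping the same polynomial choice $b(k)=ak^{2+8/(\Theta\dw)}m\unit^m$. The temporal hierarchy is unchanged because $\R$ does not depend on the spatial graph. The key combinatorial properties \eqref{eq-SInfInSup}, \eqref{eq-BaseInExt}, \eqref{eq-RInfinSup}, \eqref{eq-InfIntersSupInters}, \eqref{eq-SuppIntersEsup} are purely volumetric consequences of \eqref{def-dvSC} and the polynomial vs.\ exponential growth comparison; they therefore persist if $a,m$ are chosen large enough.

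Having set this up, the fractal percolation framework of Subsection \ref{subsec-fractal}, the definitions of $D_k,D\base_k,D\ext_k,A_k,A$ in \eqref{def-Dk}--\eqref{def-Aprod}, and the cascade Lemma \ref{lemma-Bad=>BadAnc} are formal and carry over unchanged. The decoupling Lemma \ref{lemma-Mixing} goes through because its proof only invokes Theorem \ref{thm-mixing}, the Chernoff bound, and \eqref{eq-confining}, all of which we have on $\mathbb{SC}^d$; the Corollary \ref{corol-MixingPaths} on multi-scale bad ScD-paths follows immediately. For the path-counting estimates of Lemma \ref{lemma-Combinatorial} (and its doubled version Lemma \ref{lemma-CombinatorialDD}), the only place sensitive to the underlying graph is the bound $A(h)\le \CVol h^{\dv+1}$ on the number of cells at graph distance $h$; this is a direct consequence of \eqref{def-dvSC}, so the same bound applies with the carpet's $\dv$, and the Lagrange-multiplier/Pascal chain at the end of \emph{Step 3} proceeds identically. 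Similarly, Lemma \ref{lemma-φandt} only uses $\diam_k\asymp \unit^{\ell_k}$, $\beta_k\asymp \unit^{\dw\ell_{k-1}}$ and the universal inequality $\dw\le \dv+1$ from \eqref{eq-dvdw}, so the bound $\sum_j\psi_{k_j}\ge \Cr{t}t^{c_s}$ with $c_s<\dv/(\dv+1)-1/2$ survives without change.

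The \emph{one nontrivial point} — and what I expect to be the main obstacle — is the transition from D-paths to ScD-paths (and from \DD{s} to \SDD{s}) in Section \ref{SUBSEC-Paths}, where the new adjacency of Remark \ref{rmk-AdjacentCarpet} must be reconciled with the notion of \emph{diagonal step}. In the gasket, diagonal moves exploited the triangular corner-sharing structure; on the carpet, two spatially distinct tiles are adjacent iff their indices differ by a single unit vector in $\Z^d$, so I would redefine a diagonal step to move along such a nearest-neighbour edge that strictly decreases the distance to $L_0$, yielding the natural $*$-neighbour analogue alluded to in the remark after Definition \ref{def-dpath}. One then has to check that Lemma \ref{lemma-Ω1Ωsup} (and Lemma \ref{lemma-Ω1ΩsupDD}) still hold: the construction of $P''$ from $P'$ by removing descendants and the construction of $\hat P$ by the scale-ordered greedy procedure use only that (i) two cells with intersecting areas of influence have intersecting supports, and (ii) ancestors of adjacent cells are adjacent or equal. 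Property (i) is \eqref{eq-InfIntersSupInters}, already verified; property (ii) in the carpet case follows from the cube-nested structure of $\Box_{\ell_k}\subset\Box_{\ell_{k+1}}$, since any two $L^1$-adjacent scale-$k$ tiles sit in either the same or $L^1$-adjacent scale-$(k+1)$ tiles. With this in hand, Proposition \ref{prop-KnotinR} and Proposition \ref{prop-K*notinR} follow exactly as before.

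Combining: the hypothesis \eqref{eq:cutsetAssumption} of Proposition \ref{prop-RadHill} is verified via Proposition \ref{prop-KnotinR} applied through the same reduction as in Subsection \ref{SUBSEC-ProofThm1}, giving the existence of the minimal \Lip{} $F$ almost surely and the first claim of the theorem. The surround-the-origin estimate then follows by the argument of Section \ref{SEC-ProofThm2}: Lemma \ref{lemma-HillsPath} is purely combinatorial in the hill/mountain structure and uses only the new adjacency notion symmetrically, so an escaping nearest-neighbour path in $L_1\setminus F$ produces a \DD{} from $R_1(0,0)$ leaving $B_r$, and its probability is bounded by Proposition \ref{prop-K*notinR} with $t\asymp r$, giving the claimed tail $\sum_{r\ge r_0}r^{\dv+1}\exp\{-\Cr{thm2a} r^{c_s}\}$.
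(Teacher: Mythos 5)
Your proposal is correct and follows essentially the same route as the paper's Section \ref{SEC-Carpets}: verify the analytic inputs (volume, heat-kernel bounds, parabolic Harnack, confinement) hold for $\mathbb{SC}^d$ by \cite{BarlowBassCarpets} and \cite[Theorem 3.1]{Grigor02}, apply the base-$\unit$ substitution to the tessellation quantities, redefine adjacency and diagonal moves via $*$-neighbours exactly as in Definition \ref{def-dpathCarpet}, and then observe that all of Sections \ref{SEC-LipSurf}--\ref{SEC-ProofThm2} carry over. The point you single out as the ``one nontrivial point'' (the $*$-neighbour diagonal steps) matches what the paper itself highlights in Remark \ref{rmk-AdjacentCarpet} and Definition \ref{def-dpathCarpet}, so there is no substantive divergence.
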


The application of Theorem \ref{thm-main3} is similar to how Theorem \ref{thm-main} is applied, including the order in which the various quantities are fixed. As such, we remind the reader of Remark \ref{rem-order} where details about this can be found.

\subsection{Proof of Theorem \ref{thm-main3}}

To adapt the proof, only a single notable change beyond the changes in the preceding definitions is necessary. Similar to those, this change is essentially substituting the base 2 that appeared in the gasket case with $\unit$, as we have seen in the definitions of $S_k(ι)$ and $β_k$. From here onward we will repeatedly:
\begin{equation}\label{substitution}\tag{Subst}
	\text{Substitute every base 2 exponential with a base $\unit$ exponential. }
\end{equation}

Recall the definition of adjacent scale-one cells above Remark \ref{rmk-AdjacentCarpet}. We generalise this to cells of arbitrary scale: two cells $ R_k(ι_1,τ_1)$ and $ R_k(ι_2,τ_2) $ of the same scale are called \emph{adjacent} if $ d(ι_1,ι_2)+ |τ_1-τ_2|\le 1$, where $d(\cdot,\cdot)$ is as before the graph distance. Seeing $ \mathbb{SC}^d\times \Z $ as a subgraph of $ \Z^{d+1} $, we define two cells $ R_{k}(ι_1,τ_1) $ and $ R_k(ι_2,τ_2) $ to be \emph{$ \ast $-neighbours} if $ \|  (ι_1,τ_1) - (ι_2,τ_2) \|_∞ \le 1.$
We next define d-paths for carpets.

\begin{deff}[d-path]\label{def-dpathCarpet}
	A \emph{d-path} in $ \graph$ is a  sequence $\{u_k\}_{k=0}^{n}$ of $ \ast-$neighbouring cells in $\mathbb{SC}^d\times \R $ from a bad cell $u_0\in L_1$ such that for each $u_k$ and $u_{k+1}$ one of the following holds:
	\begin{itemize}
		\item \emph{increasing move}: $u_{k+1}$ is bad and $d(L_0,u_{k+1})\ge d(L_0,u_{k})$
		\item \emph{diagonal move}:\ \ \ $d(L_0,u_{k+1})< d(L_0,u_{k})$
	\end{itemize}
\end{deff}

We say for two scale-one cells $R_1(ι,τ) $ and $ R_1(ι',τ') $ that $R_1(ι,τ) $ is \emph{diagonally connected} to $ R_1(ι',τ') $ if there exists a sequence of $ \ast $-neighbour scale-one cells $ \{R_{1}(ι_1,τ_1),\dots, R_1(ι_n,τ_n)\} $ such that $R_1(ι,τ)=R_1(ι_1,τ_1)$, for all $ j\in \{1,\dots,n-1\} $,
$d(R_1(ι_{j+1},τ_{j+1}),L_0)< d(R_1(ι_j,τ_j),L_0)$ and $ R_1(ι_n,τ_n) $ is either equal or adjacent to $R_1 (ι',τ'). $

%While the definition of D-path is the same, the definition of $ Ω_1(v\rightarrow t) $ is going to be according to \eqref{substitution}: explicitly $ Ω_1(v\rightarrow t)  $ is the set of all D-paths of cells of scale one so that the first cell of the path is $ v $ and the \emph{last cell is the only cell not contained in the space-time region $ B_t(0)\times [-t,t], $} and similarly for $ Ω_k\sup(v\rightarrow t). $
Lemma \ref{lemma-Ω1Ωsup} then still applies using the change \eqref{substitution}. Similarly, the definition of $ ψ_k $ is subject to \eqref{substitution}. In this way, Lemma \ref{lemma-Mixing} can be proven in the same way by again applying the decoupling Theorem \ref{thm-mixing} with the choices
\begin{align*}
	K          & :=\text{side length of }S_k\base(ι)  = 2b(k) \unit^{\ell_k}    		+\unit^{\ell_k},      \\
	K'         & \text{ such that }K-K'=b(k)\unit^{\ell_k},                                             \\
	%K'&:=\text{side length of }S_k\ext(ι) \,= (2b(k-1)) \unit^{\ell_{k-1}}+\unit^{\ell_k},\\
	l          & := \unit^{\ell_k},                                                                     \\
	δ          & := (1-\mathfrak{d}_{k+1}) μ_0,                                                         \\
	\mathbf{Δ} & := \text{length}([γ_{k}^{(1)}(τ)β_{k+1},τβ_k])=τβ_k-γ_{k}^{(1)}(τ)β_{k+1},	\text{ and} \\
	\bar{ε}    & :=\frac{ε}{8k^2}.
\end{align*}

Lemmas \ref{lemma-Combinatorial} and \ref{lemma-φandt}, Proposition \ref{prop-KnotinR}, and the proof of Theorem \ref{thm-main} go through by applying \eqref{substitution}, and therefore the first half of Theorem \ref{thm-main3} is shown.

Similarly, Section \ref{SEC-ProofThm2} can be proven in the same way after using \eqref{substitution} and in particular we obtain the bound
\begin{equation*}
	\P \big( S(F,r_0)^{\mathsf{c}} \big)		\le \sum_{r\ge r_0}  r^{\dv+1} e^{		-\Cr{thm2a} r^{c_s 		}}.
\end{equation*}

\section{Survival of the infection}\label{SEC-applications}

We now give an application of the \Lip{} framework to show that for an infection with recovery on a particle system as defined in Subsection \ref{subsec-PRW}, the infection survives indefinitely with positive probability, if the  recovery rate is small enough.

Consider either the \Sier{} gasket $\mathbb{G}^d$ or a generalised \Sier{} carpet $\mathbb{SC}^d(\unit,\remain)$ and the particle system defined in Subsection \ref{subsec-PRW} given by a Poisson point process with intensity $μ(x):=μ_0 λ_x$. Assume furthermore that at time 0, there is an infected particle at the origin of the graph\footnote{The choice of the site where the infection starts is arbitrary as all of the bounds we use are uniform across the graph. Note however that the local geometry of the origin is in fact different from that of any other site in the graph.}. We next describe the dynamics of the infection.

Any particle of the process gets instantaneously infected when it shares a site with an infected particle. For a second parameter $γ>0$, suppose that an infected particle recovers independently at rate $γ$, but can get infected again afterwards. In particular, we allow for a particle to get immediately reinfected if it recovers while sharing a site with an infected particle, i.e.\ recovery is impossible when a particle shares a site with a different particle. However, our application works also in the case where infections can only occur when particles change sites, i.e.\ when a healthy particle jumps to a site with an infected particle or vice versa. To model recovery, consider a collection of Poisson point processes $(R_γ^{x,n})_{x\in \mathbb{G}^d,n\in\N }$ on $\R^+$ with intensity $γ$, which we refer to as the \emph{recovery marks}. As in \cite{BaldassoStauffer}, we view the process $R_γ^{x,n}$ as the recovery marks of the random walk $(X^{x,n}_t)_{t\geq 0}$, where $X^{x,n}_t$ is the location  at time $t$ of $n$-th particle started from $x$ at time $0$. A particle $(X^x_t)_t$ recovers at time $s$ if it is alone, i.e.\ $∏_s(X^x_s)=1$, and $s\in R_γ^{x,n}.$

We say that the infection survives if for every $t>0$ there exists at least one infected particle at time $t$ somewhere on the graph. We denote with $\P_μ^γ$ the distribution of the process with intensity $μ$ and recovery rate $γ.$

\begin{prop}\label{prop-application}
	For any $μ_0>0$ there exists $γ_0>0$ such that for all $0<γ<γ_0$ the infection survives with positive probability.
\end{prop}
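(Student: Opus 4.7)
The plan is to cast Proposition \ref{prop-application} as a consequence of Theorems \ref{thm-main2} and \ref{thm-main3}, using the temporal Lipschitz connectivity from Corollary \ref{corol-LipschitzInTime}. The key step is to define an increasing local event $E(\iota,\tau)$ such that (a) for small $\gamma$, the probability $\nu_E$ crosses the threshold required by Theorem \ref{thm-main}/\ref{thm-main3}, and (b) on the resulting minimal \Lip{} $F^\mathrm{o}$, the event $E$ forces the infection to propagate indefinitely.

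I would take $E(\iota,\tau)$ to be the event that the particle configuration inside the super-cell $R_1^\eta(\iota,\tau)$ contains a \emph{space-time infection skeleton}: a collection of particles $p_1,\dots,p_N$ all confined to balls of radius $\zeta\ell$ around their starting positions during $T_1^\eta(\tau)$, none of which carries a recovery mark during $T_1^\eta(\tau)$, such that (i) the graph whose edges join pairs $(p_i,p_j)$ meeting at a common site and a common time is connected; and (ii) for every spatial neighbour $\iota'$ of $\iota$ admissible as a time-successor in Corollary \ref{corol-LipschitzInTime}, the skeleton contains a particle lying in $S_1(\iota')$ at both the beginning and the end of $T_1(\tau)$. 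The event $E$ is increasing and measurable with respect to the trajectories and recovery marks inside $R_1^\eta(\iota,\tau)$, so the framework of Sections \ref{SEC-LipSurf}--\ref{SEC-MultiScaleAnalysis} still applies after enlarging the particle data to include the marks $R_\gamma^{x,n}$.

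For $\gamma=0$ the no-recovery clause is vacuous and $\P(E)$ can be pushed arbitrarily close to $1$ by taking $\ell$ (and $\beta$ of order $2^{\dw\ell}$) large, with $\mu_0$ and $\eta$ fixed: a second-moment argument based on \eqref{eq-HKBound}, \eqref{eq-confining} and the volume bound \eqref{def-dv} guarantees a connected skeleton meeting all boundary conditions with probability tending to $1$ as $\ell\to\infty$, exactly as in the $\Z^d$ setting of \cite{Pete19,Pete19b}. Reintroducing recovery costs at most a factor $1-e^{-\gamma\eta\beta N_{\max}}$ with $N_{\max}=O(2^{\dv\ell})$, so for any fixed choice of scale parameters satisfying the hypotheses of Theorem \ref{thm-main}/\ref{thm-main3} there is a threshold $\gamma_0=\gamma_0(\mu_0)>0$ below which $\psi_1(\varepsilon,\mu_0,\ell,\eta)\ge\alpha_0$. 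Theorems \ref{thm-main2}/\ref{thm-main3} then provide, \as{}, a minimal \Lip{} $F^\mathrm{o}$ of finite random diameter around $R_1(0,0)$, on every cell of which $E$ holds.

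It remains to deduce infection survival. Combining clauses (i)--(ii) of $E$ with Corollary \ref{corol-LipschitzInTime} gives the propagation lemma: if at time $\tau\beta$ some particle inside the super-cell of a cell $R_1(\iota,\tau)\in F^\mathrm{o}$ is infected, then at time $(\tau+1)\beta$ some particle inside the super-cell of the Lipschitz-in-time successor $R_1(\iota_{+1},\tau+1)\in F^\mathrm{o}$ is infected. Iterating, any infection ever reaching $F^\mathrm{o}$ persists forever. To start the chain I would fix $R_0$ large so that $\P\bigl(F^\mathrm{o}\text{ surrounds }R_1(0,0)\text{ within distance }R_0\bigr)>\tfrac12$ by Theorem \ref{thm-main2}, and observe that on the finite region within distance $R_0$ of the origin, the event "the initial infected particle does not recover before its random walk brings it to a site adjacent to an $F^\mathrm{o}$-cell where another particle is present to take over" has probability bounded below by a strictly positive constant depending only on $R_0$ and $\mu_0$. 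Both this event and the event "$F^\mathrm{o}$ is within distance $R_0$" are increasing in the particles and decreasing in the recovery marks, so an FKG-type application to the Poisson data gives that they occur jointly with positive probability, yielding survival. The main technical obstacle I anticipate is calibrating clause (ii) precisely enough to ensure propagation along \emph{every} admissible time-successor in $F^\mathrm{o}$ without making $E$ so restrictive that the second-moment estimate fails to cross $\alpha_0$; formally enlarging the particle process to include the recovery marks in Sections \ref{SEC-LipSurf}--\ref{SEC-MultiScaleAnalysis} is straightforward and should cause no substantive changes.
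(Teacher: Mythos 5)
Your overall strategy matches the paper's: define an increasing local event $E$, invoke Theorems~\ref{thm-main}/\ref{thm-main3} and \ref{thm-main2} to get an almost surely finite \Lip{} $F^\mathrm{o}$ surrounding the origin on which $E$ holds, and then chain Corollary~\ref{corol-LipschitzInTime} to propagate the infection forever. Your ``acceptable''-type skeleton event is essentially the same mechanism the paper uses to push the infection from $S_1(\iota)$ at time $\tau\beta$ to every spatial neighbour at time $(\tau+1)\beta$. However, there is a genuine gap: you only handle propagation when the infection is already present in a cutset cell \emph{at the left endpoint} $\tau\beta$ of its time interval. The generic first entry into $F^\mathrm{o}$ happens ``from the spatial dimension'': the carrier arrives at some site of $S_1(\iota)$ at an uncontrolled time $t\in(\tau\beta,(\tau+1)\beta)$, and your skeleton condition says nothing about whether any skeleton particle is hit by time $(\tau+1)\beta$. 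You even flag calibrating clause (ii) as the hard part, but the missing piece is elsewhere: it is the entry-mid-interval case, not the choice of time-successor.

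The paper resolves this with a second property, being \emph{decent} (condition \eqref{item-decent0} with subclauses \eqref{item-decent1}, \eqref{item-decent2}). The idea is to attach to each site $x$ and each cell an \emph{a priori fixed} independent random walk realization $\pi^x$, and to use an alternative but distributionally equivalent construction of the process in which an infected particle entering $S_1(\iota)$ at $y$ during $T_1(\tau)$ is declared to follow $\pi^y$ until $(\tau+1)\beta$. The decency event then demands that for every possible jump time $t$ of $\pi^x$ during the interval, either some surviving, non-recovering particle intersects $\pi^x$ within a time buffer $T$ and lands in a neighbouring tile at $(\tau+1)\beta$ (when $t\le(\tau+1)\beta-T$), or $\pi^x$ itself ends in a neighbouring tile (when $t>(\tau+1)\beta-T$). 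Crucially, because $\pi^x$ is fixed in advance and independent, it does not leak information about the particle system into the multi-scale analysis, and decency is still measurable, local, and increasing. Your event $E$ would need this extra component (or an equivalent) to close the argument; without it the ``hand-off'' step from the origin's initially infected particle to $F^\mathrm{o}$ is not covered. The FKG remark you make is also on shaky ground, since the event ``the initial particle reaches a cell adjacent to $F^\mathrm{o}$ before recovering'' depends on $F^\mathrm{o}$ itself, which is a complicated functional of the particle data; the paper sidesteps any FKG argument entirely by making the decency condition part of $E$ and relying on the a.s.\ finiteness of $F^\mathrm{o}$ plus the direct probability bound for the initial particle not recovering up to a deterministic time.
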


Note that Theorem \ref{thm-application} is a special case of the above Proposition, and as such it remains for us to prove the latter.
We will follow the approach introduced in \cite{Pete19b} and refined in \cite{BaldassoStauffer}.
To prove the result we will define a suitable event $E(ι,τ)$ and apply Theorem \ref{thm-main}. We will then be able to infer from the definition of $E(ι,τ)$ and the connectivity properties of the \Lip{} that the infection survives indefinitely almost surely once the infection has entered the \Lip{}, therefore surviving indefinitely as long as the infection does not recover before this. The event $E(ι,τ)$ will then consist of two phases:  in the first phase we will use (some) of the already infected particles to infect a sufficiently large number of the particles in the cell $R_1(ι,τ)$. In the second phase, we will use these newly infected particles to propagate the infection to the surrounding cells.

Fix the value $\ell\in\N$ and consider a value $β$, depending on $\ell$, so that the ratio $\frac{2^{\dw\ell}}{β}$ is fixed. We define $T:= 2^{\ell(\dw -1/3)}$ to play the role of time buffer between the two phases.

Define the following condition: we say that a cell $R_1(ι,τ)$ is \emph{acceptable} if
\begin{enumerate}[label=(A\theenumi), ref=A\theenumi]
	\item \label{item-acceptable1} for every $x\in S_1(ι,τ)$ with $∏_{τβ}(x)>0$ there exists a path denoted with $π^x$, which starts at $x$, and does not exit the super-tile $S_1^3(ι)$ and has no recovery marks up to time $τβ+T$.

	\item \label{item-acceptable2}for each $S_1(ι')\subseteq S_1^3(ι)$ and each $x\in S_1(ι)$ with $∏_{τβ}(x)>0$, there exists a particle which stays inside the super-tile $S_1^3(ι)$ and does not have any recovery marks up to time $(τ+1)β$, is inside $S_1(ι')$ at time $(τ+1)β$ and
	      intersects\footnote{We say that a particle intersects a path if the path and the particle path intersect in space-time, i.e.\ have the same position at the same time at least once.}
	      the path $π^x$ during the time interval $[τβ,τβ+T].$
\end{enumerate}

We now claim 
\begin{equation}\label{eq-accetable}
	\P_μ^γ\big(R_1(ι,τ) \text{ satisfies }\eqref{item-acceptable1},\eqref{item-acceptable2}\big)
	\ge 1-\exp\Big\{ \C μ_0e^{-γβ}2^{\tfrac{\ell/3}{\dw-1}}  \Big\},
\end{equation}
the proof of which we relegate to Appendix \ref{appendix-AccepDecent} since it is an easy adaptation of the work done in \cite{Pete19b,BaldassoStauffer}.

\begin{remark}
	One might be tempted to think that using the event \[E(ι,τ):=\{ R_1(ι,τ) \text{ is acceptable}\} \] and Theorem \ref{thm-main} would yield our claim. This would be true if the infection were to enter the \Lip{} from the time
	dimension\footnote{The infection enters a cell $R_1(ι,τ)$ from the time dimension if there is an infected particle in $S_1(ι)$ at time $τβ$. We say that the infection enters the cell $R_1(ι,τ)$ from the spatial dimension, if there are no infected particles inside $S_1(ι)$ at time $τβ$ and there is an infected particle which enters $S_1(ι)$ at some time $t\in (τβ,(τ+1)β)$.}.
	Then by definition of acceptable, the infection enters from the time dimension in all cells in $R_1^3(ι,τ)$ appearing in \eqref{item-acceptable2},
	including the one in the \Lip{} due to Corollary \ref{corol-LipschitzInTime}, and thus survives indefinitely.
	The next definition takes care of the case in which the infection does not enter from the time dimension when it first enters the \Lip{}.
\end{remark}

For each cell $R_1(ι,τ)$ and each $x\in S_1(ι)$ fix an independent realisation of a random walk path $(π^x_s)_{s\in [0,τβ]}$ with $π^x_{0}=x$. We say that a cell $R_1(ι,τ)$ is \emph{decent} if
\begin{enumerate}[label=(D3), ref=D3]
	\item \label{item-decent0} for every $x\in S_1(ι)$ the path $π^x_s$ has no recovery marks and for every jump time $t$ of $(π^x_s)_{s\in [0,τβ]}$ there exists a tile $S_1(ι')\subseteq S_1^1(ι)$ such that
	      \begin{enumerate}[label=({D3}a), ref=\theenumi\theenumii]
		      \item \label{item-decent1} if $t< (τ+1)β-T$ there exists a particle which has no recovery marks and stays inside $R_1^1(ι,τ)$, is at time $(τ+1)β$ inside $S_1(ι')$ and intersects the path $(π^{x}_{s-t})_{s\in [t,t+T]}$ during the time interval $[t,t+T]$;
		      \item \label{item-decent2} if $(τ+1)β-T \le t \le (τ+1)β$ it holds $π^{x}_{(τ+1)β-t}\in S_1(ι')$.
	      \end{enumerate}
\end{enumerate}
We refer again to Appendix \ref{appendix-AccepDecent} for the proof of
\begin{equation}\label{eq-decent}
	\P_μ^γ(R_1(ι,τ) \text{ is decent})
	\ge
	1- \exp\{-\C β\}- \exp\{-\Cγβ\}
	-\exp \big\{ \C μ_0e^{-γβ} 2^{\tfrac{\ell/3}{\dw-1}}
	\big\},
\end{equation}
as the arguments remain very similar to \cite{BaldassoStauffer}.
% where the first exponential term corresponds to the probability of \ref{item-decent0}, the second exponential term is a bound for         \ref{item-decent1} and the last term comes from the bound for \ref{item-decent2}.

\begin{remark}
	We note that unlike done in \cite{BaldassoStauffer}, where the authors introduce a single random walk path $π^0$ for each space-time cell, which they then translate to $x$ as needed, our graphs lack translation invariance and we must therefore consider different paths for each $x$. This however has no bearing on the rest of the argument.
\end{remark}

\begin{proof}[Proof of Theorem \ref{prop-application}]
	We introduce an alternative construction of the process using the additional paths $(π^x_s)_{s}$. We fix the tessellation and observe a cell $R_1(ι,τ).$ If at time $τβ$ there are infected particles inside $S_1(ι)$, we do not use the paths $(π^x_s)$, $x\in S_1(ι).$
	If instead there are no infected particles in $S_1(ι)$ at $τβ$, we observe the process on adjacent tiles and consider the first infected particle which enters the tile $S_1(ι)$ at some site $y$ during $T_1(τ)$, if it exists, and let this particle follow the path $π_s^y$ until $(τ+1)β$ or until it the same rule applies for some adjacent cell, whichever happens first. Then, as simple concatenations of random walks, with this new construction the process maintains the same distribution as the original process.

	We can now define the event
	\begin{equation*}
		E(ι,τ):=\{ \text{all cells } R_1(ι',τ') \text{ adjacent to }R_1(ι,τ)
		\text{ are acceptable and decent}
		\}.
	\end{equation*}
	Then the event $E(ι,τ)$ is increasing, restricted to the super-cell $R_1^4(ι,τ)$ and using the volume estimates \eqref{def-dv} for $\ell$ large enough and $γ$ small enough we can find $α_0>0$ such that  $\P^γ_μ(E(ι,τ)) \ge 1-e^{-α_0}$.

	Then Theorem \ref{thm-main} provides the existence of a \Lip{} $F^{\mathrm{o}}$  such that the event $E(ι,τ)$ holds for all $(ι,τ) \in F^{\mathrm{o}}$ and Theorem \ref{thm-main2} entails that it surrounds the origin at some finite distance $r$ almost surely, hence an initially infected particle starting at the origin has a positive probability of entering a cell in $F^\mathrm{o}$ before recovery.

	Suppose that this infected particle enters the \Lip{} from the time dimension: then it suffices to consider \eqref{item-acceptable1} and \eqref{item-acceptable2} to obtain that the infection spreads to all cells in $R_1^1(ι,τ)$. Since by Corollary \ref{corol-LipschitzInTime} for every cell $R_1(ι,τ)$ in $F^\mathrm{o}$ there exists a cell $R_1(ι',τ+1) \subseteq F^\mathrm{o}$ with  $d(S_1(ι),S_1(ι'))=0$, by definition of \emph{acceptable} cells once the infection enters the \Lip{} it spreads to neighbouring cells inside $F^\mathrm{o}$. Since this observation can then be inductively repeated, the infection now survives almost surely by spreading along cells of $F^\mathrm{o}$.

	Suppose instead that the infected particle enters a \emph{decent} cell $R_1(ι,τ)$ from the spatial dimension. Since the cell is \emph{decent}, the infection spreads to at least one cell $R_1(ι',τ')\subseteq R_1^1(ι,τ)$ which is \emph{acceptable} by the definition of $E(ι,τ)$. Note that this cell might not necessarily be part of $F^\mathrm{o}.$ However, since it is \emph{acceptable} it spreads the infection to all cells $R_1(ι'',τ'') \subseteq R_1^3(ι',τ')$. By Corollary \ref{corol-LipschitzInTime} and since $\eta=3$ there exists in particular at least cell $R_1(ι'',τ'') \subseteq R_1^3(ι',τ')$ that is inside $F^{\mathrm{o}}$. By definition of acceptable cells, the infection enters this cell from the time dimension, and the infection survives indefinitely by the previous argument.

	Since every cell of $F^\mathrm{o}$ is acceptable and decent by construction and the \Lip{} surrounds the origin at almost surely finite distance, this yields the claim.
\end{proof}

\section{Further work}\label{sec:further}

As outlined in the introduction, this work's main contribution is adapting the Lipschitz surface framework of \cite{Pete19} from Euclidean lattices to the sub-diffusive \Sier{} fractal graphs. As such, the application from Section \ref{SEC-applications} represents only the first of many possible problems that can be studied with the help of the Lipschitz cutset framework we have developed.

\paragraph{Further results about the survival of the infection.}
In \cite{BaldassoStauffer} the authors use the Lipschitz surface framework on $\Z^d$ to prove that the infection survives locally with probability 1, conditionally on the infection surviving in the first place. They also show that if the particle intensity $\mu$ is high enough, then the infection has a positive probability of surviving for all recovery rates $\gamma>0$. We conjecture that the same holds also for fractal graphs. The result does not follow directly by just replacing the Lipschitz surface framework in \cite{BaldassoStauffer} with the Lipschitz cutset framework due to the weaker connectivity properties of the cutset. We do however believe  that Theorem \ref{thm-main2} (resp.\ Theorem \ref{thm-main3}) provides sufficient structure to still be able to deduce similar statements for fractal graphs.

\paragraph{Linear speed of the infection.}
Similarly to the above, one cannot immediately recover positive speed of the infection from the Lipschitz cutset, as was done in the case of the Lipschitz surface on $\Z^d$ in \cite{Pete19b}. While the Lipschitz cutset retains the Lipschitz property in the temporal direction (see Corollary \ref{corol-LipschitzInTime}), the Lipschitz property along the spatial axes of the cutset can only be inferred when the cutset intersects with $L_1$. We conjecture that this property can due to Theorem \ref{thm-main2} (resp. Theorem \ref{thm-main3}) be recovered sufficiently often so that positive speed of the infection should also hold for \Sier{} gaskets and carpets.

\paragraph{Shape theorem for the spread of infection.}
After proving positive speed of an infection on $\Z^d$ in \cite{KestenSid05}, Kesten and Sidoravicius used their result to derive a shape theorem for the spread of an infection in their seminal paper \cite{KestenSid08}. Baring that our conjecture above holds, a natural followup would be to try and prove a corresponding result for sub-diffusive graphs such as the fractal graphs in this paper.

\appendix
\section{Probability of acceptable and decent cells}\label{appendix-AccepDecent}
%and \ref{item-acceptable2} hold when $γ=0$ (i.e.\ no recoveries), one can adapt the proof of \cite[Lemmas 2,3, and 4]{Pete19b} using \eqref{eq-HKBound} and \eqref{eq-confining} by essentially modifying the appropriate constants and exponents. 
In this appendix we prove equations \eqref{eq-accetable} and \eqref{eq-decent} adapting the proofs of \cite{Pete19b,BaldassoStauffer}. Recall that the ratio $\frac{2^{\dw\ell}}{β}$ is fixed and that $T:=2^{\ell(\dw -\tfrac{1}{3})}.$

\paragraph{Acceptable.}
We start by showing \eqref{eq-accetable}.

\begin{lemma}[{\cite[Lemma 2]{Pete19b}}]\label{lemma-2}
	Assume that the particles in $S_1(ι)$ are a Poisson point process of intensity $\Cl[s]{int} μ_0λ_x$ for some $\Cr{int}>0$. For $x\in S_1(ι)$, let $π^x$ a path of an (infected) particle which starts in $x$ and stays inside $S_1^3(ι)$ during $[τβ,τβ+T]$. Then, for $\ell$ large enough, the number of particles in $S_1^3(ι)$ at time $τβ$ which intersect $π^x$by time $τβ+T$ is a Poisson random variable with mean at least $\Cl{Lemma2} μ_0 2^{\ell(\tfrac{1/3}{\dw-1})}.$
\end{lemma}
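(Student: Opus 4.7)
The strategy adapts the second-moment style argument of \cite{Pete19b} to the sub-diffusive setting by working at the level of a ``local time'' that each independent walker accumulates at the path $\pi^x$. By Poisson thinning, the count in question is a Poisson random variable with mean
\[
M = \sum_{y\in S_1(\iota)} \Cr{int}\mu_0\lambda_y\,\P_y\big(Y\text{ intersects }\pi^x \text{ during }[0,T]\big),
\]
so it suffices to lower bound $M$. For each $y$, set $L_y:=\int_0^T \mathds{1}_{Y_s = \pi^x_s}\,ds$; then
\[
\P_y(L_y>0) \;=\; \frac{\mathbb{E}_y[L_y]}{\mathbb{E}_y[L_y\mid L_y>0]},
\]
which reduces the task to bounding the numerator from below and the denominator from above, both in the sub-diffusive regime where the heat kernel obeys \eqref{eq-HKBound}.

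For the numerator, the plan is to restrict the sum to starting points $y$ in a ball $B_r(x)$ of radius $r\asymp T^{1/\dw}$. Since $T=2^{\ell(\dw-1/3)}$, we have $r = 2^{\ell(1-1/(3\dw))} \ll 2^{\ell}$, so the ball fits well inside $S_1(\iota)$ for $\ell$ large. Exploiting that $\pi^x$ is itself a random walk trajectory (and therefore satisfies $d(x,\pi^x_s)\lesssim s^{1/\dw}$ with high probability, which can be quantified via Lemma \ref{lemma-confining}), we obtain $d(y,\pi^x_s)\le s^{1/\dw}$ whenever $s\gtrsim d(y,x)^{\dw}$. The heat kernel lower bound \eqref{eq-HKBound} then gives $p_s(y,\pi^x_s)\geq cs^{-\dv/\dw}$ for such $s$. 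Integrating in $s$ yields $\mathbb{E}_y[L_y]\gtrsim T^{1-\dv/\dw}$, and summing over the ball (whose volume is $\asymp T^{\dv/\dw}$ by \eqref{def-dv}) gives $\sum_y \lambda_y \mathbb{E}_y[L_y]\gtrsim T$. Here one uses $\dv/\dw<1$, which holds thanks to recurrence of the walk on $\mathbb{G}^d$ (e.g.\ $\dw=\log_2 5>\log_2 3=\dv$ in $d=2$).

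For the denominator, the strong Markov property at the first hitting time $\tau$ of the path identifies the conditional expectation with a forward local-time integral starting from $\pi^x_\tau$; bounding it via the on-diagonal heat kernel upper bound $p_u(\cdot,\cdot)\le C u^{-\dv/\dw}$ (obtainable from \eqref{eq-HKBound} together with Cauchy--Schwarz) and using $\dv/\dw<1$ yields $\mathbb{E}_y[L_y\mid L_y>0]\le C T^{1-\dv/\dw}$, uniformly in $y$ and $\pi^x$. Combining, $M\gtrsim \mu_0 T^{\dv/\dw}=\mu_0\,2^{\ell(\dw-1/3)\dv/\dw}$, and an elementary check using $\dv\ge 1$ and $\dw\ge 2$ confirms the exponent inequality $(\dw-1/3)\dv/\dw\geq 1/(3(\dw-1))$, giving the claimed bound $\Cr{Lemma2}\mu_0 2^{\ell/(3(\dw-1))}$.

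The main obstacle is the interplay between the path $\pi^x$ and the region of interest: the lemma as stated only requires $\pi^x$ to remain inside $S_1^3(\iota)$, whose diameter $\asymp 2^\ell$ is much larger than the typical displacement $T^{1/\dw}$, so a worst-case $\pi^x$ may wander far from $x$ and destroy the heat kernel lower bound. The fix is to exploit that $\pi^x$ is the path of a (continuous-time) random walk, so that the additional event $\{d(x,\pi^x_s)\le C s^{1/\dw}\log(1/\delta)^{1/\dw}\ \forall s\le T\}$ holds with probability $1-\delta$ by Lemma \ref{lemma-confining}; the argument above is then applied conditionally on this event and the probability loss is absorbed into the $\Cr{Lemma2}$ constant.
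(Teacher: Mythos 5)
Your proposal takes a genuinely different route from the paper's. You estimate $\P_y(L_y>0)=\E_y[L_y]/\E_y[L_y\mid L_y>0]$ directly, for $y$ in a ball $B_r(x)$ with $r\asymp T^{1/\dw}$, whereas the paper (following \cite{Pete19b}) splits $[\tau\beta,\tau\beta+T]$ into $T/W=2^{\ell/(3(\dw-1))}$ sub-intervals of length $W=2^{\ell(\dw-1/3-\frac{1/3}{\dw-1})}$ and collects a constant expected number of fresh hitting particles from near the \emph{current} position $\pi^x_{iW}$ on each sub-interval; the exponent $\frac{1/3}{\dw-1}$ is by design exactly $\log_2(T/W)/\ell$.

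There is, however, a genuine gap in your argument, which you partly spot but do not actually close. Your lower bound $\E_y[L_y]=\int_0^T p_s(y,\pi^x_s)\,ds\gtrsim T^{1-\dv/\dw}$ uses the heat-kernel lower bound from \eqref{eq-HKBound}, which requires $d(y,\pi^x_s)^{\dw}\lesssim s$. Since $y\in B_r(x)$ with $r\asymp T^{1/\dw}$, this forces $d(x,\pi^x_s)\lesssim T^{1/\dw}=2^{\ell(1-1/(3\dw))}$, but the hypothesis only gives $\pi^x\subseteq S_1^3(\iota)$, a set of diameter $\asymp 2^\ell\gg T^{1/\dw}$. For an admissible $\pi^x$ that drifts to distance $\asymp 2^\ell$ from $x$, the lower bound fails for essentially every $s$ and your estimate of the Poisson mean collapses.

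Your proposed fix does not repair this. Conditionally on the realisation of $\pi^x$, the hit count is Poisson with a mean $M(\pi^x)$ that depends on the path; the lemma is a pointwise lower bound on $M(\pi^x)$ over all admissible paths (it is invoked in Lemma~\ref{lemma-4} for a given $\pi^x$), and ``absorbing the probability loss into $\Cr{Lemma2}$'' is not a meaningful operation on such a bound: if $M(\pi^x)$ is small on an atypical set of paths, no shrinking of the constant recovers the statement. Even if one were willing to settle for the weaker assertion that $M(\pi^x)$ is large with high probability over $\pi^x$, the failure probability would need to be comparable to the Poisson tail $\exp\{-c\mu_0 2^{\ell/(3(\dw-1))}\}$ used downstream; by Lemma~\ref{lemma-confining} this forces a confinement radius of order $(T\cdot 2^{\ell/3})^{1/\dw}\asymp 2^\ell$, i.e.\ all of $S_1^3(\iota)$, and we are back where we started. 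The paper's time-splitting sidesteps the issue entirely: at the start of each sub-interval the Poisson cloud (supported on all of $S_1^3(\iota)$, and only negligibly depleted since $(iW)^{1/\dw}\ll 2^\ell$) has density $\asymp\mu_0\lambda_y$ near $\pi^x_{iW}$ \emph{wherever} that is, so each sub-interval contributes regardless of how far $\pi^x$ has drifted from $x$.
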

\begin{proof}
	The proof is a simple adaptation of {\cite[Lemma 2]{Pete19b}}, using \eqref{eq-HKBound} and splitting time into sub-intervals of length $W:=2^{\ell(\dw-\tfrac13-\tfrac{1/3}{\dw-1})}$.
\end{proof}

\begin{lemma}[{\cite[Lemma 3]{Pete19b}}]\label{lemma-3}
	Given a set of $N\in\mathbb{N}$ particles in $S_1^3(ι)$ at time $τβ+T$ and a tile $S_1(ι')\subseteq S_1^3(ι)$, the probability that at least one of the $N$ particles is in $S_1(ι')$ at time $(τ+1)β$ is at least $1-\exp\{ -N c_p\}$ for some constant $c_p>0$ and $\ell$ large enough.
\end{lemma}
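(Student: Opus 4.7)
The plan is to apply the lower bound in \eqref{eq-HKBound} to each of the $N$ particles independently, and then conclude by a standard product argument. Since the particles move as independent random walks from time $τβ+T$ onwards, it suffices to produce a lower bound $c_p>0$, uniform in the starting location $x\in S_1^3(ι)$, for the single-particle probability
\[
p(x) := P_x\bigl(X_{β-T}\in S_1(ι')\bigr),
\]
because then the probability that none of the $N$ particles lies in $S_1(ι')$ at time $(τ+1)β$ is at most $\prod_{j=1}^N(1-p(x_j))\le(1-c_p)^N\le e^{-c_pN}$.

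First I would check the time and space scales. Since $T=2^{\ell(\dw-1/3)}$ and $β\asymp 2^{\dw\ell}$, the ratio $T/β\asymp 2^{-\ell/3}$ tends to $0$, so for $\ell$ large enough we have $β-T\ge β/2\asymp 2^{\dw\ell}$. On the other hand, for $x\in S_1^3(ι)$ and $y\in S_1(ι')\subseteq S_1^3(ι)$, the graph distance satisfies $d(x,y)\le \operatorname{diam}(S_1^3(ι))\le \Cl{diam}2^{\ell}$. Consequently, $d(x,y)^{\dw}/(β-T)$ is uniformly bounded above by a constant $\bar C$ independent of $\ell$, $ι$, $ι'$, $x$, $y$, and in particular the regime $d(x,y)<β-T$ from \eqref{eq-HKBound} is satisfied. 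The lower bound in \eqref{eq-HKBound} therefore yields
\[
p_{β-T}(x,y)\ge c_1(β-T)^{-\dv/\dw}\exp\bigl\{-(\bar C/\Cr{s1})^{1/(\dw-1)}\bigr\}\ge c_2\, 2^{-\dv\ell}
\]
uniformly in $x\in S_1^3(ι)$ and $y\in S_1(ι')$.

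Next, using $P_x(X_{β-T}=y)=λ_y\,p_{β-T}(x,y)$, uniform ellipticity \eqref{def-conductances}, and summing over $y\in S_1(ι')$, we obtain
\[
p(x)=\sum_{y\in S_1(ι')}λ_y\,p_{β-T}(x,y)\ge C_λ^{-1}c_2\,2^{-\dv\ell}\,\lvert S_1(ι')\rvert.
\]
The volume estimate \eqref{def-dv} gives $\lvert S_1(ι')\rvert\ge\cvol\,2^{\dv\ell}$, hence $p(x)\ge c_p$ for some strictly positive constant $c_p$ independent of the choice of $x$, $ι$, $ι'$ and of $\ell$ (provided $\ell$ is large enough for the above asymptotics). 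Combining this with the independence of the $N$ random walks and the elementary inequality $(1-c_p)^N\le e^{-c_pN}$ completes the proof.

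The main obstacle is essentially bookkeeping: one must verify that the two competing scales behave as claimed, namely that $β-T\asymp β$ and that $d(x,y)^{\dw}/(β-T)$ stays uniformly bounded across the super-tile $S_1^3(ι)$, so that the exponential factor in \eqref{eq-HKBound} contributes only a bounded constant. Once this is done, the argument is a direct combination of the heat kernel lower bound, the volume estimate, and independence, and there are no further substantial difficulties.
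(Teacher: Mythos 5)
Your proposal is correct and takes essentially the same approach as the paper's (extremely terse) proof: bound the single-particle probability $p(x)$ of landing in $S_1(ι')$ after time $β-T$ from below by a uniform constant $c_p$ using the lower bound in \eqref{eq-HKBound}, then conclude via $(1-c_p)^N \le e^{-c_p N}$. Your write-up simply makes explicit the scale bookkeeping ($β-T\asymp 2^{\dw\ell}$, $d(x,y)\lesssim 2^\ell$, so the exponential factor in \eqref{eq-HKBound} is bounded below) and the volume estimate \eqref{def-dv}, which the paper leaves implicit.
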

\begin{proof}
	One can define a suitable binomial variable $B$ with parameters $N$ and $p\in (0,1), $ the latter being the minimal probability for a particle to be in $S_1(ι')$ after moving for $β-T$ amount of time, so that the probability in the statement is at least $\P(B\ge 1) \ge 1- \exp\{-N p\}.$
	The estimate $p>c_p$ then follows from  applying \eqref{eq-HKBound} in the time interval $[T,(τ+1)β]$.
\end{proof}

With the help of Lemma \ref{lemma-confining}, we can combine the previous two statements with the help of Chernoff's bound into the following result.
\begin{lemma}[{\cite[Lemma 4]{Pete19b}}]\label{lemma-4}
	Assume that the particles inside $S_1^3(ι)$ at time $τβ$ are a Poisson process of intensity $\Cr{int}μ_0 λ_x$ and let $π^x$ be the path from Lemma \ref{lemma-2}.
	The probability that at time $(τ+1)β$ there is at least one particle in every tile $S_1(ι')\subseteq S_1^3(ι)$ which intersected $π^x$ during $[τβ,τβ+T]$ is at least $1-\exp\{- \Cl{lemma4} μ_02^{\tfrac{\ell/3}{\dw-1}} \}$.
\end{lemma}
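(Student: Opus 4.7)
\textbf{Plan of proof of Lemma \ref{lemma-4}.} The statement essentially asks us to combine the Poisson count provided by Lemma \ref{lemma-2} with the single-tile hitting estimate of Lemma \ref{lemma-3}, so the plan is to first lower-bound the number $N$ of particles that intersect $π^x$, and then union-bound the failure probability over all tiles $S_1(ι')\subseteq S_1^3(ι)$. Set $M:=\Cr{Lemma2}μ_0 2^{(\ell/3)/(\dw-1)}$, and let $N$ denote the number of particles intersecting $π^x$ during $[τβ,τβ+T]$. By Lemma \ref{lemma-2}, $N$ is Poisson with mean at least $M$, so the standard Chernoff bound \eqref{eq-Chernoff} gives $\P(N<M/2)\le\exp\{-\C M\}$.

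Second, I would condition on $\{N\geq M/2\}$ together with the particle positions at their respective intersection times with $π^x$. By the strong Markov property the continuations of these $N$ random walks remain independent simple random walks. For each fixed tile $S_1(ι')\subseteq S_1^3(ι)$, Lemma \ref{lemma-3} (applied with $N$ replaced by $\lceil M/2\rceil$) yields that the probability that none of them lands in $S_1(ι')$ at time $(τ+1)β$ is at most $\exp\{-c_p M/2\}$. Because $S_1^3(ι)$ contains only $\mathcal{O}(1)$ tiles of scale one (the constant depending only on $\dv$, via \eqref{def-dv}), a union bound over $S_1(ι')\subseteq S_1^3(ι)$ gives failure probability $\C\exp\{-c_p M/2\}$. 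Adding the contribution from $\{N<M/2\}$ and absorbing constants yields
\[
\P(A^c)\le \exp\{-\Cr{lemma4}μ_0 2^{(\ell/3)/(\dw-1)}\}
\]
for $\ell$ large enough, which is the claimed bound.

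The main obstacle is bookkeeping in the conditioning step: Lemma \ref{lemma-3} is phrased for $N$ particles present in $S_1^3(ι)$ at the common time $τβ+T$, whereas our $N$ particles intersect $π^x$ at different times $t\in[τβ,τβ+T]$ and a priori could leave $S_1^3(ι)$ before reaching the common time horizon. The cleanest way to handle this is to apply the heat-kernel lower bound underlying Lemma \ref{lemma-3} separately to each particle over the time interval $[t,(τ+1)β]$ of length at least $β-T\asymp 2^{\dw\ell}$, which suffices (by the same computation as in the proof of Lemma \ref{lemma-3}) to bound the probability of reaching $S_1(ι')$ from below by the same constant $c_p>0$; independence of the particles then gives the exponential bound $\exp\{-c_p N\}$ as before, and the rest of the argument proceeds verbatim.
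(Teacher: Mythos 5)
Your proposal is correct and follows the same high-level plan as the paper's (very condensed) proof sketch: apply Lemma~\ref{lemma-2} to get a Poisson lower bound on the number~$N$ of particles hitting~$\pi^x$, Chernoff to ensure $N$ is large with high probability, and then a Lemma~\ref{lemma-3}-style hitting estimate together with a union bound over the $\mathcal{O}(1)$ tiles $S_1(\iota')\subseteq S_1^3(\iota)$.

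The one point where the proposal deviates from the paper's wording is that the paper's lead-in credits Lemma~\ref{lemma-confining} as an ingredient, whereas you deliberately bypass it: instead of pulling all $N$ particles to the common time $\tau\beta+T$ (which would require a confinement argument, since they hit $\pi^x$ at different times and may have wandered), you apply the strong Markov property at each random intersection time $t$ and run the heat-kernel lower bound \eqref{eq-HKBound} over $[t,(\tau+1)\beta]$, whose length is $\asymp 2^{\dw\ell}$ uniformly in $t\in[\tau\beta,\tau\beta+T]$ because $T/\beta\asymp 2^{-\ell/3}\to 0$. That computation gives the same constant~$c_p$, so the rest goes through unchanged. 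This is a clean alternative to the paper's route; it trades the confinement lemma for a direct time-inhomogeneous application of the heat-kernel bound, and both give the same conclusion. You correctly flag this mismatch between the hypotheses of Lemma~\ref{lemma-3} and the output of Lemma~\ref{lemma-2} as the only real bookkeeping issue, and your resolution of it is sound.
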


Lemma \ref{lemma-4} with the use of a simple union bound across all paths $π^x$ for $x\in S_1^3(ι)$ and \eqref{eq-confining} for \eqref{item-acceptable1} yields
\begin{align*}
	\P_μ^0(R_1(ι,τ) \text{ satisfies }\eqref{item-acceptable1},\eqref{item-acceptable2})
	 & \ge
	1-\sum_{x\in S_1^3(ι)}
	\Big(
	\Cr{conf1}\exp\Big\{-\Cr{conf1}^{-1}2^{\tfrac{\ell/3}{\dw-1}}	\Big\}+
	\exp\Big\{ \Cr{lemma4} μ_02^{\tfrac{\ell/3}{\dw-1}}  \Big\}
	\Big)                                                         \\
	 & \ge 1-\exp\Big\{ -\C μ_02^{\tfrac{\ell/3}{\dw-1}}  \Big\}.
\end{align*}

%then combines the above results with standard Poisson thinning arguments and the Chernoff bound to obtain that for sufficiently large $\ell$, 

Applying a further thinning on all of the particles appearing in the previous arguments (as done in detail in \cite[Lemma 3.1]{BaldassoStauffer}), preventing them from recovering during the time interval $[τβ,(τ+1)β]$, one obtains the analogous result with recovery \eqref{eq-accetable}.

\paragraph{Decent.}
We now bound the probability of a cell to be decent and show \eqref{eq-decent}.
%, arguing as in \cite[Lemma 3.2]{BaldassoStauffer}: 
The probability that a path has no recovery marks during an interval of length $β$ is $e^{-γβ}$ and it holds
for any random walk that
\begin{equation}\label{eq-applicationConfinment}\P\big(\Conf(\displ_{R},Δ)\big)
	\ge 1- \C R^{\dv} \exp\Big\{-\C \frac{R^2}{Δ} 	\Big\},
\end{equation}
(see for example \cite[(4.1)]{Grigor01}).

We now evaluate the probability of \eqref{item-decent2} for fixed $x,t$. We observe the time interval $[t,(τ+1)β]$: if the length $(τ+1)β-t$ is bigger then $2^\ell$ we can apply Lemma \ref{lemma-confining}; if instead $(τ+1)β-t<2^\ell$ then we can apply \eqref{eq-applicationConfinment} with $R=2^\ell$ and $Δ\le 2^\ell$, which yields a lower bound of $1-\exp\{-\Cl{acc2} 2^{\ell}\}.$ All together
\begin{equation*}
	\P \big(\text{the pair }π^x, t\text{ satisfy }\eqref{item-decent2}\big)
	\ge 1-
	\Cr{conf1} \exp\{
	-\Cr{conf1}^{-1}  2^{\frac{\ell /3}{\dw-1}} \}
	-\exp\{-\Cr{acc2} 2^{\ell}\}.
\end{equation*}

For \eqref{item-decent1}, we adapt a strategy similar to acceptable cells. Lemma \ref{lemma-2} still applies. Lemma \ref{lemma-3} still holds as before if $(τ+1)β-t-T>2^\ell$, if instead $(τ+1)β-t-T<2^\ell$, we need to use \eqref{eq-applicationConfinment} instead of \eqref{eq-HKBound} in the proof of Lemma \ref{lemma-3}. Then Lemma \ref{lemma-4} applies with appropriately modified exponential bounds. Hence, for fixed $x$ and $t$ the probability of \eqref{item-decent1} under $\P^0_μ$ is at least $\exp\{ \Cl{acc1}μ_0 2^{\tfrac{\ell/3}{\dw-1}}\}. $

Note now that the probability that a path has no recovery marks during an interval of length $β$ is $e^{-γβ}$. The probability that a path jumps more than $3β$ times during a time interval of length $β$ is bounded by $e^{-β}$ by a simple Poisson bound. Combined, we obtain

\begin{align*}
	\P_μ^0(R_1(ι,τ) \text{ is decent})
	\ge
	1-\sum_{x\in S_1(ι)}
	\Big( & e^{-γβ}    +
	e^{-β}          +
	3β\exp\big\{ -\Cr{acc1} μ_02^{\tfrac{\ell/3}{\dw-1}}\big\}            \\
	      & +3β\Cr{conf1} \exp\{-\Cr{conf1}^{-1}  2^{\frac{\ell /3}{\dw-1}} \}
	+3β\exp\{-\Cr{acc2} 2^{\ell}\}
	\Big).
\end{align*}

With the thinning property of Poisson point processes we can adapt the calculation for the recovery marks as in \cite{BaldassoStauffer}, and \eqref{def-dv} then yields  \eqref{eq-decent} for $\ell$ large enough since the ratio $\frac{\ell^{\dw}}{β}$ is fixed.

\section{Standard Results}

\begin{lemma}[Chernoff Bound] Let $ P $ be a Poisson random variable with parameter $ λ. $ Then, for $ χ\in(0,1) $
	\begin{equation}\label{eq-Chernoff}
		\P(P<(1-χ)λ)< e^{- λ\frac{χ^2}{2}}.
	\end{equation}
\end{lemma}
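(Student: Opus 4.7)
The plan is to use the standard exponential moment (Chernoff) method applied to the lower tail of a Poisson random variable. First I would note that for any $t>0$, by Markov's inequality applied to the nonnegative random variable $e^{-tP}$,
\[
\P(P<(1-\chi)\lambda) = \P(e^{-tP} > e^{-t(1-\chi)\lambda}) \le e^{t(1-\chi)\lambda}\,\mathbb{E}[e^{-tP}].
\]
The moment generating function of a Poisson random variable with parameter $\lambda$ is $\mathbb{E}[e^{-tP}]=\exp\{\lambda(e^{-t}-1)\}$, so the bound becomes $\exp\{t(1-\chi)\lambda+\lambda(e^{-t}-1)\}$.

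Next I would optimise over $t>0$. Differentiating the exponent with respect to $t$ yields the optimal choice $t=-\log(1-\chi)$, which is positive precisely because $\chi\in(0,1)$. Substituting this value gives
\[
\P(P<(1-\chi)\lambda) \le \exp\{-\lambda[(1-\chi)\log(1-\chi)+\chi]\} \cdot \exp\{\lambda\cdot 0\} = \exp\{-\lambda\,g(\chi)\},
\]
where $g(\chi):=(1-\chi)\log(1-\chi)+\chi$.

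Finally, I would establish the elementary analytic inequality $g(\chi)\ge \chi^{2}/2$ for $\chi\in(0,1)$, which immediately gives the claim. To do this, set $h(\chi):=g(\chi)-\chi^{2}/2$; then $h(0)=0$, $h'(\chi)=-\log(1-\chi)-\chi$ with $h'(0)=0$, and $h''(\chi)=\chi/(1-\chi)>0$ on $(0,1)$. Hence $h'$ is strictly increasing with $h'(0)=0$, so $h'>0$ on $(0,1)$, and therefore $h>0$ on $(0,1)$. The only step requiring any care is this last convexity-style comparison; everything else is a direct computation, and no serious obstacle is expected.
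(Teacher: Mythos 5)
Your proof is correct and complete: the exponential-moment (Markov) step, the optimisation $t=-\log(1-\chi)$, and the elementary comparison $(1-\chi)\log(1-\chi)+\chi\ge\chi^{2}/2$ via $h''(\chi)=\chi/(1-\chi)>0$ are all accurate, and since the latter inequality is strict on $(0,1)$ you indeed recover the strict bound claimed. The paper itself states this lemma as a standard result without proof, and your argument is exactly the standard derivation one would cite for it, so there is nothing to reconcile.
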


A proof of the following result is in \cite[Lemma A.2]{Pete19}
\begin{lemma}
	Let $ x,y\in\N $. Then, for any $ a,b>1 $
	\begin{equation}\label{eq-chooses}
		\binom{x+y}{y} e^{-ax-by}			\le 	e^{-(a-1)x-(b-1)y}.
	\end{equation}
\end{lemma}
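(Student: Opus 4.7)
The plan is to reduce the claim to the elementary bound $\binom{x+y}{y}\le e^{x+y}$, which isolates the combinatorial content from the exponential prefactors. Multiplying both sides of \eqref{eq-chooses} by $e^{ax+by}$, the inequality we need is equivalent to
\begin{equation*}
    \binom{x+y}{y}\le e^{x+y},
\end{equation*}
so in particular the parameters $a,b>1$ play no further role beyond ensuring the rewriting is consistent with the claim's phrasing.

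To establish this reduced inequality, I would apply the binomial theorem to $(1+1)^{x+y}$ to obtain
\begin{equation*}
    2^{x+y}=\sum_{k=0}^{x+y}\binom{x+y}{k}\ge \binom{x+y}{y},
\end{equation*}
and then use the elementary estimate $2\le e$ to conclude $\binom{x+y}{y}\le 2^{x+y}\le e^{x+y}$. Substituting back and multiplying by $e^{-(a)x-(b)y}$ recovers precisely \eqref{eq-chooses}.

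There is no real obstacle here: the only point worth noting is that the hypothesis $a,b>1$ is what guarantees that the right-hand side of \eqref{eq-chooses} still contains a (positive) decaying exponential $e^{-(a-1)x-(b-1)y}$, which is the form in which the lemma is actually used in the multi-scale counting arguments (cf.\ the application following \eqref{eq-multichoose}). The proof itself is a one-line consequence of the binomial theorem together with $2\le e$.
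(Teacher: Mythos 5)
Your proof is correct. The reduction to $\binom{x+y}{y}\le e^{x+y}$ via multiplying through by $e^{ax+by}$, combined with the bound $\binom{x+y}{y}\le 2^{x+y}\le e^{x+y}$ from the binomial theorem, is exactly the natural elementary argument; the paper itself defers to an external reference (Lemma A.2 of the cited work) for the proof, and your approach is the standard one.
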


\paragraph{Acknowledgements.} AD and PG acknowledge support from DFG through the scientific network {\em Stochastic
		Processes on Evolving Networks.}

\printbibliography

\end{document}